\renewcommand*{\p@section}{\,}
\renewcommand*{\p@subsection}{\S\,}
\renewcommand*{\p@subsubsection}{\S\,}
\newtheorem{thm}{Theorem}[section]
\newtheorem{cor}[thm]{Corollary}
\newtheorem{lem}[thm]{Lemma}
\newtheorem{prop}[thm]{Proposition}
\newtheorem{exmp}[thm]{Example}
\newtheorem{rem}[thm]{Remark}
\numberwithin{equation}{section}
\newcommand{\N}{\ensuremath{\mathbb{N}}}
\newcommand{\R}{\ensuremath{\mathbb{R}}}
\newcommand{\Z}{\ensuremath{\mathbb{Z}}}
\newcommand{\kk}{\ensuremath{\Bbbk}}
 \newcommand{\Tr}{\operatorname{Tr}}
\newcommand{\tr}{\operatorname{tr}}
\newcommand{\Hom}{\operatorname{Hom}}
\newcommand{\Der}{\operatorname{Der}}
\newcommand{\Mat}{\operatorname{Mat}}
\newcommand{\Gl}{\operatorname{GL}}
\newcommand{\Rep}{\operatorname{Rep}}
\newcommand{\gl}{\ensuremath{\mathfrak{gl}}}
\newcommand{\g}{\ensuremath{\mathfrak{g}}}
\newcommand{\OO}{\ensuremath{\mathcal{O}}}
\newcommand{\X}{\ensuremath{\mathcal{X}}}
\newcommand{\vep}{\ensuremath{\varepsilon}}
\newcommand\dgal[1]{  \left\{\!\!\left\{#1\right\}\!\!\right\} }
\newcommand\dSN[1]{\left\{\!\!\left\{#1\right\}\!\!\right\}_{\operatorname{SN}}}
\newcommand\br[1]{\{ #1 \}} 
\newcommand\brSN[1]{\{ #1 \}_{\operatorname{SN}}}
\begin{document}

\title{Double quasi-Poisson brackets : fusion and new examples}   

\author{Maxime Fairon}
 \address[Maxime Fairon]{School of Mathematics and Statistics, University of Glasgow, University Place, Glasgow G12 8QQ, UK.}
 \email{Maxime.Fairon@glasgow.ac.uk}

 \thanks{\emph{Keywords:} Double bracket, Quasi-Hamiltonian algebra, Non-commutative geometry.}

\begin{abstract}
We exhibit new examples of double quasi-Poisson brackets, based on some classification results and the method of fusion.  This method was introduced by Van den Bergh for a large class of double quasi-Poisson brackets which are said differential, and our main result is that it can be extended to arbitrary double quasi-Poisson brackets. We also provide an alternative construction for the double quasi-Poisson brackets of Van den Bergh associated to quivers, and of Massuyeau--Turaev associated to the fundamental groups of surfaces. 
\end{abstract}

\maketitle

 \setcounter{tocdepth}{2}

 
\section{Introduction}  \label{intro} 

We fix a finitely generated associative unital algebra $A$ over a field $\kk$ of characteristic $0$, and we write $\otimes=\otimes_\kk$ for brevity. Following Van den Bergh's initial construction  \cite{VdB1}, we define on $A$ a \emph{double bracket}  $\dgal{-,-}:A\times A \to A \otimes A$ as a $\kk$-bilinear map satisfying for any $a,b,c \in A$
\begin{equation} \label{Eq:cycanti}
 \dgal{a,b}=-\dgal{b,a}^\circ \qquad \text{(cyclic antisymmetry)}, 
\end{equation}
where $(-)^\circ$ denotes the permutation of factors in $A \otimes A$, together with 
\begin{equation}\label{Eq:outder}
 \dgal{a,bc}=\dgal{a,b}c+b\dgal{a,c} \qquad \text{(right derivation rule)}.
\end{equation}
Here, the multiplication refers to the outer $A$-bimodule structure on $A\otimes A$, that is $a \,d\, b=(a d') \otimes (d'' b)$ under Sweedler's notation $d=d'\otimes d''\in A \otimes A$, which we use throughout this text. Assuming that \eqref{Eq:cycanti} holds, one can easily check that \eqref{Eq:outder} is equivalent to 
\begin{equation}\label{Eq:inder}
 \dgal{bc,a}=\dgal{b,a}\ast c+b\ast\dgal{c,a} \qquad \text{(left derivation rule)}, 
\end{equation}
where this time $\ast$ denotes the inner $A$-bimodule structure on $A\otimes A$ given by $a \ast (d'\otimes d'')\ast b=( d' b) \otimes (a d'')$. From these derivation rules, it is easily seen that it suffices to define double brackets on generators of $A$. Associated to such a double bracket, we can define an operation $A^{\times 3}\to A^{\otimes 3}$ by setting
\begin{equation}
\label{Eq:TripBr}
 \dgal{a,b,c}=\dgal{a,\dgal{b,c}'}\otimes \dgal{b,c}''+\tau_{(123)}\dgal{b,\dgal{c,a}'}\otimes \dgal{c,a}''
+\tau_{(123)}^2\dgal{c,\dgal{a,b}'}\otimes \dgal{a,b}'' \,.
\end{equation}
(Here, we define $\tau_{(123)}:\,A^{\otimes 3}\to A^{\otimes 3}$  by 
$\tau_{(123)}(a_1\otimes a_2\otimes a_3)=a_{3}\otimes a_1 \otimes a_{2}$.)
This map is an instance of \emph{triple bracket} : a  $\kk$-trilinear map, which is also a derivation in its last argument for the outer bimodule structure of $A^{\otimes 3}$, and which satisfies a generalisation of the cyclic antisymmetry \eqref{Eq:cycanti} : 
\begin{equation} \label{Eq:TriAnti}
\tau_{(123)}\circ \dgal{-,-,-}\circ \tau_{(123)}^{-1}
=\dgal{-,-,-}\,. 
\end{equation}

An important class of double brackets consists of \emph{double Poisson brackets}. They are such that the associated triple brackets $\dgal{-,-,-}$ identically vanish. Using \eqref{Eq:TripBr}, this condition can be seen as a version of Jacobi identity with value in $A^{\otimes 3}$. These structures have also been introduced by Van den Bergh \cite{VdB1}, and have been a recent subject of study, see e.g. \cite{B,IK,ORS,ORS2,PV,P16,S13,VdW}. 

 Another interesting class of double brackets appears when the unit in $A$ admits a decomposition $1=\sum_{s\in I} e_s$ in terms of a finite set of orthogonal idempotents, i.e. $|I|\in \N^\times$ and $e_s e_t = \delta_{st} e_s$.  In that case, we view $A$ as a $B$-algebra for $B=\oplus_{s\in I} \kk e_s$, and we naturally extend the definition of a double bracket to require $B$-bilinearity, i.e. it vanishes when one of the arguments belongs to $B$. Then, we say that the double bracket is \emph{quasi-Poisson}, or that $(A, \dgal{-,-})$ is a \emph{double quasi-Poisson algebra}, if the associated triple bracket \eqref{Eq:TripBr} satisfies the relation 
\begin{equation}
   \begin{aligned} \label{qPabc}
    \dgal{a,b,c}=&\frac14 \sum_{s\in I} \Big(
c e_s a \otimes e_s b \otimes e_s  - c e_s a \otimes e_s \otimes b e_s - c e_s \otimes a e_s b \otimes e_s 
+ c e_s \otimes a e_s \otimes b e_s \\
&\qquad \quad - e_s a \otimes e_s b \otimes e_s c + e_s a \otimes e_s \otimes b e_s c + e_s \otimes a e_s b \otimes e_s c - e_s \otimes a e_s \otimes b e_s c \Big)\,,
  \end{aligned}
\end{equation}
on any $a,b,c\in A$. Condition \eqref{qPabc} is an expanded form of the original definition \cite[\S 5.1]{VdB1}, and only needs to be checked on generators by the properties of a triple bracket. The main interest of this form is that it is easier to handle in order to classify double quasi-Poisson brackets. Indeed, up to now few cases of double quasi-Poisson brackets are known except associated to quivers \cite{VdB1,VdB2} or fundamental groups of surfaces \cite{MT14}. To have more examples, we provide a complete classification on the free algebra over one generator, and continue the investigation for two generators (with some restrictions). 

The reader could then be tempted to say that such examples do not provide particular insights about double quasi-Poisson brackets in general.  However, an important result of Van den Bergh is that we can perform fusion \cite[\S 5.3]{VdB1} : we can identify idempotents in an algebra with a double quasi-Poisson bracket, and the resulting algebra also admits a double quasi-Poisson bracket. For example, if we respectively denote by $e_1,e_2$ the units of $\kk[t], \kk\langle s_1,s_2 \rangle$ viewed as orthogonal idempotents inside $\kk[t]\oplus \kk\langle s_1,s_2 \rangle$, the fusion algebra obtained by the identification of $e_1$ and $e_2$  is nothing else than $\kk\langle t,s_1,s_2 \rangle$. Hence, knowing a double quasi-Poisson bracket before fusion gives another one on the free algebra over three generators. Therefore, our classification allows to get double quasi-Poisson brackets over any free algebra in general, though not all of them. 
Moving to more exotic examples of double quasi-Poisson algebras, there was a major obstruction to use this fusion process up to now, as we needed the double quasi-Poisson bracket to be differential, see \ref{ss:prelem} for the definition. It was expected by Van den Bergh that this assumption could be removed \cite[\S 5.3]{VdB1}, and the main aim of this paper is to prove this result in its most general form. 
\begin{thm}
\emph{(cf. Theorem \ref{ThmFusBr})} Let $(A,\dgal{-,-})$ be a double quasi-Poisson $B$-algebra, with $B=\oplus_{s\in I} \kk e_s$, $|I|\in \N^\times$, where $e_s e_t=\delta_{st} e_s$ for any $s,t \in I$. Then, if we pick $s,t \in I$ distinct, the algebra $A'$ obtained by identifying the idempotents $e_s,e_t \in A$ has a double quasi-Poisson bracket which coincides with the image of $\dgal{-,-}$  on $\oplus_{s',t'\in I'}e_{s'} A'e_{t'}$, where $I'=I\setminus\{1,2\}$. 
\end{thm}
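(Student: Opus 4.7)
The plan is to adapt Van den Bergh's fusion procedure (\cite[\S 5.3]{VdB1}), but to carry out the verification of the quasi-Poisson identity on all generators without invoking the differential hypothesis. Write $1 := s$ and $2 := t$ for the pair of idempotents being fused. Introduce the auxiliary $B$-algebra
\[
\tilde{A} = A \ast_B \kk\langle \alpha, \beta \rangle / (\alpha\beta - e_1,\ \beta\alpha - e_2), \qquad \alpha \in e_1 \tilde{A} e_2,\ \beta \in e_2 \tilde{A} e_1,
\]
and set $\epsilon = 1 - e_2$, so that the fused algebra is realised as $A' = \epsilon \tilde{A} \epsilon$. A complete set of orthogonal idempotents in $A'$ is then $\{e_1\} \cup \{e_{s'} \mid s' \in I \setminus \{1,2\}\}$, with $e_1$ playing the role of the merged idempotent, and for $s', t' \in I \setminus \{1,2\}$ one has $e_{s'} A' e_{t'} = e_{s'} A e_{t'}$ canonically.

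The first technical step is to extend $\dgal{-,-}$ to $\tilde{A}$. I define $\dgal{\alpha, a}$, $\dgal{\beta, a}$ (and their opposites obtained via \eqref{Eq:cycanti}) for $a \in A$, as well as $\dgal{\alpha, \alpha}$, $\dgal{\alpha, \beta}$, $\dgal{\beta, \beta}$, by the same universal tensor formulas as in \cite[\S 5.3]{VdB1}. These expressions involve only multiplication in $\tilde{A}$, the idempotents $e_1, e_2$, and the original double bracket, and in particular they make no reference to any differential structure. The derivation rules \eqref{Eq:outder}--\eqref{Eq:inder} then propagate $\dgal{-,-}$ uniquely to all of $\tilde{A}$, and a direct check shows that the defining relations $\alpha\beta - e_1$ and $\beta\alpha - e_2$ lie in the kernel of $\dgal{-,-}$ in each argument, so that the bracket is well defined on $\tilde{A}$.

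The crux is to verify that the associated triple bracket $\dgal{-,-,-}$ on $\tilde{A}$ satisfies the quasi-Poisson condition \eqref{qPabc}. By the triple-derivation property in the last argument and $B$-linearity, it suffices to check \eqref{qPabc} on triples $(a,b,c)$ in which each entry is either a generator of $A$ or one of $\alpha, \beta$. The case $a, b, c \in A$ is the hypothesis. The remaining cases split into finitely many subcases according to the number and position of $\alpha$s and $\beta$s; each reduces, after expansion via \eqref{Eq:TripBr} and the defining formulas for $\dgal{\alpha,-}$ and $\dgal{\beta,-}$, to a universal tensor identity in $a, b, c, e_1, e_2, \alpha, \beta$. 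These identities are then checked by direct manipulation, tracking the sum $\sum_{s \in I}$ appearing in \eqref{qPabc} against the idempotents that arise. The principal obstacle is the combinatorial proliferation of subcases, which must all be handled uniformly; this is the step in which Van den Bergh invoked the differential hypothesis as an organising principle, but close inspection shows that it is never essential — every required tensor identity holds purely from the axioms \eqref{Eq:cycanti}--\eqref{Eq:inder} and from \eqref{qPabc} for the original bracket.

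Once \eqref{qPabc} is established on $\tilde{A}$, the passage to $A' = \epsilon \tilde{A} \epsilon$ is routine: since $\epsilon$ is an idempotent, the $\epsilon \otimes \epsilon$-projection of $\dgal{-,-}$ restricts to a well-defined double bracket on $A'$, and \eqref{qPabc} for $A'$ follows from the corresponding identity on $\tilde{A}$ by projecting onto the $\epsilon$-corners. The assertion that this restricted bracket coincides with the image of the original $\dgal{-,-}$ on $\oplus_{s',t' \in I'} e_{s'} A' e_{t'}$ is then immediate: by the construction of the extension, no $\alpha$ or $\beta$ appears in $\dgal{x,y\,}$ when both $x$ and $y$ already belong to surviving corners.
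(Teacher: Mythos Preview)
Your proposal has a genuine gap: you are missing the fusion correction term. The extension of $\dgal{-,-}$ from $A$ to $\tilde A$ in \cite[\S5.3]{VdB1} (and in this paper, via Corollary~\ref{Cor:Induce}) is simply the $\bar B$-linear extension that sets $\dgal{\alpha,-}=\dgal{\beta,-}=0$; there are no nontrivial ``universal tensor formulas'' for $\dgal{\alpha,a}$ in that reference. With that extension, the induced bracket on $A'=\epsilon\tilde A\epsilon$ is \emph{not} quasi-Poisson. Concretely: the quasi-Poisson triple bracket on $A$ is differential for $\tfrac{1}{12}\sum_s E_s^3$, so its image on $A'$ is differential for $\tfrac{1}{12}\sum_s\Tr(E_s^3)$; but by Lemma~\ref{LemF12} one has $\Tr(E_1)+\Tr(E_2)=F_1$, hence modulo commutators
\[
\Tr(E_1^3)+\Tr(E_2^3)=F_1^3-3\Tr(E_1)^2\Tr(E_2)-3\Tr(E_1)\Tr(E_2)^2,
\]
and the right-hand side is \emph{not} $F_1^3$. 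The discrepancy is exactly why the paper adds the correction $\dgal{-,-}_{fus}$ defined by the bivector $-\tfrac12\Tr(E_1)\Tr(E_2)$: its associated triple bracket supplies precisely the missing terms.

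The actual work that generalises beyond the differential case is not the step you describe, but Lemma~\ref{LemE1E2}: one must show that the ``cross terms'' $\kappa(-,-,-)=\dgal{-,-,-}^f-\dgal{-,-,-}-\dgal{-,-,-}_{fus}$ vanish, where $\dgal{-,-}^f=\dgal{-,-}+\dgal{-,-}_{fus}$. This is a case-by-case computation over the four types of generators of $A^f$ (Lemma~\ref{AfGenerat}) and uses only the derivation rules and cyclic antisymmetry of an arbitrary $\dgal{-,-}$; it is here that the differential hypothesis from \cite{VdB1} is dispensed with. Your claim that ``every required tensor identity holds purely from the axioms'' is morally in the right spirit, but applied to the wrong object: without first introducing $\dgal{-,-}_{fus}$ there is nothing to check, because the uncorrected bracket simply fails \eqref{qPabc}. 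Your final assertion, that the new bracket agrees with the old one on $\bigoplus_{s',t'\in I'}e_{s'}A'e_{t'}$, \emph{does} hold for the corrected bracket, since $\dgal{-,-}_{fus}$ vanishes on pairs of first-type generators (equation~\eqref{tt}); but this is a consequence of the construction, not a substitute for it.
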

The advantage of our proof of this theorem is to get an explicit form for the double quasi-Poisson bracket in the algebra $A'$ obtained by identification of the idempotents $e_s,e_t \in A$ : it is given in terms of the double bracket on $A$, together with a second double bracket computed in Lemma \ref{dbrFUS} which was uncovered by Van den Bergh \cite[Theorem 5.3.1]{VdB1}.  Therefore, it becomes easy to see when a double quasi-Poisson bracket has been obtained by fusion. 
In particular, we can show using our classification of double quasi-Poisson bracket on the free algebra on two generators (with some mild restrictions) provided in \ref{classifF2} that any such double bracket is isomorphic to one obtained by fusion, see Theorem \ref{ThmF2fus}. This unexpected result  suggests that knowing double quasi-Poisson brackets on $\kk[t]$ and the path algebra of the (double of the) one-arrow quiver $t:1\to 2$ may be enough to obtain most examples of double quasi-Poisson algebra structures on free algebras.
 
A particular subclass of double quasi-Poisson brackets consists in those that admit a distinguished element. To be precise, given a double quasi-Poisson algebra $(A,\dgal{-,-})$ as above with a complete set of orthogonal idempotents $(e_s)_{s\in I}$, a \emph{multiplicative moment map}  is an invertible element $\Phi=\sum_{s\in I}\Phi_s$ with $\Phi_s\in e_sAe_s$ such that we have  for all $a\in A$ and $s\in I$ 
\begin{equation} \label{Phim}
 \dgal{\Phi_s,a}=\frac12 (ae_s\otimes \Phi_s-e_s \otimes \Phi_s a +  a \Phi_s \otimes e_s-\Phi_s \otimes e_s a)\,.
\end{equation}
We say that the triple $(A,\dgal{-,-},\Phi)$ is a \emph{quasi-Hamiltonian algebra}. As a continuation of the previous result, Van den Bergh showed that we can also obtain a moment map after fusion inside a quasi-Hamiltonian algebra when the double bracket is differential \cite[Theorem 5.3.2]{VdB1}. We also show that this result can be extended to the general case, see Theorem \ref{ThmFusMomap}. 
As a by-product of our method to prove that we keep a double quasi-Poisson bracket or multiplicative moment map after fusion,  we can easily recover the double quasi-Poisson brackets of Van den Bergh \cite{VdB1} and Massuyeau-Turaev \cite{MT14}, see Theorems \ref{ThmVdB} and \ref{ThmPI}.

\medskip

To finish this introduction, let us recall that double brackets have been introduced by Van den Bergh as a non-commutative version of an antisymmetric biderivation following the \emph{non-commutative principle} formulated by  Kontsevich and Rosenberg \cite{K,KR}. More precisely, as explained in \ref{sss:RepSpaces}, any double bracket on an algebra $A$ gives rise to an antisymmetric biderivation on the algebra $\kk[\Rep(A,n)]$ for any $n \geq 1$, i.e. on the coordinate ring of the representation space $\Rep(A,n)$ parametrising $n$-dimensional representations of $A$. In the same way, a double (quasi-)Poisson bracket provides a non-commutative notion of a (quasi-)Poisson bracket under this non-commutative principle. Hence, the present study can be understood as giving new examples of quasi-Poisson brackets on representation spaces. 

\medskip

This article proceeds as follows.  
In Section \ref{fusion}, we recall the necessary constructions needed to understand the fusion procedure, and then prove the main result of this paper which is the fusion of quasi-Hamiltonian algebras in the general case. 
In light of those developments, we give in Section \ref{appli} some examples of double quasi-Poisson brackets obtained by fusion. We also give an alternative (though equivalent) construction of Van den Bergh's quasi-Hamiltonian algebras associated to quivers, and those of Massuyeau-Turaev associated to the fundamental group of compact surfaces with boundary. 
In Section \ref{classif}, we get some first classification results for double quasi-Poisson brackets. 
We finish by explaining in Section \ref{ss:qHscheme} the notion of quasi-Poisson algebra, which is the structure carried by the coordinate ring of  representation spaces of  double quasi-Poisson algebras. There are four appendices that contain some computational proofs. 


{\bf Acknowledgement.} The author is grateful to O. Chalykh for introducing him to the theory of double brackets, and for valuable comments on an earlier draft of this work which greatly improved the presentation of the present paper.  
 The author also thanks A. Alekseev for useful discussions, and the referees for their comments. 
Part of this work was supported by a University of Leeds 110 Anniversary Research Scholarship.


\section{Fusion of quasi-Hamiltonian algebras} \label{fusion}

We consider finitely generated algebras $A,B$ over a field $\kk$ of characteristic zero. We assume that $A$ is a $B$-algebra and, without loss of generality, we identify $B$ with its image in $A$.  Our goal is to prove the main theorems of this paper, which are presented in \ref{ss:main}. To state and prove these results, we need some preliminary constructions associated to double brackets, which were already introduced by Van den Bergh in \cite{VdB1} for most of them.  Since these results easily extend to the case of $n$-brackets (see below for the definition, noting that double brackets are $2$-brackets), we begin by introducing the objects that we will use in full generalities.  

\subsection{Preliminary results} \label{ss:prelem}

We equip the algebra $A^{\otimes n}$ with the \emph{outer} $A$-bimodule structure which is given by 
$b(a_1\otimes \ldots \otimes a_n)c=ba_1\otimes \ldots \otimes a_nc$. For any $s\in S_n$, we introduce  the map $\tau_s:\,A^{\otimes n}\to A^{\otimes n}$ defined by 
$\tau_s(a_1\otimes \ldots \otimes a_n)=a_{s^{-1}(1)}\otimes \ldots \otimes a_{s^{-1}(n)}$.
Following Van den Bergh \cite{VdB1}, we say that  a $B$-linear map 
$\dgal{-,\ldots,-} : A^{\times n}\to A^{\otimes n}$ is a \emph{$n$-bracket} if it is a derivation in its last 
argument for the outer bimodule structure on $A^{\otimes n}$, 
and if it is cyclically anti-symmetric : 
\begin{equation*}
\tau_{(1\ldots n)}\circ \dgal{-,\ldots,-}\circ \tau^{-1}_{(1\ldots n)}
=(-1)^{n+1}\dgal{-,\ldots,-}\,. 
\end{equation*}
By $B$-linearity, we mean that the map $\dgal{-,\ldots,-}$ is $\kk$-linear in each argument and it vanishes on any subset $A^{\times i-1}\times B \times A^{n-i}$, $1\leq i \leq n$. 
Double and triple brackets as defined in the introduction can be equivalently obtained from the above formulation, for which they correspond to the cases $n=2$ and $n=3$.

\subsubsection{Poly-vector fields and $n$-brackets}

Examples of $n$-brackets can easily be obtained by choosing $n$ double derivations, which are elements of $\Der_B(A,A\otimes A)$, with $A\otimes A$ equipped with the outer bimodule structure. 
To state the result, we set  $D_{A/B}:=\Der_B(A,A\otimes A)$ and we see $D_{A/B}$ as an $A$-bimodule by using the inner bimodule structure on $A\otimes A$:  
if $\delta\in D_{A/B}$ and $a,b,c\in A$, then $(b\,\delta\, c) (a)=\delta(a)'\,c\otimes b\,\delta(a)''$. 
We then form the tensor algebra $D_BA:=T_AD_{A/B}$ of this bimodule, which is a graded algebra if we put $A$ in degree $0$ and $D_{A/B}$ in degree $1$. Its elements are called \emph{poly-vector fields}.
\begin{prop}   \emph{(\cite[Proposition 4.1.1]{VdB1})}
\label{Prop:BrQ}

\noindent There is a well-defined linear map $\mu:(D_BA)_n\to \{B$-linear $n$-brackets on $A\}$, 
$Q\mapsto \dgal{-,\ldots,-}_Q$, which on $Q=\delta_1 \ldots \delta_n$ is given by 
\begin{subequations}
  \begin{align}
\dgal{-,\ldots,-}_Q&=\sum_{i=0}^{n-1}(-1)^{(n-1)i}\tau^i_{(1\ldots n)}\circ\dgal{-,\ldots,-}_Q^{\widetilde{ }}
\circ\tau^{-i}_{(1\ldots n)}\,\,, \label{Eq:BrQ1} \\
\dgal{a_1,\ldots,a_n}_Q^{\widetilde{ }}&=\delta_n(a_n)'\delta_1(a_1)''\otimes \delta_1(a_1)'\delta_2(a_2)''\otimes \ldots \otimes 
\delta_{n-1}(a_{n-1})'\delta_n(a_n)''\,. \label{Eq:BrQ2} 
 \end{align}
\end{subequations}
The map $\mu$ factors through $D_BA/[D_BA,D_BA]$, where $[-,-]$ is  the graded commutator.  
\end{prop}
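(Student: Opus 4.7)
The plan has three stages: (i) the formula $\dgal{-,\ldots,-}_Q^{\widetilde{ }}$ in \eqref{Eq:BrQ2}, defined on a monomial $Q=\delta_1\cdots\delta_n$, descends to the tensor-over-$A$ product $(D_BA)_n=D_{A/B}^{\otimes_A n}$; (ii) the symmetrized map $\dgal{-,\ldots,-}_Q$ from \eqref{Eq:BrQ1} satisfies the three axioms of a $B$-linear $n$-bracket; (iii) the resulting map $\mu$ vanishes on graded commutators of $D_BA$. All three stages reduce to direct but sign-sensitive calculations starting from the inner-bimodule rule $(b\delta c)(x)=\delta(x)'c\otimes b\delta(x)''$ defining the $A$-bimodule structure on $D_{A/B}$.

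For (i), I would pick $a\in A$ and compare $\dgal{-,\ldots,-}_Q^{\widetilde{ }}$ with $\delta_i$ replaced by $\delta_i\cdot a$ against the same expression with $\delta_{i+1}$ replaced by $a\cdot\delta_{i+1}$. In \eqref{Eq:BrQ2} the variable $\delta_i(a_i)$ contributes $\delta_i(a_i)''$ to tensor position $i$ and $\delta_i(a_i)'$ to position $i+1$; inserting $a$ on either side of the tensor between $\delta_i$ and $\delta_{i+1}$ therefore only modifies position $i+1$, which becomes $\delta_i(a_i)'\cdot a\cdot\delta_{i+1}(a_{i+1})''$ in either case, while every other position is untouched.

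For (ii), $B$-linearity is immediate because each $\delta_j$ vanishes on $B$. Cyclic antisymmetry with sign $(-1)^{n+1}$ is built into \eqref{Eq:BrQ1}: conjugating by $\tau_{(1\ldots n)}$ shifts the summation index $i$ by one, and reindexing pulls out the overall factor $(-1)^{n-1}=(-1)^{n+1}$. For the derivation axiom in the last argument I would first verify that $\dgal{-,\ldots,-}_Q^{\widetilde{ }}$ is itself a derivation in $a_n$ for the outer structure on $A^{\otimes n}$, which holds because $\delta_n(a_n)$ only touches tensor positions $1$ and $n$, exactly where the outer bimodule structure acts. The same type of computation shows $\dgal{-,\ldots,-}_Q^{\widetilde{ }}$ is a derivation in every middle argument $a_k$ but for a twisted bimodule structure that places right multiplication at position $k$ and left multiplication at position $k+1$; after conjugation by $\tau^i_{(1\ldots n)}$, which rotates tensor positions cyclically by $i$, the $i$-th summand of \eqref{Eq:BrQ1} becomes a derivation in $a_n$ for the outer structure, and so is the sum.

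For (iii), I would evaluate $\mu$ on the graded commutator $[Q_1,Q_2]=Q_1Q_2-(-1)^{pq}Q_2Q_1$ for monomials $Q_1\in(D_BA)_p$, $Q_2\in(D_BA)_q$. A direct inspection of \eqref{Eq:BrQ2} shows that shifting $(\delta_1,\ldots,\delta_n)$ cyclically by $p$ places corresponds to conjugating $\dgal{-,\ldots,-}_Q^{\widetilde{ }}$ by $\tau^p_{(1\ldots n)}$; reindexing the sum in \eqref{Eq:BrQ1} then yields $\dgal{-,\ldots,-}_{Q_1Q_2}=(-1)^{(n-1)p}\dgal{-,\ldots,-}_{Q_2Q_1}$ with $n=p+q$, and this matches the Koszul sign $(-1)^{pq}$ since $(n-1)p=pq+p(p-1)$ with $p(p-1)$ even. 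Hence $\mu[Q_1,Q_2]=0$. The main obstacle I anticipate is the careful bookkeeping of signs: off-by-one errors in the cyclic power $(n-1)i$ or the Koszul sign would invalidate the whole argument. The derivation check in (ii) is also delicate, since it relies on recognising that the bimodule structure with respect to which $\dgal{-,\ldots,-}_Q^{\widetilde{ }}$ is a derivation in a middle argument is precisely the cyclic rotate of the outer structure.
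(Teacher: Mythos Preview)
The paper does not supply its own proof of this proposition: it is quoted verbatim from Van den Bergh \cite[Proposition 4.1.1]{VdB1} and used as input throughout. Your outline is the standard direct verification and is essentially correct; it is also what one finds (in condensed form) in Van den Bergh's original argument.

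One small slip in stage~(iii): the cyclic shift $(\delta_1,\ldots,\delta_n)\mapsto(\delta_{p+1},\ldots,\delta_n,\delta_1,\ldots,\delta_p)$ corresponds to conjugating $\dgal{-,\ldots,-}_Q^{\widetilde{\ }}$ by $\tau_{(1\ldots n)}^{-p}=\tau_{(1\ldots n)}^{q}$, not by $\tau_{(1\ldots n)}^{p}$. This does not affect your conclusion, since $(-1)^{(n-1)p}=(-1)^{(n-1)q}$ (as $p+q=n$ and $(n-1)n$ is even), so the Koszul sign comparison $(-1)^{(n-1)p}=(-1)^{pq}$ goes through unchanged and $\mu$ still annihilates $[Q_1,Q_2]$. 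Your anticipated obstacle, the sign bookkeeping, is exactly where this minor error crept in, but the argument is otherwise sound.
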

We say that a $n$-bracket is \emph{differential} if it is given by $\mu(Q)$ for some $Q \in (D_BA)_n$. For example, given some  $\delta_1\delta_2\in (D_BA)_2$ we have a differential double bracket by setting 
\begin{equation} \label{Eq:BrQdouble}
\dgal{b,c}_{\delta_1\delta_2}=
 \delta_2(c)'\delta_1(b)'' \otimes \delta_1(b)'\delta_2(c)''
-  \delta_1(c)'\delta_2(b)'' \otimes \delta_2(b)'\delta_1(c)''\,,
\end{equation} 
for any $b,c\in A$. Any differential double bracket is a linear sum of such double brackets. 

By \cite{CB99}, we can write $D_{A/B}=\Hom_{A\otimes A^{op}}(\Omega^1_BA,A \otimes A)$, where $\Omega^1_BA$ is the $A$-bimodule of non-commutative $1$-forms relative to $B$ \cite{CQ95}. The bimodule $\Omega^1_BA$ allows us to give conditions for the map $\mu$ to be an isomorphism.
\begin{prop}  \emph{(\cite[Proposition 4.1.2]{VdB1})}
\label{Prop:BrQSmooth}
Assume that $A$ is left and right flat over $B$, and that   $\Omega^1_BA$ is a projective $A$-bimodule. Then the map $\mu$ from Proposition \ref{Prop:BrQ} is an isomorphism. 
\end{prop}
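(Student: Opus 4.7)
The strategy is to construct an explicit inverse to $\mu$ using the duality between double derivations and non-commutative $1$-forms implicit in the identity $D_{A/B} = \Hom_{A\otimes A^{op}}(\Omega^1_BA,\,A\otimes A)$. Because $\Omega^1_BA$ is projective as an $A$-bimodule, this pairing is non-degenerate in both directions: for any $A$-bimodule $M$, bimodule maps $\Omega^1_BA\to M$ correspond to $M$-valued evaluations against elements of $D_{A/B}$, and $D_{A/B}$ is itself projective. The flatness of $A$ over $B$ then guarantees that these identifications are preserved when one passes to tensor powers over $A$.

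Given a $B$-linear $n$-bracket $P\colon A^{\times n}\to A^{\otimes n}$, the derivation property in the last argument (with respect to the outer bimodule structure on $A^{\otimes n}$) shows that $P$ factors, in its $n$-th slot, through an $A$-bimodule map out of $\Omega^1_BA$. Cyclic antisymmetry then transfers this derivation property to each other slot, with the appropriate inner/outer bimodule twist dictated by $\tau_{(1\ldots n)}$. Assembling these, $P$ descends to an $A$-multilinear map
\[
\tilde P\colon (\Omega^1_BA)^{\otimes_A n}\to A^{\otimes n},
\]
where tensor products and bimodule actions are arranged to mirror the telescoping pattern visible in \eqref{Eq:BrQ2}.

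Next, I would invoke the duality from the first paragraph to convert $\tilde P$ into an element $Q_P \in (D_{A/B})^{\otimes_A n} = (D_BA)_n$. The cyclic antisymmetry of $P$, with its sign $(-1)^{n+1}$, corresponds precisely to the cyclic symmetrisation with signs $(-1)^{(n-1)i}$ in \eqref{Eq:BrQ1}, so $Q_P$ is only well-defined modulo the graded cyclic shift; but on tensors of double derivations this coincides with the graded commutator relation already quotiented out in Proposition \ref{Prop:BrQ}. The identities $\mu(Q_P)=P$ and, conversely, $Q_{\mu(Q)} \equiv Q \pmod{[D_BA,D_BA]}$ then reduce in each tensor slot to the defining property of the $\Omega^1_BA$--$D_{A/B}$ pairing.

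The main obstacle is the careful bookkeeping of bimodule structures as one iterates. The outer structure on $A^{\otimes n}$ acts only on the extremal factors, whereas cyclic rotation converts an outer action in one slot into an inner action on its neighbours; this forces the iterated tensor product of $\Omega^1_BA$'s to be formed with alternating inner/outer conventions matching \eqref{Eq:BrQ2}, and it is exactly this bookkeeping that produces the sum over cyclic permutations in \eqref{Eq:BrQ1}. The flatness and projectivity hypotheses feed in precisely at those points where tensor products, duals, and $\Hom$-functors must be interchanged.
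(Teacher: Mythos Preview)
The paper does not provide its own proof of this proposition: it is stated with the attribution \emph{(\cite[Proposition 4.1.2]{VdB1})} and no argument is given. So there is nothing in the present paper to compare your proposal against.

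That said, your outline is along the expected lines and is essentially the strategy one would use (and, to the extent one can infer from Van den Bergh's set-up, the one he uses): reinterpret a $B$-linear $n$-bracket as an $A$-bimodule map out of $(\Omega^1_BA)^{\otimes_A n}$ via the derivation property and cyclic antisymmetry, and then use projectivity of $\Omega^1_BA$ to identify this with an element of $(D_BA)_n$ modulo cyclic rotation. The one place where your sketch is a bit glib is the sentence ``invoke the duality \ldots\ to convert $\tilde P$ into an element $Q_P\in (D_{A/B})^{\otimes_A n}$'': what you actually need is an isomorphism of the form
\[
(D_{A/B})^{\otimes_A n}\;\xrightarrow{\ \sim\ }\;\Hom_{A^e}\bigl((\Omega^1_BA)^{\otimes_A n},\,A^{\otimes n}\bigr),
\]
with the correct bimodule structures on both sides. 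This is precisely where projectivity of $\Omega^1_BA$ (to get $D_{A/B}$ finitely generated projective and to commute $\Hom$ past tensor products) and flatness of $A$ over $B$ (so that $A^{\otimes n}$ has the required exactness properties when forming these iterated tensors) are used; you gesture at this in your final paragraph, but in a complete proof you would need to make that identification explicit rather than leave it as ``invoke the duality''. The remaining point, that cyclic antisymmetry on the bracket side matches the graded-commutator ambiguity on the $D_BA$ side, is correctly identified.
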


\begin{exmp} \label{Exmpl1}
 Consider $\kk[x]$, with double bracket given by $\dgal{x,x}=\frac12 (x^2\otimes 1 - 1 \otimes x^2)$ (it is quasi-Poisson by Proposition \ref{Pr:Free1}).  This double bracket is differential : for $d_x \in D_{\kk[x]/\kk}$ given by $d_x(x)=1 \otimes 1$, we have that 
$P=\frac12 x^2d_x d_x\in (D_\kk \kk[x])_2$ defines $\dgal{-,-}$ using Proposition \ref{Prop:BrQ}. 

Fix $k \geq 3$. 
It is not hard to see that $\dgal{x,x^k}\in I_k \otimes \kk[x]\,+\, \kk[x]\otimes I_k$ for $I_k$ the ideal generated by $x^k$, so that the double bracket factors as a map $A_k\times A_k \to A_k \otimes A_k$ with $A_k=\kk[x]/I_k$. We claim that the double bracket is no longer differential on $A_k$. Indeed, any element $P\in D_{A_k/\kk}$ is uniquely defined by the image of the generator $x$, so it can be decomposed as 
\begin{equation*}
  P(x)=c_{0,0} 1 \otimes 1 + c_{1,0}x \otimes 1 + c_{1,1} 1 \otimes x + \sum_{a=2}^{2k-1} \sum_{b=0}^a c_{a,b} \,\, x^b \otimes x^{a-b}\,, \qquad c_{a,b}\in \kk\,,
\end{equation*}
and since we need to satisfy $P(x^k)=0$, we obtain that
\begin{equation*}
  P(x)=c  (x \otimes 1 - 1 \otimes x) + \sum_{a=2}^{2k-1} \sum_{b=0}^a c_{a,b} \,\, x^b \otimes x^{a-b}\,, \qquad c,c_{a,b}\in \kk\,,
\end{equation*}
with possible relations between the coefficients $(c_{a,b})$. If we consider arbitrary $P,Q\in D_{A_k/\kk}$ of that form, we see that the double bracket they define by \eqref{Eq:BrQ2} can be written as 
\begin{equation*}
\begin{aligned}
  \dgal{x,x}_{PQ}=Q(x)'P(x)'' \otimes P(x)' Q(x)''- P(x)'Q(x)'' \otimes Q(x)' P(x)''
=\sum_{a\geq 3} \sum_{b=0}^a d_{a,b} \,\,x^b \otimes x^{a-b}\,,
\end{aligned}
\end{equation*}
for some $d_{a,b}\in \kk$. Thus, any differential double bracket $\dgal{-,-}$ on $A_k$ is such that $\dgal{x,x}\in A_k \otimes A_k$ has homogeneous components of degree $\geq 3$, where we define the degree of $x^a\otimes x^b$ as $a+b$. Hence, the double bracket on $A_k$ given by $\dgal{x,x}=\frac12 (x^2\otimes 1 - 1 \otimes x^2)$ can not be differential.  
\end{exmp}

The algebra $D_BA$ is a noncommutative version of the algebra of polyvector fields on a manifold :  $D_BA$ admits a canonical \emph{double Schouten--Nijenhuis bracket}, which makes $D_BA$ into a double Gerstenhaber algebra \cite[\S 2.7,3.2]{VdB1}. We write this (graded) double bracket $(D_BA)^{\times 2}\to (D_BA)^{\otimes 2}$ as $\dSN{-,-}$. 
We denote by $\brSN{-,-}$ the associated bracket  $\brSN{-,-}:=m\circ\dSN{-,-}$, where $m$ is the multiplication on the algebra $D_BA$. 
We note that the following results hold. 
\begin{prop} \emph{(\cite[\S 4.2]{VdB1})}
 \label{TripleDiff} 
Assume that $\dgal{-,-}$ is a double bracket defined by the bivector $P \in (D_B A)_2$. Then the associated triple bracket given by \eqref{Eq:TripBr} is defined by the trivector $\frac12 \brSN{P,P}\in (D_B A)_3$.   
\end{prop}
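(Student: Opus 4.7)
The plan is to reduce to the case $P = \delta_1 \delta_2$ with $\delta_1, \delta_2 \in D_{A/B}$ and to verify the identity by direct computation on generators of $A$ over $B$.

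Both sides of the claimed identity are $3$-brackets on $A$: the left-hand side by the construction \eqref{Eq:TripBr} (which is the Jacobiator of $\dgal{-,-}_P$), and the right-hand side by Proposition \ref{Prop:BrQ}. Since a $3$-bracket is a derivation in its last argument for the outer bimodule structure and satisfies the cyclic antisymmetry \eqref{Eq:TriAnti}, it is determined by its values on any generating set of $A$ over $B$. Furthermore, $P \mapsto \dgal{-,-}_P$ and $Q \mapsto \mu(Q)$ are $\kk$-linear and $\dSN{-,-}$ is bilinear, so by decomposing $P$ into a sum of monomials it is enough to treat the single summand $P = \delta_1 \delta_2$.

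For the direct side, I would first use \eqref{Eq:BrQdouble} to write $\dgal{b,c}_P$ explicitly, then substitute into \eqref{Eq:TripBr}. Each of the three cyclic terms expands into a sum of tensor monomials in $A^{\otimes 3}$ whose factors involve two successive applications of $\delta_1$ and $\delta_2$ to two of the arguments $a,b,c$. For the Schouten side, I would expand $\brSN{\delta_1 \delta_2, \delta_1 \delta_2}$ via the graded double Leibniz rule satisfied by $\dSN{-,-}$ on the tensor algebra $D_B A$, which reduces it to a sum of four trivectors obtained by sandwiching an elementary piece $\dSN{\delta_i, \delta_j}$ between the remaining $\delta_k,\delta_\ell$ (with appropriate signs). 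Each $\dSN{\delta, \delta'}$ for $\delta, \delta' \in D_{A/B}$ is an explicit element of $(D_BA)_1 \otimes A + A \otimes (D_BA)_1$ expressing how each double derivation acts on the values of the other. Applying $\mu$ to the resulting trivector and unfolding \eqref{Eq:BrQ1}--\eqref{Eq:BrQ2} on $(a,b,c)$ yields a tensor expression in $A^{\otimes 3}$ of the same shape as the one obtained from \eqref{Eq:TripBr}.

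Matching the two sums is the heart of the proof. The cyclic permutations $\tau_{(123)}$ appearing in \eqref{Eq:TripBr} and in \eqref{Eq:BrQ1} play symmetric roles, and after a re-indexing of summations and an application of cyclic antisymmetry the two expressions become term-by-term identical; the prefactor $1/2$ compensates for the symmetry in $\brSN{P,P}$ between the two copies of $\delta_1 \delta_2$. The main obstacle is purely combinatorial: tracking the signs arising from the graded Leibniz rule for $\dSN{-,-}$ and ensuring that each $\delta$-evaluation lands in the correct slot of $A^{\otimes 3}$. There is no deep ingredient beyond the definitions, but the verification is lengthy and is most naturally laid out by fixing the cyclic role of each generator $a,b,c$ and showing that the two expansions contribute the same monomials in each of the three cyclic classes.
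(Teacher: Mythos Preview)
The paper does not prove this statement; it simply quotes it from \cite[\S 4.2]{VdB1} (note the attribution in the proposition itself), so there is no in-paper argument to compare your proposal against.

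That said, your reduction step has a genuine gap. You claim that since $P \mapsto \dgal{-,-}_P$ and $Q \mapsto \mu(Q)$ are linear and $\dSN{-,-}$ is bilinear, it suffices to treat a single monomial $P = \delta_1\delta_2$. But both sides of the identity are \emph{quadratic} in $P$: the Jacobiator \eqref{Eq:TripBr} applies $\dgal{-,-}_P$ twice, and $\brSN{P,P}$ is visibly quadratic. Knowing a quadratic identity on each monomial separately does not imply it on sums---if $P = P_1 + P_2$, the left-hand side acquires mixed terms where one application is $\dgal{-,-}_{P_1}$ and the other $\dgal{-,-}_{P_2}$, and the right-hand side acquires $\brSN{P_1,P_2}+\brSN{P_2,P_1}$. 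The fix is to polarize: prove instead the bilinear statement that the symmetrized ``mixed'' Jacobiator built from $\dgal{-,-}_{P_1}$ and $\dgal{-,-}_{P_2}$ equals $\mu\bigl(\tfrac12(\brSN{P_1,P_2}+\brSN{P_2,P_1})\bigr)$. That identity \emph{is} linear in each $P_i$, so it legitimately reduces to pairs of monomials $P_1 = \delta_1\delta_2$, $P_2 = \delta_3\delta_4$. The remainder of your plan---expanding $\dSN{\delta_i,\delta_j}$ via the graded Leibniz rule and matching tensor monomials after cyclic re-indexing---is the correct strategy once this repair is made, and is essentially how the computation is carried out in \cite{VdB1}.
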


\begin{prop} \emph{(\cite[\S 3.4]{VdB1})} 
 \label{PropBeB} 
Assume $e\in B$ is an idempotent such that $BeB=B$. Then $e (D_B A)e=D_{eBe}eAe$, and the (graded) double bracket $\dSN{-,-}$ on $D_B A$ restricted to $e (D_B A)e$ coincides with the double Schouten-Nijenhuis bracket on $D_{eBe}eAe$.  
\end{prop}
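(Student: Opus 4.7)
The plan is to first exhibit a graded algebra isomorphism $e(D_BA)e \cong D_{eBe}\,eAe$ and then check that it intertwines the two double Schouten--Nijenhuis brackets. The enabling observation is that $BeB=B$ forces $1 \in B \subseteq AeA$, so $e$ is also a full idempotent in $A$; therefore $A$ and $eAe$ are Morita equivalent via $M \mapsto eMe$ on bimodules, and this functor is compatible with tensor products over $A$, i.e.\ there is a natural isomorphism $e(M \otimes_A N)e \cong eMe \otimes_{eAe} eNe$.

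Next, I identify each graded summand. In degree zero both sides equal $eAe$. For degree one, I construct mutually inverse maps between $eD_{A/B}e$ (equipped with its inner bimodule structure) and $D_{eAe/eBe}$. Given $\delta \in D_{A/B}$, $B$-linearity forces $\delta(e)=0$, so the outer derivation rule gives $\delta(eae) = e\delta(a)e \in eA \otimes Ae$ for every $a \in A$; hence $e\delta e$ restricts to an $eBe$-derivation $eAe \to eAe \otimes eAe$. Conversely, any $\tilde\delta \in D_{eAe/eBe}$ extends uniquely to a $B$-derivation of $A$ using a decomposition $1 = \sum_i u_i e v_i$ with $u_i, v_i \in B$ (which exists because $BeB=B$) together with the derivation rule. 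In higher degrees, Morita compatibility with tensor products yields $e(D_{A/B}^{\otimes_A n})e \cong D_{eAe/eBe}^{\otimes_{eAe} n}$, and summing over $n$ gives the claimed graded algebra isomorphism.

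For the compatibility of brackets, I use that $\dSN{-,-}$ is a graded double bracket on $D_BA$, hence determined on generators by the graded double Leibniz rule. On $A \times A$ it vanishes, on $D_{A/B} \times A$ it reduces to the evaluation $\delta(a)$, and on $D_{A/B} \times D_{A/B}$ it is given by the explicit formula of Van den Bergh. For the first two cases, restriction to $eAe$ and $eD_{A/B}e$ produces values in the corresponding sub-bimodules and matches the Schouten--Nijenhuis bracket on $D_{eBe}\,eAe$ under the identifications above. The main technical obstacle is the degree-$(1,1)$ check: one must trace through the explicit formula for $\dSN{\delta_1,\delta_2}$ with $\delta_i \in eD_{A/B}e$, verify that every tensor slot naturally sits in $eAe$ or $eD_{A/B}e$ (using repeatedly the observation $\delta(eae)=e\delta(a)e$), and confirm that the resulting element agrees with the corresponding bracket on $D_{eBe}\,eAe$. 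Once this is established on generators, the graded double Leibniz rule extends the identity to all of $e(D_BA)e$.
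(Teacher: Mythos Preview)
The paper does not supply its own proof of this proposition: it is quoted verbatim as a result from \cite[\S 3.4]{VdB1} and used as a black box, so there is nothing in the present text to compare your argument against. Your outline is essentially the argument one finds in Van den Bergh's original paper: use fullness of $e$ to identify graded pieces via Morita equivalence, check that $\delta\mapsto e\delta e$ lands in $D_{eAe/eBe}$ in degree one and has an inverse built from a decomposition $1=\sum_i p_ieq_i$, extend to all degrees by compatibility of $e(-)e$ with $\otimes_A$, and finally verify the bracket identity on generators since $\dSN{-,-}$ satisfies the graded double Leibniz rules.

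One place where your sketch is thinner than it should be: the inverse map in degree one. You say a derivation $\tilde\delta\in D_{eAe/eBe}$ ``extends uniquely to a $B$-derivation of $A$'', but you need more than an extension to $D_{A/B}$ --- you need the resulting element to lie in $e\,D_{A/B}\,e$ for the inner bimodule structure, and you need to check that the two maps you have written down are actually inverse to each other (not merely that one is a section of the other). Concretely, the inverse sends $\tilde\delta$ to the double derivation $a\mapsto \sum_{i,j}(p_ie\otimes eq_j)\ast\tilde\delta(eq_iap_je)$, and one then verifies $e(\text{this})e=\tilde\delta$ on $eAe$ and, conversely, that starting from $e\delta e$ and going back recovers $e\delta e$. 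None of this is hard, but it is where the hypothesis $BeB=B$ is actually used beyond Morita generalities, so it deserves to be written out.
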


\subsubsection{Induced brackets and fusion algebras} \label{subFusAlg}

We now state several ways to get new $n$-brackets from old ones. Most of these results are  straightforward extensions of propositions given in \cite[\S 2.5]{VdB1}, which were originally stated in the case $n=2$. 

Given an algebra $A$ over $B$ and a non-empty subset $S\subset A$, we can consider the universal localisation $A_S$ as an algebra over $B$. The morphism $f: A \to A_S$ induces a unique map of double derivations $f_\ast: D_{A/B} \to D_{A_S/B}$ which satisfies $f_\ast(\delta)(s^{-1})=s^{-1}f(\delta(s)')\otimes f(\delta(s)'')s^{-1}$ for any $\delta \in D_BA$ and $s\in S$. This map can be extended to $f_\ast:D_BA \to D_B A_S$. 
\begin{prop}  \label{Pr:Loc}
Consider a non-empty subset $S \subset A$. Then a $B$-linear $n$-bracket $\dgal{-,\ldots,-}$ on $A$ induces a unique $B$-linear $n$-bracket on $A_S$. 
If $\dgal{-,\ldots,-}$ is differential for $Q \in (D_BA)_n$, 
then the induced $B$-linear $n$-bracket is differential for $f_\ast(Q)\in (D_{B} A_S)_n$.
\end{prop}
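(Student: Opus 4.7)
I would prove the two statements in the natural order: first uniqueness, then existence (with the differential case handled via pushforward). Throughout, the key observation is that $A_S$ is generated as a $B$-algebra by $f(A)\cup\{s^{-1}:s\in S\}$, so any $n$-bracket is determined once its values on tuples of generators are known.

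\textbf{Uniqueness.} Suppose $\dgal{-,\ldots,-}_1$ and $\dgal{-,\ldots,-}_2$ are two $n$-brackets on $A_S$ extending the given one. Since $n$-brackets vanish on $1$ (apply the derivation rule to $1=1\cdot 1$), the identities $s s^{-1}=s^{-1}s=1$ combined with the derivation rule in the last argument force
\[
\dgal{a_1,\ldots,a_{n-1},s^{-1}}=-s^{-1}\,\dgal{a_1,\ldots,a_{n-1},s}'\otimes \dgal{\cdots}''\otimes \cdots \otimes \dgal{\cdots}^{(n)}\,s^{-1},
\]
where left and right multiplication only affect the first and last tensor factors. Cyclic antisymmetry transports this to the case where any $s^{-1}$ appears in a non-terminal slot. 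An induction on the total number of inverses appearing among $a_1,\ldots,a_n$, combined with the derivation rule (which reduces monomials in $A_S$ to their generators), shows that the two brackets agree, giving uniqueness.

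\textbf{Existence.} I would define the bracket on $A_S$ by the formulas produced in the uniqueness argument (applied on generators), then extend by $B$-linearity and the derivation rule. The essential check is well-definedness: the formulas must be consistent with the defining relations $ss^{-1}=s^{-1}s=1$ and compatible with cyclic antisymmetry. The first check is the short computation already implicit in the uniqueness formula — the value read off from $ss^{-1}=1$ coincides with the value read off from $s^{-1}s=1$, as one sees by conjugating the inverse on the opposite side. Cyclic antisymmetry is preserved because the formula above is itself obtained by cyclic transport, so one only needs to verify that combining two of these cyclic definitions (say, with $s^{-1}$ in the last slot and then in the first) yields the same answer, which reduces to the same consistency identity. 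Once well-definedness is secured, the derivation rule holds on generators by construction and extends to all of $A_S$ tautologically.

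\textbf{Differential case.} If $\dgal{-,\ldots,-}=\mu(Q)$ with $Q=\delta_1\cdots\delta_n\in(D_BA)_n$, then $f_\ast(Q)=f_\ast(\delta_1)\cdots f_\ast(\delta_n)\in (D_BA_S)_n$ defines via Proposition \ref{Prop:BrQ} an $n$-bracket $\mu(f_\ast(Q))$ on $A_S$. By the uniqueness statement, it suffices to verify that this bracket restricts to the original on $A$ and satisfies the explicit formula on $s^{-1}$. The first is clear from naturality of $f_\ast$ on $A$; the second follows by substituting the standard Leibniz-type identity $f_\ast(\delta)(s^{-1})=-s^{-1}f(\delta(s)')\otimes f(\delta(s)'')s^{-1}$ into formulas \eqref{Eq:BrQ1}--\eqref{Eq:BrQ2} and comparing with the inverse-formula from uniqueness.

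\textbf{Main obstacle.} The technical heart is the well-definedness check in the general (non-differential) case: one must verify that the cyclically transported formulas giving $\dgal{\cdots,s^{-1},\cdots,s^{-1},\cdots}$ are internally consistent when several inverses appear, and that applying the derivation rule to different factorisations of the same element of $A_S$ produces identical output. In the differential case, these checks are automatic from the functoriality of $f_\ast$ and Proposition \ref{Prop:BrQ}; the novelty is doing them without an underlying bivector, which amounts to a bookkeeping argument involving induction on expression complexity together with the two key identities coming from $ss^{-1}=1$ and $s^{-1}s=1$.
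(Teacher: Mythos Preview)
Your proposal is correct and follows essentially the same approach as the paper. The paper's proof is a two-sentence sketch: it records the formula $\dgal{a_1,\ldots,a_{n-1},s^{-1}}=-s^{-1}\dgal{a_1,\ldots,a_{n-1},s}s^{-1}$ forced by the derivation property, then observes that cyclic antisymmetry and the derivation rule together reduce any bracket on $A_S$ to brackets evaluated on elements of $A$; your well-definedness discussion and the explicit treatment of the differential case (neither of which appear in the paper's proof) make the argument more complete.
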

\begin{proof}
Note that a $n$-bracket on $A_S$ needs to satisfy 
\begin{equation*}
  \dgal{a_1, \ldots,a_{n-1},s^{-1}}=-s^{-1}\dgal{a_1,\ldots,a_{n-1},s}s^{-1}\,,
\end{equation*}
for any $a_1,\ldots,a_{n-1}\in A_S$ and $s\in S$ due to the derivation property.  Using the cyclic antisymmetry and the derivation property, we can then always rewrite  $\dgal{a_1, \ldots,a_{n}}$ with $a_1,\ldots,a_n \in A_S$ in terms of sums and products in $A_S$ containing only the $n$-bracket evaluated on elements of $A$. 
\end{proof}

We use this result without further mention throughout the text. Next, if $e\in B$ is an idempotent, we get a canonical map $\pi^e:A \to eAe$, $a \mapsto eae$, which extends to double derivations as $\pi^e_\ast:D_{A/B} \to D_{eAe/eBe}$, $\delta \mapsto e \delta e$. In the case where $B=BeB$, we get a non-unique decomposition $1=\sum_i p_i e q_i$, and it yields a trace map $\Tr:A \to eAe$ given by $\Tr(a)=\sum_i e q_i a p_i e$. 
It also gives a map $\Tr:D_{A/B}\to D_{eAe/eBe}$ by setting $\Tr(\delta)=\sum_i e q_i \delta p_i e$, 
which can be written as $\Tr(\delta)(eae)=e\delta'(a) p_ie \otimes e q_i \delta''(a)e$ for any $a\in A$. To extend this to polyvector fields, note that $\Tr:D_BA\to eD_BAe: Q \mapsto\sum_i e q_i Q p_i e$ defines a map $D_BA\to D_{eBe}eAe$ by Proposition \ref{PropBeB}.

\begin{prop}  \label{Pr:Induce2}
Assume that $e\in B$ is an idempotent. Then a $B$-linear $n$-bracket $\dgal{-,\ldots,-}$ on $A$ induces a unique $eBe$-linear $n$-bracket on $eAe$. 
If $B=BeB$ and $\dgal{-,\ldots,-}$ is differential for $Q \in (D_BA)_n$, 
then the induced $eBe$-linear $n$-bracket is differential for $\Tr(Q)\in (D_{eBe} eAe)_n$.
\end{prop}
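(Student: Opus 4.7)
My plan has two parts, matching the two assertions of the proposition.

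For existence and uniqueness of the induced $eBe$-linear $n$-bracket on $eAe$, my strategy is to define it simply as the restriction of $\dgal{-,\ldots,-}$ to $(eAe)^{\times n}$, and then verify that this restriction automatically takes values in $(eAe)^{\otimes n}\subset A^{\otimes n}$. The key combines three ingredients: (i) the $B$-linearity vanishing $\dgal{a_1,\ldots,a_{n-1},e}=0$; (ii) the outer derivation rule in the last argument; and (iii) cyclic antisymmetry. Applying (i)--(ii) to $\dgal{x_1,\ldots,x_{n-1},ex_ne}$ for $x_k\in eAe$ yields $\dgal{x_1,\ldots,x_{n-1},x_n}=e\,\dgal{x_1,\ldots,x_n}\,e$, where the left $e$ multiplies the first tensor factor and the right $e$ the last. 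Cycling each $x_k$ into the last slot via (iii) then shows that every tensor factor of $\dgal{x_1,\ldots,x_n}$ carries an $e$ on both sides, so the value indeed lies in $(eAe)^{\otimes n}$. The remaining $n$-bracket axioms ($eBe$-linearity, cyclic antisymmetry and the outer derivation rule) are directly inherited from those on $A$, and uniqueness is immediate since an $n$-bracket on $eAe$ is determined by its values on $(eAe)^{\times n}$.

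For the differential case, assuming $B=BeB$, I would fix a decomposition $1=\sum_i p_ieq_i$ and invoke Proposition \ref{PropBeB} to regard $\Tr(Q)\in e(D_BA)e$ as an element of $(D_{eBe}eAe)_n$. By linearity of $\mu$ it suffices to treat a simple tensor $Q=\delta_1\cdots\delta_n$, for which the trace reads $\Tr(Q)=\sum_i (eq_i\delta_1)\,\delta_2\cdots\delta_{n-1}\,(\delta_np_ie)$. Applying \eqref{Eq:BrQ2} to this simple tensor and using the inner bimodule structure $(b\delta c)(a)=\delta(a)'c\otimes b\delta(a)''$ to expand the first and last double derivations, one finds that the tilded bracket $\dgal{x_1,\ldots,x_n}^{\widetilde{ }}_{\Tr(Q)}$ agrees term-by-term with $\dgal{x_1,\ldots,x_n}^{\widetilde{ }}_Q$ except that the first tensor factor acquires an inserted $p_ieq_i$; summing over $i$ and using $\sum_i p_ieq_i=1$ collapses this insertion. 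Since \eqref{Eq:BrQ1} is merely a cyclic symmetrisation of the tilded bracket, the full $n$-brackets coincide on $(eAe)^{\times n}$.

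The main obstacle I anticipate is the combinatorial bookkeeping in the first part: one must carefully exploit cyclic antisymmetry together with the single-sided derivation rule to certify that every tensor factor (and not merely the outer two) inherits the desired $eAe$-valuedness. Once that is in place, the differential case reduces to a short computation turning on the telescoping identity $\sum_i p_ieq_i=1$.
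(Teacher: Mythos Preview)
Your approach is correct and close to the paper's, with one subtlety in the differential part worth making explicit.

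For the first assertion your argument is essentially what the paper does, only stated more carefully: the paper simply writes the induced bracket as $(e\otimes\cdots\otimes e)\dgal{a_1,\ldots,a_n}(e\otimes\cdots\otimes e)$, and your cycling argument shows precisely that this compression by $e^{\otimes n}$ is already the identity on $\dgal{x_1,\ldots,x_n}$ when each $x_k\in eAe$.

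For the differential part, note that the factorisation $\Tr(Q)=\sum_i(eq_i\delta_1)\,\delta_2\cdots\delta_{n-1}\,(\delta_np_ie)$ is a product in $(D_BA)_n$, not in $(D_{eBe}eAe)_n$: the middle factors $\delta_2,\ldots,\delta_{n-1}$ lie in $D_{A/B}$ but generally not in $eD_{A/B}e=D_{eAe/eBe}$. So when you feed this into \eqref{Eq:BrQ2} you are computing the tilded bracket of $\Tr(Q)$ \emph{as a polyvector on $A$}, and what your telescoping shows is that $\Tr(Q)$ and $Q$ define the same $n$-bracket on $A$ (equivalently, $\Tr Q-Q=\sum_i[eq_i,Qp_ie]$ is a sum of graded commutators). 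To conclude that the induced bracket on $eAe$ is differential for $\Tr(Q)\in(D_{eBe}eAe)_n$ you still need that the $n$-bracket on $eAe$ defined by an element of $e(D_BA)_ne$ via Proposition~\ref{PropBeB} agrees with the restriction to $eAe$ of the $n$-bracket it defines on $A$. The paper avoids this extra step by inserting $1=\sum_jp_jeq_j$ between \emph{every} consecutive pair $\delta_k,\delta_{k+1}$, thereby rewriting $\Tr(Q)$ as an $n$-fold sum of products $\prod_k(eq_{i_k}\delta_kp_{i_{k+1}}e)$ with each factor genuinely in $D_{eAe/eBe}$; one then applies \eqref{Eq:BrQ2} directly in $D_{eBe}eAe$ and collapses all but one of the inserted sums via $\sum_jp_jeq_j=1$. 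Your single-insertion route and the paper's $n$-insertion route are the same computation organised differently; the paper's version stays inside $D_{eBe}eAe$ throughout and so requires no auxiliary compatibility statement.
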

\begin{proof}
  Fix $a_1,\ldots,a_n\in A$. Denoting $\dgal{a_1,\ldots,a_n}$ as $b_1 \otimes \ldots \otimes b_n \in A^{\otimes n}$ (up to linear combinations), we get the unique induced $n$-bracket 
\begin{equation} \label{Eq:idempt}
  \dgal{ea_1e, \ldots, e a_n e}=(e \otimes \ldots \otimes e)\dgal{a_1,\ldots,a_n}(e \otimes \ldots \otimes e) =e b_1 e \otimes \ldots  \otimes e b_n e \in (eAe)^{\otimes n}. 
\end{equation}
If the $n$-bracket is differential for $Q=\delta_1,\ldots, \delta_n \in (D_BA)_n$, we get from \eqref{Eq:idempt} and Proposition \ref{Prop:BrQ} that  
\begin{equation*}
  \dgal{ea_1e, \ldots, e a_n e}=\sum_{i=0}^{n-1}(-1)^{(n-1)i}(e \otimes \ldots \otimes e) \tau^i_{(1\ldots n)}
\dgal{-,\ldots,-}_Q^{\widetilde{ }} \tau^{-i}_{(1\ldots n)}(a_1, \ldots, a_n)
(e \otimes \ldots \otimes e) \,,
\end{equation*}
with $\dgal{-,\ldots,-}_Q^{\widetilde{ }}$ given by \eqref{Eq:BrQ2}. Assuming that $1=\sum_i p_i e q_i$, we can write for $i=0$
\begin{equation*}
\begin{aligned}
&  (e \otimes \ldots \otimes e)\dgal{a_1,\ldots,a_n}_Q^{\widetilde{ }}(e \otimes \ldots \otimes e) \\
=& e\delta_n(a_n)'1\delta_1(a_1)''e\otimes e\delta_1(a_1)'1\delta_2(a_2)''e\otimes \ldots \otimes 
 e\delta_{n-1}(a_{n-1})'1\delta_n(a_n)''e \\
=& \sum_{i_1} \ldots \sum_{i_n}\,\delta_n(ea_ne)' p_{i_1} e q_{i_1}\delta_1(ea_1e)''\otimes 
\ldots \otimes 
 \delta_{n-1}(ea_{n-1}e)' p_{i_n} e q_{i_n}\delta_n(ea_ne)'' \\
=&\sum_{i_1} \ldots \sum_{i_n} \dgal{ea_1e,\ldots,ea_ne}_{e q_{i_1}\delta_1 p_{i_2}eq_{i_2}\delta_2 p_{i_3}e \ldots e q_{i_n} \delta_n  p_{i_1} e}^{\widetilde{ }} \\
=&\sum_{i_1} \dgal{ea_1e,\ldots,ea_ne}_{e q_{i_1}\delta_1 \delta_2  \ldots\delta_n p_{i_1} e}^{\widetilde{ }}
=\dgal{ea_1e,\ldots,ea_ne}_{\Tr(\delta_1 \delta_2  \ldots\delta_n)}^{\widetilde{ }}\,.
\end{aligned}
\end{equation*}
The argument is similar for $i=1,\ldots,n-1$ so that 
\begin{equation*}
  \dgal{ea_1e, \ldots, e a_n e}=\sum_{i=0}^{n-1}(-1)^{(n-1)i} \tau^i_{(1\ldots n)}
\dgal{-,\ldots, -}_{\Tr(\delta_1 \delta_2  \ldots\delta_n)}^{\widetilde{ }} \tau^{-i}_{(1\ldots n)}
(ea_1e, \ldots, ea_ne) \,,
\end{equation*}
which is differential for $\Tr(\delta_1 \delta_2  \ldots\delta_n)$ by definition. 
\end{proof}

Next, consider algebras $A$ and $A'$ respectively over $B$ and $B'$. We get that $A \oplus A'$ is a $(B \oplus B')$-algebra, and we can identify $D_{A\oplus A' / B \oplus B'}$ with $D_{A/B}\oplus D_{A'/B'}$. This extends to the identification of $D_{B \oplus B'}A \oplus A'$ and $D_BA\oplus D_{B'}A'$.  
\begin{prop}  \label{Pr:Oplus}
Assume that $\dgal{-,\ldots,-}$ is a $B$-linear $n$-bracket on $A$, and $\dgal{-,\ldots,-}'$ is a $B'$-linear $n$-bracket on $A'$. Then there exists a unique $(B \oplus B')$-linear $n$-bracket $\dgal{-,\ldots,-}^\oplus$ on  $A \oplus A'$ extending the $n$-brackets $\dgal{-,\ldots,-}$ and $\dgal{-,\ldots,-}'$, while it is such that  $\dgal{c_1,\ldots,c_n}^\oplus=0$ whenever there exists $i\neq j$ with $c_i=(a,0)$, $c_j=(0,b)$. 
Furthermore, if the $n$-brackets on $A$ and $A'$ are differential for $Q \in (D_BA)_n$ and $Q'\in (D_{B'}A')_n$, 
then $\dgal{-,\ldots,-}^\oplus$ is differential for $(Q,Q')\in (D_{B\oplus B'} A \oplus A')_n$. 
\end{prop}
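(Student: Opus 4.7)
The plan is to write down the bracket explicitly, then check that the defining properties carry over componentwise. Decompose any element of $A\oplus A'$ as $(a,a')=\iota_A(a)+\iota_{A'}(a')$ via the inclusions $\iota_A:A\hookrightarrow A\oplus A'$ and $\iota_{A'}:A'\hookrightarrow A\oplus A'$. Motivated by the vanishing condition on mixed inputs and multilinearity, I would set
\begin{equation*}
\dgal{(a_1,a'_1),\ldots,(a_n,a'_n)}^\oplus
:=\iota_A^{\otimes n}\bigl(\dgal{a_1,\ldots,a_n}\bigr)
+\iota_{A'}^{\otimes n}\bigl(\dgal{a'_1,\ldots,a'_n}'\bigr).
\end{equation*}
Uniqueness is then immediate: since $(1_A,0)$ and $(0,1_{A'})$ are orthogonal idempotents in $B\oplus B'$, $\kk$-multilinearity together with the vanishing clause for mixed inputs forces any candidate $n$-bracket to coincide with the formula above.

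Next I would verify the three defining properties. The $(B\oplus B')$-linearity is clear: an argument $(b,b')\in B\oplus B'$ kills both summands by $B$-linearity of $\dgal{-,\ldots,-}$ and $B'$-linearity of $\dgal{-,\ldots,-}'$. Cyclic antisymmetry is inherited from each factor, because $\tau_{(1\ldots n)}$ commutes with $\iota_A^{\otimes n}$ and $\iota_{A'}^{\otimes n}$. The derivation rule in the last slot is the only step that actually requires a small verification: for $(a,a')(b,b')=(ab,a'b')$ one expands each summand via the derivation rules on $A$ and $A'$, and then uses the crucial fact that $\iota_A(a)\cdot\iota_{A'}(b')=0=\iota_{A'}(a')\cdot\iota_A(b)$ in $A\oplus A'$ to discard the cross terms, recovering the expected right-hand side.

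For the differential part, I would use the canonical identification $D_{B\oplus B'}(A\oplus A')\cong D_BA\oplus D_{B'}A'$ already recalled in the excerpt. If $Q=\delta_1\cdots\delta_n$ and $Q'=\delta'_1\cdots\delta'_n$, I view each $\delta_i$, $\delta'_i$ as a double derivation of $A\oplus A'$ which is zero on the other summand. Applying the formulas \eqref{Eq:BrQ1}--\eqref{Eq:BrQ2} of Proposition \ref{Prop:BrQ} to $(Q,Q')$, every tensor slot of the form $\delta_i(a_i)$ has image inside $\iota_A(A)\otimes\iota_A(A)$ while $\delta'_j(a'_j)$ lies in $\iota_{A'}(A')\otimes\iota_{A'}(A')$; hence all mixed monomials vanish in the product, leaving exactly the two pure summands in the displayed formula above. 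This identifies $\dgal{-,\ldots,-}^\oplus$ with the differential $n$-bracket associated to $(Q,Q')$.

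The only delicate point in the argument is the derivation identity, because one has to track carefully where the mixed products $\iota_A(\cdot)\iota_{A'}(\cdot)=0$ eliminate the would-be cross terms; everything else is a routine componentwise check enabled by the decomposition $1_{A\oplus A'}=(1_A,0)+(0,1_{A'})$ into orthogonal idempotents inside $B\oplus B'$.
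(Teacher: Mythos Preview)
Your proof is correct and follows the same approach as the paper: write down the explicit formula using linearity and the vanishing on mixed inputs, then observe that the $n$-bracket axioms and the differential claim hold componentwise. The paper's proof is considerably terser (it essentially just records the formula and leaves the verifications implicit), while you spell out the cyclic antisymmetry, the derivation rule, and the differential case; but the underlying argument is identical.
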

\begin{proof}
  It follows directly by linearity since 
\begin{equation*}
\begin{aligned}
  \dgal{(a_1,b_1), \ldots, (a_n,b_n)}^\oplus=&\dgal{(a_1,0), \ldots, (a_n,0)}^\oplus+\dgal{(0,b_1), \ldots, (0,b_n)}^\oplus \\
=&(\dgal{a_1,\ldots,a_n},0)+(0,\dgal{b_1,\ldots,b_n}')\,,
\end{aligned}
\end{equation*}
for any $a_1,\ldots,a_n \in A$, $b_1,\ldots,b_n \in A'$.
\end{proof}

Given algebras $A,A'$ over $B$ with algebra monomorphisms $j: B \to A$ and $j': B \to A'$, recall that the free algebra
$A \ast_B A'$ is given by $T_\kk (A \oplus A')/J$, where $J$ is the two-sided ideal generated by the relations $a_1\otimes a_2=a_1 a_2$, $a_1'\otimes a_2'=a_1' a_2'$, $j(b)=j'(b)$ for all $a_1,a_2 \in A$, $a_1', a_2'\in A'$ and $b\in B$. Set $\bar{A}=A \ast_B A'$. The canonical maps $i: A \to \bar{A}, i': A' \to \bar{A}$ yield maps of double derivations 
$i_\ast:D_{A/B}\to D_{\bar{A}/A'}$ and $i'_\ast:D_{A'/B}\to D_{\bar{A}/A}$, which can both be seen to take value  in $D_{\bar{A}/B}$. In particular, they extend to polyvector fields. 
\begin{prop} 
Assume that $\dgal{-,\ldots,-}$ and $\dgal{-,\ldots,-}'$ are $B$-linear $n$-brackets on $A$ and $A'$ respectively. Then there exists a unique $n$-bracket $\dgal{-,\ldots,-}^*$ on  $\bar{A}=A \ast_B A'$ extending the $n$-brackets $\dgal{-,\ldots,-}$ and $\dgal{-,\ldots,-}'$, while it is such that  $\dgal{a_1,\ldots,a_n}^\ast=0$ whenever there exists $i\neq j$ with $a_i\in A$, $a_j\in A'$. 
Furthermore, if the $n$-brackets on $A$ and $A'$ are differential for $Q \in (D_BA)_n$ and $Q'\in (D_BA')_n$, 
then $\dgal{-,\ldots,-}^*$ is differential for $i_\ast(Q)+i_\ast'(Q')\in (D_B \bar{A})_n$. 
\end{prop}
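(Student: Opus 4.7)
The plan is to mimic the strategy used for the previous extension propositions (Propositions \ref{Pr:Loc}, \ref{Pr:Induce2}, and \ref{Pr:Oplus}), adapted to the free product over $B$. The algebra $\bar A = A \ast_B A'$ is generated as a $\kk$-algebra by $A$ and $A'$, with relations identifying $j(b)$ with $j'(b)$ for $b \in B$ and enforcing the internal multiplicative structure of $A$ and $A'$. I would define the candidate $n$-bracket on tuples of generators (elements of $A \cup A' \subset \bar A$) by the three prescribed cases: use $\dgal{-,\ldots,-}$ when all arguments lie in $A$, use $\dgal{-,\ldots,-}'$ when all lie in $A'$, and set the bracket to zero when the arguments are mixed. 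On such tuples, cyclic antisymmetry is inherited from the two given $n$-brackets, and is trivial in the mixed case. One then extends to arbitrary tuples in $\bar A$ by applying the derivation rule in the last argument and using cyclic antisymmetry to promote this to a derivation in every other argument.

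The central technical step is to verify that this extension is well-defined modulo the ideal $J$ defining $\bar A$. Compatibility with the multiplicative relations $a_1 \otimes a_2 - a_1 a_2$ for $a_i \in A$ (and similarly for $A'$) is automatic, since the original $n$-brackets satisfy the derivation rule on their respective algebras. Compatibility with the identification $j(b) = j'(b)$ for $b \in B$ follows from $B$-linearity, which forces both expressions to vanish. A routine induction on total word length in $\bar A$ confirms that the derivation rule and cyclic antisymmetry remain consistent when products appear simultaneously in several arguments; the vanishing of the bracket on mixed generator-tuples is precisely what makes the cross-terms produced by the derivation rule collapse correctly. Uniqueness is then immediate: the derivation rule determines the bracket from its values on generators, and both the extension condition and the vanishing-on-mixed-tuples condition pin down those values.

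For the differential part, I would apply Proposition \ref{Prop:BrQ} directly to the polyvector field $i_\ast(Q) + i'_\ast(Q') \in (D_B \bar A)_n$. Writing $Q = \delta_1 \cdots \delta_n$ and $Q' = \delta'_1 \cdots \delta'_n$, each $i_\ast(\delta_j)$ lies in $D_{\bar A / A'}$ and hence vanishes on $A'$, and symmetrically $i'_\ast(\delta'_j)$ vanishes on $A$. In the formula \eqref{Eq:BrQ2} every double derivation acts on exactly one argument, so any summand contributing to $\mu\bigl(i_\ast(Q) + i'_\ast(Q')\bigr)$ that involves a mixed tuple collapses to zero. The surviving contributions reproduce $\dgal{-,\ldots,-}_Q^{\widetilde{\,\,}}$ on all-$A$ tuples and $\dgal{-,\ldots,-}_{Q'}^{\widetilde{\,\,}}$ on all-$A'$ tuples, so after the symmetrisation \eqref{Eq:BrQ1} we recover exactly the extension $\dgal{-,\ldots,-}^\ast$ constructed above.

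I expect the main obstacle to be the bookkeeping in the well-definedness and cyclic-antisymmetry checks for the extended bracket, where one has to juggle tuples that mix $A$- and $A'$-arguments in arbitrary positions while iterating the derivation rule; this is where the extension could a priori fail, and where the vanishing on mixed generator-tuples plays the decisive role. No substantial new idea beyond what is already present in the analogous $n=2$ statement of \cite{VdB1} appears to be required, so the argument should proceed cleanly.
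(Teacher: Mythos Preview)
Your proposal is correct and follows exactly the approach the paper takes implicitly: the paper does not actually supply a proof for this proposition, treating it (like the surrounding results) as a routine extension of the $n=2$ case in \cite[\S 2.5]{VdB1} and of the pattern already exhibited in the proofs of Propositions \ref{Pr:Loc}--\ref{Pr:Oplus}. Your argument---define on generators, extend by the derivation rule and cyclic antisymmetry, check compatibility with the ideal $J$, and for the differential claim use that $i_\ast(\delta)\in D_{\bar A/A'}$ vanishes on $A'$ (and symmetrically for $i'_\ast$)---is precisely the omitted content.
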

Endowing $A'$ with the zero $n$-bracket, we get the next result. 
\begin{cor} \label{Cor:Induce}
  Assume that $\dgal{-,\ldots,-}$ is a $B$-linear $n$-bracket on $A$. Then there is a unique $A'$-linear $n$-bracket on  $\bar{A}=A \ast_B A'$ extending it. If $\dgal{-,\ldots,-}$ is differential for $Q \in (D_BA)_n$, 
then the induced $A'$-linear $n$-bracket is differential for $i_\ast(Q)\in (D_{A'} \bar{A})_n$. 
\end{cor}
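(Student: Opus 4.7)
The plan is to deduce this corollary as a direct specialization of the preceding proposition, taking the zero $n$-bracket on $A'$. First, I would observe that the identically zero map $A'^{\times n}\to (A')^{\otimes n}$ is trivially a $B$-linear $n$-bracket (it is $\kk$-multilinear, a derivation in its last argument, and cyclically antisymmetric up to the sign $(-1)^{n+1}$). Applying the preceding proposition to $\dgal{-,\ldots,-}$ on $A$ and the zero bracket $\dgal{-,\ldots,-}'=0$ on $A'$ produces a unique $n$-bracket $\dgal{-,\ldots,-}^\ast$ on $\bar A = A\ast_B A'$ extending $\dgal{-,\ldots,-}$, whose restriction to $A'$ vanishes, and which vanishes on any tuple with both an $A$-entry and an $A'$-entry.

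Next, I would check that $\dgal{-,\ldots,-}^\ast$ is in fact $A'$-linear in the sense required, i.e.\ that it vanishes whenever at least one argument lies in $A'$. If all arguments lie in $A'$, this holds because the bracket on $A'$ was chosen to be zero. If some argument lies in $A'$ and some in $A$, the vanishing is precisely the property inherited from the proposition. A general element of $\bar A$ is a $\kk$-linear combination of alternating products of elements of $A$ and $A'$, so by the derivation rule in the last slot and the cyclic antisymmetry, the value of $\dgal{-,\ldots,-}^\ast$ on an arbitrary tuple can be expanded into a sum of terms each involving only pure $A$- or pure $A'$-arguments; the $A'$-linearity on the last argument then follows from the two vanishing cases above, and on the other arguments by cyclic antisymmetry.

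For uniqueness, any $A'$-linear $n$-bracket on $\bar A$ extending $\dgal{-,\ldots,-}$ must vanish whenever one argument lies in $A'$, hence agrees with $\dgal{-,\ldots,-}^\ast$ on all tuples of pure elements; the derivation rule in the last slot together with the cyclic antisymmetry extends this equality to all of $\bar A$.

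Finally, in the differential case, I apply the differential half of the preceding proposition with $Q\in (D_BA)_n$ and $Q'=0\in (D_BA')_n$. The resulting polyvector defining $\dgal{-,\ldots,-}^\ast$ is $i_\ast(Q)+i'_\ast(0)=i_\ast(Q)$, which a priori lies in $(D_B\bar A)_n$, but since $i_\ast(Q)$ is built only from double derivations extended from $A$, it kills all elements of $A'$ and thus lies in $(D_{A'}\bar A)_n$, giving the claimed differential description. No step here is substantive; the only mild subtlety is making the passage from $(D_B\bar A)_n$ to $(D_{A'}\bar A)_n$ explicit, which is immediate once one notes that $i_\ast$ takes values in double derivations that annihilate $A'$.
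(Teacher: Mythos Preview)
Your proposal is correct and follows precisely the paper's approach: the paper's entire proof is the single remark ``Endowing $A'$ with the zero $n$-bracket, we get the next result,'' and you have simply spelled out the routine verifications (including $A'$-linearity, uniqueness, and the differential case) that this one-liner leaves implicit. The only point worth noting is that the paper already records that $i_\ast:D_{A/B}\to D_{\bar{A}/A'}$, so your final passage from $(D_B\bar A)_n$ to $(D_{A'}\bar A)_n$ is immediate from that earlier observation rather than requiring a separate check.
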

\noindent In particular, $n$-brackets are compatible with base changes.

We now use these results, and assume that there exist orthogonal idempotents $e_1,e_2\in B$. The \emph{extension algebra} $\bar{A}$ of $A$ along the pair $(e_1,e_2)$ is given by 
\begin{equation}
  \bar{A}=A \ast_{\kk e_1 \oplus \kk e_2 \oplus \kk \mu} (\Mat_2(\kk)\oplus \kk \mu) = A \ast_B \bar{B}\,,
\end{equation}
 where $\mu=1-e_1-e_2$, and $\Mat_2(\kk)$ is seen as the $\kk$-algebra generated by $e_1=e_{11},e_{12},e_{21},e_2=e_{22}$ with $e_{st}e_{uv}=\delta_{tu}e_{sv}$. 
The \emph{fusion algebra} $A^f$ of $A$ along $(e_1,e_2)$ is the algebra obtained from $\bar{A}$ by discarding elements of $e_2 \bar{A} + \bar{A} e_2$, i.e. 
\begin{equation}
  A^f=\, \epsilon \bar{A} \epsilon\,, \qquad \text{for } \epsilon=1-e_2\,.
\end{equation}
We also say that $A^f$ is the fusion algebra obtained by fusing $e_2$ onto $e_1$. Note that $A^f$ is a $B^f$-algebra for  $B^f=\epsilon \bar{B} \epsilon$. 
The elements of $A^f$ can be characterised in terms of generators as follows. (This choice of generators was considered by Van den Bergh \cite[Proof of Lemma 5.3.3]{VdB1}.)  
\begin{lem} \label{AfGenerat}
  Elements of $A^f$ can be written in terms of generators of the following forms 
\begin{subequations}
  \begin{align}
   (\text{first type})&\qquad\qquad a=t\,, \qquad t \in \epsilon A \epsilon\,, \label{type1}\\
   (\text{second type})&\qquad\qquad a=e_{12}u\,, \qquad u \in e_2 A \epsilon\,,\label{type2} \\
   (\text{third type})&\qquad\qquad a=v e_{21}\,, \qquad v \in \epsilon A e_2\,, \label{type3} \\
   (\text{fourth type})&\qquad\qquad a=e_{12} w e_{21}\,, \qquad w \in e_2 A e_2\,. \label{type4}
  \end{align}
\end{subequations}
\end{lem}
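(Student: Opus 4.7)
The plan is to prove by induction on word length that every monomial in $A^f = \epsilon \bar{A} \epsilon$ is a product of generators of the four listed types, and then extend to arbitrary elements by $\kk$-linearity. First I reduce to a clean form: since $\bar{A} = A \ast_B \bar{B}$ and $\bar{B}$ is generated over $B$ by the matrix units $e_{ij}$ (with $e_{11}=e_1$ and $e_{22}=e_2$ already lying in $B \subset A$), any element of $\bar{A}$ is a $\kk$-linear combination of monomials $a_0 c_1 a_1 c_2 a_2 \cdots c_k a_k$, where $a_i \in A$ and each $c_j \in \{e_{12}, e_{21}\}$. It therefore suffices to decompose each such monomial sandwiched between two $\epsilon$'s.

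The computational engine is the factorisation $e_{12} = e_1 e_{12} e_2$ and $e_{21} = e_2 e_{21} e_1$, which gives $a\, e_{12} = (a e_1) e_{12}$, $e_{12}\, a = e_{12} (e_2 a)$ and the analogues for $e_{21}$. Inserting the appropriate idempotent at each interface between an $A$-letter and an adjacent matrix unit lets us re-bracket each monomial so that every $a_i$ carries specific idempotents on both sides, determined by its neighbouring $c_j$'s (or $\epsilon$ at the word's ends).

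The induction is on the number $k$ of special letters. The base $k = 0$ gives $\epsilon a_0 \epsilon$, a first-type generator. For $k \geq 1$, I split off an initial factor that consumes $c_1$. Suppose $c_1 = e_{12}$: when $k=1$, the word rewrites as $(\epsilon a_0 e_1)(e_{12}(e_2 a_1 \epsilon))$, a first-type times a second-type generator; when $k \geq 2$ and $c_2 = e_{21}$, it rewrites as $(\epsilon a_0 e_1)(e_{12}(e_2 a_1 e_2)e_{21})(e_1 a_2 c_3 \cdots a_k \epsilon)$, a first-type times a fourth-type generator (with $w = e_2 a_1 e_2$) times a remainder in $A^f$ with $k-2$ specials; when $k \geq 2$ and $c_2 = e_{12}$, it rewrites as $(\epsilon a_0 e_1)(e_{12}(e_2 a_1 e_1))(e_{12} a_2 c_3 \cdots a_k \epsilon)$, a first-type times a second-type generator (with $u = e_2 a_1 e_1 \in e_2 A e_1 \subset e_2 A \epsilon$) times a remainder lying in $A^f$ (since $e_1 \subset \epsilon$) with $k-1$ specials. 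The case $c_1 = e_{21}$ is entirely symmetric: split off the third-type generator $(\epsilon a_0 e_2) e_{21}$ and analyse $c_2$ in the same fashion, producing a fourth-type or third-type generator.

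The main subtlety is the case of two consecutive specials of the same type, where the special count drops by only one rather than two. What saves the argument is that the sandwiched $A$-letter is forced by non-vanishing of the monomial into a specific $B$-subbimodule which still matches the definition of a second- or third-type generator: for instance, when $c_1 = c_2 = e_{12}$, the projection $e_2 a_1 e_1$ of $a_1$ lies in $e_2 A e_1 \subset e_2 A \epsilon$. Each rewriting above is a literal identity in $\bar{A}$ thanks to $e_1 e_{12} = e_{12}$, $e_{12} e_2 = e_{12}$, $e_2 e_{21} = e_{21}$, $e_{21} e_1 = e_{21}$. Since every case reduces to a strictly shorter element of $A^f$, the induction closes.
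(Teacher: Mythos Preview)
Your proof is correct. The paper itself states this lemma without proof, attributing the choice of generators to Van den Bergh \cite[Proof of Lemma 5.3.3]{VdB1}, so there is no argument in the paper to compare yours against. Your induction on the number of matrix-unit letters $c_j \in \{e_{12},e_{21}\}$ in a monomial of $\bar{A}$ is a clean and direct way to establish the result; the key identities $e_1 e_{12}=e_{12}=e_{12}e_2$ and $e_2 e_{21}=e_{21}=e_{21}e_1$, together with $e_1=\epsilon e_1$, do exactly the work you claim.

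One minor expository remark: your description of the case $c_1 = e_{21}$ is slightly off. Once you split off the third-type generator $(\epsilon a_0 e_2) e_{21}$, the remainder $e_1 a_1 c_2 \cdots a_k \epsilon$ already lies in $A^f$ (since $e_1=\epsilon e_1$) with $k-1$ special letters, so induction applies immediately; there is no need to ``analyse $c_2$'' further, and the phrase ``producing a fourth-type or third-type generator'' does not match what actually happens in that branch. This is a matter of wording only and does not affect correctness.
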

Remark that $\bar{B}$ satisfies $\bar{B}=\bar{B} \epsilon \bar{B}$ since $1=1 \epsilon 1 + e_{21} \epsilon e_{12}$. 
Using the map  $\Tr:D_{\bar{B}}\bar{A}\to D_{B^f}A^f$ given by 
$\Tr(\bar{Q})=\epsilon \bar{Q} \epsilon +  \epsilon e_{12} \bar{Q} e_{21} \epsilon$ together with $i_\ast:D_BA\to D_{\bar{B}}\bar{A}$, we get a map $\Tr \circ i_\ast : D_B A \to D_{B^f}A^f$.
We combine Corollary \ref{Cor:Induce} and Proposition \ref{Pr:Induce2} to get the following generalisation of 
\cite[Corollary 2.5.6]{VdB1}. 
\begin{prop} 
\label{PropIndbr}
If $A$ is a $B$-algebra with $n$-bracket $\dgal{-,\ldots,-}$, it induces $n$-brackets on $\bar{A}$ over $\bar{B}$ and $A^f$ over $B^f$.   If the $n$-bracket on $A$ is differential for $Q \in (D_BA)_n$, then the induced $n$-brackets are differential for $i_\ast (Q) \in (D_{\bar B}\bar A)_n$ and $\Tr\circ i_\ast (Q)\in (D_{B^f} A^f)_n$ respectively. 
\end{prop}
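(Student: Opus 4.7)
The plan is to obtain the result by composing two constructions from the excerpt: first, transport the $n$-bracket from $A$ to the extension algebra $\bar{A}$ using Corollary \ref{Cor:Induce}; second, restrict it to the corner $A^f = \epsilon \bar{A}\epsilon$ using Proposition \ref{Pr:Induce2}.

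For the first step, I would invoke Corollary \ref{Cor:Induce} with $A' = \bar{B}$. Since $B$ embeds into $\bar{B}$, the free product $A \ast_B \bar{B}$ equals $\bar{A}$ as defined in the excerpt, so the corollary produces a unique $\bar{B}$-linear $n$-bracket on $\bar{A}$ extending $\dgal{-,\ldots,-}$. In the differential case, it is represented by $i_*(Q) \in (D_{\bar{B}}\bar{A})_n$, which is exactly the content of the corollary.

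For the second step, I would apply Proposition \ref{Pr:Induce2} to $\bar{A}$ viewed as a $\bar{B}$-algebra, using the idempotent $\epsilon = 1 - e_2 \in \bar{B}$. Since $\epsilon \bar{A}\epsilon = A^f$ and $\epsilon \bar{B}\epsilon = B^f$ by definition, this immediately produces the desired $B^f$-linear $n$-bracket on $A^f$. To use the differential part of that proposition, the one point to check is the hypothesis $\bar{B} = \bar{B}\epsilon\bar{B}$; this follows from the decomposition $1 = 1\cdot \epsilon\cdot 1 + e_{21}\epsilon e_{12}$ in $\bar{B}$, since $e_{21}e_{12} = e_2$ and $e_{21}e_2 e_{12} = 0$, giving $e_{21}\epsilon e_{12} = e_2$. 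The trace map produced by this decomposition via Proposition \ref{Pr:Induce2} is precisely $\Tr(\bar{Q}) = \epsilon \bar{Q}\epsilon + \epsilon e_{12}\bar{Q} e_{21}\epsilon$ recalled before the statement, so the induced $n$-bracket on $A^f$ is differential for $\Tr(i_*(Q)) = (\Tr \circ i_*)(Q) \in (D_{B^f} A^f)_n$. No substantive obstacle arises: the argument is a clean concatenation of Corollary \ref{Cor:Induce} and Proposition \ref{Pr:Induce2}, the only bookkeeping being the explicit decomposition of $1 \in \bar{B}$ that simultaneously verifies $\bar{B} = \bar{B}\epsilon\bar{B}$ and pins down the trace formula.
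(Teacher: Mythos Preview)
Your proposal is correct and follows exactly the approach the paper takes: the paper states immediately before the proposition that one combines Corollary \ref{Cor:Induce} and Proposition \ref{Pr:Induce2}, having already recorded the decomposition $1 = 1\cdot\epsilon\cdot 1 + e_{21}\epsilon e_{12}$ to verify $\bar{B} = \bar{B}\epsilon\bar{B}$ and to pin down the trace map. Your write-up simply makes these two steps explicit, with the same bookkeeping.
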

From now on, we denote the compositions $\Tr\circ i$ and  $\Tr\circ i_\ast$ simply as $\Tr$. 

\subsubsection{Double quasi-Poisson brackets from the gauge elements} \label{ss:dqP}

Assume that $B=\kk e_1 \oplus \ldots \oplus \kk e_N$, where the $(e_s)$ form   a complete set of  orthogonal idempotents.   We define for all $s=1,\ldots,N$ a double derivation $E_s\in D_{A/B}$ such that for any $a\in A$, 
$ E_s(a)=ae_s\otimes e_s - e_s\otimes e_s a$. These are called the \emph{gauge elements}. Following \cite[\S 5.1]{VdB1}, we say that a double bracket $\dgal{-,-}$ on $A$ over $B$ is quasi-Poisson if it satisfies 
\begin{equation} \label{qPabcBis}
  \dgal{-,-,-}=\frac{1}{12}\sum_{s=1}^N \dgal{-,-,-}_{E_s^3}\,,
\end{equation}
where on the left-hand side we have the associated triple bracket given by \eqref{Eq:TripBr}, while 
 the triple brackets in the right-hand side are defined from Proposition \ref{Prop:BrQ} with $E_s^3\in (D_BA)_3$. It is then an easy exercise to check that \eqref{qPabcBis} evaluated on $a,b,c \in A$ gives \eqref{qPabc}, so that this definition coincides with the one given in the introduction. 
 Note that under the assumption of Proposition \ref{Prop:BrQSmooth} the double quasi-Poisson bracket $\dgal{-,-}$ is differential for some $Q \in (D_BA)_2$, and we get the equivalent condition that $\brSN{Q,Q}=\frac16 \sum_{s=1}^N E_s^3$ modulo $[D_B A, D_BA]$  by Propositions \ref{Prop:BrQ} and \ref{TripleDiff}. 
 
In a double quasi-Poisson algebra $(A,\dgal{-,-})$, we say that an element $\Phi\in A^\times$ is a moment map if $\Phi_s=e_s \Phi e_s$ satisfies 
$\dgal{\Phi_s,-}=\frac{1}{2}(\Phi_sE_s+E_s\Phi_s)$ for all $s=1,\ldots,N$. It is an easy exercise to check that the $s$-th condition is equivalent to \eqref{Phim}, hence this definition of moment map is equivalent to the one given in the introduction. 

 \begin{rem} \label{RemOplus}
Assume that $B=\kk e_1 \oplus \ldots \oplus \kk e_N$, $B'=\kk e_1' \oplus \ldots \oplus \kk e_M'$, and we have double quasi-Poisson brackets $\dgal{-,-}$ and $\dgal{-,-}'$ over $A$ and $A'$ respectively. Then $\dgal{-,-}^\oplus$ is a $(B\oplus B')$-linear double quasi-Poisson bracket over $A \oplus A'$. This can be obtained by combining Proposition \ref{Pr:Oplus} and the definition of double quasi-Poisson bracket using the gauge elements given above. Moreover, if $\Phi$ and $\Phi'$ are the corresponding moment maps, then $(\Phi,\Phi')$ turns $A\oplus A'$ into a quasi-Hamiltonian algebra. 
\end{rem}

\subsection{Main theorems} \label{ss:main}

Hereafter, we assume that $A$ is a $B$-algebra for $B=\kk e_1 \oplus \ldots \oplus \kk e_N$ a semisimple $\kk$-algebra. Our aim is to prove the following results. 
\begin{thm} \label{ThmFusBr}
  Assume that $(A, \dgal{-,-})$ is a double quasi-Poisson algebra over $B$. Consider the fusion algebra $A^f$ obtained by fusing $e_2$ onto $e_1$.  Then, $A^f$ has a $B^f$-linear double quasi-Poisson bracket given by 
\begin{equation} \label{dgalf}
  \dgal{-,-}^f:= \dgal{-,-} + \dgal{-,-}_{fus}\,, 
\end{equation}
where  the first double bracket on the right-hand side is induced in $A^f$ by the one of $A$ using Proposition \ref{PropIndbr}, and the second double bracket $\dgal{-,-}_{fus}$ is defined by $-\frac12 \Tr(E_1)\Tr(E_2)\in (D_{B^f}A^f)_2$ using Proposition \ref{Prop:BrQ}.  
\end{thm}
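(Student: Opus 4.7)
The plan is to check the quasi-Poisson identity (\ref{qPabcBis}) for $\dgal{-,-}^f$ on generators of $A^f$. By Lemma \ref{AfGenerat}, these generators fall into four types, and since the triple bracket is a derivation in each argument and is cyclically anti-symmetric, the verification reduces to triples drawn from this list.

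As a preliminary, I would compute $\dgal{-,-}_{fus}$ in closed form from (\ref{Eq:BrQdouble}) applied to $P_{fus} = -\tfrac{1}{2}\Tr(E_1)\Tr(E_2) \in (D_{B^f}A^f)_2$; on each of the four generator types, the double derivations $\Tr(E_1)$ and $\Tr(E_2)$ act in a controlled fashion, detecting the placement of $e_1, e_2$ and the matrix units $e_{12}, e_{21}$, and yield explicit tensor expressions. Since the triple bracket (\ref{Eq:TripBr}) is quadratic in the double bracket, the triple bracket of $\dgal{-,-}^f = \dgal{-,-}^{ind} + \dgal{-,-}_{fus}$ splits as
\[
\dgal{-,-,-}^f \;=\; \dgal{-,-,-}^{ind} \;+\; \dgal{-,-,-}^{mix} \;+\; \dgal{-,-,-}^{fus},
\]
where $\dgal{-,-,-}^{ind}$ is the triple bracket of the induced $\dgal{-,-}$, $\dgal{-,-,-}^{fus}$ is the triple bracket of $\dgal{-,-}_{fus}$, and $\dgal{-,-,-}^{mix}$ gathers the cross terms in which one of the inner or outer brackets comes from $\dgal{-,-}$ and the other from $\dgal{-,-}_{fus}$.

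For $\dgal{-,-,-}^{ind}$, the key observation is that the definition (\ref{Eq:TripBr}) commutes with the induction procedure of Proposition \ref{PropIndbr}, so that this is the induction to $A^f$ of the triple bracket on $A$. Applying the quasi-Poisson identity (\ref{qPabc}) on $A$ yields a sum indexed by $s = 1, \ldots, N$. For $s \geq 3$, the corresponding idempotent is unchanged in $B^f$ and the summand is already of the expected form. For $s = 1, 2$, the summands are incomplete and must combine with the two other pieces to produce a single quasi-Poisson contribution at the new idempotent $\tilde{e}_1 = e_1 \in B^f$, whose gauge element $\tilde{E}_1$ in $A^f$ encapsulates both $\Tr(E_1)$ and $\Tr(E_2)$. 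For $\dgal{-,-,-}^{fus}$, since $\dgal{-,-}_{fus}$ is differential, Proposition \ref{TripleDiff} identifies it with the triple bracket of $\tfrac{1}{2}\brSN{P_{fus}, P_{fus}} \in (D_{B^f}A^f)_3$, which can be simplified using the standard identities for the double Schouten--Nijenhuis bracket on gauge elements.

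The principal obstacle lies in evaluating the cross terms $\dgal{-,-,-}^{mix}$: since $\dgal{-,-}$ is not assumed differential, the Schouten--Nijenhuis calculus is unavailable for this piece, and each cross term must be computed directly using the derivation property of $\dgal{-,-}$ together with the explicit action of $\Tr(E_1), \Tr(E_2)$ on the generators. After combining $\dgal{-,-,-}^{mix}$, $\dgal{-,-,-}^{fus}$, and the $s=1,2$ summands of $\dgal{-,-,-}^{ind}$, one should recover exactly $\tfrac{1}{12}\dgal{-,-,-}_{\tilde{E}_1^3}$, matching the quasi-Poisson identity on $A^f$ for the fused idempotent. This reduction is a lengthy case analysis across the four generator types of Lemma \ref{AfGenerat} — the fourth, $e_{12} w e_{21}$ with $w \in e_2 A e_2$, being the most intricate — and is exactly the sort of bookkeeping I would expect to defer to an appendix.
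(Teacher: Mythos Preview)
Your plan is sound and close in spirit to the paper's proof, but you miss one structural simplification that the paper isolates as its key lemma. In the paper, the decisive step (Lemma \ref{LemE1E2}) is that the mixed piece $\dgal{-,-,-}^{mix}$ vanishes identically, for \emph{any} $B$-linear double bracket on $A$, not only a quasi-Poisson one. This is exactly the case-by-case computation on the four generator types that you anticipate deferring to an appendix; the paper carries it out there. Once $\dgal{-,-,-}^{mix}=0$ is known, the remainder of the argument never touches generators again: the induced triple bracket is differential for $\tfrac{1}{12}\sum_s \Tr(E_s^3)$ by the quasi-Poisson hypothesis and Proposition \ref{PropIndbr}, the fusion triple bracket is differential for $\tfrac{1}{8}\brSN{\Tr(E_1)\Tr(E_2),\Tr(E_1)\Tr(E_2)}$ by Proposition \ref{TripleDiff}, and the entire verification reduces to an algebraic identity in $D_{B^f}A^f/[D_{B^f}A^f,D_{B^f}A^f]$ relating $\Tr(E_1)^3+\Tr(E_2)^3$, $F_1^3=(\Tr(E_1)+\Tr(E_2))^3$, and the Schouten--Nijenhuis square.

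By contrast, you expect $\dgal{-,-,-}^{mix}$ to be nonzero and to contribute to the balance against the $s=1,2$ summands and $\dgal{-,-,-}^{fus}$. Your approach would still succeed --- once you compute the mixed terms on all generator triples you would find them to be zero and the remaining identity would close --- but you would be carrying the $s=1,2$ induced and fusion pieces through the generator analysis unnecessarily. The paper's separation buys two things: the generator computation becomes a clean general statement (vanishing of $\kappa$) independent of the quasi-Poisson hypothesis, and the part that \emph{does} use the hypothesis becomes a short polyvector-field calculation with no case analysis.
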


\begin{thm} \label{ThmFusMomap}
  Assume that $(A, \dgal{-,-},\Phi)$ is a quasi-Hamiltonian algebra over $B$, where $\Phi=\sum_s \Phi_s \in \oplus_s e_s A e_s$. 
  Consider the fusion algebra $A^f$ obtained by fusing $e_2$ onto $e_1$. 
  Then  $A^f$ is a quasi-Hamiltonian algebra for the double quasi-Poisson bracket $\dgal{-,-}^f$ given in Theorem \ref{ThmFusBr} and for the multiplicative moment map 
\begin{equation}
  \Phi^f=e_1 \Tr(\Phi_1)\Tr(\Phi_2) e_1 + \sum_{s\neq 1,2} e_s \Tr(\Phi_s) e_s\,.
\end{equation}
\end{thm}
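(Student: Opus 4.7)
The moment map condition to be verified is
\begin{equation*}
\dgal{\Phi_r^f, a}^f = \tfrac12\bigl(a e_r \otimes \Phi_r^f - e_r \otimes \Phi_r^f a + a \Phi_r^f \otimes e_r - \Phi_r^f \otimes e_r a\bigr)
\end{equation*}
for every idempotent $e_r \in B^f$, that is for the fused index $r=1$ and for the unaffected indices $r\in\{3,\ldots,N\}$, and for every $a \in A^f$. Since $\dgal{-,-}^f$ is a double bracket, the identity can be tested on the generators of the four types in Lemma \ref{AfGenerat}, and the left-hand side splits as $\dgal{-,-}_{ind} + \dgal{-,-}_{fus}$ by Theorem \ref{ThmFusBr}. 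My plan is to verify each case separately, leveraging as a bridge the identity (to be recorded as a preparatory lemma) that in $D_{A^f/B^f}$ one has $E_r^f = \Tr(E_r)$ for $r \neq 1,2$ and $E_1^f = \Tr(E_1)+\Tr(E_2)$, checked on generators by case analysis using $\epsilon e_{12}=e_{12}$, $e_{21}\epsilon=e_{21}$, $e_{12}e_{21}=e_1$ and $e_{21}e_{12}=e_2$.

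I would first dispose of the unaffected case $r\in\{3,\ldots,N\}$, where $\Phi_r^f = \Phi_r$ and $e_r$ is unchanged. The induced bracket $\dgal{\Phi_r, a}_{ind}$ inherits the moment map identity directly from $A$: for first-type generators, sandwiching with $\epsilon$ leaves the identity intact because $e_r$, $\Phi_r$ and the relevant products lie in $\epsilon A \epsilon$; for generators of the second, third, or fourth type, I would first unfold the derivation rule in $\bar A$ (where $e_{12}, e_{21}$ are $\bar B$-inert) and then apply the moment map identity for $\Phi_r$ in $A$, using that products such as $e_r e_{12}$ vanish to account for the "missing" terms on the right. The fusion contribution $\dgal{\Phi_r, a}_{fus}$ vanishes because both $E_1(\Phi_r)$ and $E_2(\Phi_r)$ are zero when $r\neq 1,2$, so the same holds for $\Tr(E_1)(\Phi_r)$ and $\Tr(E_2)(\Phi_r)$; formula \eqref{Eq:BrQdouble} applied to $-\tfrac12\Tr(E_1)\Tr(E_2)$ then yields $\dgal{\Phi_r,-}_{fus}=0$.

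The substantive case is $r=1$, where $\Phi_1^f = \Phi_1 \Tr(\Phi_2)$ with both factors in $e_1 A^f e_1$. I will use the left derivation rule
\begin{equation*}
\dgal{\Phi_1\Tr(\Phi_2), a}^f = \dgal{\Phi_1, a}^f \ast \Tr(\Phi_2) + \Phi_1 \ast \dgal{\Tr(\Phi_2), a}^f,
\end{equation*}
expand each of the four resulting brackets into its induced and fusion parts, and process them separately. The induced pieces $\dgal{\Phi_1, a}_{ind}$ and $\dgal{\Tr(\Phi_2), a}_{ind}$ are evaluated by lifting to $\bar A$ and applying the moment map identities of $\Phi_1$ and $\Phi_2$ in $A$; for the second factor this requires writing $\Tr(\Phi_2) = e_{12}\Phi_2 e_{21}$ in $\bar A$ and using $\bar B$-linearity so that only $\dgal{\Phi_2, a}^A$ survives, flanked by $e_{12}$ on the left and $e_{21}$ on the right. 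The fusion pieces come from \eqref{Eq:BrQdouble} with $Q = -\tfrac12 \Tr(E_1)\Tr(E_2)$, and depend on the explicit form of $\Tr(E_s)$ acting on $\Phi_1$, on $\Phi_2$ and on $a$.

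The main obstacle is the bookkeeping required to assemble all these contributions. After expansion, a sizeable collection of tensor pairs must be regrouped using the matrix-unit relations recalled above together with the fusion identification $e_1+e_2\equiv e_1$. The coefficient $-\tfrac12$ in the definition of $\dgal{-,-}_{fus}$ is calibrated precisely so that the cross terms coming from the two moment map identities cancel against those produced by the fusion bracket, and the surviving terms collapse into the factored form $\tfrac12(a e_1 \otimes \Phi_1^f - e_1 \otimes \Phi_1^f a + a \Phi_1^f \otimes e_1 - \Phi_1^f \otimes e_1 a)$. Since carrying this out explicitly on each of the four generator types is lengthy, I would relegate the detailed identities to an appendix and present only the key reductions in the main text.
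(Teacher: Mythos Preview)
Your proposal is correct and follows essentially the same route as the paper: reduce to generators of the four types, split $\dgal{-,-}^f$ into its induced and fusion parts, dispose of $r\neq 1,2$ by observing that $\Tr(E_1)(\Phi_r)=\Tr(E_2)(\Phi_r)=0$ so that $\dgal{\Phi_r,-}_{fus}=0$, and for $r=1$ apply the left derivation rule to $\Phi_1^f=\Tr(\Phi_1)\Tr(\Phi_2)$ before checking the resulting identity type by type (the paper records this decomposition as a single displayed formula and then carries out the four generator cases in an appendix, exactly as you outline). The only point you omit is the observation that $\Phi^f$ is invertible, with inverse $e_1\Tr(\Phi_2^{-1})\Tr(\Phi_1^{-1})e_1+\sum_{s\neq 1,2}e_s\Tr(\Phi_s^{-1})e_s$, which is needed for the moment map definition; this is immediate but worth stating.
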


\begin{rem}
In the case where the double quasi-Poisson bracket $\dgal{-,-}$ is differential for some $Q\in (D_BA)_2$, we have that the double quasi-Poisson bracket \eqref{dgalf} is differential for $Q^f:=\Tr(Q)-\frac12 \Tr(E_1)\Tr(E_2)$ by Proposition \ref{PropIndbr} and linearity of the map $\mu$ in Proposition \ref{Prop:BrQ}. Therefore, Theorems \ref{ThmFusBr} and \ref{ThmFusMomap} are nothing else than \cite[Theorems 5.3.1,5.3.2]{VdB1} in such a case. However, if the double quasi-Poisson bracket is \emph{not} differential (which can only happen if $A$ does not satisfy the assumptions from Proposition \ref{Prop:BrQSmooth}), these results extend their analogues proved in the differential case, as expected by Van den Bergh \cite[\S 5.3]{VdB1}. 
\end{rem}

\subsection{Preparation for the proofs} \label{ss:prep} 

\subsubsection{Image of the gauge elements} 

We have well-defined double derivations $E_s\in D_{A/B}$, $1 \leq s \leq N$, and we want to know what are their images in the fusion algebra $A^f$, obtained by fusing the idempotent $e_2$ onto $e_1$ as in \ref{subFusAlg}.  To avoid any conflicting notations, write $E_1,E_2,\ldots,E_N$ for the gauge elements over $A$ and their image under $D_{A/B}\to D_{\bar{A}/\bar{B}}$, and let 
$F_1,F_3,\ldots,F_N$ be the gauge elements in $D_{A^f/B^f}$, with $B^f=\kk e_1 \oplus \kk e_3 \oplus \ldots \oplus \kk e_N$. 
We now relate the double derivations $\Tr E_s$ and $F_s$. (These results first appeared in \cite[\S 5.3]{VdB1}, but we give a proof for the sake of clarity.)

\begin{lem} 
For any $s \neq 1,2$, $\Tr(E_s)=F_s$. \label{LemFs}
\end{lem}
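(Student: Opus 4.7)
The plan is to show the equality of the two double derivations $\Tr(E_s), F_s \in D_{A^f/B^f}$ by evaluating both sides on a generating set of $A^f$; since any $\bar B$-linear derivation on $\bar A$ yields, via the trace construction, a $B^f$-linear derivation on $A^f$ whose values on a generating family determine it uniquely, this suffices. As generators I would use the four families $t, e_{12}u, v e_{21}, e_{12}w e_{21}$ of Lemma \ref{AfGenerat}.

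The key computational input is a pair of observations about how $i_\ast(E_s)$ interacts with the matrix units and with $\epsilon$, for $s\neq 1,2$. Since $E_s$ is $B$-linear, the extension $i_\ast(E_s)$ annihilates $\bar B$, so the derivation rule reduces $i_\ast(E_s)$ on any of the four types of generators to $E_s$ evaluated on a single element of $A$, multiplied on the left and right by matrix units. The second observation is the orthogonality
\[
e_s e_{ij}=0= e_{ij}e_s, \qquad e_s\epsilon = e_s=\epsilon e_s,\qquad s\neq 1,2,\ i,j\in\{1,2\},
\]
which follows because $e_s\in \bar B$ lies in the summand complementary to $\Mat_2(\kk)$, and $e_2\notin \epsilon$. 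Combined with the defining formula $E_s(a)=a e_s\otimes e_s-e_s\otimes e_s a$, these identities make most pieces vanish.

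With these tools in hand the verification becomes mechanical. On a first-type element $t\in \epsilon A\epsilon$ I would compute $i_\ast(E_s)(t)=te_s\otimes e_s-e_s\otimes e_s t$ and apply $\Tr(\delta)=\epsilon\cdot\delta\cdot\epsilon+\epsilon e_{12}\delta e_{21}\epsilon$ coming from $1=1\cdot\epsilon\cdot 1+e_{21}\cdot\epsilon\cdot e_{12}$; the second summand collapses by orthogonality, leaving $te_s\otimes e_s-e_s\otimes e_s t=F_s(t)$. For a second-type generator $e_{12}u$ with $u\in e_2Ae_2$ (extended appropriately), the derivation rule gives $i_\ast(E_s)(e_{12}u)=e_{12}(ue_s\otimes e_s-e_s\otimes e_s u)$, and since $e_{12}e_s=0$ only the first summand survives; applying $\Tr$ and using $e_se_{21}=0$ again kills the second piece, producing $e_{12}ue_s\otimes e_s$, which matches $F_s(e_{12}u)=e_{12}ue_s\otimes e_s-e_s\otimes e_se_{12}u$ because $e_se_{12}=0$. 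The third type is symmetric, and on a fourth-type generator $e_{12}we_{21}$ both summands of $i_\ast(E_s)(e_{12}we_{21})=e_{12}we_se_{21}\otimes e_s-e_{12}e_s\otimes e_sw e_{21}$ vanish by $e_se_{21}=0$ and $e_{12}e_s=0$, while $F_s(e_{12}we_{21})$ vanishes by the same orthogonality applied to $e_{21}e_s$ and $e_se_{12}$.

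No step is genuinely difficult; the only thing to be careful about is keeping track of which products involving $e_{12}, e_{21}, e_s$ vanish, since for $s=1,2$ some of them do not (which is precisely why $F_1$ and $F_2$ do not appear in this lemma and are handled separately later in the paper).
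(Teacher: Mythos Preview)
Your plan is correct and follows the same approach as the paper: check equality on the generating set from Lemma~\ref{AfGenerat}, exploiting the orthogonality relations $e_s e_{ij}=e_{ij}e_s=0$ and $e_s\epsilon=\epsilon e_s=e_s$ for $s\neq 1,2$ to kill the ``$e_{12}\,(-)\,e_{21}$'' summand of $\Tr$ and match what remains to $F_s$. The paper carries this out slightly more compactly by writing every generator uniformly as $a=e_+\alpha e_-$ with $e_+\in\{\epsilon,e_{12}\}$, $e_-\in\{\epsilon,e_{21}\}$, computing once
\[
\Tr(E_s)(a)=e_+\alpha e_s\otimes e_s e_- - e_+ e_s\otimes e_s\alpha e_-,
\]
and then observing that $e_s e_-=e_-e_s$ (both equal $e_s$ or both vanish) and $e_+e_s=e_se_+$ to rewrite this as $F_s(e_+\alpha e_-)$. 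Your case-by-case treatment is the unrolled version of this.

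One minor slip: for second-type generators you write $u\in e_2Ae_2$, but Lemma~\ref{AfGenerat} has $u\in e_2A\epsilon$; the computation you describe is unaffected.
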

\begin{proof}
We only need to prove the equality on generators of $A^f$. 
By Lemma \ref{AfGenerat}, we can write any $a \in A^f$ as $a=e_{+} \alpha e_{-}$, for $a\in A$ and some $e_+\in \{e_{12},\epsilon\}$, $e_-\in \{e_{21},\epsilon\}$. Hence, by definition of gauge element  and the trace map 
\begin{equation*}
\begin{aligned}
  &\Tr(E_s)(a)=\epsilon \ast E_s(a) \ast \epsilon + \epsilon e_{12} \ast E_s(a) \ast e_{21} \epsilon 
= \,\epsilon \ast e_+ E_s(\alpha) e_- \ast \epsilon + \epsilon e_{12} \ast e_+ E_s(\alpha) e_- \ast e_{21} \epsilon \\
=&(e_+ \alpha e_s \epsilon \otimes \epsilon e_s e_- \,-\, e_+ e_s \epsilon  \otimes \epsilon e_s \alpha e_-)
+(e_+ \alpha e_s e_{21} \epsilon \otimes \epsilon e_{12} e_s e_- - e_+ e_s e_{21} \epsilon \otimes \epsilon e_{12} e_s \alpha e_-) \\
=& (e_+ \alpha e_s  \otimes  e_s e_- \,-\, e_+ e_s   \otimes  e_s \alpha e_-)
\end{aligned}
\end{equation*}
since $e_s \epsilon = e_s = \epsilon e_s$ and $e_s e_{21}=0=e_{12} e_s$ as $s\neq 2$. Now, remark that we can write this as 
\begin{equation*}
    \Tr(E_s)(a)= (e_+ \alpha e_- e_s)  \otimes  e_s  \,-\, e_s   \otimes  e_s (e_+ \alpha e_-)\,.
\end{equation*}
Indeed, for the first term, either $e_-=\epsilon$ and $e_s e_-=e_s=e_- e_s$, or $e_-\neq \epsilon$ and $e_s e_-=0=e_- e_s$. The same applies to the second term. 
\end{proof}

\begin{lem} \label{LemF12}
The double derivations $\Tr(E_1),\Tr(E_2)$ take the following forms on generators  :

\noindent if  $a=t$ for $t \in \epsilon A \epsilon$, 
\begin{equation} \label{TrEepsilon}
  \Tr(E_1)(t)=t e_1 \otimes e_1- e_1 \otimes  e_1 t, \quad \Tr(E_2)(t)=0\,,
\end{equation}
if  $a=e_{12}u$ for $u \in e_2 A \epsilon$,
\begin{equation} \label{TrEe12}
  \Tr(E_1)(e_{12}u)=(e_{12}u) e_1 \otimes e_1, \quad \Tr(E_2)(e_{12}u)=- e_1 \otimes (e_{12}u)\,,
\end{equation}
   if $a=v e_{21}$ for $v \in \epsilon A e_2$,
\begin{equation} \label{TrEe21}
  \Tr(E_1)(v e_{21})=- e_1 \otimes e_1(v e_{21}), \quad \Tr(E_2)(v e_{21})=(ve_{21}) \otimes e_1\,,
\end{equation}
if $a=e_{12} w e_{21}$ for $w \in e_2 A e_2$, 
\begin{equation} \label{TrEw}
  \Tr(E_1)(e_{12} w e_{21})=0, \quad \Tr(E_2)(e_{12} w e_{21})=(e_{12} w e_{21})e_1 \otimes e_1 - e_1 \otimes e_1 (e_{12} w e_{21})\,.
\end{equation}
In particular, $\Tr(E_1)+\Tr(E_2)=F_1$. 
\end{lem}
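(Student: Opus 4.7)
The proof decomposes into two tasks: first, establish the four formulas \eqref{TrEepsilon}--\eqref{TrEw} by a case-by-case computation on the generators from Lemma \ref{AfGenerat}; second, read off the identity $\Tr(E_1)+\Tr(E_2) = F_1$ by inspecting them. The essential ingredients are: the defining formula $E_s(a) = ae_s\otimes e_s - e_s \otimes e_s a$ for the gauge element on $A$; the fact that its extension $i_\ast(E_s)$ to $\bar A$ vanishes on $\bar B$, and in particular on the matrix units $e_{12}, e_{21}$; and the trace formula $\Tr(\delta) = \epsilon\,\delta\,\epsilon + \epsilon e_{12}\,\delta\,e_{21}\epsilon$, in which the action on $\delta$ is the inner $A$-bimodule action on double derivations.

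For each type of generator I would first use the outer-bimodule derivation rule, combined with $i_\ast(E_s)(e_{12}) = 0 = i_\ast(E_s)(e_{21})$, to collapse $i_\ast(E_s)$ applied to a mixed product into a sandwiched expression such as $e_{12}\,E_s(u)$, $E_s(v)\,e_{21}$, or $e_{12}\,E_s(w)\,e_{21}$, whose inner factor lies in $A$ and is computed by the defining formula. Then I apply the trace and simplify using the $\Mat_2(\kk)$ relations together with $\epsilon e_s = e_s$ for $s \neq 2$, $\epsilon e_2 = 0$, $e_{12}\epsilon = 0 = \epsilon e_{21}$, $e_{12}e_{21} = e_1$, and $e_{12}e_s = 0 = e_s e_{21}$ for $s \neq 2$. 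Two observations make the cases short: for a type~1 generator the factor $e_2$ in $E_2$ annihilates $\epsilon A \epsilon$, so $\Tr(E_2)(t) = 0$ at once; dually, for a type~4 generator $e_{12}we_{21}$ the factor $e_1$ in $E_1$ is killed on both sides by the surrounding matrix units, so $\Tr(E_1)(e_{12}we_{21}) = 0$. In the two remaining cases exactly one of the two summands of $\Tr$ survives, and it is forced into the shape displayed in \eqref{TrEe12}--\eqref{TrEe21}.

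Having the four formulas in hand, the identity $\Tr(E_1)+\Tr(E_2) = F_1$ reads off by inspection, after rewriting $e_1 \otimes (e_{12}u) = e_1 \otimes e_1(e_{12}u)$ using $e_1 e_{12} = e_{12}$, and analogously $(ve_{21})\otimes e_1 = (ve_{21})e_1\otimes e_1$ using $e_{21} e_1 = e_{21}$; on types~1 and 4 only one of the two traces contributes, and its formula is already in the shape of $F_1(a) = a e_1 \otimes e_1 - e_1 \otimes e_1 a$. The main obstacle is not conceptual but notational: one must distinguish carefully between $e_1$ (which serves simultaneously as the idempotent in $\bar B$ and as the representative in $B^f$ of the fused class $\{e_1,e_2\}$) and $\epsilon = 1 - e_2 = e_1 + e_3 + \ldots + e_N$, and keep straight the inner bimodule action used in the definition of $\Tr$ against the outer action used in the derivation rule.
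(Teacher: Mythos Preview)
Your proposal is correct and follows essentially the same approach as the paper's proof: a case-by-case computation on the four generator types, using the derivation rule together with the vanishing of $i_\ast(E_s)$ on $e_{12},e_{21}$, followed by the $\Mat_2(\kk)$ relations, and then reading off $\Tr(E_1)+\Tr(E_2)=F_1$ by inspection. The only organizational difference is that the paper first observes, from $E_s\in e_s(D_BA)e_s$, that one of the two trace summands vanishes identically for each $s$ (so $\Tr(E_1)=\epsilon E_1\epsilon$ and $\Tr(E_2)=\epsilon e_{12}E_2 e_{21}\epsilon$ before touching any generator), and then applies these to a uniformly written generator $a=e_+\alpha e_-$; this slightly shortens the four cases compared to handling both summands separately in each case, but the content is the same.
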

\begin{proof}
  First, remark that $\Tr(E_1)= \epsilon  E_1 \epsilon$ and $\Tr(E_2)= \epsilon e_{12} E_2 e_{21} \epsilon$, by expansion as in Lemma \ref{LemFs} or using that in $D_B A$ we have $E_s \in e_s D_B A e_s$. Therefore, writing a generator $a\in A^f$ as $a=e_+ \alpha e_-$ as in Lemma \ref{LemFs}, 
\begin{equation*}
\begin{aligned}
  \Tr(E_1)(a)=& e_+ \alpha e_1 \otimes e_1 e_- - e_+ e_1 \otimes e_1 \alpha e_-\,, \\
\Tr(E_2)(a)=& e_+ \alpha e_{21}  \otimes  e_{12}  e_-   - e_+  e_{21}  \otimes  e_{12} \alpha e_-\,,
\end{aligned}
\end{equation*}
using the relations between idempotents. In the first case \eqref{type1}, $\alpha=t$, $e_+=e_-=\epsilon$ so that the identities are clear. In the second case \eqref{type2} with $\alpha=u$, $e_+=e_{12}$ and $e_-=\epsilon$ so that 
\begin{equation*}
  \Tr(E_1)(a)= e_{12} u e_1 \otimes e_1 - e_{12} e_1 \otimes e_1 u  \,, \quad 
\Tr(E_2)(a)= e_{12} u e_{21}  \otimes  e_{1}  - e_{1}  \otimes  e_{12} u \,,
\end{equation*}
and we get our claim by remarking that $e_{12}e_1=0$ and $u e_{21}= u \epsilon e_{21}=0$. 
In the third case \eqref{type3} we take  $\alpha=v$, $e_+=\epsilon$ and $e_-=e_{21}$, which yields 
\begin{equation*}
  \Tr(E_1)(a)= v e_1 \otimes e_1 e_{21} -  e_1 \otimes e_1 v e_{21}\,, \quad
\Tr(E_2)(a)=  v e_{21}  \otimes  e_{1}   - \epsilon  e_{21}  \otimes  e_{12} v e_{21}\,.
\end{equation*}
Hence, it suffices to remark that $e_1 e_{21}=0$ and $e_{12} v = e_{12} \epsilon v=0$. 
Finally for \eqref{type4}, we take $\alpha=w$ and $e_+=e_{12}$, $e_-=e_{21}$ to get 
\begin{equation*}
  \Tr(E_1)(a)= e_{12} w e_1 \otimes e_1 e_{21} -  e_{12} e_1 \otimes e_1 w e_{21}\,, \quad
\Tr(E_2)(a)=  e_{12}w e_{21}  \otimes  e_{12}e_{21}    - \epsilon  e_{12}e_{21}  \otimes  e_{12} w e_{21}\,,
\end{equation*}
so that our claim follows since $e_1 e_{21} =0=  e_{12} e_1$. 
\end{proof}

\subsubsection{Properties of the double bracket $\dgal{-,-}_{fus}$}
Recall that the double bracket $\dgal{-,-}_{fus}$ is defined by $-\frac12 \Tr(E_1)\Tr(E_2)\in (D_{B^f}A^f)_2$ using Proposition \ref{Prop:BrQ}. 
\begin{lem} \label{dbrFUS}
  On generators of $A^f$, the double bracket $\dgal{-,-}_{fus}$ can be written as 
\begin{subequations}
  \begin{align}
&\dgal{\epsilon t \epsilon , \epsilon \tilde{t} \epsilon}_{fus}=0\,, \label{tt}\\
&\dgal{\epsilon t \epsilon , e_{12} u \epsilon}_{fus}=\frac12 \left( e_1 \otimes t e_{12}u - e_1 t  \otimes e_{12}u\right)\,, \label{tu}\\
&\dgal{\epsilon t \epsilon ,\epsilon v e_{21} }_{fus}=\frac12 \left( v e_{21} t   \otimes e_1 - v e_{21} \otimes t e_1\right)\,, \label{tv}\\
&\dgal{\epsilon t \epsilon , e_{12} w e_{21}}_{fus}=\frac12 \left( e_{12}we_{21}t \otimes e_1 + e_1 \otimes t e_{12} w e_{21} - e_{12} w e_{21} \otimes t e_1 - e_1 t \otimes e_{12} w e_{21}\right)\,, \label{tw}
  \end{align}
\end{subequations}
when the first component $\epsilon t \epsilon$ is a generator of the first type \eqref{type1}, 
\begin{subequations}
  \begin{align}
&\dgal{e_{12} u  \epsilon , \epsilon t \epsilon}_{fus}=\frac12 (e_{12}u \otimes e_1 t- t e_{12} u  \otimes e_1) \,, \label{ut}\\
&\dgal{e_{12} u \epsilon , e_{12} \tilde{u} \epsilon}_{fus}= \frac12 (e_1 \otimes e_{12}u e_{12}\tilde{u}-e_{12}\tilde{u} e_{12}u \otimes e_1) \,, \label{uu}\\
&\dgal{e_{12} u \epsilon ,\epsilon v e_{21} }_{fus}=\frac12(e_{12}u \otimes e_1 ve_{21}-v e_{21} \otimes e_{12}u e_1)\,, \label{uv}\\
&\dgal{e_{12} u\epsilon , e_{12} w e_{21}}_{fus}=\frac12(e_1 \otimes e_{12}u e_{12}we_{21} - e_{12}w  e_{21} \otimes e_{12}u e_1) \,, \label{uw}
  \end{align}
\end{subequations}
when the first component $e_{12} u  \epsilon$ is a generator of the second type \eqref{type2}, 
\begin{subequations}
  \begin{align}
 &\dgal{\epsilon v e_{21} , \epsilon t \epsilon}_{fus}=\frac12 (t e_1 \otimes v e_{21}-e_1 \otimes v e_{21}t) \,, \label{vt}\\
&\dgal{\epsilon v e_{21}, e_{12} u \epsilon}_{fus}= \frac12 ( e_{12}u e_1 \otimes v e_{21} - e_1 v e_{21}\otimes e_{12}u) \,, \label{vu}\\
&\dgal{\epsilon v e_{21},\epsilon \tilde{v} e_{21} }_{fus}=\frac12( \tilde{v} e_{21}v e_{21}\otimes e_1 - e_1 \otimes v e_{21}\tilde{v} e_{21})\,, \label{vv}\\
&\dgal{\epsilon v e_{21} , e_{12} w e_{21}}_{fus}=\frac12(e_{12}we_{21}ve_{21}\otimes e_1 - e_1 v e_{21}\otimes e_{12}we_{21} ) \,, \label{vw}
  \end{align}
\end{subequations}
when the first component $\epsilon v e_{21}$ is a generator of the third type \eqref{type3}, 
\begin{subequations}
  \begin{align}
 &\dgal{e_{12} w e_{21} , \epsilon t \epsilon}_{fus}=\frac12 (t e_1 \otimes e_{12}we_{21}+e_{12}we_{21}\otimes e_1 t - t e_{12}we_{21}\otimes e_1 - e_1 \otimes e_{12}we_{21}t) \,, \label{wt}\\
&\dgal{e_{12} w e_{21}, e_{12} u \epsilon}_{fus}= \frac12 ( e_{12}u e_1 \otimes e_{12}w  e_{21}  -  e_{12}u e_{12}we_{21} \otimes e_1) \,, \label{wu}\\
&\dgal{e_{12} w e_{21},\epsilon v e_{21} }_{fus}=\frac12( e_{12}we_{21} \otimes e_1 v e_{21} - e_1 \otimes e_{12}we_{21}ve_{21})\,, \label{wv}\\
&\dgal{e_{12} w e_{21} , e_{12} \tilde{w} e_{21}}_{fus}=0 \,, \label{ww}
  \end{align}
\end{subequations}
when the first component $e_{12} w e_{21}$ is a generator of the fourth type \eqref{type4}.
\end{lem}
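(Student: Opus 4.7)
The plan is to compute $\dgal{-,-}_{fus}$ directly from its definition as the double bracket associated, via Proposition \ref{Prop:BrQ}, to the bivector $Q=-\tfrac12 \Tr(E_1)\Tr(E_2)\in (D_{B^f}A^f)_2$, and then evaluate on the four types of generators listed in Lemma \ref{AfGenerat}. Specialising the formula \eqref{Eq:BrQdouble} to $\delta_1=\Tr(E_1)$ and $\delta_2=\Tr(E_2)$ (and inserting the global factor $-\tfrac12$) yields, for every $b,c\in A^f$,
\begin{equation*}
\dgal{b,c}_{fus}= -\tfrac12 \bigl[\Tr(E_2)(c)'\,\Tr(E_1)(b)''\otimes \Tr(E_1)(b)'\,\Tr(E_2)(c)''
-\Tr(E_1)(c)'\,\Tr(E_2)(b)''\otimes \Tr(E_2)(b)'\,\Tr(E_1)(c)''\bigr].
\end{equation*}
From here the strategy is to substitute the explicit expressions of $\Tr(E_1)$ and $\Tr(E_2)$ on each of the four types of generators as provided by Lemma \ref{LemF12}, and simplify using the defining relations of the idempotents, chiefly $e_1 e_{12}=e_{12}$, $e_{12}e_1=0$, $e_{21}e_1=e_{21}$ and $e_1 e_{21}=0$.

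Many of the $4\times 4=16$ cases collapse immediately. Indeed, by \eqref{TrEepsilon} and \eqref{TrEw} we have $\Tr(E_2)(t)=0$ for any type-1 generator $t$ and $\Tr(E_1)(e_{12}we_{21})=0$ for any type-4 generator; consequently, when $b$ (resp.\ $c$) is of type $1$ the second (resp.\ first) term in the displayed formula vanishes, and dually when $b$ or $c$ is of type $4$. In particular \eqref{tt} and \eqref{ww} are obtained with no computation, since both summands vanish at once. Moreover, since $\dgal{-,-}_{fus}$ is produced from a bivector by $\mu$, it is automatically a double bracket and hence satisfies $\dgal{b,c}_{fus}=-\dgal{c,b}_{fus}^\circ$ by \eqref{Eq:cycanti}; this halves the work, as only one representative of each unordered pair of types needs to be computed. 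Concretely, I would establish \eqref{tu}, \eqref{tv}, \eqref{tw}, \eqref{uu}, \eqml{uv}, \eqref{uw}, \eqref{vv}, \eqref{vw} directly from the formula, and then derive \eqref{ut}, \eqref{vt}, \eqref{wt}, \eqref{vu}, \eqref{wu}, \eqref{wv} by applying $-\tau$.

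A representative computation: for \eqref{tu}, with $b=t\in\epsilon A\epsilon$ and $c=e_{12}u$, one has $\Tr(E_2)(b)=0$ (so the second bracketed term vanishes) and $\Tr(E_2)(c)=-e_1\otimes e_{12}u$; combining this with $\Tr(E_1)(t)=te_1\otimes e_1-e_1\otimes e_1 t$ and using $e_1 e_{12}=e_{12}$ produces, after the global $-\tfrac12$, exactly $\tfrac12(e_1\otimes te_{12}u-e_1t\otimes e_{12}u)$. All remaining cases follow the same pattern. The only obstacle is genuinely one of bookkeeping: each evaluation contains up to eight summands (one or two from each of $\Tr(E_1)$ and $\Tr(E_2)$ on each argument), and the simplifications rely entirely on correctly applying the vanishing and multiplication relations among $e_1,e_2,e_{12},e_{21},\epsilon$. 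There is no conceptual difficulty beyond that, and the cyclic antisymmetry provides a useful internal consistency check on the cases computed directly.
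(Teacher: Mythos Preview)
Your proposal is correct and follows essentially the same approach as the paper: write $\dgal{-,-}_{fus}$ explicitly from \eqref{Eq:BrQdouble} applied to $-\tfrac12\Tr(E_1)\Tr(E_2)$, plug in the values of $\Tr(E_1),\Tr(E_2)$ on each generator type from Lemma \ref{LemF12}, and invoke cyclic antisymmetry to cut the sixteen cases down to ten (with \eqref{tt} and \eqref{ww} trivial). The paper even chooses the same representative computation \eqref{tu} to illustrate the method.
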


\begin{proof}
Remark that from the definition of the double bracket $\dgal{-,-}_{fus}$ together with \eqref{Eq:BrQ2} we can write 
\begin{equation}
\begin{aligned}
    \dgal{a,b}_{fus}=&
-\frac12 \Tr(E_2)(b)' \Tr(E_1)(a)'' \otimes \Tr(E_1)(a)' \Tr(E_2)(b)'' \\
&+\frac12 \Tr(E_1)(b)' \Tr(E_2)(a)'' \otimes \Tr(E_2)(a)' \Tr(E_1)(b)''\,. \label{Eqfus}
\end{aligned}
\end{equation}
It remains to use \eqref{TrEepsilon}--\eqref{TrEw} to get the required identities. For example, to get \eqref{tu} we find from \eqref{TrEepsilon} and \eqref{TrEe12} 
\begin{equation}
\begin{aligned}
    \dgal{\epsilon t \epsilon, e_{12}u \epsilon}_{fus}=&
-\frac12 \Tr(E_2)(e_{12}u)' \Tr(E_1)(t)'' \otimes \Tr(E_1)(t)' \Tr(E_2)(e_{12}u)'' \\
=&-\frac12 (-e_1 e_1 \otimes t e_1 e_{12}u+e_1 e_1 t \otimes e_1 e_{12}u ) 
=\frac12 e_1 \otimes t e_{12}u - \frac12 e_1 t  \otimes e_{12}u\,.
\end{aligned}
\end{equation}
The exact same method works in each case. Note that only ten cases need to be computed as 
 other double brackets can be obtained by cyclic antisymmetry : $\dgal{b,a}_{fus}=- \dgal{a,b}''_{fus} \otimes \dgal{a,b}'_{fus}$. 
\end{proof}

These explicit forms of the double bracket $\dgal{-,-}_{fus}$ are central in the proof of the next result, which we postpone to Appendix \ref{Ann:Kappa}. 

\begin{lem} \label{LemE1E2}
  Assume that $\dgal{-,-}$ is an arbitrary $B$-linear double bracket on $A$. Consider the induced $B^f$-linear double bracket $\dgal{-,-}$ on $A^f$, and define the double bracket $\dgal{-,-}_{fus}$ as in Theorem \ref{ThmFusBr}. 
Furthermore, set $\dgal{-,-}^f:= \dgal{-,-} + \dgal{-,-}_{fus}$. 
Then the $B^f$-linear map $\kappa: (A^f)^{\times 3} \to (A^f)^{\otimes 3}$ defined by 
\begin{equation*}
\begin{aligned}
  \kappa(-,-,-)=&\dgal{-,-,-}^f-\dgal{-,-,-}-\dgal{-,-,-}_{fus}\,,
\end{aligned}
\end{equation*}
vanishes. (Here, the induced triple brackets on the right-hand side are given by \eqref{Eq:TripBr} using $\dgal{-,-}^f$, $\dgal{-,-}$ and $\dgal{-,-}_{fus}$ respectively.)
\end{lem}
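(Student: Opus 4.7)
The plan is to exploit the fact that $\kappa$ is itself a triple bracket, and then to verify its vanishing on generators of $A^f$. Since $\dgal{-,-,-}^f$, $\dgal{-,-,-}$ and $\dgal{-,-,-}_{fus}$ are each produced via \eqref{Eq:TripBr} from a $B^f$-linear double bracket, each is $B^f$-linear, cyclically antisymmetric in the sense of \eqref{Eq:TriAnti}, and a derivation in the last argument for the outer bimodule structure on $(A^f)^{\otimes 3}$. These three properties transfer to the signed sum $\kappa$. Together with Lemma \ref{AfGenerat}, it follows that to conclude $\kappa\equiv 0$ it is enough to check $\kappa(a,b,c)=0$ whenever $a,b,c$ each run over the four types of generators \eqref{type1}--\eqref{type4}.

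Inserting $\dgal{-,-}^f=\dgal{-,-}+\dgal{-,-}_{fus}$ into \eqref{Eq:TripBr} and cancelling the two pure contributions $\dgal{a,b,c}$ and $\dgal{a,b,c}_{fus}$ yields $\kappa$ as an explicit sum of six mixed terms: one of the form $\dgal{a,\dgal{b,c}'}_{fus}\otimes \dgal{b,c}''$ and one of the form $\dgal{a,\dgal{b,c}_{fus}'}\otimes \dgal{b,c}_{fus}''$, together with the two cyclic companions of each obtained by applying $\tau_{(123)}$ and $\tau_{(123)}^2$ to rotations of $(a,b,c)$. This is the only formal input; the rest is computation.

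The crucial simplification comes from the rigid structure of $\dgal{-,-}_{fus}$ recorded in Lemma \ref{dbrFUS}: every output is a $\kk$-linear combination of elementary tensors in which the idempotent $e_1$ occupies one of the two tensor factors. Since $\dgal{-,-}$ is $B^f$-linear it annihilates $e_1$ in either slot, so roughly half of each \emph{bracket-into-fusion} summand drops out, leaving a small residue that must match the surviving \emph{fusion-of-bracket} terms after cyclic summation. A further useful observation, immediate from \eqref{TrEepsilon} and \eqref{TrEw}, is that $\Tr(E_2)$ vanishes on generators of type \eqref{type1} while $\Tr(E_1)$ vanishes on those of type \eqref{type4}; consequently $\dgal{-,-}_{fus}$ itself vanishes on any pair of generators both of type \eqref{type1} or both of type \eqref{type4}, which eliminates a substantial portion of the case list at the outset.

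The principal obstacle is the bookkeeping. There are $4^3=64$ ordered triples of generator types, reduced to roughly two dozen orbits under the cyclic symmetry of $\kappa$; each surviving orbit requires a careful tracking of where the idempotents $e_{12}$ and $e_{21}$ appear inside the tensor factors, and of how the $e_1$'s coming from $\dgal{-,-}_{fus}$ interact with the inner multiplications appearing in \eqref{Eq:TripBr}. These case-by-case verifications, essentially mechanical once the above reductions are in place, are deferred to Appendix \ref{Ann:Kappa}.
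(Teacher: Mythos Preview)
Your proposal is correct and follows essentially the same route as the paper: observe that $\kappa$ is a triple bracket (hence cyclically invariant and a derivation in its last argument), reduce to generators via Lemma \ref{AfGenerat}, expand $\kappa$ into the six mixed terms, and verify case by case. The paper's Appendix \ref{Ann:Kappa} organizes exactly these twenty-odd cases, and your heuristic remarks about where the $e_1$'s force vanishings and about $\dgal{-,-}_{fus}$ being zero on pairs of type \eqref{type1} or type \eqref{type4} are precisely the shortcuts the paper exploits implicitly in its computations.
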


\subsection{Fusion for the double quasi-Poisson bracket} \label{ss:pf1} 

We prove Theorem \ref{ThmFusBr}. To do so, we need to show that $\dgal{-,-,-}^f=\frac{1}{12}\sum_{s\neq 2}\dgal{-,-,-}_{F_i^3}$, where $\dgal{-,-,-}^f$ is the triple bracket associated to the double bracket defined by \eqref{dgalf}. By Lemma \ref{LemE1E2}, we simply have that  
\begin{equation*}
    \dgal{-,-,-}^f = \dgal{-,-,-} + \dgal{-,-,-}_{fus}\,. 
\end{equation*}
By assumption, $\dgal{-,-}$ is quasi-Poisson in $A$, hence $\dgal{-,-,-}$ coincides with the  differential double bracket defined by $\frac{1}{12}\sum_s E_s^3 \in (D_BA)_3$, see \ref{ss:dqP}. We get from Proposition \ref{PropIndbr} that we can write  
$\dgal{-,-,-}=\frac{1}{12}\sum_{s}\dgal{-,-,-}_{\Tr(E_s^3)}$ in $A^f$. 

We rewrite each $\Tr(E_s^3)$ in terms of the gauge elements $F_s$, $s\neq 2$, of $A^f$. Since $E_s=e_s E_s e_s$, 
\begin{equation*}
  \Tr(E_s^3)=\epsilon E_s^3 \epsilon = (\epsilon E_s \epsilon)^3=F_s^3\,,
\end{equation*}
for any $s\neq 1,2$ by Lemma \ref{LemFs}. Similarly, since $e_2=e_{21}\epsilon e_{12}$, 
\begin{equation*}
  \Tr(E_1^3)+\Tr(E_2^3)=\epsilon E_1^3 \epsilon + \epsilon e_{12} E_2^3 e_{21} \epsilon = (\epsilon E_1 \epsilon)^3+(\epsilon e_{12} E_2 e_{21} \epsilon)^3\,.
\end{equation*}
Modulo graded commutators, we can write 
\begin{equation*}
\Tr(E_1^3)+\Tr(E_2^3)=[\Tr(E_1)+\Tr(E_2)]^3-3\Tr(E_1)\Tr(E_2)^2-3\Tr(E_1)^2 \Tr(E_2)\,,  
\end{equation*}
which is 
$F_1^3-3\Tr(E_1)\Tr(E_2)^2-3\Tr(E_1)^2 \Tr(E_2)$ using Lemma \ref{LemF12}. By Proposition \ref{Prop:BrQ}, the map $\mu$ defines $n$-brackets modulo graded commutators in $D_{B^f}A^f$ so that 
\begin{equation*}
  \dgal{-,-,-}^f=\frac{1}{12}\sum_{s\neq 2}\dgal{-,-,-}_{F_i^3}-\frac14 \dgal{-,-,-}_{\Tr(E_1)\Tr(E_2)^2+\Tr(E_1)^2\Tr(E_2)}+ \dgal{-,-,-}_{fus}\,.
\end{equation*}
Now, by Proposition \ref{TripleDiff}, the bracket $\dgal{-,-,-}_{fus}$ is defined by $\frac18 \brSN{\Tr(E_1)\Tr(E_2),\Tr(E_1)\Tr(E_2)}$. After a short computation (given e.g. in \cite[\S 5.3]{VdB1}), we find that  
\begin{equation}
   \brSN{\Tr(E_1)\Tr(E_2),\Tr(E_1)\Tr(E_2)} = 2 \Tr(E_1)^2\Tr(E_2)+2\Tr(E_1)\Tr(E_2)^2\,,
\end{equation}
modulo graded commutators, which finishes the proof. 

\subsection{Fusion for the  moment map} \label{ss:pf2}
Note that $\Phi^f$ has an inverse 
\begin{equation*}
  (\Phi^f)^{-1}=e_1 \Tr(\Phi_2^{-1}) \Tr(\Phi_1^{-1}) e_1 + \sum_{s\neq 1,2} e_s \Tr(\Phi_s^{-1}) e_s\,,
\end{equation*}
so that Theorem \ref{ThmFusMomap} directly follows from the following lemma. 
\begin{lem} \label{LemMomap}
  Assume that $s\neq 1,2$. Then for any $a \in A^f$
\begin{equation}
  \dgal{\Tr(\Phi_s),a}^f=\frac12 (a e_s \otimes \Tr(\Phi_s) + a \Tr(\Phi_s) \otimes e_s - e_s \otimes \Tr(\Phi_s) a - \Tr(\Phi_s) \otimes e_s a )\,. \label{EqPhis}
\end{equation}
If we set $\Phi^f_1=\Tr(\Phi_1)\Tr(\Phi_2)$, we have for any $a \in A^f$
\begin{equation}
  \dgal{\Phi^f_1,a}^f=\frac12 (a e_1 \otimes \Phi^f_1 + a \Phi^f_1 \otimes e_1 - e_1 \otimes \Phi^f_1 a - \Phi^f_1 \otimes e_1 a )\,. \label{EqPhi12}
\end{equation}
\end{lem}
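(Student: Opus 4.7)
The plan is to reduce both identities to verifications on generators of $A^f$. Note first that both sides of \eqref{EqPhis} and \eqref{EqPhi12} are derivations in $a$ for the outer bimodule structure: the left-hand side because $\dgal{-,-}^f$ is a double bracket, and the right-hand side by direct Leibniz expansion on each of the four canonical summands. Hence it suffices to test the identities on the four generator types listed in Lemma \ref{AfGenerat}. Throughout I use that $\Tr(\Phi_s)=\Phi_s$ for $s\neq 2$ (the second trace term $e_{12}\Phi_s e_{21}$ vanishes because $e_{12}e_s=0$ whenever $s\neq 2$), whereas $\Tr(\Phi_2)=e_{12}\Phi_2 e_{21}$ is itself a generator of type \eqref{type4}, and consequently $\Phi^f_1=\Phi_1\cdot\Tr(\Phi_2)$.

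For \eqref{EqPhis} with $s\neq 1,2$, decompose $\dgal{\Phi_s,a}^f=\dgal{\Phi_s,a}+\dgal{\Phi_s,a}_{fus}$. Since $\Phi_s$ is a generator of type \eqref{type1} and $e_1\Phi_s=\Phi_s e_1=\Phi_s e_{12}=e_{21}\Phi_s=0$, every case of \eqref{tt}--\eqref{tw} reduces to zero, so $\dgal{\Phi_s,-}_{fus}=0$. For the induced part, write $a=e_+\alpha e_-$ with $\alpha\in A$ and $e_\pm$ among $\epsilon,e_{12},e_{21}$. The outer derivation property gives $\dgal{\Phi_s,a}=e_+\dgal{\Phi_s,\alpha}e_-$, and substituting the moment map identity in $A$ for $\Phi_s$ followed by simplification using $e_{12}e_s=0=e_s e_{21}$ yields \eqref{EqPhis} in each of the four cases.

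For \eqref{EqPhi12}, apply the left derivation rule to the factorisation $\Phi^f_1=\Phi_1\cdot\Tr(\Phi_2)$:
\[
\dgal{\Phi^f_1,a}^f=\dgal{\Phi_1,a}^f\ast\Tr(\Phi_2)\,+\,\Phi_1\ast\dgal{\Tr(\Phi_2),a}^f.
\]
Compute each of the two brackets on the right-hand side as the sum of its induced and fus parts. For $\dgal{\Phi_1,a}^f$ the induced part is obtained exactly as in the proof of \eqref{EqPhis} but using the moment map identity in $A$ for $\Phi_1$, while its fus contribution is read off from \eqref{tt}--\eqref{tw} with $t=\Phi_1$, now nonzero since $e_1\Phi_1=\Phi_1=\Phi_1 e_1$. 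For $\dgal{\Tr(\Phi_2),a}^f=\dgal{e_{12}\Phi_2 e_{21},a}^f$, two applications of the left derivation rule together with $\dgal{e_{12},-}=\dgal{e_{21},-}=0$ reduce the induced bracket to $\dgal{\Phi_2,\alpha}'e_{21}\otimes e_{12}\dgal{\Phi_2,\alpha}''$, on which one substitutes the moment map identity in $A$ for $\Phi_2$; the fus contribution comes from \eqref{wt}--\eqref{ww} with $w=\Phi_2$. Plugging all four pieces back into the Leibniz split and repeatedly simplifying using $\Phi_1\Tr(\Phi_2)=\Phi^f_1$, $e_1\Tr(\Phi_2)=\Tr(\Phi_2)=\Tr(\Phi_2)e_1$, and the matrix unit relations produces \eqref{EqPhi12}.

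The main obstacle is the bookkeeping in this last step: for each of the four generator types of $a$, many terms with matrix units and idempotents must be tracked and matched. The mechanism that makes the calculation collapse is that the fus contribution is exactly calibrated to cancel the stray terms coming from the matrix units in $\Tr(\Phi_2)$, leaving only the four canonical summands of the moment map formula. Verifying the type-\eqref{type1} case in detail already exposes this cancellation pattern, and the types \eqref{type2}--\eqref{type4} then follow by the same mechanism.
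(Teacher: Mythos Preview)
Your proposal is correct and follows essentially the same route as the paper's proof in Appendix~\ref{App:PfMomap}: both reduce to generators, kill $\dgal{\Tr(\Phi_s),-}_{fus}$ for $s\neq1,2$ by the idempotent relations, and handle \eqref{EqPhi12} via the left Leibniz split $\dgal{\Phi_1\Tr(\Phi_2),a}^f=\dgal{\Phi_1,a}^f\ast\Tr(\Phi_2)+\Phi_1\ast\dgal{\Tr(\Phi_2),a}^f$ followed by the induced/fus decomposition of each factor (this is exactly the paper's equation \eqref{EqPhiWork}). The only difference is presentational: the paper spells out all four generator types for \eqref{EqPhi12} explicitly, whereas you sketch the type-\eqref{type1} case and appeal to the same mechanism for the rest; note also that your expression $\dgal{\Phi_2,\alpha}'e_{21}\otimes e_{12}\dgal{\Phi_2,\alpha}''$ should carry the outer $e_+,\,e_-$ from $a=e_+\alpha e_-$.
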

The proof consists of checking \eqref{EqPhis} and \eqref{EqPhi12} on generators, which is done in Appendix \ref{App:PfMomap}.


\section{Applications} \label{appli}

\subsection{Elementary examples of fusion} Given two double quasi-Poisson algebras $(A,\dgal{-,-})$ and $(A',\dgal{-,-}')$ over $\kk$, we can use Remark \ref{RemOplus} to get a double quasi-Poisson bracket on $A \oplus A'$ which is $B$-linear for $B=\kk e_1\oplus \kk e_2$ with $e_1=(1,0)$ and $e_2=(0,1)$. Using Theorem \ref{ThmFusBr}, we can get a double quasi-Poisson bracket on the fusion algebra  $(A \oplus A')^f$ obtained by fusing $e_2$ onto $e_1$. By iterating this process, we can create new  double quasi-Poisson algebras using the different examples given in Section \ref{classif}. (The same holds for quasi-Hamiltonian algebras if we have moment maps.) 
Nevertheless, as far as we use differential double brackets, one could argue that this could already be done using Van den Bergh's results \cite[Theorems 5.3.1,5.3.2]{VdB1}. Hence, we now give new examples that involve double brackets that are not differential. 
To do so, recall from Example \ref{Exmpl1} that for any $k \geq 3$, $\kk[x]/(x^k)$ has a double bracket given by $\dgal{x,x}=\frac12(x^2 \otimes 1 - 1 \otimes x^2)$ which is not differential. The double bracket is in fact quasi-Poisson, e.g. as a consequence of Proposition \ref{Pr:Free1}. 

\begin{exmp}
  Fix $k \geq 3$ and form $A=\kk[x]/(x^{k})$ which is a double quasi-Poisson algebra. Let $A'$ be an arbitrary double quasi-Poisson $\kk$-algebra. Then we can consider $A \oplus A'$ with idempotents $e_1=(1,0)$, $e_2=(0,1)$. For $B=\kk e_1 \oplus \kk e_2$, $A \oplus A'$ has a $B$-linear double quasi-Poisson bracket by Remark \ref{RemOplus}. We can form the fusion algebra $\bar{A}=(A\oplus A')^f$ obtained by fusing $e_2$ onto $e_1$, which we see as an algebra over $\kk$ by identifying the only remaining non-zero idempotent $e_1$ with $1$. Using Lemma \ref{AfGenerat}, $\bar{A}$ is the algebra generated by $x$ and $e_{12}w e_{21}$ for $w \in A'$. Thus, we can identify $\bar{A}$ with $A \ast_{\kk}A'$, and see the elements of $A$ as generators of type 1 \eqref{type1} after fusion, while the elements of $A'$ are generators of type 4 \eqref{type4}. Therefore, using Theorem \ref{ThmFusBr}, we have a double quasi-Poisson bracket on $\bar{A}$ given by 
\begin{equation*}
  \dgal{x,w}=\frac12(wx \otimes 1 + 1 \otimes xw - w \otimes x - x \otimes w)\,, \quad w \in A'\,,
\end{equation*}
if we use \eqref{tw} in Lemma \ref{dbrFUS}, while the double brackets $\dgal{x,x}$ and $\dgal{w,w'}$ for $w,w'\in A'$ are just the ones in $A$ and $A'$ respectively. 
\end{exmp}

\begin{exmp} \label{Exmp:xN}
  Fix integers $M \geq 1$ and $k_s\geq 3$ for $1\leq s \leq M$. We can form $A_s=\kk[x_s]/(x_s^{k_s})$ and consider $A=\oplus_s A_s$ where we denote each unit by $e_s$ so that $A$ is an algebra over $B=\oplus_s \kk e_s$. Moreover, it has a double quasi-Poisson bracket by Remark \ref{RemOplus}. 
Fusing $e_2$ onto $e_1$, then $e_3$ onto $e_1$ and so on up to $e_M$, we get the fusion algebra 
\begin{equation*}
  A'=\kk \langle x_1,\ldots, x_M\rangle /I\,, \quad 
\text{where }I\text{ is the ideal generated by }x_1^{k_1},\ldots,x_M^{k_M}\,,
\end{equation*}
which is just a $\kk$-algebra. 
By Theorem \ref{ThmFusBr} and Lemma \ref{dbrFUS}, $A'$ has a double quasi-Poisson bracket given by 
\begin{equation*}
  \begin{aligned}
    \dgal{x_s,x_s}=\frac12(x_s^2 \otimes 1 - 1 \otimes x_s^2)\,,& \quad 1\leq s \leq M\,, \\
\dgal{x_r,x_s}=\frac12 (x_s x_r \otimes 1 + 1 \otimes x_r x_s - x_s \otimes x_r - x_r \otimes x_s)\,, &
\quad 1\leq r < s \leq M\,.
  \end{aligned}
\end{equation*}
\end{exmp}

I have been unable to find a quasi-Hamiltonian algebra whose double bracket is \emph{not} differential. It is an interesting question to determine if such an example exists, in order to see whether Theorem \ref{ThmFusMomap}  is strictly stronger than  \cite[Theorem 5.3.2]{VdB1} or not.

\subsection{Revisiting Van den Bergh's double bracket for quivers} \label{ss:quivers}

\subsubsection{Generalities} \label{sss:quivGen}

Let $Q$ be a finite quiver with vertex set denoted $I$. We define the functions $t,h: Q \to I$ that associate to an arrow $a$ either its tail $t(a)\in I$ or its head $h(a)\in I$. We form the double $\bar{Q}$ of the quiver $Q$ with the same vertex set $I$ by adding an opposite arrow $a^\ast : h(a)\to t(a)$ to each $a\in Q$. We naturally extend $h,t$ to $\bar{Q}$, and set $(a^\ast)^\ast=a$ for each $a\in Q$ so that the map $a \mapsto a^\ast$, $a \in \bar{Q}$, defines an involution on $\bar{Q}$.  
We form the path algebra $\kk \bar{Q}$ which is the $\kk$-algebra generated by the arrows $a\in \bar{Q}$ and idempotents $(e_s)_{s\in I}$ labelled by the vertices such that 
\begin{equation*}
  a=e_{t(a)}a e_{h(a)}\,, \quad e_s e_t = \delta_{st}\,e_s\,.
\end{equation*}
This implies that we read paths from left to right. We see $\kk \bar{Q}$ as a $B$-algebra with $B=\oplus_{s\in I}\kk e_s$.

We define $\epsilon:\bar{Q}\to \{\pm1\}$ as the map which takes value $+1$ on arrows originally in $Q$, and $-1$ on the arrows in $\bar{Q}\setminus Q$. For each $a\in Q$, we also choose $\gamma_a \in \kk$ and set $\gamma_{a^\ast}=\gamma_a$. Finally, we associate to $\kk \bar{Q}$ the algebra $A$ obtained by universal localisation from the set $S=\{1+(\gamma_a-1)e_{t(a)}+a a^\ast \,|\, a \in \bar{Q}\}$. This is equivalent to add local inverses $(\gamma_a e_{t(a)}+a a^\ast)^{-1}$ for each $a \in \bar{Q}$ (i.e. they are inverses to $\gamma_a e_{t(a)}+a a^\ast$ in $e_{t(a)}Ae_{t(a)}$). If $\gamma_a=0$, then $a^{-1}:=a^\ast(a a^\ast)^{-1}$ satisfies $a^{-1}=(a^\ast a)^{-1}a^\ast$, so that $a a^{-1}=e_{t(a)}$ and $a^{-1}a=e_{h(a)}$; the same holds for $a^\ast$.

\subsubsection{The quasi-Hamiltonian structure} 

 For each vertex $s\in I$, consider a total ordering $<_s$ on the set $T_s=\{a\in \bar{Q}\mid t(a)=s\}$. Write $o_s(-,-)$ for the ordering function at vertex $s$ : on arrows $a,b$ we have $o_s(a,b)=+1$ if $a<_s b$, $o_s(a,b)=-1$ if $b<_s a$, while it is zero otherwise, i.e. if $a=b\in T_s$, if $a \notin T_s$ or if $b \notin T_s$. 

\begin{thm} \label{ThmVdB}
   The algebra $A$ has a double quasi-Poisson bracket defined by 
  \begin{subequations}
        \begin{align}
 \dgal{a,a}\,=\,&\frac{1}{2}o_{t(a)}(a,a^*)\left( a^2\otimes e_{t(a)}- e_{h(a)}\otimes a^2 \right) \qquad (a\in\bar{Q})\,, \label{loopG}\\
 \dgal{a,a^*}\,=\,&\gamma_a e_{h(a)}\otimes e_{t(a)}
 +\frac{1}{2} a^*a\otimes e_{t(a)} +\frac{1}{2} e_{h(a)}\otimes aa^* \nonumber\\
 & +\frac{1}{2}o_{t(a)}(a,a^*)\, (a^*\otimes a-a\otimes a^*)\qquad (a\in Q)\,, \label{aastG}
        \end{align}
  \end{subequations}
 and for $b,c\in\bar{Q}$ such that $ c\ne b,b^*$ 
 \begin{equation}
  \begin{aligned}
 \dgal{b,c}\,=\,&-\frac{1}{2}o_{t(b)}(b,c)\,(b\otimes c)-\frac{1}{2}o_{h(b)}(b^*,c^*)\,(c\otimes b)
\\ \label{a<bG}
 &+\frac{1}{2}o_{t(b)}(b,c^*)\, cb\otimes e_{t(b)} + \frac{1}{2}o_{h(b)}(b^*,c)\,e_{h(b)}\otimes bc   \,.
  \end{aligned}
 \end{equation}
Furthermore, $A$ is quasi-Hamiltonian for the multiplicative moment map 
\begin{equation} \label{EqPhiVdB}
  \Phi=\sum_s \Phi_s\,, \quad \Phi_s=\prod_{a\in T_s}^{\longrightarrow}(\gamma_a e_s+ a a^\ast)^{\epsilon(a)}\,.
\end{equation}
\end{thm}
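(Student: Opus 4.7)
The plan is to realise $A$ as an iterated fusion of elementary one-arrow building blocks, so that Theorems \ref{ThmFusBr} and \ref{ThmFusMomap} do all the work. The orderings $<_s$ will prescribe the order of the successive fusions at each vertex, and the formulas \eqref{loopG}--\eqref{a<bG} should appear as accumulated corrections $\dgal{-,-}_{fus}$ described by Lemma \ref{dbrFUS}.

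For each $a \in Q$, I first introduce a quasi-Hamiltonian algebra $A_a$. If $a$ is a loop at $s$, then $A_a$ is the universal localisation of $\kk\langle a,a^*\rangle$ at $\gamma_a e_s + aa^*$ and $\gamma_a e_s + a^*a$, equipped with the standard one-loop quasi-Hamiltonian structure whose moment map uses the restriction of $<_s$ to $\{a,a^*\}$. If $a : s_1\to s_2$ with $s_1\neq s_2$, then $A_a$ is the analogous localisation of the path algebra of the double of the one-arrow quiver $s_1\to s_2$, with its standard double quasi-Poisson bracket and moment map $(\gamma_a e_{s_1} + aa^*) + (\gamma_a e_{s_2} + a^*a)^{-1}$. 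Both structures are straightforward to verify on generators using \eqref{qPabc} and \eqref{Phim}. Using Remark \ref{RemOplus} iteratively, the direct sum $\tilde A = \bigoplus_{a\in Q} A_a$ is quasi-Hamiltonian over an idempotent ring $\tilde B$ containing one formal copy of $e_s$ for every occurrence of $s$ as an endpoint of an arrow.

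Now, at each vertex $s\in I$, fuse the formal copies of $e_s$ successively onto the smallest one in the order prescribed by $<_s$, then repeat for every $s\in I$. The resulting algebra is canonically isomorphic to $A$, since the localisations inside the $A_a$'s combine to give the set $S$ of \S\ref{sss:quivGen}. Applying Theorem \ref{ThmFusBr} at every step, the final double quasi-Poisson bracket on $A$ is the sum of the original brackets on the $A_a$'s together with all accumulated fusion corrections of Lemma \ref{dbrFUS}. The formulas \eqref{loopG}, \eqref{aastG}, \eqref{a<bG} are then read off from \eqref{tt}--\eqref{ww} case by case: the ordering-weighted terms $o_{t(b)}(b,c)$, $o_{h(b)}(b^*,c^*)$, etc.\ come from which of the two arguments was ``earlier'' at the fusion step identifying the corresponding vertices, while the diagonal terms (e.g.\ $\gamma_a e_{h(a)}\otimes e_{t(a)} + \tfrac12 a^*a\otimes e_{t(a)} + \tfrac12 e_{h(a)}\otimes aa^*$ in \eqref{aastG}) are inherited from the original bracket on $A_a$.

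For the moment map, Theorem \ref{ThmFusMomap} inductively replaces the moment map at each fused vertex by the ordered product of the two traces being fused. Performing the fusions in the order $<_s$ therefore yields $\Phi_s = \prod^{\longrightarrow}_{a\in T_s}(\gamma_a e_s + aa^*)^{\epsilon(a)}$, matching \eqref{EqPhiVdB}; here $\epsilon(a^*)=-1$ reflects that, for $a^* \in \bar Q\setminus Q$, the base-case contribution at $h(a) = t(a^*)$ was already $(\gamma_a e_{h(a)} + a^*a)^{-1}$. The main obstacle is the bookkeeping in the bracket verification: one must track intermediate fusion corrections carefully, sort the arguments into types \eqref{type1}--\eqref{type4} at each stage, and check that for $b,c\in\bar Q$ with distinct endpoints the cross terms in \eqref{a<bG} assemble correctly from contributions of fusions performed at both $t(b)$ and $h(b)$; once the correspondence between the cases of Lemma \ref{dbrFUS} and the ordering functions $o_s(\cdot,\cdot)$ is made explicit, the comparison becomes a routine, if lengthy, case analysis.
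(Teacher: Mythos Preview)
Your strategy is essentially the paper's, but your treatment of loops introduces a genuine obstruction. When $a$ is a loop at $s$, your building block $A_a=\kk\langle a,a^*\rangle_{\text{loc}}$ is a $\kk$-algebra with a \emph{single} idempotent, so in $\tilde A$ there is only one formal copy of $e_s$ attached to the pair $(a,a^*)$. After all fusions at $s$ the contributions of $a$ and $a^*$ to the moment map therefore appear as an \emph{adjacent} block $(\gamma_a e_s+aa^*)(\gamma_a e_s+a^*a)^{-1}$ (or its reverse), and the same rigidity propagates to the bracket corrections. But the statement allows an arbitrary total order $<_s$ on $T_s$: nothing forces $a$ and $a^*$ to be consecutive. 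If, say, $b<_s a<_s c<_s a^*$ for some other arrow $c$ with tail $s$, then \eqref{EqPhiVdB} demands
\[
\Phi_s=(\gamma_b e_s+bb^*)^{\epsilon(b)}(\gamma_a e_s+aa^*)\,(\gamma_c e_s+cc^*)^{\epsilon(c)}(\gamma_a e_s+a^*a)^{-1},
\]
and in \eqref{a<bG} the bracket $\dgal{a,c}$ receives contributions of opposite character from the positions of $a$ and of $a^*$ relative to $c$. Your construction cannot produce these interleaved products or brackets, because the loop enters the fusion process as an indivisible unit. (Your sentence ``one formal copy of $e_s$ for every occurrence of $s$ as an endpoint'' is already inconsistent with this, since a loop contributes two occurrences but only one idempotent in your $A_a$.)

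The paper fixes this by passing to the \emph{separated quiver} $Q^{sep}$: every arrow $a\in Q$, loop or not, is given its own pair of distinct vertices $v_a\neq v_{a^*}$, so that $\bar Q^{sep}$ has no loops and every building block is the non-loop one-arrow algebra of Example \ref{ExpVdBQ1}. One then fuses the vertices $v_{a_{s,1}},v_{a_{s,2}},\ldots$ onto a single copy of $s$ in the order dictated by $<_s$; since $v_a$ and $v_{a^*}$ are now independent idempotents, any interleaving is realised, and the accumulated corrections reproduce \eqref{loopG}--\eqref{a<bG} and \eqref{EqPhiVdB} for arbitrary orderings. Once you replace your loop blocks by this uniform separated picture, the rest of your outline (direct sum via Remark \ref{RemOplus}, iterated use of Theorems \ref{ThmFusBr}--\ref{ThmFusMomap}, and case analysis via Lemma \ref{dbrFUS}) coincides with the paper's proof.
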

In \eqref{EqPhiVdB}, we take the product defining $\Phi_s$ with respect to the ordering on $T_s$. 
If all $\gamma_a=+1$, this result explicitly gives the double bracket defined from a poly-vector field $P \in (D_BA)_2$ in \cite[Theorem 6.7.1]{VdB1}, which was written in the above form for particular choices of ordering in \cite[Proposition 2.6]{CF}. 
In fact, if all $\gamma_a \neq0$, the result is equivalent to the previous case up to rescaling. 
If some $\gamma_a$ are equal to zero, our result also encompasses the generalisation proposed in \cite[Proposition 2.7]{CF}.

\subsubsection{Proof of Theorem \ref{ThmVdB}}  As in the proof of \cite[Theorem 6.7.1]{VdB1}, we begin with the quiver $Q^{sep}$ which has vertex and arrow sets given by 
\begin{equation}
  I^{sep}=\{v_b,\,v_{b^\ast}\mid b \in Q\}\,, \quad 
Q^{sep}=\{b:v_b \to v_{b^\ast} \mid b \in Q \}\,.
\end{equation}
We form the double $\bar{Q}^{sep}$ of $Q^{sep}$, which amounts to add the arrows $\{b^\ast: v_{b^\ast} \to v_b \mid b \in Q \}$. We define on it the involution $\ast$ given by $b \mapsto b^\ast$ and $b^\ast \mapsto b$. 
We add local inverses $(\gamma_b e_{v_b} + bb^\ast)^{-1}$ in $\kk \bar{Q}^{sep}$ for all $b\in \bar{Q}^{sep}$ to get the algebra $A^{sep}$.  By combining Example \ref{ExpVdBQ1} (with $t=b,s=b^\ast$ for each $b\in Q^{sep}$) and Remark \ref{RemOplus}, $A^{sep}$ is quasi-Hamiltonian for the double quasi-Poisson bracket given by 
\begin{equation} \label{Eq:BasicBr}
  \dgal{b,b^\ast}=\gamma_b e_{v_{b^\ast}}\otimes e_{v_b} + \frac12 b^\ast b \otimes e_{v_{b}} + \frac12 e_{v_{b^\ast}} \otimes bb^\ast\,,
\end{equation}
for all $b\in Q^{sep}$ and which is zero on every other pair of generators, while  the multiplicative moment map is  defined as 
\begin{equation}
  \Phi=\sum_{b \in \bar{Q}^{sep}} \Phi_{v_b}\,,
\quad \Phi_{v_b}=(\gamma_b e_{v_b}+bb^\ast)^{\epsilon(b)}\,. 
\end{equation}

To get a quasi-Hamiltonian structure on $A$, it remains to fuse all these disjoint quivers of $\bar{Q}^{sep}$ according to the ordering that we chose at the vertices of $\bar{Q}$. More precisely, label the vertices in the quiver $\bar{Q}$ as $\{1,\ldots, |I|\}$, and label the arrows according to the ordering, that is if the arrow $b$ is the $k$-th element with respect to the total ordering on $T_s$ (going from the minimal to the maximal element in the chain) where $s=t(b)$, we label it $a_{s,k}$. We use the same names for the arrows in $\bar{Q}^{sep}$. To recover $\bar{Q}$, we rename $v_{a_{1,1}}$ as $1$, then fuse $1$ and $v_{a_{1,2}}$ which we still name $1$, then continue with all vertices labelled $v_{a_{1,k}}$ for increasing values of $k$. Next, we do the same for vertices $2,\ldots, |I|$ and recover the quiver $\bar{Q}$. 
In terms of algebras, this means that we consider the fusion algebra obtained after fusing $e_{v_{a_{1,2}}}$ onto $e_1$, then $e_{v_{a_{1,3}}}$ onto $e_1$, and so on. This finally yields the algebra $A$. Therefore, it suffices to use Theorems  \ref{ThmFusBr} and \ref{ThmFusMomap} to get the desired result. We directly find  that $\Phi$ is given by \eqref{EqPhiVdB}, but understanding the double bracket requires some work. 

We first show \eqref{loopG} and \eqref{aastG}, where there is nothing to prove if $a$ is not a loop. So assume that $a$ is a loop, and $a <_{t(a)}a^\ast$. By construction the only new terms arise when we glue $w_1:=v_a$ with $w_2:=v_{a^\ast}$, so to compute these terms we use Theorem  \ref{ThmFusBr} with the vertices $w_1,w_2$ respectively playing the role of $1,2$. 
We have that after fusion $a$ is a generator of third type \eqref{type3}, so that by \eqref{vv} the fusion amounts to add a term $\frac12 a^2 \otimes e_{t(a)}-\frac12 e_{t(a)}\otimes a^2$ in $\dgal{a,a}$. Similarly, $a^{\ast}$ is a generator of second type \eqref{type2} so by \eqref{uu} we get a term $\frac12 e_{t(a)} \otimes (a^\ast)^2 - \frac12 (a^\ast)^2 \otimes e_{t(a)}$ in $\dgal{a^\ast,a^\ast}$. 
Using \eqref{vu}, we get an additional term $\frac12 a^\ast \otimes a - \frac12 a \otimes a^\ast$ in $\dgal{a,a^\ast}$, which gives the correct double bracket by adding \eqref{Eq:BasicBr}.  
In the case  $a^\ast <_{t(a)}a$, take $w_1:=v_{a^\ast}$ with $w_2:=v_{a}$ and the proof is similar, but now $a$ is of second type and $a^\ast$ is of third type.

Before proving \eqref{a<bG}, we need some preparation.   Consider $\alpha,\beta \in \bar{Q}$ and $s\in I$ with $\alpha<_s \beta$, $\alpha \neq \beta,\beta^\ast$. 
With the labelling given above, we have that $\alpha=a_{s,k_0}$, $\beta=a_{s,k_1}$ for some $1\leq k_0<k_1 \leq |T_s|$, and $v_{\alpha}=v_{a_{s,k_0}}$, $v_{\beta}=v_{a_{s,k_1}}$. 
Write $\bar{Q}^{\alpha}$ for the quiver obtained from $\bar{Q}^{sep}$ by fusing all the vertices $v_{a_{s',k}}$ with either $s'<s$, or $s'=s$ with $k<_s k_1$ (i.e. we fuse all vertices up to excluding $v_\beta$); set $t_\alpha$ and $h_\alpha$ for the tail and head maps in $\bar{Q}^\alpha$.  Write $\bar{Q}^{\beta}$ for the quiver obtained from $\bar{Q}^\alpha$ by additionally fusing the vertex $v_{a_{s,k_1}}$ (i.e. we fuse all vertices in $\bar{Q}^{sep}$ up to including $v_\beta$). Set again $t_\beta$ and $h_\beta$ for the associated tail and head maps.  We let $A^{\alpha}$ and $A^\beta$ respectively denote the algebras obtained from $A^{sep}$ by fusion to arrive at the quivers $\bar{Q}^\alpha$ and $\bar{Q}^\beta$.  

\begin{lem}
The step of performing fusion from $A^\alpha$ to $A^\beta$ amounts to add the following terms in the double quasi-Poisson bracket of $A$ between the elements $\alpha,\alpha^\ast$ and $\beta,\beta^\ast$ : 
\begin{subequations}
  \begin{align}
& -\frac12 \alpha \otimes \beta + \frac12 \delta_{t_\beta(\alpha),t_\beta(\alpha^\ast)}\, e_{t_\beta(\alpha)}\otimes \alpha \beta\quad \text{ in }\dgal{\alpha,\beta}\,, \label{TT} \\
&+\frac12 \beta^\ast \alpha \otimes e_{t_\beta(\alpha)} - \frac12 \delta_{t_\beta(\alpha),t_\beta(\alpha^\ast)}\, \beta^\ast\otimes \alpha \quad \text{ in }\dgal{\alpha,\beta^\ast}\,, \label{TH} \\
&+\frac12 e_{t_\beta(\alpha)} \otimes \alpha^\ast\beta - \frac12 \delta_{t_\beta(\alpha),t_\beta(\alpha^\ast)}\, \alpha^\ast\otimes \beta\quad \text{ in }\dgal{\alpha^\ast,\beta}\,, \label{HT} \\
&-\frac12 \beta^\ast \otimes \alpha^\ast + \frac12 \delta_{t_\beta(\alpha),t_\beta(\alpha^\ast)}\,  \beta^\ast \alpha^\ast \otimes e_{t_\beta(\alpha)}\quad \text{ in }\dgal{\alpha,\beta}\,. \label{HH}
  \end{align}
\end{subequations}
\end{lem}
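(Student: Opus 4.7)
The plan is to apply Theorem \ref{ThmFusBr} to the single fusion step $A^\alpha \to A^\beta$, where we fuse the vertex $v_\beta = v_{a_{s,k_1}}$ (playing the role of $e_2$) onto the already-aggregated ``big'' vertex of $\bar{Q}^\alpha$ (playing the role of $e_1$). By that theorem, the double bracket on $A^\beta$ differs from the one induced from $A^\alpha$ only by the additive contribution $\dgal{-,-}_{fus}$, so the task reduces to computing $\dgal{x,y}_{fus}$ for $x,y \in \{\alpha, \alpha^*, \beta, \beta^*\}$ via Lemma \ref{dbrFUS}.

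The crucial preliminary is to identify the type \eqref{type1}--\eqref{type4} of each of the four arrows as a generator of $A^\beta$. Since $v_\alpha$ already lies in the big vertex in $\bar{Q}^\alpha$, the tail of $\alpha$ sits at $e_1$ and not at $v_\beta$; moreover, its head sits at $v_{\alpha^*}$, which is distinct from $v_\beta$ because the hypothesis $\alpha \neq \beta,\beta^*$ forces $\alpha^* \neq \beta$. Hence $\alpha$ (and symmetrically $\alpha^*$) meets $v_\beta$ at neither endpoint and is a generator of \emph{first} type. By contrast $\beta$ has its tail at $v_\beta = e_2$ and its head elsewhere, so $\beta$ is of \emph{second} type, while $\beta^*$ is of \emph{third} type.

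With the types settled, the four brackets $\dgal{\alpha,\beta}_{fus}$, $\dgal{\alpha,\beta^*}_{fus}$, $\dgal{\alpha^*,\beta}_{fus}$, $\dgal{\alpha^*,\beta^*}_{fus}$ are obtained by direct substitution into \eqref{tu} (first--second) and \eqref{tv} (first--third). Each resulting summand is then simplified using the idempotent-absorption rules in $A^\beta$: because $t_\beta(\alpha) = e_1$, one has $e_1\alpha = \alpha$ unconditionally, whereas $\alpha e_1$, $e_1\alpha^*$, as well as the products $\alpha\beta$ and $\beta^*\alpha^*$, are nonzero precisely when $h_\beta(\alpha) = e_1$, which is exactly the condition $t_\beta(\alpha^*)=t_\beta(\alpha)$ encoded by the Kronecker delta in the statement. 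The routine bookkeeping then yields \eqref{TT}--\eqref{HH} term by term.

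No serious obstacle is expected; the only mildly delicate point is to recognise that $\delta_{t_\beta(\alpha),t_\beta(\alpha^*)}$ is precisely the combinatorial shadow of whether $\alpha$ has become a loop at the big vertex by the time we fuse $v_\beta$, equivalently, of which summand in each application of \eqref{tu} or \eqref{tv} corresponds to a composable product of arrows in $A^\beta$.
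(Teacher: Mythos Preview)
Your proposal is correct and follows essentially the same approach as the paper's proof: identify $\alpha,\alpha^*$ as first-type and $\beta,\beta^*$ as second- and third-type generators, then read off the four fusion contributions from \eqref{tu} and \eqref{tv}, interpreting the surviving idempotent factors via the Kronecker delta. The only cosmetic difference is that the paper phrases the type identification as ``$h_\alpha(\beta)\neq t_\alpha(\beta)$ by the fusion order'' rather than tracing back to $v_{\alpha^*}\neq v_\beta$ in $\bar Q^{sep}$, but the content is identical.
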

\begin{proof}
We know that $h_\alpha(\beta)\neq t_\alpha(\beta)$ (otherwise it would contradict the order in which we glue vertices), so we have that $\alpha,\alpha^\ast$ are generators of the first type, $\beta$ is a generator of the second type and $\beta^\ast$ is a generator of the third type in the algebra $A^\beta$ obtained after fusing $w_1:=v_\alpha$ and $w_2:=v_\beta$. We have by \eqref{tu} that the following terms appear in the double quasi-Poisson bracket $\dgal{-,-}_\beta$ on $A^\beta$ for $\dgal{\alpha,\beta}_\beta$ : 
$\frac12 (e_{w_1}\otimes \alpha \beta - \alpha \otimes \beta)$. The first term is non-zero only if $h_\beta(\alpha)=t_\beta(\beta)$, or  $t_\beta(\alpha^\ast)=t_\beta(\alpha)$, hence we can multiply it by $\delta_{t_\beta(\alpha),t_\beta(\alpha^\ast)}$. After all fusions are performed, $w_1$ is just $t_\beta(\alpha)$ and we get \eqref{TT}. 

Using again \eqref{tu} then twice \eqref{tv} amounts to add the terms 
\begin{equation*}
  \begin{aligned}
     &  \frac12 (e_{w_1}\otimes \alpha^\ast \beta - e_{w_1}\alpha^\ast \otimes \beta) \,\, \text{ in }\dgal{\alpha^\ast,\beta}_\beta\, \, , \\
     & \frac12 (\beta^\ast \alpha \otimes e_{w_1}-\beta^\ast \otimes \alpha e_{w_1})\,\, \text{ in }\dgal{\alpha,\beta^\ast}_\beta\, \, , \\
     & \frac12 (\beta^\ast \alpha^\ast \otimes e_{w_1}-\beta^\ast \otimes \alpha^\ast)\,\, \text{ in }\dgal{\alpha^\ast,\beta^\ast}_\beta\, \, .  
  \end{aligned}
\end{equation*}
A  discussion as in the first case allows to get \eqref{TH}--\eqref{HH}.   
\end{proof}

To prove \eqref{a<bG}, we have to show that the equality holds for any kind of ordering when the two arrows meet, as it is trivially zero if they do not. We first show what happens if they meet at exactly one vertex. 

If $t(b)=t(c)$, assuming that $b<_{t(b)}c$ we get by \eqref{TT} with $\alpha=b,\beta=c$ a term $-\frac12 b \otimes c$ in $\dgal{b,c}$. If instead $c<_{t(b)}b$, we get by \eqref{TT} with $\alpha=c,\beta=b$ a term $-\frac12 c \otimes b$ in $\dgal{c,b}$, hence a term $+\frac12 b \otimes c$ in $\dgal{b,c}$ by cyclic antisymmetry. This proves \eqref{a<bG} in this case. 

Next, assuming only $t(b)=h(c)$ and $b<_{t(b)}c^\ast$, we get by \eqref{TH} with $\alpha=b,\beta=c^\ast$ a term $+\frac12 cb \otimes e_{t(b)}$ in $\dgal{b,c}$. If $c^\ast <_{t(b)}b$, we use \eqref{HT} with $\alpha=c^\ast,\beta=b$ to get a term  $+\frac12 e_{t(c^\ast)}\otimes cb$ in $\dgal{c,b}$, so this gives $-\frac12 cb \otimes e_{t(b)}$ as expected.  

Then, for $h(b)=t(c)$ with $b^\ast <_{h(b)}c$, we have from \eqref{HT} with $\alpha=b^\ast,\beta=c$ the term $+\frac12 e_{t(b^\ast)}\otimes bc$ in $\dgal{b,c}$. If $c<_{h(b)}b^\ast$ instead, we have from \eqref{TH} with $\alpha=c,\beta=b^\ast$ the term $+\frac12 bc \otimes e_{t(c)}$ in $\dgal{c,b}$, which yields $-\frac12 e_{h(b)} \otimes bc$ in $\dgal{b,c}$ and also finishes this case. 

Finally, we assume $h(b)=h(c)$. If $b^\ast<_{h(b)} c^\ast$, we get by \eqref{HH} with $\alpha=b^\ast,\beta=c^\ast$ the contributing term $-\frac12 c \otimes b$ in $\dgal{b,c}$, while for   $c^\ast<_{h(b)} b^\ast$ we obtain a term  $-\frac12 b \otimes c$ in $\dgal{c,b}$, and thus $+\frac12  c \otimes b$ in $\dgal{b,c}$ as desired. 

If $b,c$ meet at two vertices but none of them is a loop, we can conclude by adding together the two corresponding results just derived. Hence, it remains the tedious computation to check the cases when at least $b$ or $c$ is a loop. We now write two illuminating cases where $h(b)=t(b)=t(c)$, and leave to the reader the task to verify all the remaining cases (noting that we only need to check half these cases because of the cyclic antisymmetry) using \eqref{TT}--\eqref{HH}. 

Assume that  $h(b)=t(b)=t(c)$ and $b<_{t(b)}b^\ast<_{h(b)}c$. When we first glue the vertices $v_{b},v_{b^\ast}$ in $\bar{Q}^{sep}$ corresponding to $t(b),h(b^\ast)$, no term contributes to $\dgal{b,c}$. Hence, we only need to understand what happens when we glue the vertices corresponding to $t(b)=h(b)$ and $t(c)$, and by \eqref{TT} with $\alpha=b,\beta=c$ we get the term $-\frac12 b \otimes c + \frac12 e_{t(b)}\otimes bc$, as expected.  
(Alternatively, we could have used $\eqref{HT}$ with $\alpha=b^\ast,\beta=c$ to get the same answer. It is important to remark that we glue vertices not arrows, so that only one of these two cases has to be considered, not both together.) 

Assume that  $h(b)=t(b)=t(c)$ and $b<_{t(b)}c<_{h(b)}b^\ast$. When gluing the vertices of $\bar{Q}^{sep}$ corresponding to $t(b)$ and $t(c)$, we get by \eqref{TT} with $\alpha=b,\beta=c$ the only term $-\frac12 b \otimes c$ contributing to $\dgal{b,c}$ since $b$ is not (yet) a loop. Next, when we glue $t(c)=t(b)$ and $h(b)$, we get by \eqref{TH} with $\alpha=c,\beta=b^\ast$ a term $+\frac12 bc \otimes e_{t(c)}$ in $\dgal{c,b}$ since $c$ is not a loop, hence the term $-\frac12 e_{t(b)}\otimes bc$ contributes to $\dgal{b,c}$ and we are done.

\begin{figure}[ht]
   \begin{tikzpicture}[scale=2]
     \draw (-3,0) arc [start angle = 180, end angle = 30,  x radius = 14mm, y radius = 8mm]; 
     \draw (-3,0) arc [start angle = 180, end angle = 330,  x radius = 14mm, y radius = 8mm];
     \draw (-2.3,0) arc [start angle = 180, end angle = 0,  x radius = 7mm, y radius = 3mm];
     \draw (-2.45,0.1) arc [start angle = 210, end angle = 330,  x radius = 10mm, y radius = 5mm];
     \draw[red] (-2.6,0.1) arc [start angle = 180, end angle = 0,  x radius = 10mm, y radius = 5mm]; 
     \draw[red,->] (-2.6,0.1) arc [start angle = 180, end angle = 360,  x radius = 10mm, y radius = 5mm];
     \node [red] (a) at (-0.65,0.4) {$\alpha$};
     \draw[red] (-1.6,-0.15) arc [start angle = 100, end angle = 260,  x radius = 1.5mm, y radius = 3.3mm]; 
     \draw[red,->] (-1.6,-0.8) arc [start angle = 260, end angle = 200,  x radius = 1.5mm, y radius = 3.3mm]; 
     \draw[red,dashed] (-1.6,-0.15) arc [start angle = 80, end angle = -80,  x radius = 1.5mm, y radius = 3.3mm];
     \node [red] (b) at (-1.85,-0.6) {$\beta$};
\draw[black] (-0.388,0.4) to[out=-40,in=200] (0.3,0.3);
\draw[black] (-0.388,-0.4) to[out=40,in=-200] (0.3,-0.3);
     \draw (0.3,0.3) arc [start angle = 100, end angle = 260,  x radius = 1.4mm, y radius = 3.05mm]; 
     \draw (0.3,0.3) arc [start angle = 80, end angle = -80,  x radius = 1.4mm, y radius = 3.05mm]; 
     \node (ast) at (0.5,0) {$\ast$};
     \node[circle,fill=black,inner sep=0pt,minimum size=3pt] (circ) at (0.42,0) {};
     \draw[blue] (0,0.245) arc [start angle = 100, end angle = 260,  x radius = 1.2mm, y radius = 2.5mm]; 
     \draw[blue,->] (0,0.245) arc [start angle = 100, end angle = 185,  x radius = 1.2mm, y radius = 2.5mm]; 
     \draw[blue,dashed] (0,0.245) arc [start angle = 80, end angle = -80,  x radius = 1.2mm, y radius = 2.5mm]; 
     \node [blue] (c) at (-0.2,0.15) {$\Phi$};
\draw (2,0.5) -- (4,0.5); 
\draw (2,-0.5) -- (4,-0.5); 
     \draw (2,0.5) arc [start angle = 100, end angle = 260,  x radius = 1.4mm, y radius = 5.08mm]; 
     \draw[dashed] (2,0.5) arc [start angle = 80, end angle = -80,  x radius = 1.4mm, y radius = 5.08mm]; 
     \draw (4,0.5) arc [start angle = 100, end angle = 260,  x radius = 1.4mm, y radius = 5.08mm]; 
     \draw (4,0.5) arc [start angle = 80, end angle = -80,  x radius = 1.4mm, y radius = 5.08mm]; 
     \node (ast2) at (4.2,0) {$\ast$};
     \node[circle,fill=black,inner sep=0pt,minimum size=3pt] (circ) at (4.12,0) {};
     \draw[blue] (3.6,0.5) arc [start angle = 100, end angle = 260,  x radius = 1.4mm, y radius = 5.08mm]; 
     \draw[blue,->] (3.6,0.5) arc [start angle = 100, end angle = 185,  x radius = 1.4mm, y radius = 5.08mm]; 
     \draw[blue,dashed] (3.6,0.5) arc [start angle = 80, end angle = -80,  x radius = 1.4mm, y radius = 5.08mm]; 
     \node [blue] (cdeux) at (3.4,0.15) {$\Phi$};
     \draw[red] (2.5,0.5) arc [start angle = 100, end angle = 260,  x radius = 1.4mm, y radius = 5.08mm]; 
     \draw[red,->] (2.5,0.5) arc [start angle = 100, end angle = 185,  x radius = 1.4mm, y radius = 5.08mm]; 
     \draw[red,dashed] (2.5,0.5) arc [start angle = 80, end angle = -80,  x radius = 1.4mm, y radius = 5.08mm]; 
     \node [red] (g) at (2.3,0.15) {$\gamma$};
  \end{tikzpicture}
  \caption{A system of loops on $\Sigma$ in the cases $(g,r)=(1,0)$ and $(g,r)=(0,1)$. They can be used as  generators for $\pi_1(\Sigma,\ast)$ after being connected to the base point $\ast \in \partial\Sigma$ in a natural way.}
  \label{fig:Pi}
\end{figure}
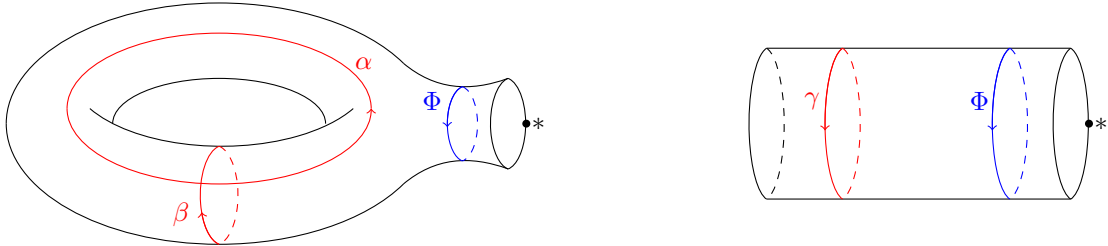

\subsection{Double quasi-Poisson brackets for fundamental groups of surfaces} \label{ss:surfaces} Let $\Sigma$ denote a compact connected surface with fixed orientation, and  such that it has a non-empty boundary $\partial \Sigma$. We denote by $g\geq 0$ its genus, and $r+1\geq 1$ the number of boundary components. Let $\ast \in \partial \Sigma$ be a base point, and denote by $\pi_1(\Sigma,\ast)$ the corresponding fundamental group of $\Sigma$.  The algebra $A=\kk \pi_1(\Sigma,\ast)$ can be presented in terms of generators $\alpha_i^{\pm 1},\beta_i^{\pm1}, \gamma_k^{\pm1},\Phi^{\pm1}$, $1\leq i \leq g$, $1\leq k \leq r$, subject to the relation 
\begin{equation} \label{Eq:Surface}
  \prod^{\rightarrow}_{1\leq i\leq g}[\alpha_i,\beta_i]\prod^{\rightarrow}_{1 \leq k \leq r}\gamma_k=\Phi\,.
\end{equation}
Here, $\Phi$ represents the loop around the boundary component containing $\ast$ (with suitable orientation, see Figure \ref{fig:Pi}), and we used the multiplicative commutator $[\alpha,\beta]=\alpha\beta \alpha^{-1}\beta^{-1}$. Note that in the products we write the factors from the left to the right with increasing indices.

Our aim is to give an alternative proof relying only on fusion of the next result due to Massuyeau and Turaev \cite{MT14}, which endows $A$ with a quasi-Hamiltonian algebra structure. (We rescale their double bracket by a  factor $1/2$.) Hence, this proof is the  non-commutative analogue of the fusion process for representation varieties \cite{QuasiP}.

\begin{thm} \label{ThmPI}
For the presentation considered above, the algebra $A=\kk \pi_1(\Sigma,\ast)$ has a double quasi-Poisson bracket defined for any $1\leq i \leq g$ by 
\begin{equation}
  \begin{aligned}
    \dgal{\alpha_i,\alpha_i}=&\frac12( \alpha_i^2 \otimes 1 - 1 \otimes \alpha_i^2)\,, \quad 
   \dgal{\beta_i,\beta_i}=-\frac12(\beta_i^2 \otimes 1 - 1 \otimes \beta_i^2) \,, \\
     \dgal{\alpha_i,\beta_i}=&\,\,\,\frac12\, \left(\beta_i \alpha_i \otimes 1 + 1 \otimes \alpha_i \beta_i - \alpha_i \otimes \beta_i + \beta_i \otimes \alpha_i \right) \,  \,, 
  \end{aligned} 
\end{equation}
for any $\phi_i \in \{\alpha_i,\beta_i\}$, $1\leq i \leq g$, and $i<j$, it is defined by 
\begin{equation} \label{Eq:phiij}
  \dgal{\phi_i,\phi_j}=\frac12 (\phi_j \phi_i \otimes 1 + 1 \otimes \phi_i \phi_j - \phi_i \otimes \phi_j - \phi_j \otimes \phi_i)\,,
\end{equation}
for any $\phi_i \in \{\alpha_i,\beta_i\}$, $1\leq i \leq g$, and $1 \leq k \leq r$, it is defined by 
\begin{equation}
  \dgal{\phi_i,\gamma_k}
=\frac12 (\gamma_k \phi_i \otimes 1 + 1 \otimes \phi_i \gamma_k - \phi_i \otimes \gamma_k - \gamma_k \otimes \phi_i)\,,
\end{equation}
and for any $1\leq k \leq r$ and $k<l$, it is defined by   
\begin{equation}
\begin{aligned}
  \dgal{\gamma_k,\gamma_k}=&\frac12(\gamma_k^2 \otimes 1 - 1 \otimes \gamma_k^2)\,, \\
\dgal{\gamma_k,\gamma_l}=&\frac12 (\gamma_l \gamma_k \otimes 1 + 1 \otimes \gamma_k \gamma_l - \gamma_k \otimes \gamma_l - \gamma_l \otimes \gamma_k).
\end{aligned}
\end{equation}
Furthermore, for any $a=\alpha_i,\beta_i,\gamma_k$, the double bracket with $\Phi$ is given by 
\begin{equation} 
 \dgal{\Phi,a}=\frac12 (a\otimes \Phi-1 \otimes \Phi a +  a \Phi \otimes 1-\Phi \otimes a)\,.
\end{equation}
In particular, $\Phi$ is a multiplicative moment map, and $A$ is quasi-Hamiltonian. 
\end{thm}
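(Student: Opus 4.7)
The plan is to realize $(A, \dgal{-,-}, \Phi)$ as an iterated fusion of $g+r$ elementary quasi-Hamiltonian algebras, adapting the classical fusion procedure for representation varieties to the double-bracket setting. We use two kinds of building blocks: the \emph{annulus block} $A'_k = \kk[\gamma_k^{\pm 1}]$, equipped with the double bracket $\dgal{\gamma_k, \gamma_k} = \frac12(\gamma_k^2 \otimes 1 - 1 \otimes \gamma_k^2)$ and multiplicative moment map $\gamma_k$; and the \emph{handle block} $A_i = \kk\langle \alpha_i^{\pm 1}, \beta_i^{\pm 1}\rangle$, equipped with the theorem's intra-handle double brackets and multiplicative moment map $\Phi_i = [\alpha_i, \beta_i]$.

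First, we verify that each elementary block is itself quasi-Hamiltonian over $\kk$. The annulus block is covered by Example~\ref{Exmpl1} (quasi-Poissonicity via Proposition~\ref{Pr:Free1}, extended to the localisation by Proposition~\ref{Pr:Loc}), and the moment-map relation \eqref{Phim} for $\Phi = \gamma_k$ is a short direct check. The handle block must be established independently, either by direct verification of \eqref{qPabc} and \eqref{Phim} on the two generators $\alpha_i, \beta_i$, or by identifying it as an instance of the 2-generator classification in Section~\ref{classif}.

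Next we assemble by fusion. Form the direct sum $\tilde A = A_1 \oplus \cdots \oplus A_g \oplus A'_1 \oplus \cdots \oplus A'_r$ over $\tilde B = \bigoplus_n \kk e_n$ (one idempotent per block); by Remark~\ref{RemOplus} it is quasi-Hamiltonian with block-diagonal moment map $(\Phi_1,\ldots,\Phi_g,\gamma_1,\ldots,\gamma_r)$. We then fuse the idempotents $e_2,\ldots,e_{g+r}$ onto $e_1$ one at a time, invoking Theorems~\ref{ThmFusBr} and~\ref{ThmFusMomap} at each stage. Under the natural identification $e_{12} w e_{21} \leftrightarrow w$ from Lemma~\ref{AfGenerat}, at each step the newly-incorporated block's generators are of fourth type while previously fused generators are of first type, so by \eqref{tw} in Lemma~\ref{dbrFUS} the contribution of $\dgal{-,-}_{fus}$ to the cross-bracket of $t$ with $w$ reads exactly $\frac12(wt \otimes 1 + 1 \otimes tw - w \otimes t - t \otimes w)$, matching \eqref{Eq:phiij} and its $\alpha_i$--$\gamma_k$, $\gamma_k$--$\gamma_l$ analogues. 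The within-block brackets are preserved since \eqref{tt} and \eqref{ww} both vanish. Successive application of Theorem~\ref{ThmFusMomap} then yields
\begin{equation*}
\Phi^f = \Phi_1 \cdots \Phi_g\, \gamma_1 \cdots \gamma_r = \prod_{i=1}^{g}[\alpha_i, \beta_i]\prod_{k=1}^{r}\gamma_k,
\end{equation*}
which equals $\Phi$ by the relation \eqref{Eq:Surface}, and the underlying algebra is the appropriate localisation of $\kk\langle \alpha_i^{\pm 1}, \beta_i^{\pm 1}, \gamma_k^{\pm 1}\rangle$, identified with $\kk\pi_1(\Sigma, \ast)$.

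The principal obstacle is the first step: the handle block cannot be obtained by a naive fusion of $\kk[\alpha^{\pm 1}]$ with $\kk[\beta^{\pm 1}]$, as the product of the two moment maps would be $\alpha\beta^{\pm 1}$ rather than the commutator $[\alpha,\beta]$. The correct commutator moment map requires a separate argument, for instance a direct finite computation of \eqref{qPabc} and \eqref{Phim} on the two generators, or a more sophisticated internal-fusion construction from an annulus groupoid algebra carrying two basepoints. Once the handle block is in hand, the remaining steps reduce to a routine bookkeeping exercise with the explicit formulas of Lemma~\ref{dbrFUS} and the trace identifications in the fusion algebra.
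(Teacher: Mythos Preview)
Your proposal is correct and follows essentially the same route as the paper: build $A$ by iterated fusion of $g$ handle blocks and $r$ annulus blocks via Remark~\ref{RemOplus} and Theorems~\ref{ThmFusBr}--\ref{ThmFusMomap}, reading off the cross-brackets from \eqref{tw}. The one point you leave open---the quasi-Hamiltonian structure on the handle block with moment map $[\alpha,\beta]$---is exactly what the paper resolves, and in fact by the very mechanism you anticipate in your last paragraph: the handle block is Example~\ref{Fus:MyCas2} with $t=\alpha$, $s=\beta$, $\delta=1$, $\gamma=0$, obtained by fusing the two idempotents of the localised path algebra of $\bar Q_1$ (Example~\ref{ExpVdBQ1}), so that the fused moment map is $(ts)(st)^{-1}=\alpha\beta\alpha^{-1}\beta^{-1}$. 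So your ``annulus groupoid algebra carrying two basepoints'' is precisely $\kk\bar Q_1$ localised at $ts$ and $st$, and no separate direct verification is needed.
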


\begin{proof}
We skip the trivial case $g=r=0$ where $A=\kk$. 
  If $g=0,r=1$, we have the generators of the boundary components, call them $\gamma,\Phi$, with $\Phi$ corresponding to the component containing $\ast \in \partial \Sigma$.  
Note that the algebra $\kk [\gamma^{\pm1}]$ has a double quasi-Poisson bracket $\dgal{\gamma,\gamma }=\frac12(\gamma^2 \otimes 1 - 1 \otimes \gamma^2)$ such that $\gamma$ is a moment map as we show in \ref{classifk1}. Since it is isomorphic to  $A_0=\kk \langle \gamma^{\pm1},\Phi^{\pm1}\rangle/(\gamma=\Phi)$, we have a quasi-Hamiltonian algebra structure on $A=A_0$.

If $g=1,r=0$, we have two generating cycles $\alpha,\beta$ and the generator of the boundary component $\Phi$, so that $A$ is just $A_1=\kk\langle \alpha^{\pm1},\beta^{\pm1},\Phi^{\pm1}\rangle /([\alpha,\beta]=\Phi)$. But the algebra $\kk\langle \alpha^{\pm1},\beta^{\pm1}\rangle$ is quasi-Hamiltonian by Example \ref{Fus:MyCas2} (with $t=\alpha$, $s=\beta$, $\delta=1$, $\gamma=0$), with double quasi-Poisson  bracket  
\begin{equation}
  \begin{aligned}
    \dgal{\alpha,\alpha}=&\frac12( \alpha^2 \otimes 1 - 1 \otimes \alpha^2)\,, \quad 
   \dgal{\beta,\beta}=-\frac12(\beta^2 \otimes 1 - 1 \otimes \beta^2) \,, \\
     \dgal{\alpha,\beta}=&\,\,\,\frac12\, \left(\beta \alpha \otimes 1 + 1 \otimes \alpha \beta - \alpha \otimes \beta + \beta \otimes \alpha \right) \,  \,, \label{brPI1}
  \end{aligned} 
\end{equation}
and moment map $\Phi=[\alpha, \beta]$. By identification, we get a quasi-Hamiltonian algebra structure on $A=A_1$. 

We now prove the general case. We consider $g$ copies of the quasi-Hamiltonian algebra $A_1$ and $r$ copies of $A_0$, 
and we form $A_1 \oplus \ldots \oplus A_1 \oplus A_0 \oplus \ldots \oplus A_0$. By Remark \ref{RemOplus}, this is a quasi-Hamiltonian algebra. We denote the element $(0,\ldots,0,1,0,\ldots,0)$ with $1$ in $i$-th position as $e_i$, $1\leq i \leq g+r$.   By fusing $e_2$ onto $e_1$, then $e_3$ onto $e_1$ and so on, we get a quasi-Hamiltonian algebra structure by fusion on 
\begin{equation*}
  \kk \langle \alpha_i^{\pm 1},\beta_i^{\pm1},\Phi_i^{\pm1}, \gamma_k^{\pm1},\bar{\Phi}_k^{\pm1} \mid 1\leq i \leq g,\, 1 \leq k \leq r \rangle / ([\alpha_i,\beta_i]=\Phi_i, \gamma_k=\bar{\Phi}_k)\,,
\end{equation*}
where $\alpha_i,\beta_i,\Phi_i$ are the images of $\alpha,\beta,\Phi$ from the $i$-th copy of $A_1$, $1\leq i \leq g$, while $\gamma_k,\bar{\Phi}_k$ are the images of $\gamma,\Phi$ in the $k$-th copy of $A_0$. 
Rewriting the moment map  in the algebra obtained by fusion in terms of the $\Phi_i,\bar{\Phi}_k$ using Theorem \ref{ThmFusMomap}, then removing these unnecessary elements, we can rewrite the latter algebra as 
\begin{equation} \label{AlgPI}
  \kk \langle \alpha_i^{\pm 1},\beta_i^{\pm1}, \gamma_k^{\pm1},\Phi^{\pm1} \mid 1\leq i \leq g,\, 1 \leq k \leq r \rangle / (\prod^{\rightarrow}_{1\leq i\leq g}[\alpha_i,\beta_i]\prod^{\rightarrow}_{1 \leq k \leq r}\gamma_k=\Phi)\,.
\end{equation}
This is precisely $A$. The double quasi-Poisson bracket is then easily obtained from Theorem \ref{ThmFusBr}, Lemma \ref{dbrFUS}, and the ones on $A_0,A_1$. For example, fix $1<j\leq g$. After the step of fusion of $e_j$ onto $e_1$, any $\phi_i \in \{\alpha_i,\beta_i\}$ with $1\leq i <j$ is a generator of first type \eqref{type1} while $\phi_j \in \{\alpha_j,\beta_j\}$ is a generator of fourth type \eqref{type4}, so that $\dgal{\phi_i,\phi_j}$ gets a contribution given by \eqref{tw}. The fusion of $e_k$ onto $e_1$ with $k\neq j$ does not give any additional term in $\dgal{\phi_i,\phi_j}$, and we obtain \eqref{Eq:phiij}. 
\end{proof}

\begin{rem}
  To see that the double bracket from Theorem \ref{ThmPI} coincides with the one of Massuyeau-Turaev, note that the double brackets that do not involve the moment map are just those given in \cite[\S 8.3]{MT14}, while for the moment map they are given in \cite[\S 9.2]{MT14}. In particular, our construction is such that the moment map is the generator of the loop at the boundary component containing $\ast \in \partial \Sigma$. 

We should also note that our proof applies to the case of a weighted surface discussed in \cite[Section 10]{MT14}, i.e. when we fix $n_k \in \N^\times$, $1\leq k \leq r$, so that the generators $\alpha_i,\beta_i, \gamma_k$ (see \eqref{AlgPI}) satisfy the extra constraints $\gamma_k^{n_k}=1$ for $1\leq k \leq r$. Indeed, we can see that the ideal generated by $\gamma^n-1$ in $A_0$ is stable under the double bracket for any $n \in \N^\times$, so that we can start the proof with the algebras  $\kk[\gamma_k^{\pm1}]/(\gamma_k^{n_k}-1)$ instead of $r$ copies of $A_0$. 

Finally, remark that the way we are gluing components is the algebraic analogue of the \emph{boundary connected sum} discussed in \cite[Appendix B.2]{MT14}.
\end{rem}

\begin{rem}
It is an interesting problem to determine whether we can modify the definition of double quasi-Poisson bracket and keep a non-trivial fusion property as in Theorems \ref{ThmFusBr} and \ref{ThmFusMomap}. As a motivation, note that for $A=\kk \pi_1(\Sigma,\ast)$ the double quasi-Poisson bracket given in Theorem \ref{ThmPI} was introduced by Massuyeau-Turaev \cite{MT14} by (cyclically anti-)symmetrizing an operation $A^{\times 2} \to A^{\otimes 2}$ denoted by $\dgal{-,-}^\eta$. This means that for any $a,b \in A$, 
\begin{equation*}
  \dgal{a,b}=\dgal{a,b}^\eta + \frac12 1\otimes ab +\frac12 ba \otimes 1 -\frac12 a \otimes b -\frac12 b \otimes a\,,
\end{equation*}
see \cite[\S 7.2]{MT14} (recall that we rescale their double bracket by a factor $1/2$). 
 Since the couple $(A,\dgal{-,-})$ can be obtained by fusion, it would be interesting to see if there is an analogue proof for $(A,\dgal{-,-}^\eta)$ (note that $\dgal{-,-}^\eta$ is \emph{not} a double quasi-Poisson bracket as \eqref{Eq:cycanti} does not hold). An explicit form similar to Theorem \ref{ThmPI} of this particular operation can be found in \cite[Proposition 2.14]{AKKN}. For other uses of the operation $\dgal{-,-}^\eta$, see   \cite{AKKN18,AKKN} and references therein. 
 \end{rem}

\subsection{Morphisms of double quasi-Poisson algebras} Fix two double quasi-Poisson algebras $(A,\dgal{-,-})$ and $(A',\dgal{-,-}')$ over a $\kk$-algebra $B$. We say that a map $\psi:A \to A'$ is a \emph{morphism of double quasi-Poisson algebras (over $B$)} if $\psi$ is a morphism of $B$-algebras such that for any $a,b \in A$, 
\begin{equation} \label{Eq:MorQP}
  (\psi \otimes \psi)\,\dgal{a,b}=\dgal{\psi(a),\psi(b)}'\,.
\end{equation}
We say that it is an isomorphism of double quasi-Poisson algebras if $\psi$ is an isomorphism of $B$-algebras, which implies that the inverse $\psi^{-1}: A' \to A$ is also an isomorphism of double quasi-Poisson algebras. It seems natural to seek for isomorphisms between the different double quasi-Poisson algebra structures associated to quivers by Van den Bergh \cite{VdB1}, or the slight generalisation given by Theorem \ref{ThmVdB}. The same problem can be formulated for the double bracket of Massuyeau-Turaev \cite{MT14} given in Theorem \ref{ThmPI} if we change the presentation of the fundamental group by swapping factors\footnote{It was pointed out by an anonymous referee that this can be obtained from \cite{MT14} by invariance of the double bracket of Massuyeau-Turaev under self-homeomorphisms of the surface $\Sigma$ preserving $\ast$.} in \eqref{Eq:Surface}. In fact, these results easily follow from the next proposition, which is a non-commutative version of \cite[Proposition 5.7]{QuasiP}.      
\begin{prop}
  Assume that $(A,\dgal{-,-},\Phi)$ is a quasi-Hamiltonian algebra over $B=\oplus_s \kk e_s$. Consider the algebra $A^f_{1\leftarrow 2}$ obtained by fusing $e_2$ onto $e_1$ and the algebra $A^f_{1 \to 2}$ obtained by fusing $e_1$ onto $e_2$, which are both quasi-Hamiltonian algebras. Then there exists an isomorphism of double quasi-Poisson algebras $A^f_{1 \leftarrow 2}\to A^f_{1 \to 2}$ which preserves moment maps. 
\end{prop}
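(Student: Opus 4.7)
My plan is to realise both fusion algebras as corner subalgebras of the common extension algebra $\bar A = A \ast_B \bar B$ from \ref{subFusAlg}, namely $A^f_{1\leftarrow 2} = \epsilon \bar A \epsilon$ with $\epsilon = 1-e_2$ and $A^f_{1\to 2} = \epsilon' \bar A \epsilon'$ with $\epsilon' = 1-e_1$, and to connect them by an inner automorphism of $\bar A$. Inside $\bar B$, set $\mu = 1-e_1-e_2$ and introduce the swap element
\[
\sigma := e_{12}+e_{21}+\mu\in \bar B.
\]
A direct calculation using $e_{ij}e_{kl}=\delta_{jk}e_{il}$ shows $\sigma^2=1$ and that conjugation by $\sigma$ exchanges $e_1\leftrightarrow e_2$ and $e_{12}\leftrightarrow e_{21}$ while fixing $\mu$ and every $e_s$ with $s\neq 1,2$. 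In particular $\sigma\epsilon\sigma=\epsilon'$, so the map
\[
\psi_0:A^f_{1\leftarrow 2}\xrightarrow{\sim} A^f_{1\to 2},\qquad \psi_0(a)=\sigma a \sigma,
\]
is a $\kk$-algebra isomorphism intertwining $B^f$ and $B^{f'}$ via the identification $e_1\leftrightarrow e_2$; on the generators of Lemma \ref{AfGenerat}, it permutes types 2 and 3 and preserves types 1 and 4.

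Bracket preservation for $\psi_0$ should follow directly from the $\bar B$-linearity of the induced double bracket on $\bar A$ combined with the derivation rules: a short calculation using both Leibniz rules and $\dgal{\sigma,-}=0=\dgal{-,\sigma}$ yields $\dgal{\sigma a\sigma,\sigma b\sigma}=(\psi_0\otimes\psi_0)\dgal{a,b}$ on the nose, which takes care of the induced part $\dgal{-,-}$ on each fusion. For the correction $\dgal{-,-}_{\mathrm{fus}}$, Lemma \ref{LemF12} shows that $\psi_0$ carries the gauge decomposition $F_1=\Tr(E_1)+\Tr(E_2)$ on $A^f_{1\leftarrow 2}$ to the gauge decomposition $F'_2=\Tr'(E_2)+\Tr'(E_1)$ on $A^f_{1\to 2}$, and consequently sends the polyvector $-\tfrac12\Tr(E_1)\Tr(E_2)$ to $-\tfrac12\Tr'(E_2)\Tr'(E_1)$, which is exactly what defines the fusion correction on $A^f_{1\to 2}$ by the symmetric instance of Theorem \ref{ThmFusBr}. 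So $\psi_0$ is already a morphism of double quasi-Poisson algebras.

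For the moment map, however, a direct calculation gives $\psi_0(\Phi^f_1)=\Tr'(\Phi_1)\Tr'(\Phi_2)$, whereas the moment map of $A^f_{1\to 2}$ at $e_2$ is $\Phi^{f'}_2=\Tr'(\Phi_2)\Tr'(\Phi_1)$; these two units of $e_2 A^f_{1\to 2} e_2$ are conjugate via $\Tr'(\Phi_2)=\Phi_2$. I would therefore correct $\psi_0$ by the inner automorphism induced by the unit $\tilde\Phi_2:=\Phi_2+(1-e_2)\in(A^f_{1\to 2})^\times$ and set
\[
\psi(a):=\tilde\Phi_2\,\psi_0(a)\,\tilde\Phi_2^{-1},
\]
so that $\psi(\Phi^f_1)=\Phi^{f'}_2$ and the remaining components $\Phi^{f'}_s$ with $s\neq 1,2$ are fixed. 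The final and main obstacle is to verify that conjugation by $\tilde\Phi_2$ is itself a morphism of double quasi-Poisson algebras; this is the noncommutative analogue of the fact that the flow generated by a moment map preserves the quasi-Poisson structure, and should follow by a direct computation from the multiplicative moment map axiom \eqref{Phim} for $\Phi_2$, showing that the contributions of $\dgal{\Phi_2,-}$ and the infinitesimal inner derivation combine to give invariance of $\dgal{-,-}^{f'}$ under $a\mapsto\tilde\Phi_2 a\tilde\Phi_2^{-1}$. Granting this gauge-invariance step, $\psi$ is the desired isomorphism of quasi-Hamiltonian algebras.
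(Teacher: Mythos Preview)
The paper itself does not supply a proof of this proposition; it explicitly states that the argument is tedious, defers it to future work, and only records that the map in question should induce the Crawley--Boevey--Shaw isomorphisms between multiplicative preprojective algebras with different orderings. So there is no paper proof to compare against, only your outline.

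Your construction of $\psi_0$ via the swap $\sigma=e_{12}+e_{21}+\mu$ is correct and elegant: the $\bar B$-linearity argument shows that $\psi_0$ intertwines the induced brackets, and the computation $(\psi_0)_\ast\bigl(-\tfrac12\Tr(E_1)\Tr(E_2)\bigr)=-\tfrac12\Tr'(E_2)\Tr'(E_1)$ matches the fusion bivector of $A^f_{1\to2}$. (The side remark that $\psi_0$ ``preserves types 1 and 4 and swaps 2 and 3'' is not quite right---for instance an element of $e_1Ae_1\subset\epsilon A\epsilon$ of first type is sent to $e_{21}(-)e_{12}$, which is of fourth type in $A^f_{1\to2}$---but this does not affect the bracket computation.)

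The genuine gap is your final step. You assert that conjugation by $\tilde\Phi_2$ preserves $\dgal{-,-}^{f'}$ and that this ``should follow by a direct computation from the multiplicative moment map axiom \eqref{Phim} for $\Phi_2$''. But $\Phi_2$ satisfies \eqref{Phim} only for the original bracket $\dgal{-,-}$ on $A$, not for the fused bracket $\dgal{-,-}^{f'}$ on $A^f_{1\to2}$. A short computation using the analogue of Lemma~\ref{dbrFUS} (with the roles of $1$ and $2$ exchanged) shows that for a second-type generator $b=e_{21}u$ one gets
\[
\dgal{\Phi_2,b}^{f'}=\tfrac12\bigl(be_2\otimes\Phi_2+b\Phi_2\otimes e_2+e_2\otimes\Phi_2 b-\Phi_2\otimes e_2 b\bigr),
\]
which differs from the moment-map expression \eqref{Phim} by the sign of the term $e_2\otimes\Phi_2 b$. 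So $\Phi_2$ is \emph{not} a moment map component in $A^f_{1\to2}$, and the hoped-for gauge-invariance argument does not apply. More generally, there is no reason for conjugation by a unit to be a morphism of double brackets, and even when a genuine moment-map axiom holds the passage from ``infinitesimal invariance'' to ``conjugation by $\Phi$ is an automorphism'' is not automatic in this noncommutative setting. The CBS isomorphisms the paper alludes to are not global inner automorphisms: they twist only the generators attached to the reordered arrows by local moment-map factors, leaving the others fixed. Completing your argument therefore requires either replacing the global conjugation by a map of that targeted form and checking compatibility with both $\dgal{-,-}$ and $\dgal{-,-}_{\mathrm{fus}}$ on all generator types, or proving directly (and it is not short) that the specific inner automorphism by $\tilde\Phi_2$ happens to commute with $\dgal{-,-}^{f'}$.
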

The proof of this statement is quite tedious, so we skip it and we will provide details in further work. 
 Let us simply mention that the isomorphisms between multiplicative preprojective algebras with different  orderings, which are given in the proof of \cite[Theorem 1.4]{CBShaw}, are precisely induced by this map.


\section{Elementary classification} \label{classif}

All our algebras are over a field $\kk$ of characteristic $0$ for convenience, but the discussion may be adapted to any integral domain (with unit) such that $2$ is invertible.  One could get rid of the latter localisation by rescaling the defining property \eqref{qPabc} as in \cite{MT14}.

\subsection{Polynomial ring in one variable} \label{classifk1}

We begin by classifying all double quasi-Poisson brackets on $A=\kk[t]$ over $B=\kk$. Our argument is similar to the classification of Powell \cite[Proposition A.1]{P16} in the case of a double Poisson bracket, i.e. when the associated triple bracket \eqref{Eq:TripBr} identically vanishes. We define a degree on $A$ by setting $|t|=1$, to get the decomposition $A=\oplus_{k\ge 0} \kk t^k$ in homogeneous components, which can clearly be extended to $A^{\otimes n}$ : an element $a_1 \otimes \ldots \otimes  a_n$ is homogeneous of degree $k$ if each $a_i$ is homogeneous in $A$ and $\sum_i |a_i|=k$. 

\begin{prop} \label{Pr:Free1}
  $A$ has a double bracket which is  quasi-Poisson if and only if it is  of the form 
\begin{equation}
  \dgal{t,t}=\lambda (t \otimes 1 - 1 \otimes t) + \mu (t^2 \otimes 1 - 1 \otimes t^2) + \nu (t^2 \otimes t - t \otimes t^2)\,, \label{qPtt}
\end{equation}
for $\lambda,\mu,\nu \in \kk$ with $4(\mu^2-\lambda \nu)=1$. 
\end{prop}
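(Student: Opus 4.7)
The plan is in three steps: parameterize $\dgal{t,t}$ via cyclic antisymmetry, bound its polynomial degree by a top-degree argument, and then solve a small finite-dimensional system. Since $A = \kk[t]$ is generated by $t$ and $B = \kk$, a double bracket is determined by the single value $\dgal{t,t} \in A \otimes A$, which by \eqref{Eq:cycanti} must satisfy $\dgal{t,t}^\circ = -\dgal{t,t}$. Hence $\dgal{t,t} = \sum_{0 \le i < j} d_{ij}(t^j \otimes t^i - t^i \otimes t^j)$ for scalars $d_{ij} \in \kk$, and conversely any such expression extends uniquely to a double bracket via the derivation rule \eqref{Eq:outder}. Because the triple bracket is $\kk$-trilinear and a derivation in its last entry, the quasi-Poisson identity \eqref{qPabc} reduces to a single equation obtained by setting $a = b = c = t$, relating $\dgal{t,t,t}$ to an explicit homogeneous tensor of total degree $3$.

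Next, I would decompose $\dgal{t,t} = \sum_n V_n$ into homogeneous components of total degree $n$. From the explicit form of \eqref{Eq:TripBr} together with the derivation rule, a pair $(V_k, V_l)$ contributes to $\dgal{t,t,t}$ only in total degree $k + l - 1$. So if $N$ is the top non-vanishing degree, the degree $2N - 1$ part of $\dgal{t,t,t}$ comes solely from the self-interaction of $V_N$ and coincides with its double-Poisson Jacobiator in that degree. Matching with the right-hand side (of degree $3$), this top piece must vanish whenever $N \ge 3$; a direct inspection, closely related to \cite[Proposition A.1]{P16} where Powell classifies double Poisson brackets on $\kk[t]$, then forces $V_N = 0$ for $N \ge 4$ and restricts the degree-$3$ part to $\nu(t^2 \otimes t - t \otimes t^2) + \delta(t^3 \otimes 1 - 1 \otimes t^3)$ for some scalars $\nu,\delta$.

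Finally, substituting the resulting four-parameter ansatz (obtained by adjoining a possible $\delta$-term to \eqref{qPtt}) into \eqref{Eq:TripBr} and matching tridegree by tridegree, the degree-$5$ contributions to $\dgal{t,t,t}$ are quadratic in $\delta$ and $\delta\nu$ with vanishing pure $\nu^2$ piece; their cancellation forces $\delta = 0$. The degree-$3$ matching then reduces, after exploiting the automatic cyclic symmetry \eqref{Eq:TriAnti}, to the single scalar identity $4(\mu^2 - \lambda\nu) = 1$, all remaining tridegrees being either automatic or vacuous. The converse direction is a direct verification of \eqref{qPabc} at $(t,t,t)$ for a bracket of this form. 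The main obstacle will be the degree-bound step: one must rule out any exotic homogeneous bivector of degree $\ge 4$ that could solve the double Poisson Jacobi identity on $\kk[t]$, which is precisely the content of Powell's classification and requires a careful combinatorial analysis of the tridegree components of $\dgal{t,t,t}_{V_N}$.
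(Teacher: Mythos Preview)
Your proposal is correct and follows essentially the same approach as the paper: decompose by total degree, use that the top homogeneous component must itself be double Poisson (since the quasi-Poisson right-hand side is homogeneous of degree $3$), invoke Powell's classification \cite[Proposition A.1]{P16} to bound the degree, and then compute the remaining finite-dimensional constraint. The only minor difference is that the paper applies Powell's result directly at degree $3$ to conclude immediately that $V_3$ is a multiple of $t^2\otimes t - t\otimes t^2$, whereas you keep the extra parameter $\delta$ in front of $t^3\otimes 1 - 1\otimes t^3$ and eliminate it afterwards via the degree-$5$ computation; this is a harmless detour that re-derives the degree-$3$ case of Powell's lemma rather than citing it.
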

\begin{proof}
First, we remark that the quasi-Poisson property can be rewritten from \eqref{qPabc} as requiring 
\begin{equation}
  \label{qPttbis}
\dgal{t,t,t}=\frac14 (1+\tau_{(123)}+\tau_{(123)}^2)(1 \otimes t^2 \otimes t - 1 \otimes t \otimes t^2)\,.
\end{equation}

  Next, following \cite[Proposition A.1]{P16}, we split the double bracket as $\dgal{-,-}=\sum_{k=\min}^{\max} \dgal{-,-}^k$, where 
$\dgal{-,-}^k$ is its homogeneous component of degree $k$, i.e. $\dgal{t,t}^k\in \oplus_{l\leq k} \kk t^l \otimes \kk t^{k-l}$.  We then obtain that the decomposition of the triple bracket $\dgal{-,-,-}$ in homogeneous components has in highest degree the triple bracket defined by $\dgal{-,-}^{\max}$ of degree $2\max-1$. Since \eqref{qPttbis} is homogeneous of degree $3$, we need that the triple bracket associated to  $\dgal{-,-}^{\max}$ vanishes if $\max \geq 3$, that is we need $\dgal{-,-}^{\max}$ to be a double Poisson bracket. But \cite[Proposition A.1]{P16} gives that such a homogeneous double Poisson bracket is non-zero only if its degree is at most $3$. Moreover, if $\max=3$, this result also yields that it is a multiple of $\dgal{t,t}^{3}:=t^2 \otimes t - t \otimes t^2$. 

We have thus obtained that $\dgal{t,t}$ must be of the form \eqref{qPtt} for some $\lambda,\mu,\nu \in \kk$. The corresponding triple bracket is easily computed  (see e.g. \cite[Proposition A.1]{P16}) and gives 
\begin{equation}
\dgal{t,t,t}=(\mu^2-\lambda \nu) (1+\tau_{(123)}+\tau_{(123)}^2)(1 \otimes t^2 \otimes t - 1 \otimes t \otimes t^2)\,,
\end{equation}
so we can conclude by comparing this last expression with \eqref{qPttbis}. 
\end{proof}

\begin{lem}
Assume that $A=\kk[t]$ is endowed with a double quasi-Poisson bracket in the form \eqref{qPtt}, and set 
$\bar A=\kk[t]_{(t-\lambda)}$. Then $\bar A$ is a quasi-Hamiltonian algebra if and only if $\nu=0$.  
\end{lem}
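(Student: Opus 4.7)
The plan is to analyse the moment-map condition \eqref{Phim} on the single generator $t$, after a change of variable that concentrates all the information into one distinguished monomial.

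Set $u := t - \lambda$, so that $\bar A = \kk[u, u^{-1}]$ has unit group $\kk^\times \cdot \{u^n : n \in \Z\}$. Any multiplicative moment map is therefore of the form $\Phi = c u^n$ with $c \in \kk^\times$ and $n \in \Z$. Substituting $t = u + \lambda$ into \eqref{qPtt} rewrites the bracket as $\dgal{u, u} = \lambda_u(u \otimes 1 - 1 \otimes u) + \mu_u(u^2 \otimes 1 - 1 \otimes u^2) + \nu(u^2 \otimes u - u \otimes u^2)$ for certain $\lambda_u, \mu_u \in \kk$; crucially, the cubic coefficient $\nu$ is unchanged.

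For the $(\Rightarrow)$ direction, I would plug $\Phi = c u^n$ and $a = t$ into \eqref{Phim}. The case $n = 0$ is ruled out immediately since $\dgal{c, t} = 0$ while the right-hand side equals $c(t \otimes 1 - 1 \otimes t) \neq 0$. For $n \neq 0$, applying the derivation rule iteratively (and using $\dgal{u^{-1}, u} = -u^{-1} \ast \dgal{u, u} \ast u^{-1}$ for $n < 0$) yields by telescoping a formula for $\dgal{u^n, u}$ whose $\nu$-part equals $\nu(u^{n+1} \otimes u - u \otimes u^{n+1})$, uniformly in $n$. Expanding the right-hand side of \eqref{Phim} with $t = u + \lambda$, all $\lambda$-dependent terms cancel pairwise, leaving $\tfrac{c}{2}(u \otimes u^n - u^n \otimes u + u^{n+1} \otimes 1 - 1 \otimes u^{n+1})$. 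The monomial $u^{n+1} \otimes u$ is present on the left only through the $\nu$-contribution and absent on the right, so matching its coefficient forces $c\nu = 0$, hence $\nu = 0$.

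For the $(\Leftarrow)$ direction, $\nu = 0$ combined with the constraint $4(\mu^2 - \lambda\nu) = 1$ yields $\mu = \pm \tfrac{1}{2}$. When $\mu = -\tfrac{1}{2}$, I would take $\Phi := u^{-1} = (t - \lambda)^{-1} \in \bar A^\times$ and verify \eqref{Phim} on $a = t$ by a short direct computation using $\dgal{u^{-1}, u} = -u^{-1} \ast \dgal{u, u} \ast u^{-1}$ and the cubic-term-free formula for $\dgal{u, u}$; by the derivation rule, verification on the generator suffices. The case $\mu = +\tfrac{1}{2}$ is analogous and yields a linear moment map.

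The main obstacle is purely computational: the telescoping of the Leibniz expansion $\dgal{u^n, u} = \sum_{j=0}^{n-1} u^j \ast \dgal{u, u} \ast u^{n-1-j}$ (and its analogue for $n < 0$) has to be performed carefully to isolate the $\nu$-monomial uniformly, and the low-degree cases $n = \pm 1$ must be checked by hand to confirm that the distinguished monomial does not degenerate into one already contributed by $\lambda_u$ or $\mu_u$.
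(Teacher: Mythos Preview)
Your argument is correct and close in spirit to the paper's, but with one genuine simplification and one difference in execution. Both proofs begin with the change of variable $\bar t = u = t - \lambda$ and observe that the cubic coefficient $\nu$ survives unchanged. For the forward direction, however, the paper does \emph{not} use the structure of the unit group: it writes a general $\Phi = \sum_{k_0 \le l \le k_1} c_l \bar t^{\,l}$ and runs a pure degree count, comparing the top degree $D + k_1 - 1$ of $\dgal{\bar t^{k_1}, \bar t}$ (coming from the derivation rule) against the top degree $k_1 + 1$ of the right-hand side of \eqref{PhimCt}, forcing $D \le 2$ and hence $\nu = 0$. Your route instead exploits that the units of $\kk[u,u^{-1}]$ are exactly $\kk^\times \cdot u^{\Z}$, reducing immediately to $\Phi = cu^n$; you then carry out the telescoping explicitly and isolate the single monomial $u^{n+1}\otimes u$. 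This buys you a completely concrete identification of the obstructing term, at the cost of the telescoping computation the paper avoids. The paper's degree argument is shorter and would survive in larger localisations where the unit group is more complicated; your version is more elementary and makes the vanishing transparent.

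For the converse, your proposal matches the paper's: both take $\Phi = (t-\lambda)^{\delta}$ with $\mu = \delta/2$ and verify \eqref{Phim} directly on the generator.
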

\begin{proof}
First, remark that when $\nu=0$, we have by Proposition \ref{Pr:Free1} that $\mu=\frac{\delta}{2}$ for some $\delta\in \{\pm 1\}$, and  $\Phi=(t-\lambda)^{\delta}$ is a moment map. 

For the converse, we see $\bar A$ as the graded algebra $\kk[\bar{t}^{\pm 1}]$, where $\bar t = t-\lambda$ has degree $+1$. We also note that \eqref{qPtt} is equivalent to   
\begin{equation}
  \dgal{\bar{t},\bar{t}}=(\lambda+2\lambda \mu+\lambda^2\nu) (\bar{t} \otimes 1 - 1 \otimes \bar{t}) + (\mu+\lambda \nu) (\bar{t}^2 \otimes 1 - 1 \otimes \bar{t}^2) + \nu (\bar{t}^2 \otimes \bar{t} - \bar{t} \otimes \bar{t}^2)\,. \label{qPttBis}
\end{equation}
Since $\bar{A}$ is quasi-Hamiltonian, there exists  an (invertible) element $\Phi$  that satisfies 
\begin{equation} \label{PhimCt}
 \dgal{\Phi,\bar{t}}=\frac12 (\bar{t}\otimes \Phi-1 \otimes \Phi \bar{t} +  \bar{t} \Phi \otimes 1-\Phi \otimes \bar{t})\,,
\end{equation} 
and which we can decompose  as 
\begin{equation} \label{Phidec}
 \Phi=\sum_{k_0\leq l \leq k_1} c_l \bar t^l\,, \quad c_{k_0},c_{k_1} \in \kk^\times\,.
\end{equation}
Then, we get by looking at \eqref{PhimCt} in highest degree that $c_{k_1} \dgal{\bar{t}^{k_1},\bar{t}}$ is of degree at most $k_1+1$. But using the derivation property \eqref{Eq:inder}, this highest degree is exactly  $D+k_1-1$, where $D$ is the maximal degree of $\dgal{\bar t,\bar t}$ given in \eqref{qPttBis}. This implies that $D\leq 2$, i.e. there is no component of degree $3$ in $\dgal{\bar t,\bar t}$. We get from \eqref{qPttBis} that  $\nu=0$. 
\end{proof}


\subsection{Algebra with two idempotents}   \label{classifQ1}

In the previous case, the algebra $A$ was simply a $\kk$-algebra with no non-trivial (i.e. distinct from $0,1$) idempotent elements. The simplest case where such a decomposition occurs consists in taking the path algebra $\kk Q_1$ of the quiver $Q_1$ with vertices $\{1,2\}$ and unique arrow $t:1 \to 2$. (For conventions on quivers and path algebras, see \ref{sss:quivGen}.) 
We can see $\kk Q_1$ as a $B$-algebra with $B=\kk e_1 \oplus \kk e_2$, and if we assume that we have a $B$-linear double bracket on $\kk Q_1$, the derivation rules yield
\begin{equation*}
  \dgal{t,t}=\dgal{e_1 t e_2, e_1 t e_2}=e_1 \ast e_1\dgal{t,t}e_2 \ast e_2\,.
\end{equation*}
Using Sweedler's notation, this implies that $\dgal{t,t}'$ and $\dgal{t,t}''$ are  of the form $\alpha t$ for some $\alpha \in \kk$. Therefore $\dgal{t,t}=\alpha \, t \otimes t$, and the cyclic antisymmetry implies $\alpha=0$ so that  $\kk Q_1$ can only be endowed with the zero double bracket. At the same time, it is easy to see that $\dgal{t,t,t}$ given by  \eqref{qPabc} vanishes for $\kk Q_1$, so we get the next result. 
\begin{lem} \label{LemQ1}
 The zero double bracket is the unique double quasi-Poisson bracket on  $\kk Q_1$.
\end{lem}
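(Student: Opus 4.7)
The plan is essentially the sketch already laid out in the excerpt preceding Lemma \ref{LemQ1}; I will make it precise and then address the quasi-Poisson condition itself, which is not discussed in detail there.

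First, I will show that any $B$-linear double bracket on $\kk Q_1$ is zero. Since $\kk Q_1$ is generated by $t = e_1 t e_2$ over $B$, the double bracket is completely determined by $\dgal{t,t}$. Using the right derivation rule \eqref{Eq:outder} on $\dgal{t, e_1 t e_2}$ together with $B$-linearity ($\dgal{t,e_1}=\dgal{t,e_2}=0$), I get $\dgal{t,t} \in e_1 A \otimes A e_2$ for the outer bimodule structure. Applying the equivalent left derivation rule \eqref{Eq:inder} on $\dgal{e_1 t e_2, t}$ together with $B$-linearity, I get $\dgal{t,t} \in A e_2 \otimes e_1 A$ for the inner bimodule structure, namely $\dgal{t,t}' \in Ae_2$ and $\dgal{t,t}'' \in e_1 A$. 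Combining both constraints, each slot lies in $e_1 A e_2 = \kk t$, so that $\dgal{t,t} = \alpha\, t \otimes t$ for some $\alpha \in \kk$. Cyclic antisymmetry \eqref{Eq:cycanti} then forces $\alpha\, t \otimes t = -\alpha\, t \otimes t$, hence $\alpha = 0$.

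Next, I will verify that the zero double bracket is in fact quasi-Poisson, i.e.\ that the right-hand side of \eqref{qPabc} also vanishes on $\kk Q_1$. Because everything is $B$-linear, it suffices to evaluate the right-hand side on the generator $t$ in all three slots. For $s = 2$, the relations $e_2 t = 0$ and $t e_1 = 0$ kill every term except the fourth, leaving $t \otimes t \otimes t$. For $s = 1$, they kill every term except the fifth, leaving $-t \otimes t \otimes t$. The two contributions cancel, so the right-hand side of \eqref{qPabc} is zero on $(t,t,t)$, which matches the (trivially vanishing) triple bracket associated to $\dgal{-,-} = 0$.

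Neither step involves any real obstacle: the first is forced by the bimodule arithmetic of orthogonal idempotents together with the cyclic antisymmetry, and the second is a small explicit check that the two surviving terms on the right-hand side of \eqref{qPabc} (one for each idempotent) cancel. The only thing worth emphasising in the write-up is that the derivation rules must be applied on both sides to extract both constraints $\dgal{t,t}' \in e_1 A \cap A e_2$ and $\dgal{t,t}'' \in e_1 A \cap A e_2$, since using only the outer bimodule structure would leave more room than $\kk\, t \otimes t$.
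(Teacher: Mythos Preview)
Your argument is correct and follows the same route as the paper: both reduce $\dgal{t,t}$ to $\alpha\, t\otimes t$ via the derivation rules and idempotent arithmetic, kill $\alpha$ by cyclic antisymmetry, and then observe that the right-hand side of \eqref{qPabc} vanishes on $(t,t,t)$. The paper leaves this last verification as ``easy to see'', so your explicit check that the two surviving terms (the fourth for $s=2$ and the fifth for $s=1$) cancel is a welcome addition rather than a departure.
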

As we have seen in \ref{classifk1}, the zero double bracket is \emph{not} quasi-Poisson on $\kk[t]$, and the fact that it is quasi-Poisson on $\kk Q_1$ is only due to the idempotent decomposition which implies $t^2=0$. In fact, if we consider $\kk[t]$ as the fusion algebra obtained by fusing $e_1$ and $e_2$ in $\kk Q_1$, the zero double quasi-Poisson bracket on $\kk Q_1$ yields after fusion the case $\lambda=\nu=0$ in Proposition \ref{Pr:Free1}.

\medskip

To get non-trivial examples of $B$-linear double brackets, we consider the double quiver $\bar{Q}_1$ obtained by adding to $Q_1$ the arrow $s=t^\ast:2\to 1$. 
If we define a degree on $A$ by setting $|s|=|t|=1$ and extend it to $A \otimes A$, we can characterise the  $B$-linear double quasi-Poisson brackets on $A$ that have degree at most $+4$ on generators. By the latter condition, we mean that 
$\dgal{s,s},\dgal{t,t}$ and $\dgal{t,s}$ (hence $\dgal{s,t}$) are sums of homogeneous terms of degree at most $+4$. 

\begin{prop} \label{Pr:Q1}
  Any  $B$-linear double quasi-Poisson bracket $\dgal{-,-}$ on $A=\kk\bar{Q}_1$ which has degree at most $+4$ on generators must be one of the following : 

\underline{\emph{Case 1:}} $\dgal{s,s}=0$, $\dgal{t,t}=0$ and one of the next two conditions holds 
\begin{subequations}
  \begin{align}
 &\text{\emph{1.a)}}\,\,  \,\, \dgal{t,s}=\frac{\delta}{2}(st \otimes e_1 - e_2 \otimes ts)\,, 
\quad \delta\in \{\pm1\} \,,  \label{Eq:Q1cqs11}\\
&\text{\emph{1.b)}}\,\,  \,\, \dgal{t,s}=
\gamma e_2\otimes e_1+\phi st \otimes ts + \alpha(st \otimes e_1 + e_2 \otimes ts)   \,, \quad
\alpha,\gamma,\phi\in \kk,\,\,\alpha^2=\frac14 + \gamma \phi\,; \label{Eq:Q1cqs12}
  \end{align}
\end{subequations}

\underline{\emph{Case 2:}} $\dgal{s,s}=0$, $\dgal{t,t}=\lambda (tst \otimes t - t \otimes tst)$ for $\lambda \in \kk^\times$ and 
\begin{equation*}
\dgal{t,s}= \frac{\delta}{2}(st \otimes e_1 - e_2 \otimes ts)\,, 
\quad \delta\in \{\pm1\} \,;
\end{equation*}

\underline{\emph{Case 3:}} $\dgal{t,t}=0$, $\dgal{s,s}=\lambda (sts \otimes s - s \otimes sts)$ for $\lambda \in \kk^\times$  and 
\begin{equation*}
\dgal{t,s}=  \frac{\delta}{2}(st \otimes e_1 - e_2 \otimes ts)\,, 
\quad \delta\in \{\pm1\} \,.
\end{equation*}
\end{prop}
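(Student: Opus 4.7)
The plan is to parametrise all admissible double brackets, reduce the quasi-Poisson condition to a finite system of polynomial equations in the parameters, and then classify the solutions.

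First I would use $B$-linearity together with the derivation rules \eqref{Eq:outder}--\eqref{Eq:inder} to confine each double bracket to a specific sub-bimodule: $\dgal{t,t}\in e_1 A e_2 \otimes e_1 A e_2$, $\dgal{s,s}\in e_2 A e_1 \otimes e_2 A e_1$ and $\dgal{t,s}\in e_2 A e_2 \otimes e_1 A e_1$. Basis paths in these spaces alternate between $s$ and $t$, so the degree-$4$ hypothesis (with $|s|=|t|=1$) leaves only finitely many monomials. Imposing the cyclic antisymmetry \eqref{Eq:cycanti} then forces
\[
\dgal{t,t}=\lambda\,(tst \otimes t - t \otimes tst),\qquad \dgal{s,s}=\mu\,(sts \otimes s - s \otimes sts),
\]
together with a six-parameter family
\[
\dgal{t,s}=\gamma\, e_2 \otimes e_1 + \alpha_+\, st \otimes e_1 + \alpha_-\, e_2 \otimes ts + \phi\, st \otimes ts + \psi_+\, stst \otimes e_1 + \psi_-\, e_2 \otimes tsts,
\]
so there are eight parameters $\lambda,\mu,\gamma,\alpha_\pm,\phi,\psi_\pm$ to determine.

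Next I would derive structural constraints by a degree argument in the spirit of Proposition \ref{Pr:Free1}. The right-hand side of \eqref{qPabc} is homogeneous of degree three when $a,b,c\in\{s,t\}$, whereas \eqref{Eq:TripBr} applied to the parametrised family produces components in a range of degrees up to $2d-1$, where $d$ is the top degree of the double bracket. The top-degree component of the triple bracket depends only on the top-degree component of the double bracket, so it must vanish whenever its degree exceeds three. Combined with Powell's classification of homogeneous double Poisson brackets on a free algebra (as used in the proof of Proposition \ref{Pr:Free1}), this should force $\psi_+=\psi_-=0$ and pin down simple relations between $\lambda$, $\mu$ and the remaining parameters, shrinking the search space substantially.

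At this stage I would evaluate \eqref{Eq:TripBr} on the cyclically inequivalent triples $(t,t,t)$, $(s,s,s)$, $(t,t,s)$ and $(s,s,t)$ by repeated application of the derivation rule (the remaining four triples are obtained by cyclic shifts via \eqref{Eq:TriAnti}), match monomial-by-monomial with the right-hand side of \eqref{qPabc}, and solve the resulting algebraic system. A natural case analysis in $(\lambda,\mu)$ then yields Case~1 when $\lambda=\mu=0$, with a sub-split according to whether $\alpha_+=-\alpha_-$ (case~1.a, in which the remaining constraints force $\gamma=\phi=0$ and $\alpha_+^2=\tfrac14$) or $\alpha_+=\alpha_-$ (case~1.b, producing the relation $\alpha^2=\tfrac14+\gamma\phi$). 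Cases~2 and~3 arise when exactly one of $\lambda,\mu$ is non-zero, and the incompatibility $\lambda\mu\neq 0$ should fall out automatically by matching the degree-$5$ monomials appearing in $\dgal{t,t,s}$. The main obstacle is the sheer volume of bookkeeping in the triple-bracket expansions, which can produce paths of length up to seven; organising the matching of coefficients by degree and alternation type so that the stated constraints emerge directly, rather than as a tangled ideal of polynomial relations, will be the real technical work.
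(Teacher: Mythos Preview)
Your overall strategy matches the paper's: parametrise the degree-$\le 4$ brackets using the idempotent constraints, expand the four cyclically independent triple brackets, and solve the resulting polynomial system by case analysis. The parametrisation you write down is exactly the one the paper uses in \eqref{EqQ1A}--\eqref{EqQ1B}, and your case split on $(\lambda,\mu)$ mirrors the paper's Lemmas \ref{Lem:Q1A}--\ref{Lem:Q1B}.

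There is, however, one genuine gap. You propose to kill $\psi_\pm$ early by invoking ``Powell's classification of homogeneous double Poisson brackets on a free algebra''. Powell's result \cite[Proposition~A.1]{P16} is stated and proved only for $\kk[t]$; there is no off-the-shelf classification of homogeneous double Poisson brackets on $\kk\bar{Q}_1$ that you can cite. The paper does \emph{not} take this shortcut: it computes $\dgal{t,t,t}$ and $\dgal{s,s,s}$ directly (Lemma \ref{Lem:Q1A}) to obtain the dichotomy ``$\lambda=l=0$'' versus ``$\gamma=\phi_1=0,\ \alpha_1'=-\alpha_3,\ \phi_0=-\phi_2$'', and only then kills $\phi_0,\phi_2$ by reading off the coefficients of $(st)^3\otimes t\otimes e_1$ and $e_2\otimes t\otimes(ts)^3$ in $\dgal{t,t,s}$ (Lemma \ref{Lem:Q1B}). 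Your degree heuristic is morally right---those are indeed the top-degree obstructions---but you must do the computation rather than appeal to Powell.

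A smaller point: the incompatibility $\lambda\mu\neq 0$ does not come from degree-$5$ monomials. After the simplifications forced by $\dgal{t,t,t}=0$, the paper computes $\dgal{t,t,s}=\alpha_3^2(st\otimes t\otimes e_1-e_2\otimes t\otimes ts)+\lambda l\,(st\otimes t\otimes tsts-stst\otimes t\otimes ts)$, so the obstruction to $\lambda l\neq 0$ sits in degree~$7$.
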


The proof is given in Appendix \ref{App:PfQ1}.

\begin{exmp} \label{ExpFusQ1}
The simplest double quasi-Poisson brackets that can be obtained from Case 1 are 
\begin{equation} \label{Eq:FusQ1}
  \dgal{t,t}=0\,, \quad \dgal{s,s}=0\,, \quad 
\dgal{t,s}=\frac{\delta}{2}st \otimes e_1 + \frac{\delta'}{2}e_2 \otimes ts\,, \quad \delta,\delta'\in \{\pm1\}\,.
\end{equation}
These double brackets are all obtained by fusion. Indeed, consider the quiver $Q_1$ with vertices $\{1,2\}$ and unique arrow $t:1\to 2$, and the quiver $Q_1'$ with vertices $\{3,4\}$ and unique arrow $s:4 \to 3$. Their path algebras have a double quasi-Poisson bracket which is the zero one by Lemma \ref{LemQ1}. Thus, the zero double bracket on the path algebra $A$ of the quiver $Q_1 \sqcup Q_1'$ is also quasi-Poisson by Remark \ref{RemOplus}. We can see $A$ as an algebra over $B=\oplus_{s=1}^4 \kk e_s$, where $e_s$ is the elementary path corresponding to the $s$-th vertex. We can glue the vertices $1$ and $3$, as well as the vertices $2$ and $4$. The resulting fusion algebra is just $\kk\bar{Q}_1$, and we have a double quasi-Poisson bracket by Theorem \ref{ThmFusBr} given by \eqref{Eq:FusQ1}, 
where $\delta=+1$ (resp. $\delta=-1$) if we fuse $e_3$ onto $e_1$ (resp. $e_1$ onto $e_3$), and where 
$\delta'=+1$ (resp. $\delta'=-1$) if we fuse $e_4$ onto $e_2$ (resp. $e_2$ onto $e_4$). 
\end{exmp}

\begin{exmp} \label{ExpVdBQ1}
Up to localisation, we claim that the algebra $A$ with double quasi-Poisson bracket given by Case 1 with \eqref{Eq:Q1cqs12} is quasi-Hamiltonian when $\gamma\phi=0$. In such a case, we set $\alpha=\frac{\delta}{2}$ for some $\delta=\pm 1$. 

If $\phi=0$, consider  the localisation of $A$ at $\delta \gamma+st$ and $\delta\gamma+ts$. This is equivalent to require that the element $\delta\gamma e_1+ts$ is invertible in $e_1 A e_1$, while $\delta\gamma e_2 + st$ is invertible in $e_2 A e_2$. We can easily check that $\Phi_1=(\delta\gamma e_1+ts)^{\delta}$ and $\Phi_2=(\delta\gamma e_2+st)^{-\delta}$ satisfy \eqref{Phim}. Hence $\Phi=\Phi_1+\Phi_2$ is a moment map in the localised algebra. 

If $\gamma=0$, we require that $ts$ (resp. $st$) is invertible in $e_1 A e_1$ (resp. $e_2 A e_2$) with local inverse $(ts)^{-1}$ (resp. $(st)^{-1}$). We then further require that we have local inverses for $\phi e_1+(ts)^{-1}$ and $\phi e_2+(st)^{-1}$. As a result, we can check that $\Phi=\Phi_1+\Phi_2$ is a moment map for $\Phi_1=(\delta\phi e_1+(ts)^{-1})^{-\delta}$ and $\Phi_2=(\delta\phi e_2+(st)^{-1})^{\delta}$. 

When $\gamma=\phi=0$, both constructions give the same quasi-Hamiltonian algebra. 
\end{exmp}
\begin{rem}
For $\phi=0$ and $\gamma=\delta=+1$  in Example \ref{ExpVdBQ1}, this corresponds to Van den Bergh's key example of quasi-Hamiltonian algebra associated to the double of the quiver $1 \to 2$ given in \cite[\S 6.5]{VdB1} (see Theorem \ref{ThmVdB}).  
\end{rem}

\subsection{Free algebra on two generators} \label{classifF2}

Consider $A=\kk\langle s,t \rangle$ with $B=\kk$. To obtain new examples of double quasi-Poisson brackets on $A$, we assume that we have a double bracket such that 
\begin{subequations}
  \begin{align}
    \dgal{t,t}=&\lambda (t \otimes 1 - 1 \otimes t) + \mu (t^2 \otimes 1 - 1 \otimes t^2) + \nu (t^2 \otimes t - t \otimes t^2)\,, \label{Eqtt} \\
   \dgal{s,s}=&l (s \otimes 1 - 1 \otimes s) + m (s^2 \otimes 1 - 1 \otimes s^2) + n (s^2 \otimes s - s \otimes s^2) \,, \label{Eqss}
  \end{align}
\end{subequations}
with coefficients in $\kk$ that satisfy $4(\mu^2-\lambda \nu)=1$ and  $4(m^2- ln)=1$. 
Furthermore, we consider that the double bracket between $s$ and $t$ has the form 
\begin{equation}
  \begin{aligned} \label{EqtsA}
    \dgal{t,s}=&\,\,\,\alpha_0\, t^2 \otimes 1 + \alpha_0'\, 1 \otimes t^2 + \beta_0\, s^2 \otimes 1 + \beta_0'\, 1 \otimes s^2 + \gamma_0 \, t \otimes t + \gamma_1 \, s \otimes s \\
&+ \alpha_1\, ts \otimes 1 + \alpha_1'\, st \otimes 1 + \alpha_2 \, t \otimes s + \alpha_2' \, s \otimes t + \alpha_3 \, 1 \otimes ts + \alpha_3' \, 1 \otimes st \\
&+ \beta_1 \, t \otimes 1 + \beta_1' \, 1 \otimes t + \beta_2 \, s \otimes 1 + \beta_2' \, 1 \otimes s + \gamma \, 1 \otimes 1\,,
  \end{aligned}
\end{equation}
with all coefficients in $\kk$. 
In other words, if we fix a degree on $A$ by $|t|=|s|=1$ and extend it to $A\otimes A$, we assume that the double bracket $\dgal{t,s}$ has degree at most $+2$. 
We wish to formulate a classification of the double quasi-Poisson brackets of the above form. To do so, introduce the conditions 
\begin{equation*}
\begin{aligned}
  \text{(C1)}&\quad \lambda=\nu=0,\, \mu =\pm\frac12\,, \quad \text{(C1')}\quad \mu=0,\, \nu=\frac{-1}{4\lambda}\in \kk^\times\,,  \\
\text{(C2)}&\quad l=n=0,\, m =\pm\frac12\,, \quad \text{(C2')}\quad m=0,\, n=\frac{-1}{4l}\in \kk^\times\,.
\end{aligned}
\end{equation*}
We say that a double bracket $\dgal{-,-}$ on $A$ of the form \eqref{Eqtt}--\eqref{Eqss} and \eqref{EqtsA} is  \emph{reduced} if it satisfies either (C1) or (C1'), together with either (C2) or (C2'). 
It is not difficult to see that, up to an affine change of variables $t\mapsto t+\rho_t$, $s \mapsto s+\rho_s$, for suitable $\rho_t,\rho_s \in \kk$, any double bracket $\dgal{-,-}$ on $A$ of the form \eqref{Eqtt}--\eqref{Eqss} and \eqref{EqtsA} can be put into reduced form.

\begin{prop} \label{Pr:Free2}
  Any double bracket $\dgal{-,-}$ on $A$ of the form \eqref{Eqtt}--\eqref{Eqss} and \eqref{EqtsA} which is quasi-Poisson is isomorphic to one of the following reduced double quasi-Poisson brackets : 

\vspace{0.2cm}

\underline{\emph{Case 1:}} For any $\gamma_0,\gamma_1\in \kk$, $\mu=\pm\frac12$, $\alpha\in \kk$ such that $\alpha^2=\frac14 + \gamma_0\gamma_1$,
\begin{equation}  
  \begin{aligned} 
&\dgal{t,t}=\mu(t^2 \otimes 1 - 1 \otimes t^2)\,, \quad 
\dgal{s,s}=\mu(s^2 \otimes 1 - 1 \otimes s^2)\,, \\
&\dgal{t,s}=\gamma_0 t\otimes t + \gamma_1 s \otimes s + \mu (st \otimes 1 - 1 \otimes ts) + \alpha (t \otimes s + s \otimes t)\,,     \label{MyCas1} 
  \end{aligned}  
\end{equation}

\underline{\emph{Case 2:}} For any $\gamma\in \kk$, $\alpha,\mu=\pm\frac12$, 
\begin{equation}
  \begin{aligned} 
&\dgal{t,t}=\mu(t^2 \otimes 1 - 1 \otimes t^2)\,, \quad 
\dgal{s,s}=-\mu(s^2 \otimes 1 - 1 \otimes s^2)\,, \\
&\dgal{t,s}=\alpha (st \otimes 1 + 1 \otimes ts) + \mu ( s \otimes t - t \otimes s ) + \gamma 1\otimes 1\,,      \label{MyCas5}
  \end{aligned}
\end{equation}

\underline{\emph{Case 3:}} For any $m,\mu=\pm\frac12$, 
\begin{equation}
  \begin{aligned}
&\dgal{t,t}=\mu(t^2 \otimes 1 - 1 \otimes t^2)\,, \quad 
\dgal{s,s}=m(s^2 \otimes 1 - 1 \otimes s^2)\,, \\
&\dgal{t,s}= \mu (st \otimes 1 -t \otimes s + s \otimes t - 1 \otimes ts) \,,    \label{MyCas4}
  \end{aligned}
\end{equation}

\underline{\emph{Case 4:}} For any $\alpha,m,\mu=\pm\frac12$, 
\begin{equation}
  \begin{aligned}
&\dgal{t,t}=\mu(t^2 \otimes 1 - 1 \otimes t^2)\,, \quad 
\dgal{s,s}=m(s^2 \otimes 1 - 1 \otimes s^2)\,, \\
&\dgal{t,s}= \alpha (st \otimes 1 -t \otimes s - s \otimes t + 1 \otimes ts) \,,    \label{MyCas6}
  \end{aligned}
\end{equation}

\underline{\emph{Case 5:}} For any $n \in \kk^\times$, $\alpha,\mu=\pm\frac12$,
\begin{equation}
  \begin{aligned}
&\dgal{t,t}=\mu(t^2 \otimes 1 - 1 \otimes t^2)\,, \quad 
\dgal{s,s}=\frac{-1}{4n}(s\otimes 1 - 1 \otimes s) + n (s^2 \otimes s - s \otimes s^2)\,, \\
&\dgal{t,s}= \alpha (st \otimes 1 -t \otimes s - s \otimes t + 1 \otimes ts) \,,    \label{MyCas17}
  \end{aligned}
\end{equation}

\underline{\emph{Case 6:}} For any $n \in \kk^\times$, $\mu=\pm\frac12$, 
\begin{equation}
  \begin{aligned}
&\dgal{t,t}=\mu(t^2 \otimes 1 - 1 \otimes t^2)\,, \quad 
\dgal{s,s}=\frac{-1}{4n}(s\otimes 1 - 1 \otimes s) + n (s^2 \otimes s - s \otimes s^2)\,, \\
&\dgal{t,s}= \mu (st \otimes 1 -t \otimes s + s \otimes t - 1 \otimes ts) \,,    \label{MyCas16}
  \end{aligned}
\end{equation}

\underline{\emph{Case 7:}} For any $n,\nu \in \kk^\times$, $\alpha=\pm\frac12$, 
\begin{equation}
  \begin{aligned}
&\dgal{t,t}=\frac{-1}{4\nu}(t \otimes 1 - 1 \otimes t) + \nu (t^2 \otimes t - t \otimes t^2)\,, \\ 
&\dgal{s,s}=\frac{-1}{4n}(s\otimes 1 - 1 \otimes s) + n (s^2 \otimes s - s \otimes s^2)\,, \\
&\dgal{t,s}= \alpha (st \otimes 1 -t \otimes s - s \otimes t + 1 \otimes ts) \,,    \label{MyCas18}
  \end{aligned}
\end{equation}
\end{prop}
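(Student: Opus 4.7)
The plan is to first invoke the observation made in the statement that any such double bracket can be put into reduced form by an affine change of variables $t\mapsto t+\rho_t$, $s\mapsto s+\rho_s$, which reduces the problem to classifying quasi-Poisson brackets satisfying either (C1) or (C1'), and analogously either (C2) or (C2'). Explicitly, under $t\mapsto t+\rho_t$ the coefficients of \eqref{Eqtt} transform as $\lambda\mapsto \lambda-2\rho_t\mu+\rho_t^2\nu$, $\mu\mapsto\mu-\rho_t\nu$, $\nu\mapsto\nu$, with the normalisation $4(\mu^2-\lambda\nu)=1$ preserved; if $\nu=0$ one picks $\rho_t=\lambda/(2\mu)$ to achieve (C1), and if $\nu\neq 0$ one picks $\rho_t=\mu/\nu$ to achieve (C1'), and symmetrically for $s$. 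After this reduction the only remaining freedom lies in the fifteen coefficients of the expansion \eqref{EqtsA} of $\dgal{t,s}$, together with the signs of $\mu$ and $m$ (when relevant).

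By the cyclic antisymmetry \eqref{Eq:TriAnti} and the derivation property, the quasi-Poisson condition \eqref{qPabc} only needs to be checked on the four generator triples $(t,t,t)$, $(t,t,s)$, $(t,s,s)$ and $(s,s,s)$. Proposition \ref{Pr:Free1} together with the normalisations $4(\mu^2-\lambda\nu)=1$ and $4(m^2-ln)=1$ handles $(t,t,t)$ and $(s,s,s)$ automatically. The crux is therefore to expand $\dgal{t,t,s}$ and $\dgal{t,s,s}$ from \eqref{Eq:TripBr} in terms of the fifteen coefficients and compare with the right-hand side of \eqref{qPabc}, where since $B=\kk$ only $e_1=1$ contributes so the sum collapses to the eight terms displayed in the introduction. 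Matching coefficients of the monomials $t^a s^b\otimes t^c s^d\otimes t^e s^f$ produces a finite, purely quadratic system in the fifteen coefficients (and in the normalised $\mu,m,\lambda,l,\nu,n$).

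To keep this system tractable I would exploit the homogeneity argument used in the proof of Proposition \ref{Pr:Free1}: grading $A$ by $|t|=|s|=1$, the highest-degree part of \eqref{qPabc} has total degree $3$, so any piece of $\dgal{t,s}$ whose interaction with $\dgal{t,t}$ or $\dgal{s,s}$ produces a triple bracket of degree $\geq 4$ must vanish independently. This quickly forces most of the low-degree coefficients $\beta_1,\beta_1',\beta_2,\beta_2',\gamma$ to be zero (except in Case 2 where a genuine central term $\gamma\cdot 1\otimes 1$ survives), and similarly constrains the cross-terms among $\alpha_0,\alpha_0',\beta_0,\beta_0'$. The remaining reduced system is then solved case by case according to the pair of conditions (C1)/(C1') and (C2)/(C2') satisfied. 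Exchanging $s\leftrightarrow t$ identifies the (C1')-(C2) branch with the (C1)-(C2') branch, leaving three essentially distinct sub-cases whose explicit resolution yields precisely the seven families listed.

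The principal obstacle is the sheer computational bookkeeping: expanding $\dgal{t,t,s}$ and $\dgal{t,s,s}$ via \eqref{Eq:TripBr} with $\dgal{t,s}$ taken in the fully general form \eqref{EqtsA} generates dozens of tensor monomials in $A^{\otimes 3}$, and the resulting system of quadratic relations couples nearly all of the unknowns. Organising this computation cleanly, distinguishing genuinely new solutions from those obtained by an inadmissible rescaling or relabelling, and verifying that the seven listed families both satisfy and exhaust the constraints is where the real work lies; this is the reason the detailed verification is best carried out in a dedicated appendix rather than in the main text.
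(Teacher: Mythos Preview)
Your overall strategy matches the paper's: reduce to normal form, note that only $\dgal{t,t,s}$ and $\dgal{s,s,t}$ need checking, expand via \eqref{Eq:TripBr}, match monomial coefficients, and sift through cases. That skeleton is correct.

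There is, however, a concrete error in how you invoke the degree argument. You claim that homogeneity ``quickly forces most of the low-degree coefficients $\beta_1,\beta_1',\beta_2,\beta_2',\gamma$ to be zero''. This is backwards. The degree-$1$ and degree-$0$ pieces of $\dgal{t,s}$ contribute to \emph{low}-degree monomials in $\dgal{t,t,s}$, not to the top; they cannot be killed by looking above degree~$3$. What the monomial-by-monomial inspection actually eliminates first are six of the degree-$2$ coefficients: the terms $s^3\otimes 1\otimes 1$, $1\otimes 1\otimes s^3$, $t^2s\otimes 1\otimes 1$, $1\otimes 1\otimes st^2$ appear in $\dgal{t,t,s}$ with coefficients $\beta_0^2$, $-(\beta_0')^2$, $\alpha_1^2$, $-(\alpha_3')^2$, forcing $\beta_0=\beta_0'=\alpha_1=\alpha_3'=0$; the symmetric computation for $\dgal{s,s,t}$ kills $\alpha_0,\alpha_0'$. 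So it is the ``wrong'' degree-$2$ terms that die first, not the $\beta_i$'s.

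The coefficients $\beta_1,\beta_1',\beta_2,\beta_2',\gamma$ survive the $\dgal{t,t,s}$ analysis with genuine freedom in several sub-cases (for instance one finds families with $\beta_2^2=\gamma\gamma_1$ or with $\beta_1',\gamma$ arbitrary). They are eliminated only after intersecting with the constraints coming from $\dgal{s,s,t}$. The paper organises this step via the involution
\[
(\lambda,\mu,\nu)\leftrightarrow(l,m,n),\quad
\gamma_0\leftrightarrow -\gamma_1,\quad
\alpha_1'\leftrightarrow -\alpha_3,\quad
\alpha_2\mapsto -\alpha_2,\ \alpha_2'\mapsto -\alpha_2',\quad
\beta_1\leftrightarrow -\beta_2',\ \beta_1'\leftrightarrow -\beta_2,\quad
\gamma\mapsto -\gamma,
\]
which transports the $\dgal{t,t,s}$ case list to the $\dgal{s,s,t}$ case list; one then checks which pairs of cases are compatible, and it is only at this intersection stage that the $\beta_i$'s collapse and $\gamma$ survives solely in Case~2. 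Your sketch skips this mechanism, and without it the claim that the $\beta$'s vanish is unjustified.
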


\begin{rem}
  Under the automorphism of $A$ given by $s\mapsto t$, $t \mapsto s$, the cases given by \eqref{MyCas1}, \eqref{MyCas5}, \eqref{MyCas6} and \eqref{MyCas18} are invariant; we obtain from the other cases \eqref{MyCas4}, \eqref{MyCas17} and \eqref{MyCas16} three additional cases that do not appear in Proposition \ref{Pr:Free2}. In particular, this explains why there is no other occurrence of the case $\nu \neq0$ than in \eqref{MyCas18}. 
\end{rem}

The proof of Proposition \ref{Pr:Free2} is quite tedious and not interesting, so we skip it until Appendix \ref{App:F2}. The idea is to realise that the two conditions 
\begin{equation*}
\dgal{t,t,t}=\frac14 (1+\tau_{(123)}+\tau_{(123)}^2)(1 \otimes t^2 \otimes t - 1 \otimes t \otimes t^2)\,, \quad 
\dgal{s,s,s}=\frac14 (1+\tau_{(123)}+\tau_{(123)}^2)(1 \otimes s^2 \otimes s - 1 \otimes s \otimes s^2)\,,
\end{equation*}
obtained from \eqref{qPabc} are trivially satisfied by Proposition \ref{Pr:Free1} since we require $4(\mu^2-\lambda \nu)=1$ and  $4(m^2- ln)=1$. Using that a triple bracket is cyclically antisymmetric and is completely determined by its value on generators, it remains to check for which coefficients we have the equalities 
\begin{equation} \label{Eqtts14}
\begin{aligned}
  \dgal{t,t,s}=& \frac14 \Big(
s t \otimes t \otimes   1 - s t \otimes 1  \otimes  t - s  \otimes  t^2 \otimes 1  + s \otimes  t \otimes  t  \\
& \qquad  -t \otimes t \otimes s +  t \otimes 1  \otimes  t s + 1 \otimes  t^2 \otimes s - 1  \otimes t \otimes  t s \Big)\,,
\end{aligned}
\end{equation}
\begin{equation} \label{Eqsst14}
\begin{aligned}
  \dgal{s,s,t}=& \frac14 \Big(
t s \otimes s \otimes   1 - t s \otimes 1  \otimes  s - t  \otimes  s^2 \otimes 1  + t \otimes  s \otimes  s  \\
& \qquad  -s\otimes s \otimes t +  s\otimes 1  \otimes  s t + 1 \otimes  s^2 \otimes t - 1  \otimes s \otimes  s t \Big)\,,
\end{aligned}
\end{equation}
also obtained from \eqref{qPabc}.

\subsubsection{Fusion for Proposition \ref{Pr:Free2}} \label{ss:F2fusion}

We can use Theorem \ref{ThmFusBr} to obtain the following result. 

\begin{thm} \label{ThmF2fus}
 Up to localisation, any double quasi-Poisson bracket on $A$ of the form \eqref{Eqtt}--\eqref{Eqss} and \eqref{EqtsA} is isomorphic to a reduced double quasi-Poisson bracket obtained by fusion. 
\end{thm}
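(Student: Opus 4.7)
The plan is to prove the theorem by going through each of the seven reduced cases listed in Proposition \ref{Pr:Free2} and exhibiting an explicit source algebra $(A_0, \dgal{-,-}_0)$ over $B = \kk e_1 \oplus \kk e_2$ whose fusion (in the sense of Theorem \ref{ThmFusBr}) produces the prescribed double quasi-Poisson bracket on $A$, possibly after a further localisation. The building blocks are the elementary double quasi-Poisson algebras classified in Proposition \ref{Pr:Free1} (on $\kk[t]$), Lemma \ref{LemQ1} (on $\kk Q_1$) and Proposition \ref{Pr:Q1} (on $\kk \bar{Q}_1$), which one combines via Remark \ref{RemOplus} into direct sums carrying a $B$-linear bracket.

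For cases where both $\dgal{t,t}$ and $\dgal{s,s}$ are purely quadratic (Cases 2, 3, 4), I would take $A_0 = \kk[t] \oplus \kk[s]$ with each factor equipped with a bracket $\dgal{x,x} = \mu(x^2 \otimes 1 - 1 \otimes x^2)$ from Proposition \ref{Pr:Free1}; the relative signs of the two coefficients $\mu$ and the orientation of fusion determine the shape of the cross-term produced by $\dgal{-,-}_{fus}$ via Lemma \ref{dbrFUS}. Case 2 is in fact realised in this manner by Example \ref{Fus:MyCas2}, already used in the proof of Theorem \ref{ThmPI}. Cases with a cubic contribution in $s$ (Cases 5, 6, 7) are handled identically, except that the factor for $s$ is replaced by the $\mu = 0,\,\nu \neq 0$ variant of Proposition \ref{Pr:Free1}, after a localisation to ensure the required invertibility; Case 7 requires this substitution in both factors.

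Case 1 is the most delicate and should be obtained by fusing the two idempotents of $\kk \bar{Q}_1$ equipped with the bracket \eqref{Eq:Q1cqs12}, whose compatibility condition $\alpha^2 = \frac{1}{4} + \gamma \phi$ matches the constraint $\alpha^2 = \frac{1}{4} + \gamma_0 \gamma_1$ in Case 1 of Proposition \ref{Pr:Free2}. The idempotent-valued terms $\gamma\, e_2 \otimes e_1$ and $\phi\, st \otimes ts$ should map after fusion to $s \otimes s$- and $t \otimes t$-type contributions (up to an appropriate identification of the generators), while $\dgal{-,-}_{fus}$ supplies the remaining $\mu(st \otimes 1 - 1 \otimes ts)$ and $\alpha(t \otimes s + s \otimes t)$ pieces. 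The residual constant terms coming from \eqref{Eq:Q1cqs11} would likewise account for the $\gamma\, 1 \otimes 1$ contribution in Case 2.

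The main obstacle is the systematic matching of coefficients: the formulas \eqref{tt}--\eqref{ww} must be evaluated on the generators of each candidate source algebra, and the resulting linear combinations compared to the target brackets \eqref{MyCas1}--\eqref{MyCas18}. This is mechanical but genuinely lengthy, with the subtlety that signs depend on the direction of fusion and on which factor supplies which generator; particular care is needed with the localisation qualifier, since several target brackets (notably those with $\gamma_0 \gamma_1 \neq 0$ or with $n, \nu$ nonzero) will only emerge after localising the source algebra at expressions such as $ts$, $st$ or $\delta \gamma\, e_1 + ts$, in parallel with Example \ref{ExpVdBQ1}. Once each reduced case has been realised by such a concrete fusion construction, the theorem follows, since any bracket of the form \eqref{Eqtt}--\eqref{Eqss} and \eqref{EqtsA} can be put in reduced form by an affine change of variables as explained before the statement of Proposition \ref{Pr:Free2}.
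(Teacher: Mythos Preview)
Your overall strategy---reduce to the seven cases of Proposition \ref{Pr:Free2} and realise each one by an explicit fusion---is exactly what the paper does, and your treatment of Case 1 (from $\kk\bar{Q}_1$ with the bracket \eqref{Eq:Q1cqs12}, after localising at $ts$ and $st$) is correct and matches Example \ref{Fus:MyCas1}.

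The gap is in your handling of Cases 2, 3 and 6. You propose to obtain Cases 2--4 from $A_0=\kk[t]\oplus\kk[s]$, saying that ``the relative signs of the two coefficients $\mu$ and the orientation of fusion determine the shape of the cross-term produced by $\dgal{-,-}_{fus}$''. This is false: by construction $\dgal{-,-}_{fus}$ is defined from the gauge elements $\Tr(E_1),\Tr(E_2)$ and is therefore \emph{independent} of the brackets chosen on the two factors. Since $\dgal{t,s}^\oplus=0$, the fused cross-term $\dgal{t,s}^f=\dgal{t,s}_{fus}$ is completely rigid: by \eqref{tw} (or \eqref{wt}) it is always $\pm\tfrac12(st\otimes 1+1\otimes ts-s\otimes t-t\otimes s)$, the sign depending only on the direction of fusion. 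That pattern is exactly Case 4 (and, with cubic $\dgal{s,s}$ or $\dgal{t,t}$, Cases 5 and 7), but it can never produce the sign patterns of Case 2 (where the $s\otimes t$ and $t\otimes s$ terms carry opposite signs and there is an extra $\gamma\,1\otimes 1$) or of Cases 3 and 6 (where the $st\otimes 1$ and $1\otimes ts$ terms carry opposite signs).

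The paper resolves this as follows. Case 2 is obtained not from a direct sum but from a single copy of $\kk\bar{Q}_1$ with the bracket of Example \ref{ExpVdBQ1} ($\phi=0$): after fusing $e_2$ onto $e_1$, the arrows $t,s$ are respectively of third and second type, and formulas \eqref{vv},\eqref{uu},\eqref{vu} produce precisely the asymmetric cross-term and the constant $\gamma\,1\otimes 1$; this is Example \ref{Fus:MyCas2}. Cases 3 and 6 are obtained from $\kk Q_1\oplus\kk[s]$ (the zero bracket on the one-arrow quiver, three idempotents) by \emph{two} successive fusions: first $e_3$ onto $e_2$, then $e_2$ onto $e_1$. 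The intermediate step makes $t$ a second/third-type generator rather than a first-type one, which is what breaks the symmetry of the cross-term; see Example \ref{Fus:MyCas4-16}. Without this two-step mechanism (or the $\kk\bar{Q}_1$ source for Case 2) your argument cannot reach these cases.
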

The proof follows by combining the different examples that we give now together with Proposition \ref{Pr:Free2}. 

\begin{exmp} \label{Fus:MyCas1} \emph{(Fusion for Case 1.)} 
 For any $\alpha,\gamma_0,\gamma_1\in \kk$ such that $\alpha^2=\frac14 + \gamma_0\gamma_1$, we can consider $\kk \bar{Q}_1$ with the double quasi-Poisson bracket given by \eqref{Eq:Q1cqs12} in Proposition \ref{Pr:Q1} with $\gamma=\gamma_1$, $\phi=\gamma_0$. We form the algebra $A$ by locally inverting $ts= e_1 ts e_1$ and $st= e_2 st e_2$. We can introduce $\bar{s}=(ts)^{-1}t=t(st)^{-1}\in e_1 A e_2$. The double quasi-Poisson bracket descends to $A$ in such a way that 
 \begin{equation*}
  \dgal{t,t}=0=\dgal{\bar s,\bar s}\,, \quad \dgal{t,\bar{s}}=\gamma_0 t\otimes t +\gamma_1 \bar{s} \otimes \bar{s} + \alpha(t \otimes \bar{s} + \bar{s} \otimes t). 
 \end{equation*}
Fusing $e_1$ and $e_2$, we get the fusion algebra $A^f=\kk\langle t^{\pm 1},s^{\pm1}\rangle$ with double quasi-Poisson bracket given by \eqref{MyCas1}, where $\mu=+\frac12$ (resp. $\mu=-\frac12$) if we fuse $e_2$ onto $e_1$ (resp. $e_1$ onto $e_2$) by using \eqref{vv} (resp. \eqref{uu}). 
\end{exmp}

\begin{exmp} \label{Fus:MyCas2} \emph{(Fusion for Case 2.)} 
For any $\gamma \in \kk$ and $\delta=\pm1$, the localisation $A$ of the path algebra $\kk \bar{Q}_1$ at $a=\delta \gamma+ts$ and $b=\delta\gamma+st$ is a quasi-Hamiltonian $B$-algebra for $B=\kk e_1 \oplus \kk e_2$ by Example \ref{ExpVdBQ1} (with $\phi=0$). 
The fusion algebra $A^f$ obtained by fusing $e_2$ onto $e_1$ can be identified with $\kk\langle s,t \rangle_{a,b}$.  It is a quasi-Hamiltonian algebra  with double quasi-Poisson bracket 
\begin{equation} \label{Eq:FusMyCas2}
\begin{aligned}
     \dgal{t,t}=&\frac12(t^2 \otimes 1 - 1 \otimes t^2)\,, \quad 
\dgal{s,s}=\frac12(1 \otimes s^2 - s^2 \otimes 1)\,, \\
\dgal{t,s}=&\gamma 1 \otimes 1 + \frac{\delta}{2}(st \otimes 1 + 1 \otimes ts) + \frac12 (s \otimes t - t \otimes s)\,,
\end{aligned}
\end{equation}
using successively \eqref{vv}, \eqref{uu} and \eqref{vu}. The moment map $\Phi=a^\delta b^{- \delta}$ is obtained by Theorem \ref{ThmFusMomap}. 
If we fuse $e_1$ onto $e_2$ instead, the factors $\frac12$ appearing in \eqref{Eq:FusMyCas2} are replaced by $-\frac12$ and the moment map is now $\Phi=b^{- \delta}a^\delta$. 
\end{exmp}
  
\begin{rem} After fusion, the case $\gamma=\delta=+1$ treated in Example \ref{Fus:MyCas2} corresponds to Van den Bergh's quasi-Hamiltonian algebra associated to a one-loop quiver \cite{VdB1} (see Theorem \ref{ThmVdB}). The case $\gamma=0$ appears  after localisation on $A'=\kk \langle s^{\pm1},t^{\pm1}\rangle$ in \cite{CF}, and gives the quasi-Hamiltonian structure for the fundamental group of a torus with one marked boundary component \cite{MT14}  (see Theorem \ref{ThmPI}).
\end{rem}

\begin{exmp} \label{Fus:MyCas4-16} \emph{(Fusion for Cases 3,6.)} 
We consider the algebra $\kk\langle s\rangle$ with double quasi-Poisson bracket \eqref{Eqss}, and $\kk Q_1$ for the quiver $Q_1$ given by $t:1 \to 2$ endowed with the zero double quasi-Poisson bracket. Consider the direct sum $A=\kk Q_1 \oplus \kk\langle s\rangle$, where we denote the identity of $\kk\langle s\rangle$ as $e_3$. This is a double quasi-Poisson algebra by Remark \ref{RemOplus}. 

If we fuse $e_3$ onto $e_2$ (resp. $e_2$ onto $e_3$) and call it $e_2$, we obtain the fusion algebra $A'$ with double quasi-Poisson bracket \eqref{Eqss}, $\dgal{t,t}=0$  and 
\begin{equation*}
 \dgal{t,s}=\alpha (e_2 \otimes ts - s \otimes t)\,, \quad \alpha=+\frac12 \text{ (resp. }\alpha=-\frac12\text{).}
\end{equation*}
Then, if we fuse $e_2$ onto $e_1$ (resp. $e_1$ onto $e_2$) which becomes the unit in the fusion algebra $A''$, we have a double quasi-Poisson bracket 
given by \eqref{Eqss} and 
\begin{equation*}
\dgal{t,t}=\mu (t^2 \otimes 1 - 1 \otimes t^2)\,, \quad 
 \dgal{t,s}=\alpha (1 \otimes ts - s \otimes t)+\mu (st \otimes 1 - t \otimes s)\,, 
\end{equation*}
where $\mu=\frac12$ (resp. $\mu=-\frac12$). When $\alpha=-\mu$, we get \eqref{MyCas4} if $n=l=0$, or we get \eqref{MyCas16} if $m=0$.  
\end{exmp}

\begin{exmp} \label{Fus:MyCas6-17-18} \emph{(Fusion for Cases 4,5,7.)} 
We consider the algebras $\kk\langle t\rangle$ and $\kk\langle s\rangle$ with double quasi-Poisson brackets \eqref{Eqtt}--\eqref{Eqss}. 
Then $A=\kk\langle t\rangle\oplus \kk\langle s\rangle$ is a double quasi-Poisson algebra by Remark \ref{RemOplus}, and we denote $e_1=(1,0)$, $e_2=(0,1)$. If we fuse $e_2$ onto $e_1$ (resp. $e_1$ onto $e_2$) which is the unit in the fusion algebra $A'$, we get a double quasi-Poisson bracket given by \eqref{Eqtt}--\eqref{Eqss} and 
\begin{equation*}
 \dgal{t,s}=\alpha(st \otimes 1 + 1 \otimes ts - s \otimes t - t \otimes s)\,,
\end{equation*}
with $\alpha=+\frac12$ (resp. $\alpha=-\frac12$). For $n=l=\nu=\lambda=0$ we get \eqref{MyCas6}, for $m=\nu=\lambda=0$ we get \eqref{MyCas17}, while for $m=\mu=0$ we get \eqref{MyCas18}.
\end{exmp}

\section{Representations spaces and (quasi-)Poisson algebras} \label{ss:qHscheme}

\subsection{Generalities on representation spaces}  \label{sss:RepSpaces}

We assume that $A$ is a finitely generated associative algebra over $B=\oplus_{s=1}^K\kk e_s$, with $e_s e_t = \delta_{st} e_s$. 
Following \cite[Section 7]{VdB1} (see also \cite[Section 4]{CB11} and \cite[Section 3]{MT14}), let $I=\{1,\ldots,K\}$ and choose a dimension vector $\alpha \in \N^I$, setting $N=\sum_{s\in I}\alpha_s$. We consider the representation space (relative to $B$) $\Rep(A,\alpha)$. The representation space is the affine scheme whose  coordinate ring $\OO(\Rep(A,\alpha))$ is generated by symbols $a_{ij}$ for $a\in A$, $1\leq i,j \leq N$, which satisfy 
\begin{equation*}
  (a+b)_{ij}=a_{ij}+b_{ij}\,, \quad (ab)_{ij}=\sum_{k=1}^N a_{ik}b_{kj}\,, 
\end{equation*}
together with the condition that for any $1 \leq s \leq K$ the matrix $\X(e_s)=((e_s)_{ij})_{ij}$ is the $s$-th diagonal identity block of size $\alpha_s$. In other words, we have that $(e_s)_{ij}=\delta_{ij}$ if $\alpha_1 + \ldots + \alpha_{s-1}+1\leq i,j\leq \alpha_1 + \ldots + \alpha_s$, while it is zero otherwise. Note that this implies  $1_{ij}=\delta_{ij}$ for all $1\leq i,j \leq N$. 
To ease notations, denote by $R=\OO(\Rep(A,\alpha))$ the coordinate ring, and for any $a\in A$ set $\X(a)$ to denote the matrix with entries $a_{ij}\in R$.

By definition of $\Rep(A,\alpha)$, any element $a\in A$ induces functions $(a_{ij})_{ij}$ on $\Rep(A,\alpha)$, and we would like to extend this definition to derivations. We associate to any $\delta \in D_{A/B}$ the vector fields  $\delta_{ij}\in \Der(R)$, $1\leq i,j \leq N$, defined by  
\begin{equation} \label{Eq:Standard}
 \delta_{ij}(b_{kl})=\delta(b)'_{kj}\delta(b)''_{il}\,, 
\end{equation}
and introduce the vector field-valued matrix $\X(\delta)$ with $(i,j)$ entry $\delta_{ij}$. 
We call the particular disposition of indices in \eqref{Eq:Standard} the \emph{standard index notation} as in \cite{VdB2}. More generally, for an element $\delta=\delta_1\ldots \delta_n \in (D_BA)_n$ we define  $\delta_{ij}\in \bigwedge_R^n \Der(R)$ from the matrix identity $\X(\delta)=\X(\delta_1)\ldots \X(\delta_n)$, and we set $\tr \X(\delta)=\sum_i \delta_{ii}$. 
 
\begin{prop} \emph{(\cite[Propositions 7.5.1,7.5.2]{VdB1})} \label{PropBr}
Assume that $\dgal{-,-}$ is a $B$-linear double bracket defined on $A$. Then there is a unique antisymmetric biderivation $\br{-,-}$ on $R$ such that 
\begin{equation} \label{EqBrDefn}
  \br{a_{ij},b_{kl}}=\dgal{a,b}'_{kj}\dgal{a,b}''_{il}\,,
\end{equation}
for any $a,b \in A$. Moreover, for any $a,b,c\in A$, 
\begin{equation} \label{JacVdB}
  Jac(a_{ij},b_{kl},c_{uv})=\dgal{a,b,c}_{uj,il,kv}- \dgal{a,c,b}_{kj,iv,ul}\,,
\end{equation}
where, on the left-hand side, $Jac:R^{\times 3}\to R$ is defined by 
\begin{equation*}
  Jac(g_1,g_2,g_3)=\br{g_1,\br{g_2,g_3}}\,+\, \br{g_2,\br{g_3,g_1}}\,+\, \br{g_3,\br{g_1,g_2}}\,, \quad g_1,g_2,g_3 \in R\,,
\end{equation*}
while on the right-hand side $\dgal{-,-,-}$ is the triple bracket \eqref{Eq:TripBr} defined by $\dgal{-,-}$, and we write for $a=a'\otimes a'' \otimes a'''\in A^{\otimes 3}$ that $a_{ij,kl,uv}=a'_{ij}a''_{kl}a'''_{uv}$. 
\end{prop}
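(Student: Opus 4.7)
The plan is to treat the two parts separately, first establishing existence and uniqueness of the biderivation, and then computing the Jacobiator.

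For the first part, uniqueness is immediate: since $R$ is generated (as a $\kk$-algebra) by the symbols $a_{ij}$ for $a\in A$ and $1\leq i,j\leq N$, any biderivation on $R$ is determined by its values on such pairs of generators. For existence, I would define $\br{a_{ij},b_{kl}}$ by the right-hand side of \eqref{EqBrDefn} and extend by the Leibniz rule in each argument. The only thing to verify is that this prescription is compatible with the defining relations of $R$, namely additivity $(a+b)_{ij}=a_{ij}+b_{ij}$, multiplicativity $(ab)_{ij}=\sum_k a_{ik}b_{kj}$, and the constraints coming from $B$, which prescribe $(e_s)_{ij}$ and in particular force $1_{ij}=\delta_{ij}$. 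Additivity follows from $\kk$-bilinearity of the double bracket. Compatibility with multiplication in the second argument uses the right derivation rule \eqref{Eq:outder} in the form
\begin{equation*}
\dgal{a,bc}=\dgal{a,b}c+b\dgal{a,c}\,,
\end{equation*}
which upon extracting entries at indices $(k,j)$ and $(i,l)$ matches exactly the Leibniz expansion $\sum_m(b_{km}\br{a_{ij},c_{ml}}+\br{a_{ij},b_{km}}c_{ml})$. Compatibility with multiplication in the first argument uses the left derivation rule \eqref{Eq:inder} in the analogous way. Finally, $B$-linearity of the double bracket ensures $\br{(e_s)_{ij},-}=0=\br{-,(e_s)_{ij}}$, so the bracket kills the constants $(e_s)_{ij}=\delta_{ij}\text{ or }0$ and is well-defined on $R$. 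Antisymmetry $\br{b_{kl},a_{ij}}=-\br{a_{ij},b_{kl}}$ on generators is a direct translation of \eqref{Eq:cycanti}: writing $\dgal{b,a}=\sum x\otimes y$, then $\dgal{a,b}=-\sum y\otimes x$, and one matches the two expressions in $R$ (using commutativity of $R$).

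For the second part, I would evaluate $\br{a_{ij},\br{b_{kl},c_{uv}}}$ using the definition on generators. Since $\br{b_{kl},c_{uv}}=\dgal{b,c}'_{uj_0}\dgal{b,c}''_{kl_0}$ (with indices $u,l$ in the appropriate slots), and the bracket is a derivation, we get
\begin{equation*}
\br{a_{ij},\br{b_{kl},c_{uv}}}
=\br{a_{ij},\dgal{b,c}'_{ul}}\dgal{b,c}''_{kv}+\dgal{b,c}'_{ul}\br{a_{ij},\dgal{b,c}''_{kv}}\,.
\end{equation*}
Applying the defining formula once more in each term produces expressions involving $\dgal{a,\dgal{b,c}'}$ and $\dgal{a,\dgal{b,c}''}$, which together assemble into the first summand of the triple bracket \eqref{Eq:TripBr}, namely $\dgal{a,\dgal{b,c}'}\otimes\dgal{b,c}''$, with very specific index placements. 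Summing the three cyclic terms defining $Jac$ yields all three summands of $\dgal{a,b,c}$ and $\dgal{a,c,b}$; the minus sign in \eqref{JacVdB} appears because the formula for the bracket has the primed component receiving the ``second'' set of indices and the double primed component the ``first'', so that the cycle $\tau_{(123)}$ in \eqref{Eq:TripBr} shows up naturally in one orientation and reversed in the other.

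The main obstacle will be the careful bookkeeping of indices in the Jacobi computation: there are six tensor slots to track (two for each of $a,b,c$) across three cyclic terms, and the combinatorics must be arranged so that exactly the index patterns $(uj,il,kv)$ and $(kj,iv,ul)$ emerge with opposite signs. Once one chooses the convention \eqref{Eq:Standard} consistently (the ``standard index notation'' of \cite{VdB2}), each of the six terms in the Jacobiator lands on one of the six terms making up $\dgal{a,b,c}_{uj,il,kv}-\dgal{a,c,b}_{kj,iv,ul}$; the key cross-check is that the three summands in \eqref{Eq:TripBr} correspond precisely to the three orderings coming from the Leibniz expansion applied to the three cyclic terms of $Jac$. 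The rest of the proof, including the check that $\br{-,-}$ is indeed an honest biderivation on $R$, reduces to routine verifications already implicit in the structure of a double bracket.
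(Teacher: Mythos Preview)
The paper does not supply its own proof of this proposition: it is stated with an attribution to \cite[Propositions~7.5.1, 7.5.2]{VdB1} and left at that. Your outline is the standard argument (and essentially the one Van den Bergh gives): well-definedness via the derivation rules and $B$-linearity, antisymmetry via \eqref{Eq:cycanti}, and the Jacobiator by expanding each cyclic term with Leibniz and regrouping into the six summands of $\dgal{a,b,c}$ and $\dgal{a,c,b}$. Aside from a small slip in the displayed indices (you write $\dgal{b,c}'_{uj_0}\dgal{b,c}''_{kl_0}$ before correcting to $\dgal{b,c}'_{ul}\dgal{b,c}''_{kv}$), the bookkeeping you sketch is right; in particular, the second Leibniz term in $\br{a_{ij},\br{b_{kl},c_{uv}}}$ becomes, after using $\dgal{b,c}=-\dgal{c,b}^\circ$, exactly $-\big(\dgal{a,\dgal{c,b}'}\otimes\dgal{c,b}''\big)_{kj,iv,ul}$, which is the mechanism producing the minus sign in \eqref{JacVdB}.
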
 

We now remark the following result,  which will be important in \ref{ss:qPA}. 
\begin{lem} \label{LemUseful}
  Assume that $Q \in (D_BA)_n$, and denote by $\dgal{-,\ldots,-}$ the corresponding differential $n$-bracket given by Proposition \ref{Prop:BrQ}. For any $a=a^1\otimes \ldots \otimes a^n \in A^{\otimes n}$, introduce 
\begin{equation*}
  a_{(u_1v_1,\ldots,u_n v_n)}=a^1_{u_1 v_1} \ldots a^n_{u_n v_n} \,\,\in R\,, 
\end{equation*}
with indices in the set $\{1,\ldots,N\}$. 
Consider the natural action of $S_{n}$ on $\{1,\ldots,n\}$ and the action of $S_{n-1}$ on $\{2,\ldots,n\}$ obtained by fixing the element $1$. Then the following holds 
\begin{equation} \label{relQRep}
  \tr \X(Q)(a^1_{u_1 v_1}, \ldots, a^n_{u_n v_n})
=\sum_{\tilde{\sigma} \in S_{n-1}}\epsilon(\tilde{\sigma})\dgal{a^{1},a^{\tilde{\sigma}(2)},\ldots,a^{\tilde{\sigma}(n)}}_{\tilde{\sigma}(u,v)}
\end{equation}
where $\tilde{\sigma}(u,v):=(u_{\tilde{\sigma}(n)}v_1,u_1 v_{\tilde{\sigma}(2)},\ldots,u_{\tilde{\sigma}(n-1)}v_{\tilde{\sigma}(n)})$, while 
 $\epsilon(\tilde{\sigma})=+1$ if $\tilde{\sigma}$ is an even permutation, and $\epsilon(\tilde{\sigma})=-1$ if $\tilde{\sigma}$ is an odd  permutation. 
\end{lem}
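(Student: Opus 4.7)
The plan is to expand the left-hand side of \eqref{relQRep} term-by-term and match it against the cyclic sum defining $\mu(Q)$ in Proposition~\ref{Prop:BrQ}. Writing $Q=\delta_1\cdots\delta_n$, by definition $\tr\X(Q)=\sum_i Q_{ii}$ is the sum of the diagonal entries of the matrix product $\X(\delta_1)\cdots\X(\delta_n)$ viewed inside $\bigwedge^n_R\Der(R)$, so expanding the wedge and evaluating on the $n$ functions $a^j_{u_jv_j}$ produces a sum over $\sigma\in S_n$ with sign $\epsilon(\sigma)$. After applying the standard index rule \eqref{Eq:Standard} to each factor $(\delta_k)_{l_{k-1}l_k}(a^{\sigma(k)}_{u_{\sigma(k)}v_{\sigma(k)}})=\delta_k(a^{\sigma(k)})'_{u_{\sigma(k)}l_k}\delta_k(a^{\sigma(k)})''_{l_{k-1}v_{\sigma(k)}}$ and contracting the internal matrix indices $l_1,\dots,l_{n-1}$ and $i$, each $\sigma$-summand collapses into the product of the $n$ factors $[\delta_{k-1}(a^{\sigma(k-1)})'\delta_k(a^{\sigma(k)})'']_{u_{\sigma(k-1)}v_{\sigma(k)}}$ (indices modulo $n$, so the $k=1$ factor involves $\delta_n$ and $u_{\sigma(n)}$). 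Comparing with \eqref{Eq:BrQ2}, this is exactly $\dgal{a^{\sigma(1)},\ldots,a^{\sigma(n)}}^{\widetilde{\ }}_Q$ evaluated componentwise at the index tuple $\vec I_\sigma:=(u_{\sigma(n)}v_{\sigma(1)},u_{\sigma(1)}v_{\sigma(2)},\ldots,u_{\sigma(n-1)}v_{\sigma(n)})$.

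Next I parameterise $S_n$ using its right cosets modulo the stabiliser of~$1$. Letting $c=(1\,2\,\ldots\,n)$ and identifying $S_{n-1}$ with the subgroup fixing~$1$, each $\sigma\in S_n$ decomposes uniquely as $\sigma=\tilde\sigma\circ c^{k-1}$ with $\tilde\sigma\in S_{n-1}$ and $k\in\{1,\ldots,n\}$, and the sign splits as $\epsilon(\sigma)=\epsilon(\tilde\sigma)(-1)^{(n-1)(k-1)}$. The crucial observation is that right multiplication by $c^{k-1}$ induces a single cyclic shift on both the argument and the index tuples: setting $\vec b:=(a^1,a^{\tilde\sigma(2)},\ldots,a^{\tilde\sigma(n)})$, the fact that $\sigma(j)=\tilde\sigma(j+k-1\bmod n)$ directly gives that the arguments of the $\sigma$-summand are $\tau^{-(k-1)}_{(1\ldots n)}\vec b$ and that the index tuple is $\vec I_\sigma=\tau^{-(k-1)}_{(1\ldots n)}\tilde\sigma(u,v)$. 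Using the elementary identity $T(\tau^{-j}\vec I)=(\tau^jT)(\vec I)$ valid for any tensor $T\in A^{\otimes n}$ and any index tuple~$\vec I$, the $\sigma$-summand can therefore be rewritten as $\bigl[\tau^{k-1}_{(1\ldots n)}\circ\dgal{-,\ldots,-}_Q^{\widetilde{\ }}\circ\tau^{-(k-1)}_{(1\ldots n)}\bigr](\vec b)$ evaluated componentwise at $\tilde\sigma(u,v)$.

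Summing first over $k=1,\ldots,n$ with the sign $(-1)^{(n-1)(k-1)}$ produces precisely the cyclic sum appearing in \eqref{Eq:BrQ1}, which by Proposition~\ref{Prop:BrQ} equals $\dgal{a^1,a^{\tilde\sigma(2)},\ldots,a^{\tilde\sigma(n)}}_Q$; factoring out the remaining $\epsilon(\tilde\sigma)$ and summing over $\tilde\sigma\in S_{n-1}$ then recovers the right-hand side of \eqref{relQRep}. The main obstacle is purely combinatorial, namely verifying that the single right-multiplication $\sigma\mapsto\sigma c$ realises the same cyclic shift $\tau^{-1}_{(1\ldots n)}$ simultaneously on the argument tuple and on the index tuple, and that the resulting sign $(-1)^{(n-1)(k-1)}$ matches the one used in the symmetrisation defining $\mu$; once this matching is carefully checked, the identity \eqref{relQRep} follows immediately from the cyclic symmetrisation built into the definition of a differential $n$-bracket.
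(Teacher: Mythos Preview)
Your proof is correct and follows essentially the same route as the paper: expand $\tr\X(Q)$ as a signed sum over $S_n$, decompose each $\sigma$ as $\tilde\sigma\,\tau^i$ with $\tilde\sigma\in S_{n-1}$ and $\tau=(1\ldots n)$, and then recognise the inner sum over $i$ as the cyclic symmetrisation \eqref{Eq:BrQ1} defining $\dgal{-,\ldots,-}_Q$. The paper organises the final matching by expanding both sides separately and observing a $\tau^i$-invariance of the summands, whereas you rewrite each $\sigma$-summand directly via the identity $T(\tau^{-j}\vec I)=(\tau^jT)(\vec I)$; these are the same argument presented in slightly different order.
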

\begin{proof}
By linearity, we can just assume that $Q=\delta^1\ldots\delta^n$ with each $\delta^i\in D_{A/B}$. 
We can write 
\begin{equation*}
  \begin{aligned}
  & \tr \X(Q)(a^1_{u_1 v_1}, \ldots, a^n_{u_n v_n})
= \sum_{i_1,\ldots,i_n}  (\delta^1_{i_1 i_2} \wedge \ldots \wedge \delta^n_{i_n i_1})(a^1_{u_1 v_1}, \ldots, a^n_{u_n v_n}) \\
=& \sum_{i_1,\ldots,i_n} \sum_{\sigma \in S_n} \epsilon(\sigma) \,
\delta^1_{i_1 i_2}(a^{\sigma(1)}_{u_{\sigma(1)}v_{\sigma(1)}}) \ldots 
\delta^q_{i_q i_{q+1}}(a^{\sigma(q)}_{u_{\sigma(q)}v_{\sigma(q)}}) \ldots
\delta^n_{i_n i_1}(a^{\sigma(n)}_{u_{\sigma(n)}v_{\sigma(n)}})\,.
  \end{aligned}
\end{equation*}
Using \eqref{Eq:Standard} and summing over all $i_q$, we get that this equals 
\begin{equation*}
  \begin{aligned}
 &\sum_{\sigma \in S_n}\epsilon(\sigma)\,
(\delta^1(a^{\sigma(1)})' \delta^2(a^{\sigma(2)})'')_{u_{\sigma(1)} v_{\sigma(2)}} \ldots 
(\delta^q(a^{\sigma(q)})' \delta^{q+1}(a^{\sigma(q+1)})'')_{u_{\sigma(q)} v_{\sigma(q+1)}} \ldots 
(\delta^n(a^{\sigma(n)})' \delta^1(a^{\sigma(1)})'')_{u_{\sigma(n)} v_{\sigma(1)}}
\\
&=\sum_{\sigma \in S_n} \epsilon(\sigma) \,\left(
\delta^n(a^{\sigma(n)})' \delta^1(a^{\sigma(1)})'' \otimes 
\delta^1(a^{\sigma(1)})' \delta^2(a^{\sigma(2)})''\otimes  \ldots \otimes
\delta^{n-1}(a^{\sigma(n-1)})' \delta^1(a^{\sigma(1)})''\right)_{\sigma(u,v)} \,,
  \end{aligned}
\end{equation*}
where $\sigma(u,v)=(u_{\sigma(n)} v_{\sigma(1)},u_{\sigma(1)}v_{\sigma(2)},\ldots,u_{\sigma(n-1)}v_{\sigma(n)})$.

Next, remark that we can identify any $\sigma \in S_n$ with $\tilde{\sigma}\tau^i$, where $\tau=(1 \ldots n)$, $i\in \{0, \ldots, n-1\}$,   and $\tilde{\sigma}\in S_{n-1}$ acts on $\{2,\ldots,n\}$. Given $\sigma$, the pair $(i,\tilde\sigma)$ is unique and satisfies $\epsilon(\sigma)=(n-1)i+\epsilon(\tilde \sigma)$. Moreover, the action of $\sigma \in S_n$ on $A^{\otimes n}$ decomposes into the permutation $\tau^i$ of the factors and the action of $\tilde{\sigma }\in S_{n-1}$ fixing the first copy in the tensor product. Therefore, we can write 
$\tr \X(Q)(a^1_{u_1 v_1}, \ldots, a^n_{u_n v_n}) $ as follows 
\begin{equation} \label{Useful1}
\sum_{\tilde \sigma \in S_{n-1}}\epsilon(\tilde \sigma)
\sum_{i=0}^{n-1}(-1)^{(n-1)i}\left(
\delta^n(a^{\sigma(n)})' \delta^1(a^{\sigma(1)})'' \otimes \ldots \otimes
\delta^{n-1}(a^{\sigma(n-1)})' \delta^1(a^{\sigma(1)})''\right)_{\sigma(u,v)}\,,
\end{equation}
where $\sigma(u,v)=(u_{\sigma(n)} v_{\sigma(1)},u_{\sigma(1)}v_{\sigma(2)},\ldots,u_{\sigma(n-1)}v_{\sigma(n)})$ and we put $\sigma=\tilde{\sigma}\tau^i$. 

Meanwhile, remark that we can get from Proposition \ref{Prop:BrQ} 
\begin{equation*}
\begin{aligned}
    \dgal{b^1,\ldots,b^n}=\sum_{i=0}^{n-1} (-1)^{(n-1)i} \,\, &
\delta^{\tau^{-i}(n)}(b^n)'\delta^{\tau^{-i}(1)}(b^1)'' \otimes \ldots \otimes 
\delta^{\tau^{-i}(q)}(b^q)'\delta^{\tau^{-i}(q+1)}(b^{q+1})'' \otimes \ldots \\
& \qquad \qquad \ldots \otimes 
\delta^{\tau^{-i}(n-1)}(b^{n-1})'\delta^{\tau^{-i}(n)}(b^n)''  \,.
\end{aligned}
\end{equation*}
If we extend the action of $S_{n-1}$ on $\{2,\ldots,n\}$ to $\{1,\ldots,n\}$ by setting $\tilde \sigma(1)=1$, we find that  
\begin{equation*}
\begin{aligned}
&\sum_{\tilde{\sigma} \in S_{n-1}}\epsilon(\tilde{\sigma})\dgal{a^{\tilde{\sigma}(1)},a^{\tilde{\sigma}(2)},\ldots,a^{\tilde{\sigma}(n)}}_{\tilde{\sigma}(u,v)} \\
=&\sum_{\tilde{\sigma} \in S_{n-1}}\epsilon(\tilde{\sigma})
\sum_{i=0}^{n-1} (-1)^{(n-1)i} \,\, 
\left(\delta^{\tau^{-i}(n)}( a^{\tilde{\sigma}(n)} )'\delta^{\tau^{-i}(1)}(a^{\tilde{\sigma}(1)})'' \otimes \ldots \otimes 
\delta^{\tau^{-i}(n-1)}(a^{\tilde{\sigma}(n-1)})'\delta^{\tau^{-i}(n)}(a^{\tilde{\sigma}(n)})''\right)_{\tilde{\sigma}(u,v)}  \,.
\end{aligned}
\end{equation*}
where $\tilde{\sigma}(u,v):=(u_{\tilde{\sigma}(n)}v_{\tilde{\sigma}(1)},u_{\tilde{\sigma}(1)} v_{\tilde{\sigma}(2)},\ldots,u_{\tilde{\sigma}(n-1)}v_{\tilde{\sigma}(n)})$. Now, we remark that if we simultaneously apply $\tau^i$ on the tensor product and on the indices $\tilde{\sigma}(u,v)$, then each term on the right-hand side is unchanged. But doing so is equivalent to replace any element $q\in \{1,\ldots,n\}$ (before applying $\tilde{\sigma}$ !) by $\tau^i(q)$ in the indices occurring in the tensor product as well as in $\tilde{\sigma}(u,v)$.  This gives nothing else that \eqref{Useful1}. 
\end{proof}

We will particularly be interested in the case $n=3$, which takes the following form.
\begin{lem} \label{LemTrivect}
  Assume that $Q\in (D_BA)_3$, and denote by $\dgal{-,-,-}_Q$ the corresponding differential triple bracket. 
With the notation introduced in Lemma \ref{LemUseful}, we have  for any $a,b,c\in A$ 
\begin{equation} \label{Trivect}
  \tr\X (Q)(a_{ij},b_{kl},c_{uv})=(\dgal{a,b,c}_Q)_{uj,il,kv}- (\dgal{a,c,b}_Q)_{kj,iv,ul}\,.
\end{equation} 
\end{lem}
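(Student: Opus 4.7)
The plan is to deduce Lemma~\ref{LemTrivect} as the immediate specialization of Lemma~\ref{LemUseful} to $n=3$. Apply Lemma~\ref{LemUseful} with $a^{1}=a$, $a^{2}=b$, $a^{3}=c$ and index pairs $(u_{1},v_{1})=(i,j)$, $(u_{2},v_{2})=(k,l)$, $(u_{3},v_{3})=(u,v)$. The outer sum is indexed by $S_{n-1}=S_{2}$ acting on $\{2,3\}$, and contains only two terms: the identity (sign $+1$) and the transposition $\tilde\sigma=(23)$ (sign $-1$).

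The main step is then just to unpack $\tilde\sigma(u,v):=(u_{\tilde\sigma(3)}v_{1},\, u_{1}v_{\tilde\sigma(2)},\, u_{\tilde\sigma(2)}v_{\tilde\sigma(3)})$ in each of the two cases. For $\tilde\sigma=\mathrm{id}$, this reads $(u_{3}v_{1},u_{1}v_{2},u_{2}v_{3})=(uj,il,kv)$, contributing $\dgal{a,b,c}_{Q,\,uj,il,kv}$. For $\tilde\sigma=(23)$, one gets $(u_{2}v_{1},u_{1}v_{3},u_{3}v_{2})=(kj,iv,ul)$ with argument order $a^{1},a^{\tilde\sigma(2)},a^{\tilde\sigma(3)}=a,c,b$, contributing $-\dgal{a,c,b}_{Q,\,kj,iv,ul}$. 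Adding these two gives exactly the right-hand side of~\eqref{Trivect}.

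Thus no new computation is needed; the only thing to verify is the bookkeeping of the two index-tuples dictated by Lemma~\ref{LemUseful}. The mild obstacle, if any, is purely cosmetic: one must be careful that Lemma~\ref{LemUseful} is stated with the outer permutation acting on $\{2,\dots,n\}$ while fixing $1$, so that the first slot of the triple bracket is always $a^{1}=a$; this explains why only $\dgal{a,b,c}$ and $\dgal{a,c,b}$ appear in~\eqref{Trivect}, not the other four orderings. Once this convention is matched, the proof is a one-line specialization.
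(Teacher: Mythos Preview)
Your proposal is correct and matches the paper's approach exactly: the paper presents Lemma~\ref{LemTrivect} as the $n=3$ case of Lemma~\ref{LemUseful} without writing out any details, and your specialization (identifying the two $S_{2}$ terms and tracking the index tuples) is precisely the computation that the paper leaves implicit.
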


\begin{rem}
Let us look again at Proposition \ref{PropBr} when $\dgal{-,-}$ is differential for some $P \in (D_BA)_2$. 
First, looking at Lemma \ref{LemUseful} with $n=2$, the right-hand side of \eqref{relQRep} is the same as the right-hand side of \eqref{EqBrDefn} when $a_{ij}=a^1_{u_1 v_1},b_{kl}=a^2_{u_2v_2}$. Hence,  $\br{-,-}$ is equivalently defined by the bivector field $\tr \X(P)$ on $\Rep(A,\alpha)$, as first observed in \cite[\S 7.8]{VdB1}. 

Next, note that the left-hand side of \eqref{JacVdB} is obtained by applying the trivector $\frac12 [\tr \X(P),\tr \X(P)]$, where $[-,-]$ is the (geometric) Schouten-Nijenhuis bracket.  But it was remarked in \cite[\S 7.7]{VdB1} that taking traces defines a Lie algebra homomorphism from the algebraic to the geometric Schouten-Nijenhuis bracket, so that $\tr \X(\brSN{P,P})=[\tr \X(P),\tr \X(P)]$. Now, by Proposition \ref{TripleDiff}, the triple bracket $\dgal{-,-,-}$ defined by $\dgal{-,-}$ is differential with trivector $\frac12 \brSN{P,P}$. Therefore, \eqref{JacVdB} becomes a corollary of \eqref{Trivect} with $Q=\frac12 \brSN{P,P}$. 
\end{rem}

\subsection{Quasi-Poisson algebras}   \label{ss:qPA} 
Let $\g$ be a Lie algebra over $\kk$ such that $\g$ is equipped with a non-degenerate symmetric bilinear form denoted $(-|-)$. Furthermore, assume that the form is $\g$-invariant, i.e. $(\eta_1 | [\eta_2,\eta_3])=([\eta_1,\eta_2]|\eta_3)$ for all $\eta_1,\eta_2,\eta_3 \in \g$. 
If we take dual bases $(\vep_i),(\vep^i)$  under $(-|-)$, then we can define the \emph{Cartan trivector} $\phi \in \bigwedge^3 \g$ given by 
\begin{equation} \label{CartanTri}
  \phi=\frac{1}{12}\sum_{i,j,k}(\vep^i | [\vep^j,\vep^k])\, \vep_i \wedge \vep_j \wedge \vep_k\,.
\end{equation}

Following \cite[Section 2]{MT14} from now on, we assume that $\g$ acts on a commutative $\kk$-algebra $R$ by derivation, so that the map $\g \to \Der(R)$ is a Lie algebra homomorphism. Denoting by $\eta_R$ the action of $\eta \in \g$ on $R$, the latter means that $[\eta^1,\eta^2]_R(a)=\eta^1_R (\eta^2_R(a)) - \eta^2_R (\eta^1_R(a))$ for any $a \in R$, $\eta_1,\eta_2 \in \g$. We say that $R$ is a \emph{quasi-Poisson algebra} over $\g$ if $R$ is equipped with an anti-symmetric biderivation 
$\br{-,-}$ such that for any $\eta \in \g$ and $a,b,c\in R$ 
\begin{subequations}
  \begin{align}
    & \eta_R ( \br{a,b})=\br{\eta_R (a),b} + \br{a,\eta_R (b)}\,, \label{Eq:qP1}\\
&\br{a,\br{b,c}} + \br{b,\br{c,a}}  + \br{c,\br{a,b}}=\frac12 \phi_R(a,b,c)\,. \label{Eq:qP2}
  \end{align}
\end{subequations}
Here, $\phi_R$ is the image of the Cartan trivector induced by the map $\g^{\otimes 3} \times R^{\times 3} \to \kk$ given by 
\begin{equation*}
 (\eta^1 \otimes \eta^2 \otimes \eta^3,a,b,c)\mapsto 
(\eta^1 \otimes \eta^2 \otimes \eta^3)_R(a,b,c):=\eta^1_R(a)\eta^2_R(b)\eta^3_R(c)\,.
\end{equation*}
The operation $\br{-,-}$ is called a \emph{quasi-Poisson bracket}.  Note that if $R^\g\subset R$ is the subalgebra of $\g$-invariant elements, i.e. $R^\g=\{a \in R \mid \eta_R(a)=0\, \forall \eta \in \g\}$, then $\br{-,-}$ descends to a Poisson bracket on $R^\g$  since the right-hand side of \eqref{Eq:qP2} vanishes. 
\begin{rem}
In this work, we restrict the definition of quasi-Poisson algebra to the case where $\phi$ is the Cartan trivector \eqref{CartanTri}, in analogy with \cite{QuasiP,VdB1}. Working in greater generalities, Massuyeau and Turaev considered an arbitrary element $\phi \in \bigwedge^3 \g$, from which we still get a Poisson bracket on $R^\g$ \cite[\S 2.2]{MT14}. This notion also encompasses Poisson algebras when we take $\g=\{0\}$. 
\end{rem}

Assume that we are also given an arbitrary group $G$ acting on the left on $\g$ by Lie algebra automorphisms. (We do not require that $\g=\text{Lie}(G)$.) For any $g\in G$, we write the action as $\eta \mapsto { }^g\!\eta$, $\eta \in \g$. We say that $R$ is a $(G,\g)$-algebra if $R$ is a $\g$-algebra endowed with a compatible left $G$-action : 
\begin{equation}\label{Eq:qP3}
  ({ }^g\! \eta)_R\,a \,=\,g.\eta_R(g^{-1}.a)\,,\quad g \in G\,, \eta \in \g\,, a \in R\,.
\end{equation}
 We say that $R$ is a \emph{quasi-Poisson algebra} over the pair $(G,\g)$ if $R$ is a $(G,\g)$-algebra and if $R$ is a quasi-Poisson algebra over $\g$ such that for any $g\in G$, $a,b \in R$
\begin{subequations}
  \begin{align}
    & g. \br{a,b}=\br{g.a,g.b} \,, \label{Eq:qP4}\\
& { }^g\!\phi=\frac{1}{12}\sum_{i,j,k}(\vep^i | [\vep^j,\vep^k])\,\, { }^g\!\vep_i \wedge { }^g\!\vep_j \wedge { }^g\!\vep_k =\phi\,. \label{Eq:qP5}
  \end{align}
\end{subequations}
We easily see that if $R^G\subset R$ is the subalgebra of $G$-invariant elements, then the quasi-Poisson bracket descends to a Poisson bracket on $R^G\cap R^{\g}$.   

We now consider $R=\OO(\Rep(A,\alpha))$ as in \ref{sss:RepSpaces}. The algebra $R$ is naturally endowed with an action of $\Gl_\alpha=\prod_{s=1}^K \Gl_{\alpha_s}(\kk)$, which is given in matrix notation by $g . \X(a)=g^{-1}\X(a)g$ for all $a \in A$, $g \in \Gl_\alpha$. 
We can also consider the Lie algebra $\g_\alpha=\prod_{s=1}^K \gl_{\alpha_s}(\kk)$ of $\Gl_\alpha$, which acts by derivation on $R$ as $\eta_R (\X(a))=[\X(a),\eta]$, for all $a\in A$, $\eta\in \g_\alpha$. 
We can  endow $\g_\alpha$ with the trace pairing $(\eta_1|\eta_2)=\tr(\eta_1 \eta_2)$, and consider the left adjoint action of $\Gl_\alpha$ on $\g_\alpha$ so that \eqref{Eq:qP3} is satisfied. The following result generalises \cite[Theorem 7.12.2]{VdB1}, see also \cite[Lemma 4.4]{MT14}. (This was already noticed by Van den Bergh without a proof, as mentioned in \cite[Remark 7.12.3]{VdB1}.)

\begin{thm} \label{Thm:qRed}
Assume that $(A,\dgal{-,-})$ is a double quasi-Poisson algebra over $B$. Then the algebra $R=\OO(\Rep(A,\alpha))$ is a quasi-Poisson algebra over the pair $(\Gl_\alpha,\g_\alpha)$ for the quasi-Poisson bracket defined by Proposition \ref{PropBr}.  
\end{thm}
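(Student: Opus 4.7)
The plan is to verify the four conditions \eqref{Eq:qP1}, \eqref{Eq:qP2}, \eqref{Eq:qP4}, \eqref{Eq:qP5} in turn, with \eqref{Eq:qP2} being the heart of the argument. The antisymmetric biderivation $\br{-,-}$ on $R$ is already produced by Proposition \ref{PropBr}. The $\Gl_\alpha$-invariance \eqref{Eq:qP5} of the Cartan trivector is immediate from invariance of the trace pairing on $\g_\alpha$ under the adjoint action. For \eqref{Eq:qP4}, I would observe that the generating function \eqref{EqBrDefn} depends on $A$ only through the matrix entries $\X(a)_{ij}$, so that conjugating all entries by $g\in\Gl_\alpha$ (i.e.\ applying $g.\X(a)=g^{-1}\X(a)g$) permutes the $\dgal{a,b}_{ij}'$ and $\dgal{a,b}_{kl}''$ in a way that preserves $\br{-,-}$; concretely, this is a direct computation using that $g^{-1}\X(a)g$ has entries $g^{-1}_{mi}a_{mn}g_{nj}$ and the biderivation property. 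Condition \eqref{Eq:qP1} then follows by differentiating \eqref{Eq:qP4} at the identity, together with the observation that $\eta_R$ is itself a derivation of $R$, so only \eqref{Eq:qP4} needs to be checked in detail.

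The substantive step is \eqref{Eq:qP2}. Starting from \eqref{JacVdB} in Proposition \ref{PropBr}, we have
\[
Jac(a_{ij},b_{kl},c_{uv})=\dgal{a,b,c}_{uj,il,kv}-\dgal{a,c,b}_{kj,iv,ul}.
\]
Using the quasi-Poisson hypothesis in the form \eqref{qPabcBis} (i.e.\ that the triple bracket is differential for the polyvector $Q=\tfrac{1}{12}\sum_s E_s^3\in (D_BA)_3$), both triple brackets on the right become values of $\dgal{-,-,-}_Q$. Applying Lemma \ref{LemTrivect} to $Q$ converts this to
\[
Jac(a_{ij},b_{kl},c_{uv})=\tfrac{1}{12}\sum_{s=1}^{K}\tr\X(E_s^3)(a_{ij},b_{kl},c_{uv}).
\]
So what remains is the identification
\begin{equation}\label{Eq:PlanCartan}
\tfrac{1}{12}\sum_{s=1}^{K}\tr\X(E_s^3)=\tfrac{1}{2}\,\phi_R,
\end{equation}
as trivector fields on $\Rep(A,\alpha)$.

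To prove \eqref{Eq:PlanCartan}, I would unpack the entries of $\X(E_s)$ using \eqref{Eq:Standard} and the explicit formula $E_s(a)=ae_s\otimes e_s - e_s\otimes e_s a$. A short check shows that for indices $i,j$ in the block corresponding to the vertex $s$, the vector field $(E_s)_{ij}$ coincides with the infinitesimal action $(E^s_{ji})_R$ of the elementary matrix $E^s_{ji}\in\gl_{\alpha_s}\subset\g_\alpha$; outside this block $\X(E_s)$ vanishes. Thus $\tr\X(E_s^3)=\sum_{i_1,i_2,i_3}(E^s_{i_2 i_1})_R\wedge(E^s_{i_3 i_2})_R\wedge(E^s_{i_1 i_3})_R$, where the sums run over the block of vertex $s$. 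On the other hand, the elementary matrices $(E^s_{ij})$ form a basis of $\g_\alpha$ dual (under the trace pairing) to $(E^s_{ji})$, and the structure constants satisfy $(E^s_{ji}|[E^t_{lk},E^u_{nm}])=\delta_{st}\delta_{su}\delta_{ki}\delta_{jn}\delta_{ml}-\delta_{st}\delta_{su}\delta_{mi}\delta_{jl}\delta_{kn}$. Substituting into the definition \eqref{CartanTri} of $\phi$ and taking the image $\phi_R$, the two terms combine to give exactly twice the right-hand side of \eqref{Eq:PlanCartan}, after relabeling indices and using antisymmetry of the wedge.

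The main obstacle is the last paragraph: carefully matching signs and indices between $\tfrac{1}{12}\sum_s\tr\X(E_s^3)$ and $\tfrac{1}{2}\phi_R$. The two explicit terms coming from the commutator $[E^s_{lk},E^s_{nm}]$ produce two wedge monomials which, after the wedge alternation kills one of the six permutation orbits of the triple $(\epsilon_i,\epsilon_j,\epsilon_k)$, yield the factor of $\tfrac{1}{2}$. Once \eqref{Eq:PlanCartan} is established, \eqref{Eq:qP2} follows, and combined with the equivariance checks the theorem is proved.
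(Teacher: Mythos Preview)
Your proposal is correct and follows essentially the same approach as the paper: both arguments dismiss the equivariance conditions \eqref{Eq:qP1}, \eqref{Eq:qP4}, \eqref{Eq:qP5} as routine, and for \eqref{Eq:qP2} both combine \eqref{JacVdB}, the quasi-Poisson condition \eqref{qPabcBis}, and Lemma \ref{LemTrivect} to reduce to the identification \eqref{Eq:PlanCartan}. The only difference is that the paper invokes \cite[Proposition 7.12.1]{VdB1} for \eqref{Eq:PlanCartan}, whereas you sketch a direct verification via the elementary-matrix basis of $\g_\alpha$; your computation is sound in structure, though as you note the index and sign bookkeeping there is delicate.
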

\begin{proof}
Showing \eqref{Eq:qP1}, \eqref{Eq:qP4} and \eqref{Eq:qP5} is easy, so we are left to show \eqref{Eq:qP2} on generators of the coordinate ring $R$. Hence,  fix $a,b,c\in A$. 
We remark that by \cite[Proposition 7.12.1]{VdB1} the 3-vector field $\phi_R$ is given by $\frac16 \sum_{s=1}^K \tr \X(E_s^3)$, hence we can write for any $1\leq i,j,k,l,u,v \leq N$ 
\begin{equation*}
\frac12 \phi_R(a_{ij},b_{kl},c_{uv})=\frac{1}{12}\sum_s \tr \X(E_s^3)(a_{ij},b_{kl},c_{uv})\,.
\end{equation*}
Using Lemma \ref{LemTrivect},  this is the same as
\begin{equation*}
\frac12 \phi_R(a_{ij},b_{kl},c_{uv})= 
  \left(\dgal{a,b,c}_{\frac{1}{12}\sum_s E_s^3}\right)_{uj,il,kv} 
- \left(\dgal{a,c,b}_{\frac{1}{12}\sum_s E_s^3}\right)_{kj,iv,ul}\,.
\end{equation*}
But then,  since the double bracket is quasi-Poisson we get by definition 
\begin{equation} \label{Eq:qRed}
\frac12 \phi_R(a_{ij},b_{kl},c_{uv})= 
   \dgal{a,b,c}_{uj,il,kv}- \dgal{a,c,b}_{kj,iv,ul}\,,
\end{equation}
where the triple bracket is defined from $\dgal{-,-}$ using \eqref{Eq:TripBr}. 
The right-hand side of \eqref{Eq:qRed} is nothing else than $Jac(a_{ij},b_{kl},c_{uv})$ by \eqref{JacVdB}. 
\end{proof}

If $\kk$ is algebraically closed, we can use Le Bruyn-Procesi Theorem \cite[Theorem 1]{LBP} to get that $A^{\Gl_\alpha}$ is generated by functions $\tr \X(a)$, $a\in A$, see e.g. \cite[Remark 4.3]{CB11}. In particular,  $A^{\Gl_\alpha}=A^{\Gl_\alpha} \cap A^{\g_\alpha}$. 
\begin{cor} \label{CorGL}
  Assume that $(A,\dgal{-,-})$ is a double quasi-Poisson algebra over $B$. If $\kk$ is an algebraically closed field of characteristic $0$, then the algebra $R^{\Gl_\alpha}=\OO(\Rep(A,\alpha)/\!/\Gl_\alpha)$ is a Poisson algebra whose Poisson bracket is induced by the quasi-Poisson bracket on $R$. 
\end{cor}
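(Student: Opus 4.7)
The plan is to apply Theorem \ref{Thm:qRed}, which already tells us that $R=\OO(\Rep(A,\alpha))$ is a quasi-Poisson algebra over the pair $(\Gl_\alpha,\g_\alpha)$. The remark following the definition of quasi-Poisson algebra over a pair then yields a Poisson bracket on $R^{\Gl_\alpha}\cap R^{\g_\alpha}$ obtained by restricting $\br{-,-}$. Hence it suffices to identify this subalgebra with $R^{\Gl_\alpha}$, i.e.\ to establish the inclusion $R^{\Gl_\alpha}\subseteq R^{\g_\alpha}$.

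For this, I would invoke the Le Bruyn--Procesi theorem, as suggested in the sentence immediately preceding the statement: over an algebraically closed field of characteristic zero, $R^{\Gl_\alpha}$ is generated, as a $\kk$-algebra, by the trace functions $\tr\X(a)$ for $a\in A$. Given $\eta\in\g_\alpha$, the defining infinitesimal action $\eta_R(\X(a))=[\X(a),\eta]$ combined with the cyclicity of the ordinary matrix trace yields
\begin{equation*}
\eta_R(\tr\X(a))=\tr\bigl(\eta_R(\X(a))\bigr)=\tr[\X(a),\eta]=0,
\end{equation*}
so every algebra generator of $R^{\Gl_\alpha}$ lies in $R^{\g_\alpha}$. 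Since $\g_\alpha$ acts on $R$ by derivations, $R^{\g_\alpha}$ is a subalgebra, and it therefore contains the whole of $R^{\Gl_\alpha}$. This gives the desired equality $R^{\Gl_\alpha}\cap R^{\g_\alpha}=R^{\Gl_\alpha}$, and the Poisson bracket on $R^{\Gl_\alpha}=\OO(\Rep(A,\alpha)/\!/\Gl_\alpha)$ is the restriction of $\br{-,-}$.

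There is no genuine obstacle here: the substantive content is packaged into Theorem \ref{Thm:qRed} and the general descent remark for quasi-Poisson brackets, and Le Bruyn--Procesi provides exactly the input needed to reduce $\Gl_\alpha$-invariance to $\g_\alpha$-invariance on a spanning set. An alternative route would be to observe that $\Gl_\alpha$ is a connected affine algebraic group over an algebraically closed field of characteristic zero, so that $R^{\Gl_\alpha}=R^{\g_\alpha}$ directly; but the trace-generator argument is in the spirit of the surrounding development and has the virtue of exhibiting an explicit family of generators for the resulting Poisson subalgebra.
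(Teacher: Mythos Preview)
Your proposal is correct and matches the paper's approach exactly: the paper invokes Le Bruyn--Procesi in the sentence immediately preceding the corollary to conclude that $R^{\Gl_\alpha}$ is generated by the trace functions $\tr\X(a)$, hence $R^{\Gl_\alpha}=R^{\Gl_\alpha}\cap R^{\g_\alpha}$, and the corollary then follows from Theorem~\ref{Thm:qRed} together with the general descent remark for quasi-Poisson algebras over a pair. Your explicit verification that $\eta_R(\tr\X(a))=0$ and your observation about the alternative connectedness argument are both sound additions, but the core reasoning is identical.
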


\begin{exmp}
  Fix integers $M \geq 1$ and $k_m\geq 3$ for $1\leq m \leq M$. Let $N=\max(k_1,\ldots,k_M)$. Combining Example \ref{Exmp:xN} and Theorem \ref{Thm:qRed}, we get that the algebra 
\begin{equation*}
  R=\kk\left[X_{m,ij} \mid 1\leq m \leq M,\,\, 1\leq i,j \leq N\right]
/\left(X_m^{k_m}=0_N \text{ for }X_m=(X_{m,ij}),\,\, 1\leq m \leq M\right)
\end{equation*}
is a quasi-Poisson algebra over the pair  $(\Gl_N(\kk),\gl_N(\kk))$ with quasi-Poisson bracket 
 \begin{equation*}
   \begin{aligned}
     \br{X_{m,ij},X_{m,kl}}=\frac12(X_m^2)_{kj} \delta_{il} - \frac12 \delta_{kj} (X_m^2)_{il}\,,& \quad 1\leq m \leq M\,, \\
 \br{X_{m,ij},X_{n,kl}}=\frac12 (X_n X_m)_{kj} \delta_{il} + \frac12 \delta_{kj} (X_m X_n)_{il} 
- \frac12 X_{n,kj} X_{m,il} - \frac12 X_{m,kj} X_{n,il}\,, &
 \quad 1\leq m < n \leq M\,.
   \end{aligned}
 \end{equation*}
When all the $(k_m)_m$ are equal, this gives a quasi-Poisson algebra structure on the coordinate ring corresponding to $M$ copies of the space of  nilpotent $N \times N$ matrices.  
\end{exmp}

\begin{exmp} \label{Ex:RepMT}
  If $\kk=\R$, we have by \cite[Appendix B]{MT14} that the double quasi-Poisson bracket of Massuyeau and Turaev given in Theorem \ref{ThmPI} endows $\Rep(\kk \pi_1(\Sigma,\ast),N)$ with the quasi-Poisson bracket given in \cite{QuasiP}.  
\end{exmp}

\subsection{Moment maps and Poisson algebra}
Consider the quasi-Poisson algebra $(R,\br{-,-})$ over the pair $(\Gl_\alpha,\g_\alpha)$ obtained from the double quasi-Poisson algebra $(A,\dgal{-,-})$ by Theorem \ref{Thm:qRed}. We now assume that $A$ is a quasi-Hamiltonian algebra, i.e. it is endowed with a moment map $\Phi\in \oplus_s e_s A e_s$. For any $(q_s)\in (\kk^\times)^K$, let $q=\sum_s q_s e_s \in B^\times$ and define the ideal $J_q$ generated by the entries of the matrix identity $\X(\Phi)-\X(q)=0_N$. 
We can form the algebra $R_q=R/J_q$, and denote by $\bar{r}$ the image of an element $r\in R$ under the projection $R\to R_q$. 

We clearly have that $J_q$ is $\Gl_\alpha$- and $\g_\alpha$-invariant, so that we can consider the induced actions on $R_q=R/J_q$. If we let $R_q^t\subset R_q$ denote the subalgebra generated by elements $\tr(\bar{r})$, $r\in R$, we can see that $R_q^t\subset R_q^{\Gl_\alpha}\cap R_q^{\g_\alpha}$.   
The next result follows either from \cite[Proposition 6.8.1]{VdB1} and \cite[Theorem 4.5]{CB11}, or from \cite[Proposition 7.13.2]{VdB1} and quasi-Hamiltonian reduction \cite{QuasiP}.

\begin{thm} \label{Thm:qRedMom}
  Let $(A,\dgal{-,-},\Phi)$ be a quasi-Hamiltonian algebra over $B$. Then, for any $q\in B^\times$, the algebra $R_q^t$ is a Poisson algebra whose Poisson bracket is induced by the quasi-Poisson bracket on $R$. 
\end{thm}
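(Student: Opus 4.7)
The plan is to descend the quasi-Poisson bracket $\br{-,-}$ on $R$ to a Poisson bracket on $R_q^t$ via three steps. Let $R^t\subset R$ denote the subalgebra generated by the elements $\tr\X(a)$ for $a\in A$, which projects onto $R_q^t$ in $R_q$. The three things I would verify are: (i) $R^t$ is closed under $\br{-,-}$; (ii) bracketing an element of $R^t$ against an element of $J_q$ lands in $J_q$; (iii) the induced bracket on $R_q^t$ satisfies the Jacobi identity. Step (i) is a short computation: using \eqref{EqBrDefn} together with the cyclic invariance of the matrix trace,
\begin{equation*}
  \br{\tr\X(a),\tr\X(b)} \,=\, \sum_{i,k} \dgal{a,b}'_{ki}\,\dgal{a,b}''_{ik} \,=\, \tr\X\bigl(m(\dgal{a,b})\bigr),
\end{equation*}
where $m:A\otimes A\to A$ is the multiplication; the Leibniz rule for $\br{-,-}$ then extends this to all of $R^t$.

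The heart of the argument is step (ii), which I would reduce to showing $\br{\tr\X(a),(\Phi_s)_{ij}}=0$ for every $a\in A$ and every pair $(i,j)$. Combining \eqref{Phim} with \eqref{EqBrDefn} yields
\begin{equation*}
  \br{(\Phi_s)_{ij},a_{kl}} \,=\, \tfrac12\bigl[(ae_s)_{kj}(\Phi_s)_{il} - (e_s)_{kj}(\Phi_s a)_{il} + (a\Phi_s)_{kj}(e_s)_{il} - (\Phi_s)_{kj}(e_s a)_{il}\bigr].
\end{equation*}
Specialising to $l=k$ and summing over $k$ collapses the four terms pairwise, via matrix multiplication, to $(\Phi_s a e_s)_{ij}$ and $(e_s a \Phi_s)_{ij}$ with opposite signs, producing the desired vanishing. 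Cyclic antisymmetry of $\br{-,-}$ and the Leibniz rule then extend this to $\br{F,(\Phi_s)_{ij}}=0$ for every $F\in R^t$; since $q\in B^\times$ makes each entry $q_{ij}$ a scalar, one also has $\br{F,(\X(\Phi)-\X(q))_{ij}}=0$. Because $J_q$ is generated as an $R$-ideal by the entries of $\X(\Phi)-\X(q)$, another application of the Leibniz rule gives $\br{R^t,J_q}\subset J_q$. Combined with (i), the bracket descends to a well-defined $\kk$-bilinear antisymmetric biderivation on $R_q^t$.

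For step (iii), the Jacobi identity is immediate from \eqref{Eq:qP2}: by \cite[Proposition 7.12.1]{VdB1} the defect $\phi_R=\tfrac16\sum_s\tr\X(E_s^3)$ is a trivector built entirely from the $\g_\alpha$-action on $R$, so $\phi_R(f,g,h)=0$ whenever $f,g,h$ lie in $R^{\g_\alpha}$. Since each $\tr\X(a)$ is $\Gl_\alpha$-invariant and therefore $\g_\alpha$-invariant, this covers $R^t$, and hence $\br{-,-}$ is already Poisson on $R^t$; the Jacobi identity then passes to $R_q^t$ automatically. The main obstacle is the cancellation in step (ii), which is the non-commutative echo of the classical fact that a group-valued moment map generates the infinitesimal group action, and therefore acts trivially on invariant functions.
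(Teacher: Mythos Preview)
Your argument is correct. The computation in step (ii) is the key point and you carry it out cleanly: summing $\br{(\Phi_s)_{ij},a_{kk}}$ over $k$ produces $(\Phi_s a e_s)_{ij}-(\Phi_s a e_s)_{ij}+(e_s a\Phi_s)_{ij}-(e_s a\Phi_s)_{ij}=0$, exactly as you say; steps (i) and (iii) are standard and correctly justified (note that $\tr\X(a)\in R^{\g_\alpha}$ follows directly from $\eta_R(\tr\X(a))=\tr[\X(a),\eta]=0$, so the appeal to $\Gl_\alpha$-invariance is not even needed).

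The paper itself gives no proof, only two pointers: either combine \cite[Proposition~6.8.1]{VdB1} with \cite[Theorem~4.5]{CB11}, or invoke \cite[Proposition~7.13.2]{VdB1} together with quasi-Hamiltonian reduction from \cite{QuasiP}. Your write-up is essentially an explicit unpacking of the first route: \cite[Proposition~6.8.1]{VdB1} records that traces form an $H_0$-Poisson structure (your step~(i) and the Jacobi identity on $R^t$), while \cite[Theorem~4.5]{CB11} is precisely the statement that the moment-map relation \eqref{Phim} forces $\br{\tr\X(a),(\Phi_s)_{ij}}=0$ so that one may reduce. The second route is more geometric---one shows $\X(\Phi)$ is a group-valued moment map on $\Rep(A,\alpha)$ and then imports the reduction theorem of \cite{QuasiP}---and what it buys is a cleaner conceptual picture at the cost of heavier machinery. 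Your direct verification is more self-contained and makes transparent exactly where the multiplicative moment map condition is used.
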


\begin{cor}
  Assume that $(A,\dgal{-,-},\Phi)$ is a  quasi-Hamiltonian algebra over $B$, and fix $q \in B^\times$. 
If $\kk$ is an algebraically closed field of characteristic $0$, then the algebra $R_q^{\Gl_\alpha}=\left(\OO(\Rep(A,\alpha))/(\X(\Phi-q))\right)^{\Gl_\alpha}$ is a Poisson algebra. 
\end{cor}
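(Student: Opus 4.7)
The plan is to deduce this corollary from Theorem \ref{Thm:qRedMom} by showing that, when $\kk$ is algebraically closed of characteristic zero, the invariant subalgebra $R_q^{\Gl_\alpha}$ coincides with the trace subalgebra $R_q^t$ already treated in that theorem.

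First, I would recall that $J_q$ is a $\Gl_\alpha$-stable ideal of $R$, as was observed in the excerpt preceding Theorem \ref{Thm:qRedMom}. Since $\kk$ has characteristic $0$ and is algebraically closed, the group $\Gl_\alpha=\prod_s \Gl_{\alpha_s}(\kk)$ is linearly reductive, so the functor $(-)^{\Gl_\alpha}$ is exact on rational representations (equivalently, a Reynolds operator is available). Applying this to the short exact sequence $0\to J_q \to R \to R_q \to 0$ yields
\begin{equation*}
R_q^{\Gl_\alpha}\,=\,R^{\Gl_\alpha}\big/(J_q\cap R^{\Gl_\alpha})\,=\,\text{image of }R^{\Gl_\alpha}\text{ in }R_q\,.
\end{equation*}

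Second, I would invoke the Le Bruyn--Procesi theorem, precisely as it was used immediately before Corollary \ref{CorGL}: the algebra $R^{\Gl_\alpha}$ is generated by the trace functions $\tr\X(a)$ for $a\in A$. Pushing this description through the quotient map $R\to R_q$, we get that $R_q^{\Gl_\alpha}$ is generated by the images $\tr\X(\bar a)=\overline{\tr\X(a)}$ for $a\in A$. By the very definition of $R_q^t$ given in the paragraph preceding Theorem \ref{Thm:qRedMom}, this is exactly the statement $R_q^{\Gl_\alpha}=R_q^t$.

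Finally, Theorem \ref{Thm:qRedMom} asserts that $R_q^t$ is a Poisson algebra whose Poisson bracket is induced from the quasi-Poisson bracket on $R$. Combined with the identification $R_q^{\Gl_\alpha}=R_q^t$ from the previous step, this gives the required Poisson algebra structure on $R_q^{\Gl_\alpha}$, and so the corollary follows. The only real point requiring care is the reductivity/exactness step, but this is standard under the hypotheses on $\kk$; there is no substantive obstacle, and essentially the proof is the moment map analogue of the argument sketched after Corollary \ref{CorGL}.
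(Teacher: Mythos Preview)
Your proposal is correct and is precisely the argument the paper intends: the corollary is stated without proof, but it is the moment-map analogue of Corollary~\ref{CorGL}, whose justification (Le Bruyn--Procesi giving $R^{\Gl_\alpha}$ generated by traces, then identifying with the trace subalgebra) you have reproduced, together with the standard reductivity argument to pass the invariants through the quotient by $J_q$. One cosmetic remark: the expression $\tr\X(\bar a)$ is not literally defined (the bar was introduced for elements of $R$, not of $A$), so you should simply write $\overline{\tr\X(a)}$ throughout.
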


\begin{exmp}
  If $\kk$ is algebraically closed, the double quasi-Poisson bracket of Van den Bergh given in Theorem \ref{ThmVdB} (with $\gamma_a=+1$ for all $a\in \bar{Q}$) defines a Poisson structure on multiplicative quiver varieties of Crawley-Boevey and Shaw \cite{CBShaw}, see \cite[Theorem 1.1]{VdB1}. 
\end{exmp}


\appendix 

\section{Vanishing of the map \texorpdfstring{$\kappa$}{kappa}} \label{Ann:Kappa}

In this appendix, we prove Lemma \ref{LemE1E2}. Note that 
 $\kappa$ is a linear combination of triple brackets, so it is itself a triple bracket. By definition, it is a derivation in its last argument and is cyclically anti-symmetric. Thus, to show that $\kappa$ vanishes, it suffices to show that it is equal to zero when applied to generators of $A^f$. 
Before tackling this task, we use \eqref{Eq:TripBr} and remark that we can write 
\begin{equation*}
\begin{aligned}
  &\kappa(-,-,-)=\dgal{-,-,-}^f-\dgal{-,-,-}-\dgal{-,-,-}_{fus} \\
=&\sum_{r \in \Z_3} \tau_{(123)}^r \circ \left[(\dgal{-,-}\otimes 1_A)\circ(1_A \otimes \dgal{-,-}_{fus})+ (\dgal{-,-}_{fus}\otimes 1_A)\circ(1_A \otimes \dgal{-,-}) \right] \circ \tau_{(123)}^{-r}\,,
\end{aligned}
\end{equation*}
where $1_A$ is the identity map. 
Therefore, evaluated on some elements $a,b,c \in A^f$, we can write 
\begin{equation}\label{Eqkap}
  \begin{aligned}
    \kappa(a,b,c)=&
\,\,\underbrace{\dgal{a,\dgal{b,c}'_{fus}}\otimes \dgal{b,c}_{fus}''}_{A}
+\underbrace{\dgal{a,\dgal{b,c}'}_{fus}\otimes \dgal{b,c}''}_{A'} \\
&+ \underbrace{\tau_{(123)}\dgal{b,\dgal{c,a}'_{fus}}\otimes \dgal{c,a}_{fus}''}_{B} 
+ \underbrace{\tau_{(123)}\dgal{b,\dgal{c,a}'}_{fus}\otimes \dgal{c,a}''}_{B'}\\
&+ \underbrace{\tau_{(132)}\dgal{c,\dgal{a,b}'_{fus}}\otimes \dgal{a,b}_{fus}''}_{C}
+ \underbrace{\tau_{(132)}\dgal{c,\dgal{a,b}'}_{fus}\otimes \dgal{a,b}''}_{C'}\,,
  \end{aligned}
\end{equation}
so that we will write down the terms $A,B,C,A',B',C'$ for the different types of generators. Using the cyclicity, we only have twenty cases to check. We will only detail the computations in the first few cases, and we will give the final form of the terms $A,B,C,A',B',C'$ in the remaining cases so that the reader can check that they sum up to zero. 

Before beginning with the calculations, we remark that  identities involving the double bracket $\dgal{-,-}$ follow from extension from $A$ to $A^f$ which respects the derivation property in each variable. That is, given $e_+,f_+ \in \{\epsilon,e_{12}\}$ and $e_-,f_-\in \{\epsilon,e_{21}\}$, we have for any $a=e_+ \alpha e_-$, $b=f_+ \beta f_-$ with $\alpha,\beta\in A$ that  
\begin{equation} \label{EqAf}
  \dgal{a,b}=f_+ \dgal{\alpha,\beta}' e_- \otimes e_+ \dgal{\alpha,\beta}'' f_-\,.
\end{equation}
Here, in the left-hand side we have the induced double bracket on $A^f$, while the double bracket in the right-hand side is the original one on $A$.
Recall that we can choose generators $a,b\in A^f$ that admit such a decomposition by Lemma \ref{AfGenerat}.

\subsection{All generators of the same type} \label{proofSame} We drop the idempotent $\epsilon$ in our computations since this is the unit in $A^f$.

\noindent \emph{Generators of the second type.} Write $a=e_{12}\alpha$, $b=e_{12}\beta$ and $c=e_{12}\gamma$ for $\alpha,\beta,\gamma \in e_2 A \epsilon$. 
Using \eqref{uu}, then  the derivation property for the outer bimodule structure in the second entry of the double bracket on $A^f$ together with \eqref{EqAf}, we get that 
\begin{equation*}
  \begin{aligned}
A(a,b,c)=&-\frac12 \dgal{e_{12}\alpha,e_{12}\gamma e_{12}\beta}\otimes e_1 
=-\frac12 \left(e_{12}\gamma \dgal{e_{12}\alpha,e_{12}\beta} 
+ \dgal{e_{12}\alpha,e_{12}\gamma }e_{12}\beta\right)\otimes e_1    \\
=&-\frac12 e_{12}\dgal{\alpha,\gamma}' \otimes e_{12} \dgal{\alpha,\gamma}''e_{12}\beta \otimes e_1 
-\frac12 e_{12}\gamma e_{12}\dgal{\alpha,\beta}' \otimes e_{12}\dgal{\alpha,\beta}''\otimes e_1\,.
  \end{aligned}
\end{equation*}
Similarly we obtain 
\begin{equation*}
  \begin{aligned}
B(a,b,c)
=&-\frac12 \tau_{(123)}(e_{12}\dgal{\beta,\alpha}' \otimes e_{12} \dgal{\beta,\alpha}''e_{12}\gamma \otimes e_1 + e_{12}\alpha e_{12}\dgal{\beta,\gamma}' \otimes e_{12}\dgal{\beta,\gamma}''\otimes e_1)\, \\
=&-\frac12 e_1 \otimes e_{12}\dgal{\beta,\alpha}' \otimes e_{12} \dgal{\beta,\alpha}''e_{12}\gamma  
-\frac12 e_1 \otimes e_{12}\alpha e_{12}\dgal{\beta,\gamma}' \otimes e_{12}\dgal{\beta,\gamma}''\,,  \\
C(a,b,c)
=&-\frac12 \tau_{(132)}(e_{12}\dgal{\gamma,\beta}' \otimes e_{12} \dgal{\gamma,\beta}''e_{12}\alpha \otimes e_1 + e_{12}\beta e_{12}\dgal{\gamma,\alpha}' \otimes e_{12}\dgal{\gamma,\alpha}''\otimes e_1)\, \\
=&-\frac12  e_{12} \dgal{\gamma,\beta}''e_{12}\alpha \otimes e_1 \otimes  e_{12}\dgal{\gamma,\beta}' 
- \frac12  e_{12}\dgal{\gamma,\alpha}''\otimes e_1 \otimes e_{12}\beta e_{12}\dgal{\gamma,\alpha}'\,.
  \end{aligned}
\end{equation*}
Now, remark that \eqref{EqAf} gives $\dgal{e_{12} \beta,e_{12} \gamma}=e_{12}\dgal{\beta,\gamma}'\otimes e_{12}\dgal{\beta,\gamma}''$, so that the element in the first copy of $A^{\otimes 2}$ is also a generator of the second type. Using \eqref{uu} for the expression of $\dgal{-,-}_{fus}$, we get 
\begin{equation*}
  \begin{aligned}
A'(a,b,c)=&\dgal{e_{12}\alpha,e_{12}\dgal{\beta,\gamma}'}_{fus}\otimes e_{12}\dgal{\beta,\gamma}'' \\
=&\frac12 e_1 \otimes e_{12}\alpha e_{12}\dgal{\beta,\gamma}'\otimes e_{12}\dgal{\beta,\gamma}''
- \frac12 e_{12}\dgal{\beta,\gamma}' e_{12}\alpha \otimes e_1 \otimes e_{12}\dgal{\beta,\gamma}''\,.
  \end{aligned}
\end{equation*}
In the same way, we find 
\begin{equation*}
  \begin{aligned}
B'(a,b,c)=&
\frac12 \tau_{(123)}( e_1 \otimes e_{12}\beta e_{12}\dgal{\gamma,\alpha}'\otimes e_{12}\dgal{\gamma,\alpha}''
-  e_{12}\dgal{\gamma,\alpha}' e_{12}\beta \otimes e_1 \otimes e_{12}\dgal{\gamma,\alpha}'' ) \\
=& \frac12 e_{12}\dgal{\gamma,\alpha}'' \otimes e_1 \otimes e_{12}\beta e_{12}\dgal{\gamma,\alpha}'
- \frac12  e_{12}\dgal{\gamma,\alpha}'' \otimes e_{12}\dgal{\gamma,\alpha}' e_{12}\beta \otimes e_1  \,, \\
C'(a,b,c)=&
\frac12 \tau_{(132)}( e_1 \otimes e_{12}\gamma e_{12}\dgal{\alpha,\beta}'\otimes e_{12}\dgal{\alpha,\beta}''
-  e_{12}\dgal{\alpha,\beta}' e_{12}\gamma \otimes e_1 \otimes e_{12}\dgal{\alpha,\beta}'' ) \\
=& \frac12  e_{12}\gamma e_{12}\dgal{\alpha,\beta}'\otimes e_{12}\dgal{\alpha,\beta}'' \otimes e_1
- \frac12  e_1 \otimes e_{12}\dgal{\alpha,\beta}'' \otimes e_{12}\dgal{\alpha,\beta}' e_{12}\gamma\,.
  \end{aligned}
\end{equation*}
Summing all terms, we obtain after obvious cancellations 
\begin{equation*}
  \begin{aligned}
\kappa(a,b,c)=&
-\frac12 (e_{12}\dgal{\alpha,\gamma}' \otimes e_{12} \dgal{\alpha,\gamma}''e_{12}\beta \otimes e_1 
+  e_{12}\dgal{\gamma,\alpha}'' \otimes e_{12}\dgal{\gamma,\alpha}' e_{12}\beta \otimes e_1) \\
&-\frac12 (e_1 \otimes e_{12}\dgal{\beta,\alpha}' \otimes e_{12} \dgal{\beta,\alpha}''e_{12}\gamma 
+ e_1 \otimes e_{12}\dgal{\alpha,\beta}'' \otimes e_{12}\dgal{\alpha,\beta}' e_{12}\gamma) \\
&-\frac12 ( e_{12} \dgal{\gamma,\beta}''e_{12}\alpha \otimes e_1 \otimes  e_{12}\dgal{\gamma,\beta}'
+  e_{12}\dgal{\beta,\gamma}' e_{12}\alpha \otimes e_1 \otimes e_{12}\dgal{\beta,\gamma}'' )\,.
  \end{aligned}
\end{equation*}
It remains to notice in the last expression that all lines vanish using the cyclic antisymmetry of the double bracket.

\noindent \emph{Generators of the third type.} Write $a=\alpha e_{21}$, $b=\beta e_{21}$ and $c=\gamma e_{21}$ for $\alpha,\beta,\gamma \in \epsilon A e_2$. From \eqref{vv} and \eqref{EqAf} we get that 
\begin{equation*}
  \begin{aligned}
A(a,b,c)=&\frac12 \dgal{\alpha e_{21},\gamma e_{21}\beta e_{21}}\otimes e_1     \\
=&\frac12 \dgal{\alpha,\gamma}' e_{21} \otimes  \dgal{\alpha,\gamma}''e_{21} \beta e_{21} \otimes e_1 
+\frac12 \gamma e_{21}\dgal{\alpha,\beta}' e_{21} \otimes \dgal{\alpha,\beta}''e_{21} \otimes e_1\,.
  \end{aligned}
\end{equation*}
Similarly we obtain 
\begin{equation*}
  \begin{aligned}
B(a,b,c)
=&\frac12 e_1 \otimes \dgal{\beta,\alpha}' e_{21} \otimes  \dgal{\beta,\alpha}''e_{21} \gamma e_{21} 
+\frac12 e_1 \otimes \alpha e_{21}\dgal{\beta,\gamma}' e_{21} \otimes \dgal{\beta,\gamma}''e_{21} \,,  \\
C(a,b,c)
=&\frac12   \dgal{\gamma,\beta}''e_{21} \alpha e_{21} \otimes e_1  \otimes \dgal{\gamma,\beta}' e_{21}
+\frac12  \dgal{\gamma,\alpha}''e_{21} \otimes e_1 \otimes \beta e_{21}\dgal{\gamma,\alpha}' e_{21}\,. 
  \end{aligned}
\end{equation*}
Noticing from \eqref{EqAf} that $\dgal{b,c}'=\dgal{\beta,\gamma}' e_{21}$ is a generator of the third type, we get again from \eqref{vv} 
\begin{equation*}
  \begin{aligned}
A'(a,b,c)=&\dgal{\alpha e_{21},\dgal{\beta,\gamma}' e_{21}}_{fus}\otimes \dgal{\beta,\gamma}'' e_{21} \\
=&\frac12 \dgal{\beta,\gamma}' e_{21} \alpha e_{21} \otimes e_1 \otimes \dgal{\beta,\gamma}'' e_{21}
- \frac12 e_1 \otimes \alpha e_{21} \dgal{\beta,\gamma}' e_{21} \otimes \dgal{\beta,\gamma}'' e_{21}\,.
  \end{aligned}
\end{equation*}
Analogously 
\begin{equation*}
  \begin{aligned}
B'(a,b,c)
=& \frac12 \dgal{\gamma,\alpha}'' e_{21} \otimes \dgal{\gamma,\alpha}' e_{21} \beta e_{21} \otimes e_1 
- \frac12 \dgal{\gamma,\alpha}'' e_{21} \otimes e_1 \otimes \beta e_{21} \dgal{\gamma,\alpha}' e_{21}  \,,  \\
C'(a,b,c)
=&\frac12  e_1 \otimes \dgal{\alpha,\beta}'' e_{21} \otimes  \dgal{\alpha,\beta}' e_{21} \gamma e_{21}
- \frac12   \gamma e_{21} \dgal{\alpha,\beta}' e_{21} \otimes \dgal{\alpha,\beta}'' e_{21} \otimes e_1 \,. 
  \end{aligned}
\end{equation*}
Summing all terms yield 
\begin{equation*}
  \begin{aligned}
\kappa(a,b,c)=&
+\frac12 (\dgal{\alpha,\gamma}' e_{21} \otimes  \dgal{\alpha,\gamma}''e_{21} \beta e_{21} \otimes e_1
+ \dgal{\gamma,\alpha}'' e_{21} \otimes \dgal{\gamma,\alpha}' e_{21} \beta e_{21} \otimes e_1 ) \\
&+\frac12 (e_1 \otimes \dgal{\beta,\alpha}' e_{21} \otimes  \dgal{\beta,\alpha}''e_{21} \gamma e_{21} 
+  e_1 \otimes \dgal{\alpha,\beta}'' e_{21} \otimes  \dgal{\alpha,\beta}' e_{21} \gamma e_{21}) \\
&+\frac12 (\dgal{\gamma,\beta}''e_{21} \alpha e_{21} \otimes e_1  \otimes \dgal{\gamma,\beta}' e_{21} 
+  \dgal{\beta,\gamma}' e_{21} \alpha e_{21} \otimes e_1 \otimes \dgal{\beta,\gamma}'' e_{21} )\,,
  \end{aligned}
\end{equation*}
which is zero using the cyclic antisymmetry. 

\noindent \emph{Generators of the first type.} For $a,b,c\in \epsilon A \epsilon$, we have by \eqref{EqAf} that the  double bracket $\dgal{-,-}$ evaluated on any two of these elements belongs to $(\epsilon A \epsilon)^{\otimes 2}$. At the same time, \eqref{tt} gives that $\dgal{\epsilon A \epsilon,\epsilon A \epsilon}_{fus}=0$. Hence all terms in \eqref{Eqkap} trivially vanish and $\kappa(a,b,c)=0$. 

\noindent \emph{Generators of the fourth type.} As in the first type case, we use \eqref{EqAf}  to get that $\dgal{e_{12} A e_{21},e_{12} A e_{21}}\subset (e_{12} A e_{21})^{\otimes 2}$ and \eqref{ww} to obtain $\dgal{e_{12} A e_{21},e_{12} A e_{21}}_{fus}=0$, so that all terms vanish.

\subsection{Two generators of the first type}  

Let $a,b\in \epsilon A \epsilon$. 

\noindent \emph{With one generator of the second type.}  Consider $c=e_{12}\gamma$ for some $\gamma \in e_2 A \epsilon$. Using \eqref{tu} and \eqref{ut},
\begin{equation*}
\begin{aligned}
A=&-\frac12 e_1\dgal{a, b}' \otimes \dgal{a, b}''\otimes e_{12}\gamma\,, \\
B=& \frac12 e_1 a \otimes e_{12}\dgal{b,\gamma}' \otimes \dgal{b,\gamma}''
-\frac12 e_1 \otimes \dgal{b,a }' \otimes \dgal{b,a }'' e_{12}\gamma
-\frac12 e_1 \otimes a e_{12}\dgal{b,\gamma}' \otimes \dgal{b,\gamma}'' \,.
\end{aligned}
\end{equation*}
By \eqref{tt}, $C$ trivially vanishes. It is also the case for $B'$ because $\dgal{e_{12}\gamma,a}'\in \epsilon A \epsilon$. Next we get by \eqref{tu} and \eqref{ut} that 
\begin{equation*}
\begin{aligned}
  A'=& \frac12 e_1 \otimes a e_{12} \dgal{b,\gamma}' \otimes \dgal{b,\gamma}'' 
- \frac12 e_1 a \otimes e_{12}\dgal{b,\gamma}' \otimes \dgal{b,\gamma}''\,, \\
    C'=&\frac12  e_1 \dgal{a,b}' \otimes \dgal{a,b}'' \otimes e_{12}\gamma 
- \frac12 e_1 \otimes \dgal{a,b}'' \otimes \dgal{a,b}'e_{12}\gamma\,,
\end{aligned}
\end{equation*}
so that all terms cancel out together (after using the cyclic antisymmetry, which we will need in each of the remaining cases).  

\noindent \emph{With one generator of the third type.}  Consider $c=\gamma e_{21}$ for some $\gamma \in \epsilon A e_2$. We get from \eqref{tv} and \eqref{vt}  that 
\begin{equation*}
\begin{aligned}
  A=&\frac12 \dgal{a,\gamma}' \otimes \dgal{a,\gamma}'' e_{21}b \otimes e_1
+ \frac12 \gamma e_{21}\dgal{a,b}' \otimes \dgal{a,b}'' \otimes e_1 
- \frac12 \dgal{a,\gamma}' \otimes \dgal{a,\gamma}'' e_{21} \otimes b e_1\,, \\
B=& \frac12 \gamma e_{21} \otimes \dgal{b,a}' \otimes \dgal{b,a}'' e_1\,.
\end{aligned}
\end{equation*}
Again using \eqref{tt} we have $C=0$, and $A'=0$ since $\dgal{b,\gamma e_{21}}' \in \epsilon A \epsilon$. Finally, from \eqref{tv} and \eqref{vt} we get 
\begin{equation*}
\begin{aligned}
  B'
=& \frac12 \dgal{\gamma,a}'' \otimes \dgal{\gamma,a}' e_{21}b \otimes e_1 
- \frac12 \dgal{\gamma,a}'' \otimes \dgal{\gamma,a}' e_{21}\otimes b e_1\,,\\
C'
=&\frac12 \gamma e_{21} \otimes \dgal{a,b}'' \otimes \dgal{a,b}' e_1
- \frac12   \gamma e_{21}\dgal{a,b}' \otimes \dgal{a,b}'' \otimes e_1\,,
\end{aligned}
\end{equation*}
and all terms sum up to zero. 

\noindent \emph{With one generator of the fourth type.} Consider $c=e_{12} \gamma e_{21}$ for some $\gamma \in e_2 A e_2$. 
First, using \eqref{tw} and  \eqref{wt}  we get 
\begin{equation*}
\begin{aligned}
    A=& \frac12 e_{12}\dgal{a,\gamma}' \otimes \dgal{a,\gamma}'' e_{21}b \otimes e_1 
+ \frac12 e_{12}\gamma e_{21}\dgal{a,b}' \otimes \dgal{a,b}'' \otimes e_1  \\
&-\frac12 e_{12}\dgal{a,\gamma}' \otimes \dgal{a,\gamma}''e_{21} \otimes b e_1 
- \frac12 e_1\dgal{a,b}' \otimes \dgal{a,b}'' \otimes e_{12}\gamma e_{21}\,, \\
B=& \frac12 e_{12}\gamma e_{21} \otimes \dgal{b,a}' \otimes \dgal{b,a}'' e_1 
+ \frac12 e_1 a \otimes e_{12}\dgal{b,\gamma}' \otimes \dgal{b,\gamma}'' e_{21} \\
&-\frac12 e_1 \otimes \dgal{b,a}'\otimes \dgal{b,a}'' e_{12}\gamma e_{21}
- \frac12 e_1 \otimes ae_{12}\dgal{b,\gamma}' \otimes \dgal{b,\gamma}'' e_{21}\,.
\end{aligned}
\end{equation*}
Again, $C=0$ by \eqref{tt}. Meanwhile, we find from \eqref{tu}, \eqref{tv} and \eqref{wt}
\begin{equation*}
\begin{aligned}
    A'
=&\frac12 e_1 \otimes a e_{12} \dgal{b,\gamma}' \otimes  \dgal{b,\gamma}'' e_{21} 
- \frac12 e_1 a \otimes e_{12}\dgal{b,\gamma}' \otimes \dgal{b,\gamma}'' e_{21}\,, \\
    B'
=&\frac12 e_{12}\dgal{\gamma,a}'' \otimes \dgal{\gamma,a}' e_{21}b \otimes e_1 
- \frac12 e_{12}\dgal{\gamma,a}'' \otimes \dgal{\gamma,a}' e_{21} \otimes b e_1 \,, \\
    C'
=&\frac12  e_{12}\gamma e_{21} \otimes \dgal{a,b}'' \otimes \dgal{a,b}' e_1
+ \frac12  e_1\dgal{a,b}' \otimes \dgal{a,b}'' \otimes e_{12}\gamma e_{21} \\
&- \frac12  e_1 \otimes \dgal{a,b}''  \otimes \dgal{a,b}' e_{12}\gamma e_{21}
- \frac12  e_{12}\gamma e_{21} \dgal{a,b}' \otimes \dgal{a,b}'' \otimes e_1\,.
\end{aligned}
\end{equation*}
Summing terms together, we get $\kappa=0$.

\subsection{Two generators of the second type} 
Let $a=e_{12}\alpha,b=e_{12}\beta $ for $\alpha,\beta\in e_2 A \epsilon$. We only collect the final form of the terms $A,B,C,A',B',C'$ from now on, and the reader can check that they sum up to zero. 

\noindent \emph{With one generator of the first type.}  Consider $c\in \epsilon A \epsilon$.  
\begin{equation*}
 \begin{aligned}
     A
=&\frac12 e_{12}\dgal{\alpha,\beta}'\otimes e_{12}\dgal{\alpha,\beta}'' \otimes e_1 c
- \frac12 \dgal{\alpha,c}' \otimes e_{12} \dgal{\alpha,c}''e_{12}\beta \otimes e_1
- \frac12 ce_{12} \dgal{\alpha,\beta}'\otimes e_{12}\dgal{\alpha,\beta}'' \otimes e_1\,, \\
B
=& -\frac12 e_{12}\alpha \otimes e_1\dgal{\beta,c}' \otimes e_{12} \dgal{\beta,c}''\,, \\
C
=&-\frac12  \dgal{c,\beta}'' e_{12} \alpha \otimes e_1 \otimes e_{12}\dgal{c, \beta}'
- \frac12  \dgal{c,\alpha}'' \otimes e_1 \otimes e_{12}\beta e_{12} \dgal{c,\alpha}'\,, \\ 
A' 
=& \frac12 e_{12}\alpha \otimes e_1 \dgal{\beta,c}' \otimes e_{12} \dgal{\beta,c}''
-\frac12 \dgal{\beta,c}'e_{12}\alpha \otimes e_1 \otimes e_{12}\dgal{\beta,c}''\,, \\
B' 
=& \frac12 \dgal{c,\alpha}'' \otimes e_1 \otimes e_{12}\beta e_{12} \dgal{c,\alpha}' 
-\frac12 \dgal{c,\alpha}'' \otimes e_{12}\dgal{c,\alpha}' e_{12}\beta \otimes e_1 \,, \\
C' 
=& \frac12  c e_{12}\dgal{\alpha,\beta}' \otimes e_{12}\dgal{\alpha,\beta}'' \otimes e_1
-\frac12 e_{12}\dgal{\alpha,\beta}' \otimes e_{12}\dgal{\alpha,\beta}'' \otimes e_1c\,.
 \end{aligned}
\end{equation*}

\noindent \emph{With one generator of the third type.}  Consider $c=\gamma e_{21}$ for some $\gamma \in \epsilon A e_2$. 
\begin{equation*}
\begin{aligned}
  A
=&\frac12 e_{12}\dgal{\alpha,\beta}' \otimes e_{12}\dgal{\alpha,\beta}'' \otimes e_1 \gamma e_{21}
- \frac12 \dgal{\alpha,\gamma}' \otimes e_{12}\dgal{\alpha,\gamma}'' e_{21}\otimes e_{12}\beta e_1\,, \\
B
=&\frac12 \gamma e_{21} \otimes e_{12}\dgal{\beta, \alpha}' \otimes e_{12}\dgal{\beta,\alpha}'' e_1 
- \frac12  e_{12} \alpha \otimes e_1 \dgal{\beta,\gamma}' \otimes e_{12}\dgal{\beta,\gamma}''e_{21} \,, \\
C
=&- \frac12 \dgal{\gamma,\beta}'' e_{12}\alpha \otimes e_1 \otimes  e_{12}\dgal{\gamma,\beta}' e_{21}
-\frac12  \dgal{\gamma,\alpha}'' \otimes e_1 \otimes e_{12}\beta e_{12}\dgal{\gamma,\alpha}'e_{21}\,, \\
A'
=& \frac12 e_{12}\alpha \otimes e_1 \dgal{\beta,\gamma}' \otimes e_{12} \dgal{\beta,\gamma}''e_{21}
-\frac12 \dgal{\beta,\gamma}'e_{12}\alpha \otimes e_1 \otimes e_{12}\dgal{\beta,\gamma}''e_{21}\,, \\
B'
 =& \frac12  \dgal{\gamma,\alpha}''  \otimes e_1 \otimes e_{12} \beta e_{12} \dgal{\gamma,\alpha}' e_{21} 
 - \frac12  \dgal{\gamma,\alpha}'' \otimes e_{12} \dgal{\gamma,\alpha}' e_{21} \otimes e_{12} \beta e_1\,, \\
C'
=& \frac12   \gamma e_{21} \otimes e_{12}\dgal{\alpha,\beta}'' \otimes e_{12}\dgal{\alpha,\beta}'e_1 
- \frac12   e_{12}\dgal{\alpha,\beta}' \otimes e_{12}\dgal{\alpha,\beta}'' \otimes e_1 \gamma e_{21}\,.
 \end{aligned}
\end{equation*}

\noindent \emph{With one generator of the fourth type.} Consider $c=e_{12} \gamma e_{21}$ for some $\gamma \in e_2 A e_2$. 
\begin{equation*}
\begin{aligned}
  A
=&-\frac12 e_{12}\dgal{\alpha,\gamma}' \otimes e_{12}\dgal{\alpha,\gamma}'' e_{21} \otimes e_{12} \beta e_1\,, \\
B
=&\frac12 e_{12} \gamma e_{21} \otimes e_{12}\dgal{\beta,\alpha}' \otimes e_{12} \dgal{\beta,\alpha}'' e_1  
- \frac12 e_1 \otimes e_{12}\dgal{\beta,\alpha}' \otimes e_{12}\dgal{\beta,\alpha}'' e_{12}\gamma e_{21} \\
&- \frac12 e_1 \otimes e_{12}\alpha e_{12}\dgal{\beta,\gamma}' \otimes e_{12}\dgal{\beta,\gamma}'' e_{21} \,, \\
C 
=&- \frac12 e_{12}\dgal{\gamma,\beta}'' e_{12}\alpha \otimes e_1 \otimes  e_{12}\dgal{\gamma,\beta}' e_{21}
-\frac12  e_{12}\dgal{\gamma,\alpha}'' \otimes e_1 \otimes e_{12}\beta e_{12}\dgal{\gamma,\alpha}'e_{21}\,, \\
A'
=& \frac12 e_1 \otimes e_{12}\alpha e_{12}\dgal{\beta,\gamma}'   \otimes e_{12} \dgal{\beta,\gamma}''e_{21}
-\frac12   e_{12}\dgal{\beta,\gamma}'e_{12}\alpha \otimes e_1   \otimes e_{12}\dgal{\beta,\gamma}''e_{21}\,, \\
B'
 =& \frac12 e_{12} \dgal{\gamma,\alpha}''  \otimes e_1 \otimes e_{12} \beta e_{12} \dgal{\gamma,\alpha}' e_{21} 
 - \frac12 e_{12} \dgal{\gamma,\alpha}'' \otimes e_{12} \dgal{\gamma,\alpha}' e_{21} \otimes e_{12} \beta e_1\,, \\
C'
=& \frac12   e_{12}\gamma e_{21} \otimes e_{12}\dgal{\alpha,\beta}'' \otimes e_{12}\dgal{\alpha,\beta}' e_1 
- \frac12  e_1 \otimes e_{12}\dgal{\alpha,\beta}'' \otimes e_{12}\dgal{\alpha,\beta}' e_{12}\gamma e_{21}\,.
 \end{aligned}
\end{equation*}

\subsection{Two generators of the third type} 
Let $a=\alpha e_{21},b=\beta e_{21}$ for $\alpha,\beta\in  \epsilon A e_2$. 

\noindent \emph{With one generator of the first type.}  Consider $c\in \epsilon A \epsilon$. 
\begin{equation*} 
   \begin{aligned}
  A=&\frac12 \dgal{\alpha ,c}' e_{21} \otimes \dgal{\alpha ,c}'' e_1\otimes \beta e_{21}\,, \\
B
=&\frac12 e_1 \otimes \dgal{\beta,\alpha}' e_{21} \otimes \dgal{\beta,\alpha}'' e_{21}c  
+ \frac12 e_1 \otimes \alpha e_{21}\dgal{\beta,c}' e_{21} \otimes \dgal{\beta,c}'' 
- \frac12 c e_1 \otimes \dgal{\beta,\alpha}' e_{21} \otimes \dgal{\beta,\alpha}'' e_{21} \,,  \\
C
=& \frac12   \dgal{c,\beta}'' e_{21} \alpha e_{21} \otimes e_1 \otimes \dgal{c,\beta}'
+ \frac12   \dgal{c,\alpha}'' e_{21} \otimes e_1 \otimes \beta e_{21} \dgal{c,\alpha}' \,, \\
A'
=&\frac12 \dgal{\beta,c}'e_{21} \alpha e_{21} \otimes e_1 \otimes \dgal{\beta,c}'' 
- \frac12 e_1 \otimes  \alpha e_{21} \dgal{\beta,c}'e_{21} \otimes \dgal{\beta,c}''\,, \\
B'
=&\frac12 \dgal{c,\alpha}'' e_{21} \otimes \dgal{c,\alpha}'e_1 \otimes \beta e_{21} 
- \frac12 \dgal{c,\alpha}'' e_{21} \otimes  e_1 \otimes \beta e_{21} \dgal{c,\alpha}'  \,, \\
C'
=&\frac12   e_1 \otimes \dgal{\alpha,\beta}''e_{21} \otimes \dgal{\alpha,\beta}'e_{21} c
- \frac12  c e_1 \otimes \dgal{\alpha,\beta}''e_{21} \otimes \dgal{\alpha,\beta}'e_{21}    \,.
 \end{aligned}
\end{equation*}

\noindent \emph{With one generator of the second type.}  Consider $c=e_{12}\gamma$ for some $\gamma \in e_2 A \epsilon$. 
\begin{equation*}
 \begin{aligned}
     A
=& \frac12 e_{12} \dgal{\alpha ,\gamma}'e_{21} \otimes \dgal{\alpha ,\gamma}'' e_1 \otimes \beta e_{21} 
- \frac12 e_1 \dgal{\alpha ,\beta}' e_{21} \otimes \dgal{\alpha ,\beta}'' e_{21} \otimes e_{12} \gamma\,,  \\
B
=&\frac12  e_1 \alpha e_{21} \otimes e_{12} \dgal{\beta ,\gamma}' e_{21} \otimes \dgal{\beta ,\gamma}''  
- \frac12  e_{12} \gamma e_1 \otimes  \dgal{\beta ,\alpha}' e_{21} \otimes \dgal{\beta ,\alpha}'' e_{21} \,, \\
C
=& \frac12   e_{12}\dgal{\gamma,\beta}'' e_{21} \alpha e_{21} \otimes e_1 \otimes \dgal{\gamma,\beta}'
+ \frac12   e_{12}\dgal{\gamma,\alpha}'' e_{21} \otimes e_1 \otimes \beta e_{21} \dgal{\gamma,\alpha}' \,, \\
A'
=&\frac12 e_{12} \dgal{\beta,\gamma}' e_{21} \alpha e_{21} \otimes e_1 \otimes \dgal{\beta,\gamma}''
- \frac12 e_1 \alpha e_{21} \otimes e_{12}\dgal{\beta,\gamma}'e_{21} \otimes \dgal{\beta,\gamma}''  \,,\\
B'
=&\frac12 e_{12}\dgal{\gamma,\alpha}'' e_{21} \otimes \dgal{\gamma,\alpha}'e_1 \otimes \beta e_{21} 
- \frac12 e_{12}\dgal{\gamma,\alpha}'' e_{21} \otimes  e_1 \otimes \beta e_{21} \dgal{\gamma,\alpha}'  \,,\\
C'
=&\frac12    e_1 \dgal{\alpha,\beta}' e_{21} \otimes \dgal{\alpha,\beta}'' e_{21} \otimes e_{12} \gamma
- \frac12   e_{12} \gamma e_1 \otimes \dgal{\alpha,\beta}'' e_{21} \otimes \dgal{\alpha,\beta}'e_{21}    \,.
 \end{aligned}
\end{equation*}

\noindent \emph{With one generator of the fourth type.} Consider $c=e_{12} \gamma e_{21}$ for some $\gamma \in e_2 A e_2$. 
\begin{equation*}
 \begin{aligned}
     A
=& \frac12 e_{12}\dgal{\alpha,\gamma}'e_{21} \otimes \dgal{\alpha,\gamma}'' e_{21}\beta e_{21} \otimes e_1 
+ \frac12 e_{12}\gamma e_{21} \dgal{\alpha,\beta}'e_{21} \otimes \dgal{\alpha,\beta}'' e_{21} \otimes e_1  \\
&- \frac12 e_1 \dgal{\alpha,\beta}'e_{21} \otimes \dgal{\alpha,\beta}'' e_{21} \otimes e_{12}\gamma e_{21}\,, \\
B
=&\frac12 e_1 \alpha e_{21} \otimes e_{12}\dgal{\beta,\gamma}'e_{21} \otimes \dgal{\beta,\gamma}'' e_{21}\,, \\
C
=& \frac12   e_{12}\dgal{\gamma,\beta}'' e_{21} \alpha e_{21} \otimes e_1 \otimes \dgal{\gamma,\beta}' e_{21}
+ \frac12   e_{12}\dgal{\gamma,\alpha}'' e_{21} \otimes e_1 \otimes \beta e_{21} \dgal{\gamma,\alpha}' e_{21} \,, \\
A'
=&\frac12 e_{12} \dgal{\beta,\gamma}' e_{21} \alpha e_{21} \otimes e_1 \otimes \dgal{\beta,\gamma}'' e_{21} 
- \frac12 e_1 \alpha e_{21} \otimes e_{12}\dgal{\beta,\gamma}'e_{21} \otimes \dgal{\beta,\gamma}''  e_{21}  \,,\\
B'
=&\frac12 e_{12}\dgal{\gamma,\alpha}'' e_{21} \otimes \dgal{\gamma,\alpha}'e_{21} \beta e_{21} \otimes e_1
- \frac12 e_{12}\dgal{\gamma,\alpha}'' e_{21} \otimes e_1 \otimes \beta e_{21} \dgal{\gamma,\alpha}' e_{21} \,,\\
C'
=&\frac12    e_1 \dgal{\alpha,\beta}'e_{21} \otimes \dgal{\alpha,\beta}''e_{21} \otimes e_{12}\gamma e_{21}
- \frac12 e_{12} \gamma e_{21} \dgal{\alpha,\beta}'e_{21} \otimes \dgal{\alpha,\beta}''e_{21} \otimes e_1  \,.
 \end{aligned}
\end{equation*}

\subsection{Two generators of the fourth type} 
Let $a=e_{12}\alpha e_{21},b=e_{12}\beta e_{21}$ for $\alpha,\beta\in  e_2 A e_2$. 

\noindent \emph{With one generator of the first type.}  Consider $c\in \epsilon A \epsilon$. 
We get $C=0$, while 
\begin{equation*}
 \begin{aligned} 
    A
=& \frac12 \dgal{\alpha,c}' e_{21} \otimes e_{12} \dgal{\alpha,c}'' e_1 \otimes e_{12} \beta e_{21} 
+ \frac12  e_{12} \dgal{\alpha,\beta}' e_{21} \otimes e_{12} \dgal{\alpha,\beta}'' e_{21} \otimes e_1 c   \\
&- \frac12 \dgal{\alpha,c}' e_{21} \otimes e_{12} \dgal{\alpha,c}'' e_{12} \beta e_{21} \otimes e_1 
- \frac12 c e_{12} \dgal{\alpha,\beta}' e_{21} \otimes e_{12} \dgal{\alpha,\beta}'' e_{21} \otimes e_1   \,, \\
B
=& \frac12 e_1 \otimes e_{12}\dgal{\beta,\alpha}'e_{21} \otimes e_{12}\dgal{\beta,\alpha}'' e_{21} c 
+ \frac12  e_1 \otimes e_{12}\alpha e_{21}\dgal{\beta,c}' e_{21} \otimes e_{12} \dgal{\beta,c}''  \\
&- \frac12 c e_1 \otimes e_{12} \dgal{\beta,\alpha}' e_{21} \otimes e_{12} \dgal{\beta,\alpha}'' e_{21}
- \frac12  e_{12} \alpha e_{21} \otimes e_1 \dgal{\beta,c}' e_{21} \otimes e_{12} \dgal{\beta,c}''   \,, 
 \end{aligned}
\end{equation*}
\begin{equation*}
 \begin{aligned} 
A'
=& \frac12 e_{12}\alpha e_{21} \otimes e_1 \dgal{\beta ,c}' e_{21} \otimes e_{12} \dgal{\beta ,c}''
- \frac12 e_1 \otimes e_{12} \alpha e_{21} \dgal{\beta ,c}' e_{21} \otimes e_{12} \dgal{\beta ,c}''\,, \\
B'
=& \frac12 \dgal{c,\alpha}'' e_{21} \otimes e_{12} \dgal{c,\alpha}' e_1 \otimes e_{12} \beta e_{21}
- \frac12 \dgal{c,\alpha}'' e_{21} \otimes e_{12} \dgal{c,\alpha}' e_{12} \beta e_{21} \otimes e_1\,, \\
C'
=&\frac12  e_1 \otimes e_{12}\dgal{\alpha,\beta}''e_{21} \otimes  e_{12}\dgal{\alpha,\beta}'e_{21} c
+\frac12  c e_{12}\dgal{\alpha,\beta}'e_{21} \otimes e_{12}\dgal{\alpha,\beta}''e_{21} \otimes  e_1   \\
&- \frac12   c e_1 \otimes e_{12}\dgal{\alpha,\beta}''e_{21} \otimes e_{12}\dgal{\alpha,\beta}'e_{21}
- \frac12  e_{12}\dgal{\alpha,\beta}'e_{21} \otimes e_{12}\dgal{\alpha,\beta}''e_{21}  \otimes  e_1 c  \,.
 \end{aligned}
\end{equation*}

\noindent \emph{With one generator of the second type.}  Consider $c=e_{12}\gamma$ for some $\gamma \in e_2 A \epsilon$.   We get $C=0$, $A'=0$,  while 
\begin{equation*}
 \begin{aligned} 
    A
=&\frac12 e_{12}\dgal{\alpha,\gamma}' e_{21} \otimes e_{12} \dgal{\alpha,\gamma}'' e_1 \otimes e_{12}\beta e_{21} 
- \frac12 e_{12}\dgal{\alpha,\gamma}' e_{21} \otimes e_{12} \dgal{\alpha,\gamma}'' e_{12} \beta e_{21} \otimes e_1 \\
&-\frac12 e_{12}\gamma e_{12}\dgal{\alpha,\beta}' e_{21}\otimes e_{12}\dgal{\alpha,\beta}'' e_{21} \otimes e_1\,,\\
     B
=&-\frac12 e_{12} \gamma e_1\otimes  e_{12}\dgal{\beta,\alpha}' e_{21} \otimes e_{12} \dgal{\beta,\alpha}'' e_{21}
\,, \\
B'
=& \frac12 e_{12} \dgal{\gamma,\alpha}'' e_{21} \otimes e_{12} \dgal{\gamma,\alpha}' e_1 \otimes e_{12} \beta e_{21}
- \frac12 e_{12} \dgal{\gamma,\alpha}'' e_{21} \otimes e_{12} \dgal{\gamma,\alpha}' e_{12} \beta e_{21} \otimes e_1\,, \\
C'
=& \frac12  e_{12}\gamma e_{12}\dgal{\alpha,\beta}'e_{21} \otimes e_{12}\dgal{\alpha,\beta}''e_{21} \otimes e_1
-\frac12  e_{12}\gamma e_1 \otimes e_{12}\dgal{\alpha,\beta}''e_{21} \otimes e_{12}\dgal{\alpha,\beta}'e_{21}  \,.
 \end{aligned}
\end{equation*}

\noindent \emph{With one generator of the third type.}  Consider $c=\gamma e_{21}$ for some $\gamma \in \epsilon A e_2$.   We get $C=0$, $B'=0$, while 
\begin{equation*}
 \begin{aligned} 
    A=&
\frac12 e_{12}\dgal{\alpha,\beta}'e_{21}\otimes e_{12}\dgal{\alpha,\beta}'' e_{21}\otimes e_1 \gamma e_{21}\,, \\
B
=&\frac12 e_1 \otimes e_{12}\dgal{\beta,\alpha}'e_{21} \otimes e_{12}\dgal{\beta,\alpha}'' e_{21}\gamma e_{21}
+ \frac12 e_1 \otimes e_{12} \alpha e_{21} \dgal{\beta,\gamma}' e_{21} \otimes e_{12} \dgal{\beta,\gamma}'' e_{21}\\
&-\frac12 e_{12}\alpha e_{21} \otimes e_1 \dgal{\beta,\gamma}' e_{21}\otimes e_{12}\dgal{\beta,\gamma}''e_{21}\,,\\
A'
=& \frac12 e_{12}\alpha e_{21} \otimes e_1 \dgal{\beta,\gamma}' e_{21} \otimes e_{12}\dgal{\beta,\gamma}'' e_{21} 
-\frac12 e_1 \otimes e_{12}\alpha e_{21}\dgal{\beta,\gamma}'e_{21}\otimes e_{12}\dgal{\beta,\gamma}'' e_{21}\,,\\
C'
=& \frac12 e_1  \otimes e_{12}\dgal{\alpha,\beta}''e_{21} \otimes e_{12}\dgal{\alpha,\beta}'e_{21} \gamma e_{21}
-\frac12 e_{12}\dgal{\alpha,\beta}'e_{21}    \otimes e_{12}\dgal{\alpha,\beta}''e_{21} \otimes e_1 \gamma e_{21} \,.
 \end{aligned}
\end{equation*}

\subsection{Remaining cases} \label{proofRemain} We now take three different types of generators. 

\noindent \emph{No generator of the fourth type.} Let $a\in \epsilon A \epsilon$, $b=e_{12}\beta$ for $\beta\in e_2 A \epsilon$ and $c=\gamma e_{21}$ for $\gamma \in \epsilon A e_2$. We have $A'=0$, while 
\begin{equation*}
 \begin{aligned} 
    A
=&\frac12 e_{12}\dgal{a,\beta}' \otimes \dgal{a,\beta}'' \otimes e_1 \gamma e_{21} 
- \frac12 \dgal{a,\gamma}' \otimes \dgal{a,\gamma}''e_{21} \otimes e_{12} \beta e_1\,,\\
B
=& \frac12 \gamma e_{21}\otimes  \dgal{\beta,a}' \otimes e_{12}\dgal{\beta,a}'' e_1\,, \\ 
C
=& -\frac12  \dgal{\gamma,a}'' \otimes e_{12} \beta  \otimes e_1\dgal{\gamma,a}' e_{21}\,,\\
B'
=& \frac12 \dgal{\gamma,a}'' \otimes e_{12}\beta \otimes e_1 \dgal{\gamma,a}' e_{21}
- \frac12 \dgal{\gamma,a}'' \otimes \dgal{\gamma,a}'e_{21} \otimes e_{12}\beta e_1\,, \\
C'
=& \frac12   \gamma e_{21} \otimes \dgal{a,\beta}''  \otimes e_{12}\dgal{a,\beta}' e_1 
- \frac12   e_{12}\dgal{a,\beta}' \otimes \dgal{a,\beta}'' \otimes e_1 \gamma e_{21}  \,.
 \end{aligned}
\end{equation*}

\noindent \emph{No generator of the third type.} Let $a\in \epsilon A \epsilon$, $b=e_{12}\beta$ for $\beta\in e_2 A \epsilon$ and $c=e_{12}\gamma e_{21}$ for $\gamma \in e_2 A e_2$. 
\begin{equation*}
 \begin{aligned} 
    A
=&-\frac12e_{12}\dgal{a,\gamma}' \otimes \dgal{a,\gamma}'' e_{21}\otimes e_{12}\beta e_1\,,\\
B
=&\frac12 e_{12}\gamma e_{21} \otimes \dgal{\beta,a}' \otimes e_{12}\dgal{\beta,a}'' e_1 
+ \frac12 e_1 a \otimes e_{12}\dgal{\beta,\gamma}' \otimes e_{12}\dgal{\beta,\gamma}'' e_{21}   \\
&-\frac12 e_1 \otimes \dgal{\beta,a}'\otimes e_{12}\dgal{\beta,a}'' e_{12} \gamma e_{21} 
-\frac12 e_1 \otimes a e_{12} \dgal{\beta,\gamma}'  \otimes e_{12} \dgal{\beta,\gamma}'' e_{21}\,, \\
C
=& - \frac12  e_{12} \dgal{\gamma,a}''  \otimes e_{12}\beta \otimes e_1 \dgal{\gamma,a}' e_{21} \,, 
 \end{aligned}
\end{equation*}
\begin{equation*}
 \begin{aligned} 
A'
=& \frac12 e_1 \otimes a e_{12} \dgal{\beta,\gamma}' \otimes e_{12} \dgal{\beta,\gamma}'' e_{21}
- \frac12  e_1 a \otimes e_{12} \dgal{\beta,\gamma}' \otimes e_{12} \dgal{\beta,\gamma}'' e_{21} \,, \\
B'
=& \frac12 e_{12}\dgal{\gamma,a}'' \otimes e_{12}\beta \otimes e_1 \dgal{\gamma,a}' e_{21}
- \frac12 e_{12}\dgal{\gamma,a}'' \otimes \dgal{\gamma,a}'e_{21} \otimes e_{12}\beta e_1\,, \\
C'
=& \frac12    e_{12}\gamma e_{21} \otimes \dgal{a,\beta}'' \otimes  e_{12}\dgal{a,\beta}'e_1
- \frac12      e_1 \otimes \dgal{a,\beta}'' \otimes e_{12}\dgal{a,\beta}' e_{12}\gamma e_{21}  \,.
 \end{aligned}
\end{equation*}

\noindent \emph{No generator of the second type.} This case and the next one are a bit tedious. We set $a\in \epsilon A \epsilon$, $b=\beta e_{21}$ for $\beta\in \epsilon A e_2$ and $c=e_{12}\gamma e_{21}$ for $\gamma \in e_2 A e_2$. 
\begin{equation*}
 \begin{aligned} 
    A
=&\frac12 e_{12}\dgal{a,\gamma}' \otimes \dgal{a,\gamma}'' e_{21}\beta e_{21}\otimes e_1
+ \frac12  e_{12}\gamma e_{21}\dgal{a,\beta}'\otimes \dgal{a,\beta}'' e_{21} \otimes e_1  \\
&- \frac12 e_1 \dgal{a,\beta}' \otimes \dgal{a,\beta}'' e_{21} \otimes e_{12}\gamma e_{21}\,, \\
B
=&\frac12 e_{12}\gamma e_{21} \otimes \dgal{\beta,a}' e_{21} \otimes \dgal{\beta,a}'' e_1 
+ \frac12 e_1 a \otimes e_{12}\dgal{\beta,\gamma}'e_{21} \otimes \dgal{\beta,\gamma}'' e_{21}   \\
&-\frac12 e_1 \otimes \dgal{\beta,a}' e_{21}\otimes \dgal{\beta,a}'' e_{12} \gamma e_{21} 
-\frac12 e_1 \otimes a e_{12} \dgal{\beta,\gamma}' e_{21} \otimes  \dgal{\beta,\gamma}'' e_{21}\,, \\
C
=& \frac12  e_{12}\dgal{\gamma,\beta}'' e_{21}a \otimes e_1 \otimes \dgal{\gamma,\beta}'e_{21}
+ \frac12   e_{12}\dgal{\gamma,a}'' \otimes e_1 \otimes  \beta e_{21}\dgal{\gamma,a}' e_{21}  \\
&-\frac12  e_{12}\dgal{\gamma,\beta}'' e_{21} \otimes a e_1 \otimes \dgal{\gamma,\beta}' e_{21}\,, 
 \end{aligned}
\end{equation*}
\begin{equation*}
 \begin{aligned} 
A'
=& \frac12 e_{12}\dgal{\beta,\gamma}' e_{21} a \otimes e_1  \otimes \dgal{\beta,\gamma}''e_{21}
+ \frac12 e_1 \otimes a e_{12}\dgal{\beta,\gamma}' e_{21}  \otimes \dgal{\beta,\gamma}''e_{21}  \\
&-\frac12 e_{12}\dgal{\beta,\gamma}' e_{21} \otimes a e_1  \otimes \dgal{\beta,\gamma}''e_{21}
- \frac12 e_1 a \otimes e_{12}\dgal{\beta,\gamma}' e_{21}  \otimes \dgal{\beta,\gamma}''e_{21}   \,, \\ 
B'
=& \frac12 e_{12}\dgal{\gamma,a}'' \otimes \dgal{\gamma,a}'e_{21} \beta e_{21}\otimes e_1
- \frac12 e_{12}\dgal{\gamma,a}'' \otimes e_1 \otimes \beta e_{21} \dgal{\gamma,a}'e_{21}   \,, \\ 
C'
=& \frac12  e_{12}\gamma e_{21}  \otimes \dgal{a,\beta}''e_{21} \otimes \dgal{a,\beta}' e_1 
+ \frac12  e_1 \dgal{a,\beta}'   \otimes \dgal{a,\beta}''e_{21} \otimes e_{12}\gamma e_{21} \\
&-\frac12  e_1  \otimes \dgal{a,\beta}''e_{21} \otimes \dgal{a,\beta}'e_{12}\gamma e_{21}
-\frac12  e_{12}\gamma e_{21} \dgal{a,\beta}'  \otimes \dgal{a,\beta}''e_{21} \otimes e_1  \,.
 \end{aligned}
\end{equation*}

\noindent \emph{No generator of the first type.} Let $a=e_{12}\alpha$ for $\alpha \in e_2 A \epsilon$, $b=\beta e_{21}$ for $\beta\in \epsilon A e_2$ and $c=e_{12}\gamma e_{21}$ for $\gamma \in e_2 A e_2$.
\begin{equation*}
 \begin{aligned} 
    A
=&\frac12 e_{12}\dgal{\alpha,\gamma}' \otimes e_{12}\dgal{\alpha,\gamma}'' e_{21}\beta e_{21}\otimes e_1
+ \frac12  e_{12}\gamma e_{21}\dgal{\alpha,\beta}'\otimes e_{12}\dgal{\alpha,\beta}'' e_{21} \otimes e_1  \\
&- \frac12 e_1 \dgal{\alpha,\beta}' \otimes e_{12}\dgal{\alpha,\beta}'' e_{21} \otimes e_{12}\gamma e_{21}\,, \\
B
=& \frac12 e_{12}\gamma e_{21}  \otimes  e_{12}\dgal{\beta,\alpha}' e_{21} \otimes \dgal{\beta,\alpha}'' e_1
- \frac12  e_1 \otimes  e_{12}\dgal{\beta,\alpha}' e_{21} \otimes \dgal{\beta,\alpha}'' e_{12}\gamma e_{21}\\
&- \frac12 e_1 \otimes e_{12}\alpha e_{12}\dgal{\beta,\gamma}'e_{21}\otimes \dgal{\beta,\gamma}'' e_{21} \,, \\
C
=& \frac12  e_{12} \dgal{\gamma,\alpha}'' \otimes e_1 \beta e_{21}   \otimes e_{12}\dgal{\gamma,\alpha}' e_{21}
- \frac12  e_{12}\dgal{\gamma,\beta}'' e_{21}\otimes e_{12} \alpha e_1 \otimes \dgal{\gamma,\beta}' e_{21}\,,
 \end{aligned}
\end{equation*}
\begin{equation*}
 \begin{aligned} 
A'
=& \frac12   e_1 \otimes e_{12}\alpha e_{12}\dgal{\beta,\gamma}' e_{21} \otimes \dgal{\beta,\gamma}'' e_{21}
- \frac12  e_{12}\dgal{\beta,\gamma}' e_{21} \otimes e_{12}\alpha e_1 \otimes \dgal{\beta,\gamma}'' e_{21}  \,, \\
B'
=& \frac12 e_{12}\dgal{\gamma,\alpha}'' \otimes e_{12}\dgal{\gamma,\alpha}' e_{21} \beta e_{21}\otimes e_1 
- \frac12  e_{12}\dgal{\gamma,\alpha}'' \otimes e_1 \beta e_{21}\otimes e_{12}\dgal{\gamma,\alpha}'e_{21} \,,\\
C'
=& \frac12  e_{12}\gamma e_{21}  \otimes e_{12}\dgal{\alpha,\beta}''e_{21} \otimes \dgal{\alpha,\beta}' e_1 
+ \frac12  e_1 \dgal{\alpha,\beta}'   \otimes e_{12}\dgal{\alpha,\beta}''e_{21} \otimes e_{12}\gamma e_{21} \\
&-\frac12  e_1  \otimes e_{12}\dgal{\alpha,\beta}''e_{21} \otimes \dgal{\alpha,\beta}'e_{12}\gamma e_{21}
-\frac12  e_{12}\gamma e_{21} \dgal{\alpha,\beta}'  \otimes e_{12}\dgal{\alpha,\beta}''e_{21} \otimes e_1  \,.
 \end{aligned}
\end{equation*}


\section{Proof of Lemma \ref{LemMomap}} \label{App:PfMomap}

Note that $\Tr(\Phi_s)=\epsilon \Phi_s \epsilon$ for $s\neq 2$, while $\Tr(\Phi_2)=e_{12} \Phi_2 e_{21}$. In particular, using that for $s\neq 2$ we have $\Phi_s=e_s \Phi_s e_s$, we get $\Tr(\Phi_s)=\Phi_s$ by understanding that equality in $A^f$. 

\subsection{Moment map condition for the non-fused idempotents} First, assume that  $s\neq 1,2$. Then, using Lemma \ref{LemF12}, we get 
\begin{equation*}
\Tr(E_1)(\Tr(\Phi_s))=\Phi_s e_1 \otimes e_1- e_1 \otimes  e_1 \Phi_s
      =0\,, \quad 
\Tr(E_2)(\Tr(\Phi_s))=0\,,
\end{equation*}
which gives $\dgal{\Tr(\Phi_s),-}_{fus}=0$. Therefore, if $a=e_+ \alpha e_-$ is a generator of $A^f$, 
\begin{equation*}
  \dgal{\Tr(\Phi_s),a}^f = \dgal{\Tr(\Phi_s),a} =  e_+ \dgal{\Phi_s,\alpha}' \epsilon \otimes \epsilon \dgal{\Phi_s,\alpha}'' e_-\,,
\end{equation*}
where the double bracket in the last equality is taken in $A$. 
By assumption $\Phi_s$ satisfies \eqref{Phim} for  $\dgal{-,-}$ on $A$ so that 
\begin{equation} \label{EqPhisa}
  \dgal{\Tr(\Phi_s),a}^f = 
\frac12 (e_+\alpha e_s  \otimes  \Phi_se_-  
-e_+ e_s  \otimes \Phi_s \alpha e_- 
+  e_+\alpha \Phi_s  \otimes  e_s e_- 
-e_+\Phi_s  \otimes  e_s \alpha e_-) \,,
\end{equation}
where we omitted to write the idempotents $\epsilon$, because with $s\neq 1,2$ we get $e_s \epsilon=e_s =\epsilon e_s$. It remains to see that it coincides with \eqref{EqPhis} in all four cases of generators. For example, if $a=e_{12}\alpha \epsilon$ with $\alpha \in e_2 A \epsilon$, we obtain for $e_+=e_{12},e_-=\epsilon$
\begin{equation*}
  \dgal{\Tr(\Phi_s),e_{12}\alpha \epsilon}^f = \frac12 (a e_s  \otimes  \Phi_s   +  a \Phi_s  \otimes  e_s  ) \,,
\end{equation*}
because the second and last terms in \eqref{EqPhisa} disappear since $e_{12}e_s=0=e_{12}\Phi_s$. Meanwhile, the right-hand side of \eqref{EqPhis} reads in that case 
\begin{equation*}
  \frac12 (a e_s \otimes \Tr(\Phi_s) + a \Tr(\Phi_s) \otimes e_s - e_s \otimes \Tr(\Phi_s) a - \Tr(\Phi_s) \otimes e_s a )\,,
\end{equation*}
and the last two terms disappear as $s\neq1,2$. Indeed $e_s a=e_s e_{12}\alpha=0$ and $\Tr(\Phi_s)a=\epsilon(e_s \Phi_s e_s)\epsilon (e_{12}\alpha)=e_s \Phi_s e_s e_{12}\alpha=0$. The two expressions coincide, and the result is similar with the other types of generators.

\subsection{Moment map condition at the fused idempotent} 
 Using the derivation properties and decomposing the double bracket $\dgal{-,-}^f$ as $\dgal{-,-}+\dgal{-,-}_{fus}$, we obtain for $a=e_+ \alpha e_-\in A^f$, $\alpha \in A$, that 
\begin{equation} \label{EqPhiWork}
\begin{aligned}
     \dgal{\Phi_1^f,a}^f=& 
\Tr(\Phi_1)e_{12}\ast e_+ \dgal{\Phi_2,\alpha} e_- \ast e_{21} 
+ \epsilon \ast e_+ \dgal{\Phi_1,\alpha} e_- \ast \epsilon \Tr(\Phi_2) \\
&+ \Tr(\Phi_1)\ast \dgal{\Tr(\Phi_2),e_+ \alpha e_-}_{fus} + \dgal{\Tr(\Phi_1),e_+ \alpha e_-}_{fus} \ast \Tr(\Phi_2)\,.
\end{aligned}
\end{equation}
The first two terms can easily be obtained from \eqref{Phim}. 
Since $\Tr(\Phi_2)$ is a generator of fourth type \eqref{type4},  we need \eqref{wt}--\eqref{ww} to evaluate the third term. 
In the exact same way, as  $\Tr(\Phi_1)$ is a generator of  first type \eqref{type1},  we need \eqref{tt}--\eqref{tw} to evaluate the last term. 
Thus, we check separately the four types of generators.

\noindent \emph{On a generator of the first type.} We let $a\in \epsilon A \epsilon$, hence $e_+=e_-=\epsilon$ and $a=\alpha$. We directly get by  \eqref{Phim} that $\dgal{\Phi_2,a}=0$ since $e_2 a = 0 = ae_2$, while $\dgal{\Tr(\Phi_1),a}_{fus}=0$ by \eqref{tt}. For the remaining two terms, we have on one hand by \eqref{Phim} 
 \begin{equation*} 
 \dgal{\Phi_1,a}=\frac12 (ae_1\otimes \Tr(\Phi_1)-e_1 \otimes \Tr(\Phi_1) a +  a \Tr(\Phi_1) \otimes e_1-\Tr(\Phi_1) \otimes e_1 a)\,,
 \end{equation*}
after projecting the equality in $A^f$ where  $\Tr(\Phi_1)=\Phi_1$. On the other hand by \eqref{wt} 
 \begin{equation*} 
 \dgal{\Tr(\Phi_2),a}_{fus}=\frac12(a e_1 \otimes \Tr(\Phi_2) + \Tr(\Phi_2) \otimes e_1 a - a \Tr(\Phi_2) \otimes e_1 - e_1 \otimes \Tr(\Phi_2) a)\,.
 \end{equation*}
Putting this back in \eqref{EqPhiWork} yields 
\begin{equation*}
\begin{aligned}
     \dgal{\Phi_1^f,a}^f=& 
\frac12 (ae_1\Tr(\Phi_2)\otimes \Tr(\Phi_1)-e_1\Tr(\Phi_2) \otimes \Tr(\Phi_1) a +  a \Tr(\Phi_1)\Tr(\Phi_2) \otimes e_1-\Tr(\Phi_1)\Tr(\Phi_2) \otimes e_1 a)
 \\
&+\frac12(a e_1 \otimes \Tr(\Phi_1)\Tr(\Phi_2) + \Tr(\Phi_2) \otimes \Tr(\Phi_1)e_1 a - a \Tr(\Phi_2) \otimes \Tr(\Phi_1)e_1 - e_1 \otimes \Tr(\Phi_1)\Tr(\Phi_2) a)\,.
\end{aligned}
\end{equation*}
Using that $\Tr(\Phi_1)=e_1\Tr(\Phi_1)e_1$ and $\Tr(\Phi_2)=e_1\Tr(\Phi_2)e_1$ allows us to conclude after cancellation of the first and seventh terms, and the second and sixth terms.

\noindent \emph{On a generator of the second type.} Let $a=e_{12}\alpha \epsilon$ with $e_+=e_{12}$, $e_-=\epsilon$, $\alpha \in e_2 A \epsilon$. 
We get from \eqref{Phim} 
 \begin{equation*}
 \dgal{\Phi_1,\alpha}=\frac12 (\alpha e_1\otimes \Phi_1 +  \alpha \Phi_1 \otimes e_1 )\,, \quad 
\dgal{\Phi_2,\alpha}=-\frac12 (e_2 \otimes \Phi_2 \alpha +\Phi_2 \otimes e_2 \alpha)\,,
 \end{equation*}
because $e_1 \alpha=0$ and $\alpha e_2=0$. Meanwhile, \eqref{tu} and \eqref{wu} give
\begin{equation*}
 \dgal{\Tr(\Phi_1),a}_{fus}=\frac12 (e_1 \otimes \Tr(\Phi_1) a - e_1 \Tr(\Phi_1) \otimes a) \,, \quad 
\dgal{\Tr(\Phi_2),a}_{fus}=\frac12 (a e_1 \otimes \Tr(\Phi_2) - a \Tr(\Phi_2) \otimes e_1)\,.
\end{equation*}
Hence, \eqref{EqPhiWork} gives 
\begin{equation*}
\begin{aligned}
     \dgal{\Phi_1^f,a}^f=& 
-\frac12 (\, e_{12}e_{21} \otimes \Tr(\Phi_1)e_{12}\Phi_2 \alpha +e_{12}\Phi_2e_{21} \otimes \Tr(\Phi_1)e_{12}e_2 \alpha \, )  \\
&+ \frac12 (\,e_{12}\alpha e_1 \Tr(\Phi_2)\otimes \Phi_1 +  e_{12}\alpha \Phi_1 \Tr(\Phi_2) \otimes e_1 \,) \\
&+\frac12 (a e_1 \otimes \Tr(\Phi_1)\Tr(\Phi_2) - a \Tr(\Phi_2) \otimes \Tr(\Phi_1)e_1) \\
&+ \frac12 (e_1\Tr(\Phi_2) \otimes \Tr(\Phi_1) a - e_1 \Tr(\Phi_1)\Tr(\Phi_2) \otimes a)\,.
\end{aligned}
\end{equation*}
This equality holds in $A^f$ where $\Tr(\Phi_1)=\Phi_1$, $\Tr(\Phi_2)=e_{12} \Phi_2 e_{21}$ and $a=e_{12}\alpha$. Thus it is not hard to rewrite all factors in the four first terms as $\Tr(\Phi_s),a$ or the idempotents (we have to note for the first term that $e_{12}\Phi_2\alpha=e_{12}\Phi_2 e_2 \alpha = e_{12}\Phi_2 e_{21}e_{12}\alpha=\Tr(\Phi_2)a$).  After cancelling out the second (resp. third) with the seventh (resp. sixth) term, we get the desired result.

\noindent \emph{On a generator of the third type.} Let $a=\epsilon\alpha e_{21}$ with $e_+=\epsilon$, $e_-=e_{21}$,  $\alpha \in \epsilon A e_2$. 
Using  \eqref{Phim} yields
 \begin{equation*}
 \dgal{\Phi_1,\alpha}=-\frac12 (e_1 \otimes \Phi_1 \alpha +\Phi_1 \otimes e_1 \alpha)\,, \quad 
\dgal{\Phi_2,\alpha}=\frac12 (\alpha e_2\otimes \Phi_2 + \alpha \Phi_2 \otimes e_2)\,,
 \end{equation*}
because $ \alpha e_1=0$ and $e_2\alpha =0$. From  \eqref{tv} and \eqref{wv} we obtain 
\begin{equation*}
 \dgal{\Tr(\Phi_1),a}_{fus}=\frac12 (a \Tr(\Phi_1) \otimes e_1 - a \otimes \Tr(\Phi_1) e_1) \,, \quad 
\dgal{\Tr(\Phi_2),a}_{fus}=\frac12 (\Tr(\Phi_2) \otimes e_1 a - e_1 \otimes \Tr(\Phi_2) a  )\,.
\end{equation*}
Summing everything inside \eqref{EqPhiWork}, we get 
\begin{equation*} 
\begin{aligned}
     \dgal{\Phi_1^f,a}^f=& 
\frac12 (\alpha e_{21} \otimes \Tr(\Phi_1)e_{12} \Phi_2e_{21}  + \alpha \Phi_2 e_{21}  \otimes \Tr(\Phi_1)e_{12} e_2e_{21} )\\
&-\frac12 (e_1 \Tr(\Phi_2) \otimes \Phi_1 \alpha e_{21} +\Phi_1 \Tr(\Phi_2) \otimes e_1 \alpha e_{21}) \\
&+ \frac12 (\Tr(\Phi_2) \otimes \Tr(\Phi_1)e_1 a - e_1 \otimes \Tr(\Phi_1)\Tr(\Phi_2) a  ) \\
&+ \frac12 (a \Tr(\Phi_1)\Tr(\Phi_2) \otimes e_1 - a\Tr(\Phi_2) \otimes \Tr(\Phi_1) e_1)\,.
\end{aligned}
\end{equation*}
By arguments similar to the previous case, we can rewrite the four first terms using $a,\Tr(\Phi_1),\Tr(\Phi_2)$ and the idempotents $e_1,e_2$ so that the second and eighth terms cancel out, while the third and fifth terms cancel out. The remaining terms give the desired result.

\noindent \emph{On a generator of the fourth type.} We let $a=e_{12}\alpha e_{21}$ with $e_+=e_{12}$, $e_-=e_{21}$, $\alpha\in e_2 A e_2$. We directly get by  \eqref{Phim} that $\dgal{\Phi_1,a}=0$, and by \eqref{ww} that  $\dgal{\Tr(\Phi_2),a}_{fus}=0$. For the remaining two terms, we have by \eqref{Phim} and \eqref{tw}
 \begin{equation*} 
\begin{aligned}
 \dgal{\Phi_2,\alpha}=&\frac12 (\alpha e_2\otimes \Phi_2-e_2 \otimes \Phi_2 \alpha +  \alpha \Phi_2 \otimes e_2-\Phi_2 \otimes e_2 \alpha)\,, \\
 \dgal{\Tr(\Phi_1),a}_{fus}=&\frac12(a \Tr(\Phi_1) \otimes e_1 + e_1 \otimes \Tr(\Phi_1) a - a \otimes \Tr(\Phi_1) e_1 - e_1 \Tr(\Phi_1) \otimes a )\,.
\end{aligned}
 \end{equation*}
Thus, we get after some easy manipulations
\begin{equation*} 
\begin{aligned}
     \dgal{\Phi_1^f,a}^f=& 
\frac12 (a\otimes \Tr(\Phi_1)\Tr(\Phi_2)-e_1 \otimes \Tr(\Phi_1)\Tr(\Phi_2)a +  a\Tr(\Phi_2) \otimes \Tr(\Phi_1)-\Tr(\Phi_2) \otimes \Tr(\Phi_1)a) \\
&+\frac12(a \Tr(\Phi_1)\Tr(\Phi_2) \otimes e_1 + e_1  \Tr(\Phi_2)\otimes \Tr(\Phi_1) a - a \Tr(\Phi_2) \otimes \Tr(\Phi_1) e_1 - e_1 \Tr(\Phi_2) \Tr(\Phi_1) \otimes a )\,,
\end{aligned}
\end{equation*}
from which we can conclude.


\section{Proof of Proposition \ref{Pr:Q1}} \label{App:PfQ1}

Note that any $B$-linear double bracket on $A$ of degree at most $+4$ on generators needs to satisfy 
\begin{subequations}
  \begin{align}
    \dgal{t,t}=&\lambda (tst \otimes t - t \otimes tst)\,, \quad 
   \dgal{s,s}=l (sts \otimes s - s \otimes sts)\,, \label{EqQ1A} \\
    \dgal{t,s}=&\gamma e_2 \otimes e_1 + \alpha_1' st \otimes e_1 + \alpha_3 e_2 \otimes ts 
+ \phi_0 stst \otimes e_1 + \phi_1 st \otimes ts + \phi_2 e_2 \otimes tsts\,, \label{EqQ1B}
  \end{align}
\end{subequations}
after using that $t=e_1 t e_2,s=e_2 s e_1$ with the cyclic antisymmetry and the derivation rules.  Moreover, if $\dgal{-,-}$ is a double quasi-Poisson bracket it must satisfy \eqref{qPabc} on generators, and this is easily seen to be equivalent to require that 
\begin{subequations}
  \begin{align}
   & \dgal{t,t,t}=0\,, \quad \dgal{s,s,s}=0\,, \label{EqQ1aaa} \\
& \dgal{t,t,s}=\frac14 (st \otimes t \otimes e_1 - e_2 \otimes t \otimes ts)\,, \label{EqQ1tts} \\
& \dgal{s,s,t}=\frac14 (ts \otimes s \otimes e_2 - e_1 \otimes s \otimes st)\,. \label{EqQ1sst} 
  \end{align}
\end{subequations}

\begin{lem} \label{Lem:Q1A}
If \eqref{EqQ1aaa} holds, then either $\lambda=l=0$ or 
\begin{equation}  \label{EqCond1}
  \gamma=0,\quad \phi_1=0, \quad \alpha_1'=-\alpha_3, \quad \phi_0=-\phi_2\,.
\end{equation}
\end{lem}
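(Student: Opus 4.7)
The plan is to derive \eqref{EqCond1} directly by computing the triple brackets $\dgal{t,t,t}$ and $\dgal{s,s,s}$ from \eqref{Eq:TripBr} and extracting conditions on the coefficients appearing in \eqref{EqQ1A}--\eqref{EqQ1B}. Since both triple brackets must vanish by \eqref{EqQ1aaa}, and since the case $\lambda=l=0$ is a trivial alternative, the work is to show that any occurrence of $\lambda\ne 0$ or $l\ne 0$ forces the constraints in \eqref{EqCond1}.

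The main computation is for $\dgal{t,t,t}$. First I would use the outer right-derivation rule to expand
\[
\dgal{t,tst} = \dgal{t,t}\cdot(st) + t\cdot\dgal{t,s}\cdot t + (ts)\cdot\dgal{t,t},
\]
and substitute the explicit forms of $\dgal{t,t}$ and $\dgal{t,s}$ from \eqref{EqQ1A}--\eqref{EqQ1B}. The path-algebra relations $t=e_1te_2$, $s=e_2se_1$ make every multiplication unambiguous: no monomial collapses to zero, but the idempotent bookkeeping trims the sum to terms in $e_1Ae_2\otimes e_1Ae_2$. After simplification, a partial cancellation of two occurrences of $\lambda$ leaves
\[
\dgal{t,tst} = \gamma\, t\otimes t + \alpha_1' tst\otimes t + \alpha_3\, t\otimes tst + \phi_1 tst\otimes tst + (\phi_0+\lambda) tstst\otimes t + (\phi_2-\lambda) t\otimes tstst.
\]
Plugging into \eqref{Eq:TripBr} with $a=b=c=t$ makes the three summands cyclic rotations of the single tensor $\lambda\bigl(\dgal{t,tst}\otimes t - \dgal{t,t}\otimes tst\bigr)$, so $\dgal{t,t,t}$ equals the cyclic symmetrisation of this element.

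The simplifying observation is that cyclic symmetrisation collapses pairs such as $[tstst\otimes t\otimes t]_{cyc}$ and $[t\otimes tstst\otimes t]_{cyc}$ (equal as cyclic sums), and similarly for $[tst\otimes t\otimes tst]_{cyc}=[tst\otimes tst\otimes t]_{cyc}=[t\otimes tst\otimes tst]_{cyc}$. Regrouping then yields
\[
\dgal{t,t,t} = 3\lambda\gamma\, t^{\otimes 3} + \lambda(\alpha_1'+\alpha_3)[tst\otimes t\otimes t]_{cyc} + \lambda\phi_1[tst\otimes tst\otimes t]_{cyc} + \lambda(\phi_0+\phi_2)[tstst\otimes t\otimes t]_{cyc},
\]
the two $\pm\lambda^2$ contributions cancelling exactly. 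Since the four cyclic sums above are supported on distinct monomials, they are linearly independent, so $\dgal{t,t,t}=0$ with $\lambda\ne 0$ forces $\gamma=0$, $\alpha_1'=-\alpha_3$, $\phi_1=0$, and $\phi_0=-\phi_2$, i.e.\ \eqref{EqCond1}. An entirely parallel computation for $\dgal{s,s,s}$, using $\dgal{s,t}=-\dgal{t,s}^\circ$ in the expansion of $\dgal{s,sts}$, produces the same four conditions with an overall factor $l$, so $l\ne 0$ alone also entails \eqref{EqCond1}. Combining, whenever $(\lambda,l)\ne(0,0)$ the constraints \eqref{EqCond1} must hold, and the lemma follows.

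The main obstacle is the bookkeeping: one has to expand $\dgal{t,tst}$ carefully through successive applications of the outer derivation rule, track which paths arise after multiplication by the idempotents $e_1,e_2$, and finally identify which cyclic sums coincide so as to see the clean cancellation of the $\lambda^2$ terms. Once these collapses are recognised, the linear independence argument is immediate and the symmetric $s$-case requires no new ideas beyond dualising via cyclic antisymmetry.
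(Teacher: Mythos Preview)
Your proposal is correct and follows essentially the same approach as the paper: both compute $\dgal{t,\dgal{t,t}'}\otimes\dgal{t,t}''$, observe that the $\lambda^2$ contributions cancel under cyclic symmetrisation, and extract the conditions \eqref{EqCond1} from the remaining term $\lambda\, t\dgal{t,s}t\otimes t$, then repeat with $s$. The only organisational difference is that you first expand $\dgal{t,tst}$ in full and then track the cyclic collapses, whereas the paper isolates the $\lambda^2$ terms from the outset and writes the remainder directly as $\lambda\, t\dgal{t,s}t\otimes t$; the substance is identical.
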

\begin{proof}
  By \eqref{Eq:TripBr}, we have that for any $a \in A$, 
\begin{equation}\label{Eq:Triaaa}
  \dgal{a,a,a}=(1+\tau_{(123)}+\tau_{(132)})\dgal{a,\dgal{a,a}'}\otimes \dgal{a,a}''\,.
\end{equation}
We first look at the case $a=t$. Using \eqref{EqQ1A}, we can find that 
\begin{equation*}
  \dgal{t,\dgal{t,t}'}\otimes \dgal{t,t}''=
\lambda^2 \, tstst \otimes t \otimes t - \lambda^2\, t \otimes tstst \otimes t 
- \lambda^2\, tst \otimes t \otimes tst + \lambda^2\, t \otimes tst \otimes tst 
+ t \dgal{t,s}t\otimes t\,.
\end{equation*}
The first four terms cancel if we take their sum under cyclic permutations, so that we can write 
\begin{equation*}
\begin{aligned}
  \dgal{t,t,t}=&\lambda(1+\tau_{(123)}+\tau_{(132)}) t\dgal{t,s}t\otimes t \\
=&\lambda(1+\tau_{(123)}+\tau_{(132)})\left[\gamma t\otimes t \otimes t + (\alpha_1'+\alpha_3)t \otimes t \otimes tst + (\phi_0+\phi_2) t \otimes t \otimes tstst + \phi_1 t \otimes tst \otimes tst \right].
\end{aligned}
\end{equation*}
Therefore either $\lambda=0$, or the different coefficients vanish i.e. $\gamma=0$, $\phi_1=0$ while $\alpha_1'=-\alpha_3$ and $\phi_0=-\phi_2$. Doing the computation with $s$ instead of $t$, we need either $l=0$ or the same four conditions. 
\end{proof}

\begin{lem} \label{Lem:Q1B}
If $\lambda=0$ and \eqref{EqQ1tts} holds, then 
\begin{subequations}
\begin{align}
  &\phi_0=0, \quad \phi_2=0, \label{LQ1a}\\
&(\alpha_1')^2=\frac14 + \phi_1 \gamma, \quad \alpha_3^2=\frac14 + \phi_1 \gamma. \quad 
(\alpha_1'-\alpha_3)\gamma=0, \quad (\alpha_1'-\alpha_3)\phi_1=0\,. \label{LQ1b}
\end{align}
\end{subequations}
The same identities are satisfied if $l=0$ and \eqref{EqQ1sst} holds. 
\end{lem}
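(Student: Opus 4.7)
Assume $\lambda=0$, so that $\dgal{t,t}=0$ and the last cyclic summand in the formula
\[
\dgal{t,t,s}=\dgal{t,\dgal{t,s}'}\otimes\dgal{t,s}''+\tau_{(123)}\dgal{t,\dgal{s,t}'}\otimes\dgal{s,t}''+\tau_{(123)}^{2}\dgal{s,\dgal{t,t}'}\otimes\dgal{t,t}''
\]
drops out. The first step is to use the derivation property \eqref{Eq:outder} to reduce everything to $\dgal{t,s}$: since $\dgal{t,t}=0$ and $\dgal{t,e_i}=0$, one gets $\dgal{t,st}=\dgal{t,s}\,t$, $\dgal{t,ts}=t\,\dgal{t,s}$, $\dgal{t,stst}=\dgal{t,s}\,tst+st\,\dgal{t,s}\,t$, and $\dgal{t,tsts}=t\,\dgal{t,s}\,ts+tst\,\dgal{t,s}$, with the bimodule actions taken on the outer structure. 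Substituting \eqref{EqQ1B} for $\dgal{t,s}$, and likewise $\dgal{s,t}=-\dgal{t,s}^\circ$, expand the two surviving cyclic summands into a sum of elementary tensors $p_1\otimes p_2\otimes p_3$ where each $p_i$ is a path in $\bar Q_1$. Since such paths form a $\kk$-basis of $A$, and hence $p_1\otimes p_2\otimes p_3$ a basis of $A^{\otimes 3}$, the identity $\dgal{t,t,s}=\frac14(st\otimes t\otimes e_1-e_2\otimes t\otimes ts)$ becomes a finite system of linear equations in the coefficients $\gamma,\alpha_1',\alpha_3,\phi_0,\phi_1,\phi_2$.

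The second step is to organize this system by total degree (with $|t|=|s|=1$). The right-hand side has total degree $3$, while the left-hand side produces terms up to degree $5$ (from $\phi_i\cdot\phi_j$-type contributions) and at least $2$ (from $\alpha^2$ and $\alpha\gamma$ contributions, the lowest coming from the $\gamma$ summand of $\dgal{t,s}$ coupled with an $\alpha_k$ or $\phi_1$ summand via the derivation rules). Looking at the pure degree-$5$ monomials (e.g.\ $stst\otimes t\otimes e_1$, $e_2\otimes t\otimes tsts$, etc.) one reads off that all quadratic contributions in $\phi_0,\phi_2$ vanish, which together with cross-terms forces $\phi_0=\phi_2=0$. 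This is the cleanest way to split the computation into a tractable upper block.

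With $\phi_0=\phi_2=0$, the remaining expansion has terms only in degrees $2,3,4$. Matching the degree-$3$ monomials $st\otimes t\otimes e_1$ and $e_2\otimes t\otimes ts$ against the coefficients on the right produces the two quadratic relations $(\alpha_1')^2-\phi_1\gamma=\tfrac14$ and $\alpha_3^{\,2}-\phi_1\gamma=\tfrac14$; matching the mixed monomials $e_2\otimes t\otimes e_1$ (coming from $\gamma$ coupled to $\alpha_1'$ or $\alpha_3$) gives $(\alpha_1'-\alpha_3)\gamma=0$, and matching the degree-$4$ monomials of the form $st\otimes t\otimes ts$ (from $\phi_1$ coupled to an $\alpha_k$) gives $(\alpha_1'-\alpha_3)\phi_1=0$. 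Every other tensor monomial appears with coefficient $0$ on both sides and is either a tautology or a consequence of the four relations already obtained; this bookkeeping is the main obstacle, but it is routine once the basis is fixed and the extraneous degrees have been eliminated.

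Finally, for the statement when $l=0$ and \eqref{EqQ1sst} holds, rather than redoing the computation one invokes the symmetry swapping the roles of $t$ and $s$. Precisely, the $\kk$-algebra automorphism of $A$ that exchanges $t\leftrightarrow s$ and $e_1\leftrightarrow e_2$ converts the ansatz \eqref{EqQ1B} into the analogous expression for $\dgal{s,t}$ with the same parameters $\gamma,\alpha_1',\alpha_3,\phi_0,\phi_1,\phi_2$ (one checks that each summand is sent to the corresponding summand, using $\dgal{s,t}=-\dgal{t,s}^\circ$). Hence the derivation of \eqref{EqQ1sst} from \eqref{EqQ1tts} in the mirror direction yields the identical set of equations \eqref{LQ1a}--\eqref{LQ1b}, completing the proof.
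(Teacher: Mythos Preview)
Your overall strategy---expand $\dgal{t,t,s}$ in the path basis of $A^{\otimes 3}$ and compare coefficients---is exactly what the paper does, and the reduction of the first two cyclic summands via the derivation rule is correct. The concrete error is in your degree bookkeeping. The highest-degree term of $\dgal{t,s}$ is $4$, and the triple bracket involves expressions like $\phi_0\,st\dgal{t,s}'\otimes\dgal{t,s}''t\otimes e_1$, so the top degree of $\dgal{t,t,s}$ is $7$, not $5$. Consequently the degree-$5$ monomials you single out, e.g.\ $stst\otimes t\otimes e_1$, do \emph{not} carry the pure squares: a direct check gives that the coefficient of $stst\otimes t\otimes e_1$ is $2\alpha_1'\phi_0$, a cross-term. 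The paper instead inspects the degree-$7$ monomials $(st)^3\otimes t\otimes e_1$ and $e_2\otimes t\otimes(ts)^3$, whose coefficients are exactly $\phi_0^2$ and $-\phi_2^2$; these force $\phi_0=\phi_2=0$ immediately, with no need to combine relations. Once $\phi_0=\phi_2=0$, the paper observes that only four basis tensors survive---$st\otimes t\otimes e_1$, $e_2\otimes t\otimes ts$, $e_2\otimes t\otimes e_1$, $st\otimes t\otimes ts$---with coefficients $(\alpha_1')^2-\phi_1\gamma$, $-(\alpha_3^2-\phi_1\gamma)$, $(\alpha_1'-\alpha_3)\gamma$, $(\alpha_1'-\alpha_3)\phi_1$, and comparison with \eqref{EqQ1tts} finishes.

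For the last paragraph, the symmetry you invoke is fine (and the paper uses the same shortcut), but be careful with the bookkeeping: the involution sends $\dgal{t,s}$ to $-\dgal{t,s}^\circ$, so you should verify that the parameters really map back to themselves under the combined swap, rather than asserting it.
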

\begin{proof}
  When we compute $\dgal{t,t,s}$ using \eqref{Eq:TripBr}, we get that the term $(st)^3 \otimes t \otimes e_1$ only appears with a factor $\phi_0^2$, and $e_2 \otimes t \otimes (ts)^3$ only appears with a factor $-\phi_2^2$. Therefore, if \eqref{EqQ1tts} is satisfied we need $\phi_0=\phi_2=0$ which gives \eqref{LQ1a}. 

Under the conditions from \eqref{LQ1a}, the only terms remaining in $\dgal{t,t,s}$ are given by 
$st \otimes t \otimes e_1$, $e_2 \otimes t \otimes ts$, $e_2 \otimes t \otimes e_1$ and $st \otimes t \otimes ts$ with respective coefficients $(\alpha_1')^2-\phi_1 \gamma$, $-((\alpha_3)^2-\phi_1 \gamma)$, $(\alpha_1'-\alpha_3)\gamma$ and $(\alpha_1'-\alpha_3)\phi_1$. Comparing with \eqref{EqQ1tts}, we get \eqref{LQ1b}. 

The method is exactly the same in the case $l=0$ assuming that \eqref{EqQ1sst} holds. 
\end{proof}

We get by combining Lemmas \ref{Lem:Q1A} and \ref{Lem:Q1B} that if $\lambda=l=0$ as well as $\alpha_1'\neq \alpha_3$, we are in the case 1.a) of Proposition \ref{Pr:Q1}.  If $\alpha_1'=\alpha_3$ instead, we are in the case 1.b).

\medskip

We now assume that at least one of the two constants $\lambda,l$ is nonzero. Hence, if the double bracket \eqref{EqQ1A}--\eqref{EqQ1B} satisfies \eqref{EqQ1aaa}, it must be such that  
\begin{equation}  \label{EqQ1Bbis}
  \dgal{t,s}=\alpha_3 (e_2 \otimes ts - st \otimes e_1) + \phi_0 (stst \otimes e_1 - e_2 \otimes tsts)\,, \quad 
\alpha_3,\phi_0\in \kk\,,
\end{equation}
using Lemma \ref{Lem:Q1A}. 

\begin{lem} If \eqref{EqQ1tts} holds, then $\phi_0=0$, $l\lambda=0$ and $\alpha_3^2=\frac14$.  Moreover, the same statement holds if \eqref{EqQ1sst} holds. 
\end{lem}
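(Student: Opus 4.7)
The plan is to expand $\dgal{t,t,s}$ using \eqref{Eq:TripBr} applied to the double brackets \eqref{EqQ1A} and \eqref{EqQ1Bbis}, and to compare the result with the required expression \eqref{EqQ1tts} monomial by monomial. Equip $A$ with the grading $|s|=|t|=1$, extended to $A^{\otimes 3}$. The right-hand side of \eqref{EqQ1tts} is homogeneous of degree $3$, so every monomial of higher degree appearing in the expansion of $\dgal{t,t,s}$ must cancel. In that expansion, $\alpha_3$ enters with output-degree $2$, while $\phi_0$ and $\lambda$ each enter with output-degree $4$; the potentially obstructing contributions thus live in degrees $5$, $6$ and $7$, and are controlled by the coefficient products $\alpha_3\phi_0$, $\alpha_3\lambda$, $\phi_0^2$, $\phi_0\lambda$ and $\lambda l$.

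The first step is to isolate a top-degree monomial (degree $7$) whose coefficient receives contributions only from $\phi_0^2$ and $\phi_0\lambda$, and to verify that the $\phi_0\lambda$ contributions cancel across the cyclic summands of \eqref{Eq:TripBr}. A convenient choice is $e_2\otimes t\otimes tststs$: after expanding $\dgal{t,tsts}$ via the derivation rule, this monomial appears in the second cyclic summand with coefficient $-\phi_0^2$, while the $\phi_0\lambda$ contributions from the second and third summands cancel by direct computation. Absence of this monomial from the right-hand side of \eqref{EqQ1tts} then forces $\phi_0=0$. Next, with $\phi_0=0$, the only degree-$7$ contributions left come from the $t\dgal{s,s}t$ piece of $\dgal{s,tst}$ inside the third summand, producing $\lambda l\bigl(st\otimes t\otimes tsts - stst\otimes t\otimes ts\bigr)$; since these two monomials are linearly independent and absent from \eqref{EqQ1tts}, we obtain $\lambda l=0$. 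Finally, with $\phi_0=0$ and $\lambda l=0$, the $\alpha_3\lambda$ terms (of degrees $5$ and $6$) cancel across the three cyclic summands by a direct verification, leaving the $\alpha_3^2$ contributions which sum to $\alpha_3^2\bigl(st\otimes t\otimes e_1 - e_2\otimes t\otimes ts\bigr)$; matching with the right-hand side of \eqref{EqQ1tts} yields $\alpha_3^2=\tfrac14$.

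The assertion starting from \eqref{EqQ1sst} follows by symmetry: the involution of $A$ exchanging $s\leftrightarrow t$ and $e_1\leftrightarrow e_2$ preserves the form of \eqref{EqQ1A}--\eqref{EqQ1Bbis} while swapping $\lambda$ and $l$, and it maps \eqref{EqQ1tts} to \eqref{EqQ1sst}. The main obstacle is not conceptual but computational: a complete expansion of $\dgal{t,t,s}$ yields a large number of monomials to track, so the proof hinges on choosing well-placed high-degree monomials whose coefficients isolate a single coefficient product after the cyclic cancellations, thereby bypassing the full expansion. The cancellation of the $\alpha_3\lambda$ terms is the most delicate verification and is best handled by writing the three cyclic contributions side by side.
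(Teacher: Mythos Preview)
Your proof is correct and follows essentially the same route as the paper: expand $\dgal{t,t,s}$, isolate $\phi_0$ from a degree-$7$ monomial (the paper primarily uses $e_2\otimes tstst\otimes ts$ while you use the equally valid $e_2\otimes t\otimes tststs$, which the paper also lists), then read off $\lambda l=0$ and $\alpha_3^2=\tfrac14$ from the simplified expression $\dgal{t,t,s}=\alpha_3^2(st\otimes t\otimes e_1 - e_2\otimes t\otimes ts)+\lambda l(st\otimes t\otimes tsts - stst\otimes t\otimes ts)$. One small slip to fix: the $\alpha_3\lambda$ cross-terms live only in degree $5$, not ``degrees $5$ and $6$'', since every monomial appearing in $\dgal{t,t,s}$ has odd total degree.
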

\begin{proof}
Developing $\dgal{t,t,s}$ with \eqref{Eq:TripBr}, we get that the term $e_2\otimes tstst \otimes ts$ only appears with a factor $\phi_0^2$. (This is also true for $e_2 \otimes t \otimes tststs$, $st \otimes tstst \otimes e_1$ and $ststst \otimes t \otimes e_1$ with factor $-\phi_0^2$.) Therefore $\phi_0=0$. Under this condition, we obtain that 
\begin{equation*}
  \dgal{t,t,s}=\alpha_3^2(st \otimes t \otimes e_1 - e_2 \otimes t \otimes ts) + \lambda l (st \otimes t \otimes tsts - stst \otimes t \otimes ts)\,,
\end{equation*}
and we get the remaining two equalities by comparing this expression with \eqref{EqQ1tts}. The computation for $\dgal{s,s,t}$ with \eqref{EqQ1sst} is similar and gives the second result. 
\end{proof}

As a consequence of this lemma, $\phi_0$ vanishes and  $\alpha_3=\pm\frac12$ in \eqref{EqQ1Bbis}. Furthermore, either we have $\lambda\neq0$ with $l=0$, or we have $l \neq 0$ with $\lambda=0$.  These are respectively Case 2 and Case 3 from Proposition \ref{Pr:Q1}.


\section{Proof of Proposition \ref{Pr:Free2}} \label{App:F2}

\subsection{Coefficients verifying the triple brackets identities}

The strategy of the proof is given after Proposition \ref{Pr:Free2}. In this subsection, we gather a list of equalities that the coefficients appearing in the double bracket must satisfy in order for the corresponding triple bracket to satisfy \eqref{Eqtts14} or \eqref{Eqsst14}. 

\subsubsection{First conditions}

\begin{lem}
If a double bracket given by \eqref{Eqtt}--\eqref{Eqss} and \eqref{EqtsA} satisfies \eqref{Eqtts14}, then we have $\beta_0=\beta_0'=\alpha_1=\alpha_3'=0$. 
\end{lem}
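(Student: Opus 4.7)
The plan is to compute the triple bracket $\dgal{t,t,s}$ explicitly from \eqref{Eq:TripBr} and to compare it coefficient-by-coefficient with the right-hand side of \eqref{Eqtts14}. A useful preliminary observation is that every monomial in this right-hand side is a tensor of total degree $3$ in which the letter $s$ appears \emph{exactly once}. Hence any monomial in $\dgal{t,t,s}$ of different total degree, or containing $s$ in unexpected positions, must vanish. This is the principle I would exploit to isolate each of the four targeted coefficients.

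To show $\beta_0=0$, I would examine the coefficient of $s^3\otimes 1\otimes 1$. The second and third summands of \eqref{Eq:TripBr} cannot contribute: the second would require $\dgal{s,t}''=s^3$, impossible as $\dgal{s,t}$ has degree at most $2$; the third would require $\dgal{t,t}''=1$ together with a $1\otimes s^3$ piece inside $\dgal{s,t^2}$, and inspection of $\dgal{s,t}\,t$ and $t\,\dgal{s,t}$ shows none arises. Within the first summand $\dgal{t,\dgal{t,s}'}\otimes \dgal{t,s}''$, a contribution requires $\dgal{t,s}''=1$. Running through the admissible $\dgal{t,s}'$ (namely the seven summands of \eqref{EqtsA} whose second tensor factor is $1$) and applying the derivation rule $\dgal{t,s^2}=\dgal{t,s}\,s+s\,\dgal{t,s}$, I would verify that only $X=\beta_0\,s^2$ produces an $s^3\otimes 1$ piece, coming from $s\cdot(\beta_0 s^2\otimes 1)=\beta_0 s^3\otimes 1$. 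The resulting coefficient in $\dgal{t,t,s}$ is therefore $\beta_0^2$, forcing $\beta_0=0$. The symmetric argument applied to the monomial $1\otimes 1\otimes s^3$, using the second summand of \eqref{Eq:TripBr} together with $\dgal{s,t}=-\dgal{t,s}^{\circ}$, yields coefficient $\beta_0'^2$, hence $\beta_0'=0$.

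To show $\alpha_1=0$, I would isolate the coefficient of $t^2 s\otimes 1\otimes 1$ in $\dgal{t,t,s}$. Degree considerations again eliminate the second and third summands of \eqref{Eq:TripBr}, and in the first summand the only piece producing $t^2s\otimes 1$ is $X=\alpha_1\,ts$, via $t\cdot\dgal{t,s}\supset t\cdot(\alpha_1 ts\otimes 1)=\alpha_1 t^2 s\otimes 1$. This yields coefficient $\alpha_1^2$, whence $\alpha_1=0$. Dually, the coefficient of $1\otimes 1\otimes st^2$ can only come from the second summand of \eqref{Eq:TripBr} applied with $\dgal{s,t}'=-\alpha_3' st$, via $\dgal{t,s}\,t\supset(\alpha_3' 1\otimes st)\,t=\alpha_3' 1\otimes st^2$; the resulting coefficient is $\alpha_3'^2$, giving $\alpha_3'=0$.

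The main obstacle is purely bookkeeping: with fifteen parameters in \eqref{EqtsA} and the derivation rule producing many intermediate terms, one must check carefully that no stray monomial coincides with the four chosen ones. The point of the argument is that the monomials $s^3\otimes 1\otimes 1$, $1\otimes 1\otimes s^3$, $t^2s\otimes 1\otimes 1$ and $1\otimes 1\otimes st^2$ are cleverly chosen so that only one coefficient of \eqref{EqtsA} can possibly feed into them, and it does so quadratically; the vanishing of each pure square then produces the desired result.
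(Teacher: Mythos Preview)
Your proof is correct and follows exactly the same approach as the paper: isolate the monomials $s^3\otimes 1\otimes 1$, $1\otimes 1\otimes s^3$, $t^2s\otimes 1\otimes 1$, $1\otimes 1\otimes st^2$ in $\dgal{t,t,s}$, observe that none of them appears on the right-hand side of \eqref{Eqtts14}, and compute that their coefficients are pure squares of $\beta_0,\beta_0',\alpha_1,\alpha_3'$ respectively. One minor correction: the coefficients you obtain for $1\otimes 1\otimes s^3$ and $1\otimes 1\otimes st^2$ should be $-(\beta_0')^2$ and $-(\alpha_3')^2$ rather than $(\beta_0')^2$ and $(\alpha_3')^2$ (the sign coming from $\dgal{s,t}=-\dgal{t,s}^{\circ}$ in the second summand of \eqref{Eq:TripBr}), but this does not affect the conclusion.
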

\begin{proof}
  Without computing all terms, we can remark that in $\dgal{t,t,s}$ (obtained from \eqref{Eq:TripBr} using \eqref{Eqtt}, \eqref{EqtsA}) the element $s^3 \otimes 1 \otimes 1$ appears with coefficient $\beta_0^2$, and so do respectively $1 \otimes 1 \otimes s^3$, $t^2s \otimes 1 \otimes 1$ and $1 \otimes 1 \otimes st^2$ with coefficients $-(\beta_0')^2$, $\alpha_1^2$ and $-(\alpha_3')^2$. None of these expressions appear \eqref{Eqtts14}. 
\end{proof}

We can go through a similar argument using $\dgal{s,s,t}$ instead. 
\begin{lem}
If a double bracket given by \eqref{Eqtt}--\eqref{Eqss} and \eqref{EqtsA} satisfies \eqref{Eqsst14}, then we have $\alpha_0=\alpha_0'=\alpha_1=\alpha_3'=0$. 
\end{lem}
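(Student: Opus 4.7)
The plan is to proceed in exact analogy with the previous lemma. I would first expand $\dgal{s,s,t}$ using \eqref{Eq:TripBr} and the identity $\dgal{s,t}=-\dgal{t,s}^\circ$ (so that the coefficients of \eqref{EqtsA} govern both orderings), and then isolate four degree-$3$ tensor monomials in $A^{\otimes 3}$ which do not appear on the right-hand side of \eqref{Eqsst14}: namely $t^3\otimes 1\otimes 1$, $1\otimes 1\otimes t^3$, $s^2 t\otimes 1\otimes 1$, and $1\otimes 1\otimes ts^2$.

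For each of these monomials, the expansion should produce it through a single self-interaction of one of the coefficients $\alpha_0,\alpha_0',\alpha_1,\alpha_3'$. Concretely, $t^3\otimes 1\otimes 1$ should arise only in the first summand of \eqref{Eq:TripBr}, via the monomial $-\alpha_0'\,t^2\otimes 1$ of $\dgal{s,t}$ interacting with itself through the right-derivation rule applied to $\dgal{s,t^2}$, contributing $(\alpha_0')^2$. Symmetrically, $1\otimes 1\otimes t^3$ should appear only after the $\tau_{(123)}$-shift of $\dgal{s,\dgal{t,s}'}\otimes\dgal{t,s}''$, from $\dgal{t,s}'=t^2$ (coefficient $\alpha_0$) combined with the $-\alpha_0\,1\otimes t^2$ contribution of $\dgal{s,t}$ inside $\dgal{s,t^2}$, yielding $-\alpha_0^2$. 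By the same mechanism, $s^2 t\otimes 1\otimes 1$ should pick up coefficient $(\alpha_3')^2$ (from $-\alpha_3'\,st\otimes 1$ in $\dgal{s,t}$, since $s\dgal{s,t}$ contains $s^2t\otimes 1$), while $1\otimes 1\otimes ts^2$ should receive $-\alpha_1^2$ (from $\alpha_1\,ts\otimes 1$ in $\dgal{t,s}$ combined with $-\alpha_1\,1\otimes ts$ in $\dgal{s,t}$). Since none of these four monomials appears in \eqref{Eqsst14}, all four coefficients must vanish.

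The main obstacle is the bookkeeping involved in verifying that no other term in the expansion contributes to these four monomials. Fortunately, this is straightforward: $\dgal{s,s}$ is a polynomial in $s$ alone, which kills every contribution from the $\tau_{(132)}$-summand to our chosen monomials, while $\dgal{t,s}$ has polynomial degree at most $2$, which forces a unique matching first factor under the right-derivation rule in each of the remaining summands. The argument is thus a direct analogue of the one used in the previous lemma.
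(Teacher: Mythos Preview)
Your proposal is correct and takes essentially the same approach as the paper, which simply says ``we can go through a similar argument using $\dgal{s,s,t}$ instead.'' You have in fact supplied more detail than the paper does: the four monomials you isolate ($t^3\otimes 1\otimes 1$, $1\otimes 1\otimes t^3$, $s^2t\otimes 1\otimes 1$, $1\otimes 1\otimes ts^2$) and the coefficients you compute ($(\alpha_0')^2$, $-\alpha_0^2$, $(\alpha_3')^2$, $-\alpha_1^2$) are exactly the ones obtained by applying the $s\leftrightarrow t$ symmetry to the previous lemma, and your justification that no other terms contribute is sound.
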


Hence, we are left to discuss the coefficients of the double bracket given by \eqref{Eqtt}--\eqref{Eqss} and 
\begin{equation}
  \begin{aligned}
    \dgal{t,s}=&\,\,\, \gamma_0 \, t \otimes t + \gamma_1 \, s \otimes s  + \alpha_1'\, st \otimes 1 + \alpha_2 \, t \otimes s + \alpha_2' \, s \otimes t + \alpha_3 \, 1 \otimes ts  \\
&+ \beta_1 \, t \otimes 1 + \beta_1' \, 1 \otimes t + \beta_2 \, s \otimes 1 + \beta_2' \, 1 \otimes s + \gamma \, 1 \otimes 1\,. \label{Eqts}
  \end{aligned}
\end{equation}

\subsubsection{Identities verified by the coefficients when \eqref{Eqtts14} is satisfied}

\begin{lem} \label{Lem:nu0}
Consider a double bracket defined on $A$ by \eqref{Eqtt}, \eqref{Eqss} and \eqref{Eqts}, with $\nu=0$, $\lambda \in \kk$ and $\mu \in \{\pm \frac12\}$. Then \eqref{Eqtts14} is satisfied if and only if the following list of identities hold : 
\begin{subequations}
  \begin{align}
    &\alpha_1',\alpha_3 =\pm \frac12\,, \quad \alpha^2_2=\frac14+\gamma_1 \gamma_0\,, \quad 
(\alpha'_2)^2=\frac14+\gamma_1 \gamma_0\,, \label{nu0a}\\
&\frac14 + \alpha_2 \alpha_3=-\mu (\alpha_2 + \alpha_3)\,, \quad 
\frac14 + \alpha_1' \alpha_2' = \mu (\alpha_1'+\alpha_2')\,, \label{nu0abis}\\
&\gamma_1 (\alpha_1'- \mu)=0\,, \quad \gamma_1 (\alpha_2'-\alpha_2)=0\,, \quad \gamma_1 (\alpha_3 + \mu)=0\,, \label{nu0b}\\ 
& \beta_2(\alpha_1'-\mu)=0\,, \quad \beta_2 (\alpha_2'-\mu)- \gamma_1 \beta_1'=0\,, \quad (\beta_2-\lambda)(\alpha_1'+\alpha_2')-\gamma_1 \beta_1=0\,, \label{nu0c}\\
& \beta_2'(\alpha_3+\mu)=0\,, \quad \beta_2' (\alpha_2+\mu)- \gamma_1 \beta_1=0\,, \quad (\beta_2'+\lambda)(\alpha_2+\alpha_3)-\gamma_1 \beta_1'=0\,, \label{nu0d}\\
&\gamma_0 (\alpha_1'-\mu)=0\,,\quad \gamma_0(\alpha_2'-\alpha_2)=0\,, \quad  
\gamma_0(\alpha_3 + \mu)=0\,,\quad \label{nu0e} \\
& \beta_1 (\alpha_1' - \alpha_2)+ \gamma_0 (\beta_2-\lambda)=0\,, \quad \beta_1'(\alpha_2'-\alpha_3)-\gamma_0(\beta_2'+\lambda)=0\,, \label{nu0f}\\
&\beta_1' (\alpha_1' - \mu)- \beta_1 (\alpha_3+\mu)=0\,, \quad 
\beta_1 (\alpha_2' - \mu)- \beta_1' (\alpha_2+\mu)+\gamma_0 \lambda=0\,, \label{nu0g} \\
&\gamma (\alpha_2 + \mu) - \beta_2 \beta_1=0\,, \quad \gamma (\alpha_1' - \alpha_3)+ \beta_1'(\beta_2-\lambda)-\beta_1(\beta_2'+\lambda)=0\,, \quad 
\gamma (\alpha_2' - \mu ) - \beta_2' \beta_1'=0\,, \label{nu0h} \\
& \beta_2' (\beta_2'+\lambda)-\gamma_1 \gamma=0\,, \quad \beta_2 (\beta_2-\lambda)- \gamma_1 \gamma=0\,, \label{nu0i} \\
&(\beta_2-\beta_2'-\lambda)\gamma_1=0\,, \quad (\beta_2 - \beta_2'-\lambda)\gamma=0\,. \label{nu0j}
  \end{align}
\end{subequations}
\end{lem}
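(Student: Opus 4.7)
The proof is a direct, albeit laborious, computation. The plan is to expand $\dgal{t,t,s}$ from its definition \eqref{Eq:TripBr}, substitute the assumed expressions \eqref{Eqtt} (with $\nu=0$) and \eqref{Eqts} into the three summands
\begin{equation*}
\dgal{t,\dgal{t,s}'}\otimes\dgal{t,s}'' \;+\; \tau_{(123)}\dgal{t,\dgal{s,t}'}\otimes\dgal{s,t}'' \;+\; \tau_{(123)}^2\dgal{s,\dgal{t,t}'}\otimes\dgal{t,t}''\,,
\end{equation*}
use the outer derivation rule \eqref{Eq:outder} to open up each inner double bracket (this is where $\dgal{t,-}$ and $\dgal{s,-}$ are applied to products such as $st$, $t$, $s$, $t^2$, etc.), and then collect the resulting sum as a linear combination of elementary tensors $m_1\otimes m_2\otimes m_3$ where each $m_i$ is a monomial in $s,t$. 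The right-hand side of \eqref{Eqtts14} is already written in such a monomial basis, so comparing coefficient by coefficient will produce the full list of identities \eqref{nu0a}--\eqref{nu0j}, and conversely any solution of these identities makes every coefficient match.

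The order I would proceed is as follows. First I would fix some bookkeeping: writing the eleven terms of $\dgal{t,s}$ in \eqref{Eqts} with labels $\gamma_0,\gamma_1,\alpha_1',\alpha_2,\alpha_2',\alpha_3,\beta_1,\beta_1',\beta_2,\beta_2',\gamma$, and noting that $\dgal{t,t}=\mu(t^2\otimes 1-1\otimes t^2)$ since $\nu=0$ and $\mu=\pm\tfrac12$ (the relation $4\mu^2=1$ then forces $\lambda=0$ to drop out of the degree count only if $\lambda=0$; but $\lambda$ is allowed to be any scalar, so I will keep it). Second, I would split the computation by the maximal tensor-degree of the monomials produced. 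The highest-degree terms come from inner double brackets that themselves contain quadratic monomials, and they immediately yield \eqref{nu0a} (matching the $st\otimes t\otimes 1$, $1\otimes t\otimes ts$, $s\otimes t\otimes t$, $t\otimes t\otimes s$ coefficients on the right-hand side of \eqref{Eqtts14}) and \eqref{nu0abis} (matching the $st\otimes 1\otimes t$, $t\otimes 1\otimes ts$ coefficients). The quadratic identities \eqref{nu0b}, \eqref{nu0e} and \eqref{nu0i} will arise from collecting coefficients of monomials involving only one of $s^2,st,ts$; and the linear ones \eqref{nu0c}--\eqref{nu0d}, \eqref{nu0f}--\eqref{nu0g}, \eqref{nu0j} will arise from monomials of total degree $2$ or $3$. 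Finally, the three scalar identities \eqref{nu0h} come from monomials in which no $t$ or $s$ appears in any other slot but the middle one, which must vanish on the right-hand side.

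The main (and essentially only) obstacle is the sheer volume of the computation: after expansion, each of the three summands produces a few dozen tensor monomials, and while any individual line is trivial, the bookkeeping is error-prone. I would mitigate this by organising the coefficients in a table indexed by triples of monomials occurring on either side of \eqref{Eqtts14}; each entry then becomes one of the equations in \eqref{nu0a}--\eqref{nu0j}, and linear independence of distinct elementary tensors in $A^{\otimes 3}$ (where $A$ is a free algebra) guarantees that these conditions are both necessary and sufficient. Since no cancellation with any term from $\dgal{s,s}$ enters (the generator $s$ is not differentiated among itself here, $\dgal{s,s}$ never appears in the expansion of $\dgal{t,t,s}$), the identities produced depend on the coefficients in \eqref{Eqtt}--\eqref{Eqts} only, exactly as stated; the $l,m,n$ of \eqref{Eqss} play no role, and this confirms that treating \eqref{Eqtts14} and \eqref{Eqsst14} separately (as done in the next lemmas of the appendix) is legitimate.
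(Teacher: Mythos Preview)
Your proposal is correct and follows exactly the paper's approach: a direct expansion of $\dgal{t,t,s}$ via \eqref{Eq:TripBr} followed by coefficient comparison in the monomial basis of $A^{\otimes 3}$, with the paper likewise leaving the ``cumbersome (but elementary)'' bookkeeping to the reader. One minor slip to fix: your initial display $\dgal{t,t}=\mu(t^2\otimes 1-1\otimes t^2)$ omits the term $\lambda(t\otimes 1-1\otimes t)$, which does \emph{not} vanish when $\nu=0$ (the constraint $4(\mu^2-\lambda\nu)=1$ leaves $\lambda$ free) and contributes to the identities \eqref{nu0c}--\eqref{nu0j}; since you say you will keep $\lambda$, make sure this term is actually carried through the expansion.
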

\begin{proof}
We collect now all nonzero terms that appear in the expansion of $\dgal{t,t,s}$ obtained from \eqref{Eq:TripBr}, leaving the cumbersome (but elementary) computations to the reader. 

The coefficients for $t \otimes t \otimes s$, $s \otimes t \otimes t$, $1 \otimes t \otimes ts$ and $st \otimes t \otimes 1$ are respectively $\gamma_0\gamma_1-\alpha^2_2$, $(\alpha_2')^2-\gamma_0\gamma_1$, $-\alpha_3^2$ and $(\alpha_1')^2$. The coefficient for $t \otimes 1 \otimes ts$ and $1 \otimes t^2 \otimes s$ is $- \alpha_2 \alpha_3- \mu (\alpha_2+\alpha_3)$, while we have for $st \otimes 1 \otimes t$ and $s \otimes t^2 \otimes 1$ the coefficient $\alpha_1' \alpha_2' - \mu (\alpha_1' + \alpha_2')$. Since these terms appear in \eqref{Eqtts14}, this gives \eqref{nu0a} and \eqref{nu0abis}. In particular, all the other coefficients in the expansion of $\dgal{t,t,s}$ must vanish. 

The coefficients for $st \otimes 1 \otimes s$, $s \otimes t \otimes s$ and $s \otimes 1 \otimes ts$ are respectively 
$\gamma_1 (\alpha_1'- \mu)$, $\gamma_1 (\alpha_2'-\alpha_2)$ and $\gamma_1 (\alpha_3 + \mu)$, which yields \eqref{nu0b}. 

The vanishing of the coefficients for $st \otimes 1 \otimes 1$, $s \otimes 1 \otimes t$ and $s \otimes t \otimes 1$ gives successively the three identities in \eqref{nu0c}. Similarly $1 \otimes 1 \otimes ts$, $t \otimes 1 \otimes s$ and $1 \otimes t \otimes s$ imply \eqref{nu0d}, while $1 \otimes t^2 \otimes t$, $t \otimes t^2 \otimes 1$ and $t \otimes t \otimes t$ give \eqref{nu0e}. 

The coefficients for $t \otimes t \otimes 1,1 \otimes t \otimes t$  and $1\otimes t^2 \otimes1, t \otimes 1 \otimes t$ give \eqref{nu0f} and \eqref{nu0g} respectively. With $t \otimes 1 \otimes 1$, $1 \otimes t \otimes 1$ and $1 \otimes 1 \otimes t$ we obtain \eqref{nu0h}. 

The terms $1 \otimes 1 \otimes s$ and $s \otimes 1 \otimes 1$ give \eqref{nu0i}. We finally get \eqref{nu0j} from $s \otimes 1 \otimes s$ and $1 \otimes 1 \otimes 1$.  
\end{proof}

In the exact same way, we get the next lemma. 
\begin{lem} \label{Lem:nuneq}
Consider a double bracket defined on $A$ by \eqref{Eqtt}, \eqref{Eqss} and \eqref{Eqts}, with $\nu\neq 0$ and $\lambda, \mu \in \kk$ satisfying $4(\mu^2-\lambda \nu) = 1$. Then \eqref{Eqtts14} is satisfied if and only if the following list of identities holds : 
\begin{subequations}
  \begin{align}
    &\gamma_0=0\,, \quad \gamma_1=0\,, \quad \beta_2=0\,, \quad \beta_2'=0\,, \label{nuneq0}\\
    &\alpha_1',\alpha_2,\alpha_2',\alpha_3 =\pm \frac12\,, \quad \frac14 + \alpha_2 \alpha_3=-\mu (\alpha_2 + \alpha_3)\,, \quad \frac14 + \alpha_1' \alpha_2' = \mu (\alpha_1'+\alpha_2')\,, \label{nuneqb}\\
& \alpha_1'=-\alpha_2'\,, \qquad \alpha_2=-\alpha_3\,, \qquad   \beta_1'=-\beta_1\,, \label{nuneqc} \\
& \beta_1(\alpha_2+\alpha_2')=0\,, \quad \beta_1(\alpha_1'+\alpha_3)-\nu \gamma=0\,, \quad  
\beta_1(\alpha_2'-\alpha_3)+\nu \gamma=0\,, \quad \beta_1(\alpha_1'-\alpha_2)-\nu \gamma=0\,, \label{nuneqd} \\ 
&\gamma(\alpha_2+\mu)=0\,, \quad \gamma(\alpha_1'-\alpha_3)=0\,, \quad 
\gamma (\alpha_2'-\mu)=0\,, \quad \lambda \gamma =0\,. \label{nuneqe}
  \end{align}
\end{subequations}
\end{lem}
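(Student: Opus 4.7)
\medskip

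\textbf{Proof proposal.} The plan is to mimic exactly the strategy carried out for Lemma \ref{Lem:nu0}: expand the triple bracket $\dgal{t,t,s}$ using the definition \eqref{Eq:TripBr} and the given forms \eqref{Eqtt}, \eqref{Eqss}, \eqref{Eqts}, collect the coefficients of each basis monomial in $A^{\otimes 3}$, and compare them against \eqref{Eqtts14}. Monomials that do not appear in \eqref{Eqtts14} must have vanishing coefficient, and the two monomials that do appear must give coefficient $\tfrac14$. Each such constraint yields one of the identities \eqref{nuneq0}--\eqref{nuneqe}. Reciprocally, assuming all listed identities, a direct substitution shows \eqref{Eqtts14} holds.

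The genuinely new feature compared with Lemma \ref{Lem:nu0} is the presence of the cubic term $\nu (t^2\otimes t - t\otimes t^2)$ in $\dgal{t,t}$. First I would organise the expansion according to the total $t$-degree of the output tensor. The top degree contribution comes from the summand $\tau_{(132)}\dgal{s,\dgal{t,t}'}\otimes \dgal{t,t}''$ when $\dgal{t,t}'\otimes \dgal{t,t}''$ is taken to be its $\nu$-part, combined with degree-$2$ pieces of $\dgal{s,t}=-\dgal{t,s}^{\circ}$ obtained via the outer derivation rule $\dgal{s,t^2}=\dgal{s,t}\,t+t\,\dgal{s,t}$. Since \eqref{Eqtts14} only contains monomials of total degree $3$, every monomial of total degree $\geq 4$ produced in the expansion must vanish, and reading off these top-degree vanishing conditions is what forces \eqref{nuneq0} (killing $\gamma_0,\gamma_1,\beta_2,\beta_2'$), the values $\alpha_1',\alpha_2,\alpha_2',\alpha_3=\pm\tfrac12$ from \eqref{nuneqb}, and the linear relations $\alpha_1'=-\alpha_2'$, $\alpha_2=-\alpha_3$, $\beta_1'=-\beta_1$ in \eqref{nuneqc}.

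Once those high-degree vanishings are imposed, the remaining expansion collapses substantially: many of the terms that clutter the proof of Lemma \ref{Lem:nu0} simply disappear. I would then sweep through the degree-$3$ monomials to identify the two entries of \eqref{Eqtts14} (giving the two trailing identities in \eqref{nuneqb} exactly as in \eqref{nu0abis}), and the degree-$2$ and degree-$1$ monomials to extract the conditions \eqref{nuneqd}--\eqref{nuneqe}. The quadratic relation $4(\mu^2-\lambda\nu)=1$ enters when cross-terms between $\mu$ and $\nu$ produce contributions proportional to $4\lambda\nu-4\mu^2+1$, which must vanish; this is automatic under our standing hypothesis, as is needed for Lemma \ref{Lem:nu0}.

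The main obstacle is not conceptual but combinatorial: one must keep careful track of which tensor monomial each of the roughly thirty summands produces, and many basis elements receive contributions from several summands, so the coefficient comparisons cannot be done piecemeal. To keep the bookkeeping under control I would tabulate, for each of the three cyclic summands in \eqref{Eq:TripBr}, the output indexed by the pair $(\dgal{-,-}'_{\text{inner}},\,\dgal{-,-}''_{\text{outer}})$, apply the outer derivation rule uniformly to reduce everything to $\dgal{t,t}$, $\dgal{s,s}$ and $\dgal{t,s}$ on generators, and only then group by the output monomial. Aside from this bookkeeping the argument is entirely parallel to the proof of Lemma \ref{Lem:nu0}.
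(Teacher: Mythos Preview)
Your proposal is correct and takes essentially the same approach as the paper: the paper's proof of this lemma is literally the one-line statement ``In the exact same way, we get the next lemma,'' referring back to the monomial-by-monomial expansion carried out for Lemma \ref{Lem:nu0}. Your organisation by total degree is a sensible bookkeeping device that the paper does not make explicit, but the underlying argument---expand $\dgal{t,t,s}$ via \eqref{Eq:TripBr}, compare coefficients against \eqref{Eqtts14}, and note that the $\nu$-term now generates additional monomials whose vanishing forces the extra constraints \eqref{nuneq0} and \eqref{nuneqc}---is identical.
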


\begin{rem}
These results are easily adapted to the case where the double bracket is Poisson, i.e. when the associated triple bracket \eqref{Eq:TripBr} identically vanishes. In such a case, we require $4(\mu^2-\lambda \nu) = 0$ to get $\dgal{t,t,t}=0$ by \cite[Proposition A.1]{P16}. 

If $\nu=\mu=0$, then $\dgal{t,t,s}=0$ when the conditions \eqref{nu0a}--\eqref{nu0j} of Lemma \ref{Lem:nu0} are satisfied with the extra requirements that all the terms containing a factor $\mu$ are removed, and that 
 all the terms $\pm\frac12$ and $\frac14$ in \eqref{nu0a}--\eqref{nu0abis} are removed (in particular $\alpha_1'=\alpha_3=0$).

If $\nu \neq0$ and  $\mu^2-\lambda \nu = 0$ then $\dgal{t,t,s}=0$ when the conditions \eqref{nuneq0}--\eqref{nuneqe} of Lemma \ref{Lem:nu0} are satisfied with the extra requirements that the terms $\pm \frac12$ and $\frac14$ appearing in the identities \eqref{nuneqb} are removed. 
\end{rem}

\subsubsection{Identities verified by the coefficients when \eqref{Eqsst14} is satisfied} \label{subsst}

We can obtain the analogues of Lemmae \ref{Lem:nu0} and \ref{Lem:nuneq} when \eqref{Eqsst14} is satisfied as follows. Using the cyclic antisymmetry of the double bracket, remark that we can get from \eqref{Eqts} 
\begin{equation}
  \begin{aligned}
    \dgal{s,t}=&\,\,\, - \gamma_1 \, s \otimes s - \gamma_0 \, t \otimes t  
-\alpha_3 \,  ts \otimes 1 - \alpha_2 \, s \otimes t - \alpha_2' \, t \otimes s - \alpha_1'\, 1 \otimes st  \\
&- \beta_2' \, s \otimes 1 - \beta_2 \, 1 \otimes s - \beta_1' \, t \otimes 1 - \beta_1 \, 1 \otimes t 
- \gamma \, 1 \otimes 1 \,. \label{Eqstbis}
  \end{aligned}
\end{equation}
Comparing \eqref{Eqtt} and \eqref{Eqss}, then doing the same with \eqref{Eqts} and \eqref{Eqstbis}, one can see that to compute $\dgal{s,s,t}$ one just needs to consider  $\dgal{t,t,s}$ in which we replace all variables $s$ by $t$ and vice-versa, then do the following changes in the coefficients 
\begin{equation}
  \begin{aligned} \label{Eq:Mapping}
  &  \lambda \mapsto l\,, \quad \mu \mapsto m\,, \quad \nu \mapsto n\,, \\
&\gamma_0 \mapsto -\gamma_1\,, \quad \gamma_1 \mapsto - \gamma_0\,, \quad 
\alpha_1' \mapsto - \alpha_3\,, \quad  \alpha_2 \mapsto - \alpha_2\,, \quad \alpha_2' \mapsto - \alpha_2'\,, \quad 
\alpha_3 \mapsto - \alpha_1'\,, \\ 
&\beta_1 \mapsto - \beta_2'\,, \quad \beta_1' \mapsto - \beta_2\,, \quad 
\beta_2 \mapsto - \beta_1'\,, \quad \beta_2' \mapsto - \beta_1\,, \quad \gamma \mapsto - \gamma\,.
  \end{aligned}
\end{equation}

For $n=0$, $l \in \kk$ and $m \in \{\pm \frac12\}$, we have that \eqref{Eqsst14} is satisfied if and only if the list of identities obtained by applying \eqref{Eq:Mapping} to \eqref{nu0a}--\eqref{nu0j} is verified. 

For $n\neq 0$ and $l,m \in \kk$ satisfying $4(m^2-ln) = 1$, we have that \eqref{Eqsst14} is satisfied if and only if the list of identities obtained by applying \eqref{Eq:Mapping} to \eqref{nuneq0}--\eqref{nuneqe} is verified.

\subsection{Splitting the identities into cases}

\begin{lem} \label{Lem:tts}
Consider a reduced double bracket defined on $A$ by \eqref{Eqtt}, \eqref{Eqss} and \eqref{Eqts}, with $\nu=\lambda=0$ and $\mu \in \{\pm \frac12\}$. Then \eqref{Eqtts14} is satisfied if and only if the double bracket verifies one of the following cases 

\underline{\emph{Case A:}} For $\gamma_0,\gamma_1\in \kk^\times$, then $\gamma\in \kk$ is free while 
\begin{equation}  
  \begin{aligned} 
& \alpha_1'=\mu, \quad \alpha_3=-\mu,\quad \alpha_2'=\alpha_2 \,\, \text{ with }\, \alpha_2^2=\frac14 + \gamma_0 \gamma_1\,, \\
& \beta_1=\frac{\gamma_0 \beta_2}{\alpha_2-\mu}, \quad \beta_1'=\frac{\gamma_0 \beta_2}{\alpha_2+\mu}, \quad 
\beta_2'=\beta_2\,\, \text{ with }\beta_2^2=\gamma \gamma_1\,. 
\label{ttsCasA} 
  \end{aligned}  
\end{equation}

\underline{\emph{Case B:}} For $\gamma_1\in \kk^\times$, $\gamma_0=0$, then $\beta_2 \in \kk$ is free while 
\begin{equation}
\alpha_1'=\mu, \quad \alpha_3=-\mu, \quad \beta_2'=\beta_2, \quad \gamma=\frac{\beta_2^2}{\gamma_1} \,, \label{ttsCasBgen} 
\end{equation}
and one of the following two sets of conditions holds : 
\begin{subequations}
  \begin{align}
\text{\emph{B1)}}&\quad \alpha_2'=\alpha_2=\mu, \quad \beta_1'=0, \quad \beta_1=\frac{2\mu \beta_2}{\gamma_1}\,, 
\label{ttsCasB1} \\     
\text{\emph{B2)}}&\quad \alpha_2'=\alpha_2=-\mu, \quad \beta_1=0, \quad \beta_1'=-\frac{2\mu \beta_2}{\gamma_1}\,. 
\label{ttsCasB2} 
  \end{align}
\end{subequations}

\underline{\emph{Case C:}} For $\gamma_0\in \kk^\times$, $\gamma_1=0$, then 
\begin{equation}
\alpha_1'=\mu, \quad \alpha_3=-\mu, \quad \beta_2'=\beta_2=0, \quad \gamma=0 \,, \label{ttsCasCgen} 
\end{equation}
and one of the following two sets of conditions holds : 
\begin{subequations}
  \begin{align}
\text{\emph{C1)}}&\quad \alpha_2'=\alpha_2=\mu, \quad \beta_1'=0, \quad \beta_1\in \kk\,, 
\label{ttsCasC1} \\     
\text{\emph{C2)}}&\quad \alpha_2'=\alpha_2=-\mu, \quad \beta_1=0, \quad \beta_1\in \kk\,. 
\label{ttsCasC2} 
  \end{align}
\end{subequations}

\underline{\emph{Case D:}} For $\gamma_0=\gamma_1=0$, then $\beta_2'=\beta_2=0$ and one of the following sets of conditions holds : 

\noindent if $(\alpha_1',\alpha_3)=(-\mu,\mu)$, 
\begin{equation}
 \text{\emph{D1)}}\quad \alpha_1'=\alpha_2=-\mu, \quad \alpha_3=\alpha_2'=\mu, \quad \beta_1=\beta_1'=\gamma=0\,;
\label{ttsCasD1} 
\end{equation}
if $(\alpha_1',\alpha_3)=(\mu,\mu)$,
\begin{subequations}
  \begin{align}    
\text{\emph{D2.1)}}&\quad \alpha_1'=\alpha_2'=\alpha_3=\mu, \quad \alpha_2=-\mu, \quad \beta_1=0, \quad \beta_1',\gamma\in \kk\,,
\label{ttsCasD21} \\
\text{\emph{D2.2)}}&\quad \alpha_1'=\alpha_3=\mu, \quad \alpha_2=\alpha_2'=-\mu, \quad \beta_1=\beta_1'=\gamma=0\,;
\label{ttsCasD22} 
  \end{align}
\end{subequations}
if $(\alpha_1',\alpha_3)=(-\mu,-\mu)$,
\begin{subequations}
  \begin{align}    
\text{\emph{D3.1)}}&\quad \alpha_1'=\alpha_2=\alpha_3=-\mu, \quad \alpha_2'=\mu, \quad \beta_1'=0, \quad \beta_1,\gamma\in \kk\,,
\label{ttsCasD31} \\
\text{\emph{D3.2)}}&\quad \alpha_1'=\alpha_3=-\mu, \quad \alpha_2=\alpha_2'=\mu, \quad \beta_1=\beta_1'=\gamma=0\,;
\label{ttsCasD32} 
  \end{align}
\end{subequations}
if $(\alpha_1',\alpha_3)=(\mu,-\mu)$,
\begin{subequations}
  \begin{align}    
\text{\emph{D4.1)}}&\quad \alpha_1'=\alpha_2=\alpha_2'=\mu, \quad \alpha_3=-\mu, \quad \beta_1'=\gamma=0, \quad \beta_1\in \kk\,,
\label{ttsCasD41} \\
\text{\emph{D4.2)}}&\quad \alpha_1'=\mu, \quad \alpha_3=\alpha_2=\alpha_2'=-\mu, \quad \beta_1=\gamma=0, \quad \beta_1'\in \kk\,,
\label{ttsCasD42} \\
\text{\emph{D4.3)}}&\quad \alpha_1'=\alpha_2=\mu, \quad \alpha_3=\alpha_2'=-\mu, \quad \gamma=0, \quad \beta_1'=-\beta_1,\,\, \beta_1\in \kk\,,
\label{ttsCasD43} \\
\text{\emph{D4.4)}}&\quad \alpha_1'=\alpha_2'=\mu, \quad \alpha_3=\alpha_2=-\mu, \quad \beta_1=\beta_1'=\gamma=0\,.
\label{ttsCasD44} 
  \end{align}
\end{subequations}
\end{lem}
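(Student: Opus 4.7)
The plan is to take the full list of polynomial identities \eqref{nu0a}--\eqref{nu0j} furnished by Lemma \ref{Lem:nu0}, specialize to $\lambda=0$ and $\mu^2=1/4$, and then solve the resulting system by a four-way case split on whether each of $\gamma_0,\gamma_1$ vanishes. Since \eqref{nu0a} always forces $\alpha_1',\alpha_3\in\{\pm 1/2\}=\{\mu,-\mu\}$ and $\alpha_2^2=(\alpha_2')^2=\tfrac14+\gamma_0\gamma_1$, the only genuinely free sign choices left are in $(\alpha_1',\alpha_3)$ and the way in which $\alpha_2,\alpha_2'$ realise the quadratic constraint.

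In Case A, where $\gamma_0,\gamma_1\in\kk^\times$, the three linear conditions in each of \eqref{nu0b} and \eqref{nu0e} immediately yield $\alpha_1'=\mu$, $\alpha_3=-\mu$ and $\alpha_2=\alpha_2'$; the quadratic equations \eqref{nu0abis} are then automatically satisfied. The first identities in \eqref{nu0c} and \eqref{nu0d} become vacuous, and the remaining two of each set (with $\lambda=0$) determine $\beta_1$ and $\beta_1'$ in terms of $\beta_2$ and $\beta_2'$ respectively; comparing the two expressions for $\beta_1$ forces $\beta_2'=\beta_2$ (noting that $\alpha_2\pm\mu\neq 0$ in this case, since otherwise $\gamma_0\gamma_1$ would vanish), and the key algebraic identity $(\alpha_2-\mu)(\alpha_2+\mu)=\alpha_2^2-\mu^2=\gamma_0\gamma_1$ rewrites the formulae into the symmetric form of \eqref{ttsCasA}. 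Finally \eqref{nu0i} gives $\beta_2^2=\gamma\gamma_1$, and one checks that \eqref{nu0f}--\eqref{nu0h} and \eqref{nu0j} are then automatically satisfied.

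In Case B ($\gamma_1\in\kk^\times$, $\gamma_0=0$), conditions \eqref{nu0b} still force $\alpha_1'=\mu$, $\alpha_3=-\mu$, $\alpha_2'=\alpha_2$, while \eqref{nu0e} becomes vacuous. The quadratic \eqref{nu0a} then gives $\alpha_2^2=1/4$, i.e.\ $\alpha_2=\pm\mu$, leading to the dichotomy B1/B2; the remaining identities determine $\beta_1$, $\beta_1'$ and $\gamma$ in terms of the free parameter $\beta_2$, with one of $\beta_1,\beta_1'$ necessarily vanishing in each sub-case because the corresponding coefficient $\alpha_2\mp\mu$ in \eqref{nu0c}--\eqref{nu0d} degenerates to zero. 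Case C is the mirror argument with the roles of $\gamma_0,\gamma_1$ (and of $\beta_i$ versus $\beta_i'$) swapped; here \eqref{nu0i}--\eqref{nu0j} degenerate so that no non-trivial constraint on $\gamma$ remains, and $\gamma=\beta_2=\beta_2'=0$ follows from \eqref{nu0c}--\eqref{nu0d}.

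Case D, where $\gamma_0=\gamma_1=0$, is the most combinatorial: \eqref{nu0b} and \eqref{nu0e} are vacuous, \eqref{nu0a} forces $\alpha_2,\alpha_2'\in\{\pm\mu\}$, and one must enumerate the $2^4$ sign patterns of $(\alpha_1',\alpha_3,\alpha_2,\alpha_2')$. The two quadratic relations \eqref{nu0abis} cut this down by selecting, for each choice of $(\alpha_1',\alpha_3)$, the admissible pairs $(\alpha_2,\alpha_2')$; this produces the subdivision D1, D2.x, D3.x, D4.x. The remaining linear identities \eqref{nu0f}--\eqref{nu0j} then pin down $\beta_1$, $\beta_1'$ and $\gamma$, with the number of free parameters determined by which of the coefficients $\alpha_i\pm\mu$ and $\alpha_i'\pm\mu$ happen to vanish. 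The only part requiring genuine care is the four-fold split D4 arising from $(\alpha_1',\alpha_3)=(\mu,-\mu)$, where the greatest number of equations in \eqref{nu0f}--\eqref{nu0g} degenerate simultaneously; this is the main (purely bookkeeping) obstacle, but since the system is linear in $\beta_1,\beta_1',\gamma$ everything can be read off directly from \eqref{nu0f}--\eqref{nu0h}.
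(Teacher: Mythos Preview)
Your proposal is correct and follows exactly the approach the paper itself takes, namely specializing the system \eqref{nu0a}--\eqref{nu0j} of Lemma~\ref{Lem:nu0} to $\lambda=0$ and then exhausting the cases according to the vanishing of $\gamma_0,\gamma_1$ (the paper's proof is in fact just the one-line statement that this is what one does). One small slip: in Case~C the vanishing of $\gamma$ does not come from \eqref{nu0c}--\eqref{nu0d} but from the middle identity of \eqref{nu0h}, which reads $2\mu\gamma=0$ once $\beta_2=\beta_2'=\lambda=0$.
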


The proof of Lemma \ref{Lem:tts} consists in listing the possible coefficients of a reduced double bracket that satisfy Lemma \ref{Lem:nu0}. The next lemma is obtained similarly from Lemma \ref{Lem:nuneq}. 

\begin{lem} \label{Lem:ttsNU}
Consider a reduced double bracket defined on $A$ by \eqref{Eqtt}, \eqref{Eqss} and \eqref{Eqts}, with $\nu\in \kk^\times$ and $\mu=0$, $\lambda=\frac{-1}{4\nu}$. Then \eqref{Eqtts14} is satisfied if and only if the double bracket verifies 
\begin{equation}
\gamma_0=\gamma_1=0, \quad \beta_2=\beta_2'=0, \quad \gamma=0, \quad \alpha_2=\pm \frac12\,, \label{ttsCasNU} 
\end{equation}
and  one of the following two conditions holds : 
\begin{subequations}
  \begin{align}    
\text{\emph{A}$_\nu$\emph{)}}&\quad \alpha_1'=\alpha_2, \quad \alpha_2'=\alpha_3=-\alpha_2, \quad \beta_1'=-\beta_1,\,\, \beta_1\in \kk\,,
\label{ttsCasANU} \\
\text{\emph{B}$_\nu$\emph{)}}&\quad \alpha_2'=\alpha_2, \quad \alpha_1'=\alpha_3=-\alpha_2, \quad \beta_1=\beta_1'=0\,.
\label{ttsCasBNU} 
  \end{align}
\end{subequations}
\end{lem}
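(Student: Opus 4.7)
The strategy is to apply Lemma \ref{Lem:nuneq} directly, since the hypothesis $\nu \in \kk^\times$ with $\mu=0$ and $\lambda=-1/(4\nu)$ indeed satisfies $4(\mu^2-\lambda\nu)=1$. I expect the entire proof to be a careful bookkeeping of which conditions from \eqref{nuneq0}--\eqref{nuneqe} simplify under the specialisation $\mu=0$, $\lambda\neq 0$, and then a short case split. The converse direction is immediate: one just checks that each of the sets of values A$_\nu$) and B$_\nu$) fits into the list in Lemma \ref{Lem:nuneq}.

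For the forward direction, I would proceed as follows. First, \eqref{nuneq0} gives $\gamma_0=\gamma_1=\beta_2=\beta_2'=0$ without any work. Next, since $\lambda=-1/(4\nu)\in \kk^\times$, the last identity $\lambda\gamma=0$ in \eqref{nuneqe} forces $\gamma=0$. With $\mu=0$, the two quadratic conditions of \eqref{nuneqb} become $\alpha_2\alpha_3=-\tfrac14$ and $\alpha_1'\alpha_2'=-\tfrac14$; combined with $\alpha_1',\alpha_2,\alpha_2',\alpha_3\in\{\pm\tfrac12\}$, these yield $\alpha_3=-\alpha_2$ and $\alpha_2'=-\alpha_1'$ (which is consistent with, though slightly stronger than, \eqref{nuneqc}). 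In particular $\alpha_2=\pm\tfrac12$, as stated.

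The only remaining freedom is the sign of $\alpha_1'$ relative to $\alpha_2$, giving two cases:
\begin{itemize}
\item If $\alpha_1'=\alpha_2$, then $\alpha_2'=\alpha_3=-\alpha_2$. Substituting into \eqref{nuneqd} (with $\gamma=0$), all four coefficients multiplying $\beta_1$ vanish identically, so $\beta_1$ is free; recalling $\beta_1'=-\beta_1$ from \eqref{nuneqc} gives case A$_\nu$).
\item If $\alpha_1'=-\alpha_2$, then $\alpha_2'=\alpha_2$ and $\alpha_1'=\alpha_3=-\alpha_2$. Substituting into the first identity in \eqref{nuneqd} (with $\gamma=0$) gives $2\alpha_2\beta_1=0$, hence $\beta_1=0$ and thus $\beta_1'=0$; this is case B$_\nu$).
\end{itemize}
The remaining identities of \eqref{nuneqd} and \eqref{nuneqe} are then automatic in both cases.

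There is no real obstacle here, just verification; the step most prone to error is keeping track of the signs in \eqref{nuneqd} to see that in case A$_\nu$) all four coefficients of $\beta_1$ vanish while in case B$_\nu$) at least one does not. Since the lemma is stated as an ``if and only if'', I would close by noting that plugging either A$_\nu$) or B$_\nu$) back into \eqref{nuneq0}--\eqref{nuneqe} satisfies every identity (using $\mu=0$ and $\gamma=0$ to kill the $\mu$- and $\gamma$-dependent terms, and using $\nu\gamma=0$ in \eqref{nuneqd}), so these are precisely the reduced double brackets satisfying \eqref{Eqtts14}.
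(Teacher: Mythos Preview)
Your proof is correct and follows exactly the approach indicated in the paper, which simply states that the lemma ``is obtained similarly from Lemma \ref{Lem:nuneq}''; you have carried out precisely this routine specialisation of \eqref{nuneq0}--\eqref{nuneqe} under $\mu=0$, $\lambda\neq 0$. The case split on the sign of $\alpha_1'$ relative to $\alpha_2$ and the verification that \eqref{nuneqd} forces $\beta_1=0$ only in case B$_\nu$) are correct.
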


\begin{rem} \label{RemMapp}
From the discussion in \ref{subsst}, we get that a reduced double bracket defined on $A$ by \eqref{Eqtt}, \eqref{Eqss} and \eqref{Eqts} satisfies  \eqref{Eqsst14} if and only if the double bracket verifies one of the cases from Lemma \ref{Lem:tts} or Lemma \ref{Lem:ttsNU} \emph{after} application of the mapping \eqref{Eq:Mapping} on the different coefficients in each case.  
\end{rem}

\subsection{Finishing the proof}

We need to see which conditions from Lemma \ref{Lem:tts} or Lemma \ref{Lem:ttsNU} are compatible with at least one of the conditions obtained by applying the mapping \eqref{Eq:Mapping}, as explained in Remark \ref{RemMapp}. 

For example, applying transformation \eqref{Eq:Mapping} to the case D4.4 in Lemma \ref{Lem:tts} yields 
\begin{equation}
\begin{aligned}
 \text{Case D4.4$^{(s)}$)}\quad& \gamma_0=\gamma_1=0, \quad \beta_1=\beta_1'=0\,, \\
 &\alpha_3=\alpha_2'=-m, \quad \alpha_1'=\alpha_2=m, \quad \beta_2=\beta_2'=\gamma=0\,.
\label{sstCasD44}
\end{aligned}
\end{equation}
A quick inspection shows that this is compatible with the conditions of the cases D1, D4.3 given by \eqref{ttsCasD1}, \eqref{ttsCasD43} in Lemma \ref{Lem:tts}, and A$_{\nu}$ given by \eqref{ttsCasANU} in Lemma \ref{Lem:ttsNU}. 
In the first two cases, and under the isomorphism $t\mapsto s$, $s\mapsto t$ (with $\mu \leftrightarrow m$), the obtained double quasi-Poisson brackets satisfy Case 3 of Proposition \ref{Pr:Free2} given by \eqref{MyCas4}.  
In the last case, the double bracket is isomorphic to Case 6 of Proposition \ref{Pr:Free2} given by \eqref{MyCas16} under the same isomorphism (with $m\mapsto \mu$, $\nu \mapsto n$).


\end{document}